\documentclass[12pt]{amsart}
\usepackage{amssymb}
\usepackage[all]{xy}
\hoffset -10mm \voffset -21mm \textheight 240mm \textwidth 150mm



\theoremstyle{plain} 
\newtheorem{theor}[equation]{Theorem}
\newtheorem{cor}[equation]{Corollary}
\newtheorem{lem}[equation]{Lemma}
\newtheorem{conjecture}[equation]{Conjecture}
\newtheorem{proposition}[equation]{Proposition}

\theoremstyle{definition}
\newtheorem{defin}[equation]{Definition}

\theoremstyle{remark}
\newtheorem{rem}[equation]{Remark}
\newtheorem{ex}[equation]{Example}
\newtheorem{notation}[equation]{Notation}

\newtheorem{propriete}[equation]{Property}

\newcommand{\deuxcat}{\mathcal S}
\newcommand{\propBD}{BD}

\def\build#1_#2^#3{\mathrel{\mathop{\kern0pt#1}\limits_{#2}^{#3}}}

\title{String topology of classifying spaces}
\author{David Chataur and  Luc Menichi }

\begin{document}

\maketitle 

\centerline{\it Dedicated to Micheline Vigu\'e-Poirrier for her 60th birthday}

\begin{abstract}  Let $G$ be a finite group or a compact connected Lie group and let $BG$ be its classifying space. Let $\mathcal{L}BG:=map(S^1,BG)$ be the free loop space of $BG$ i.e. the space of continuous maps from the circle $S^1$ to $BG$. The purpose of this paper is to study the singular homology $H_*(\mathcal LBG)$ of this loop space. We prove that when taken with coefficients in a field the homology of $\mathcal LBG$ is a homological conformal field theory. As a byproduct of our main theorem, we get a Batalin-Vilkovisky algebra structure on the cohomology $H^*(\mathcal LBG)$. We also prove an algebraic version of this result by showing that the Hochschild cohomology $HH^*(S_* (G),S_*(G))$ of the singular chains of $G$ is a Batalin-Vilkovisky algebra.

\end{abstract}
    \vspace{5mm}\noindent {\bf AMS Classification} :  18D50, 55N91, 55P35, 55P48, 55R12, 55R35, 55R40, 57R56, 58D29, 81T40, 81T45

    \vspace{2mm}\noindent {\bf Key words} : free loop space, Hochschild cohomology, props, string topology, topological field theories

\vspace{5mm}

\centerline{\bf Introduction}
$\footnotetext{The first author is supported in part by ANR grant JCJC06 OBTH.}$
\\
\\
Popularized by M. Atiyah and G. Segal~\cite{Atiyah:TQFTsIHES,Segal:defCFT}, topological quantum field theories, more generally quantum field theories and their conformal cousins have entered with success the toolbox of algebraic topologists.
\\
\\
Recently they appeared in a fundamental way in the study of the algebraic and differential topology of loop spaces. Let $M$ be a compact, closed, oriented $d$-dimensional manifold. Let $\mathcal LM$ be the free loop space of $M$. By definition $\mathcal LM$ is the space of continuous maps of the circle into $M$. In their foundational paper "String topology" \cite{Chas-Sullivan:stringtop} M. Chas and D. Sullivan introduced a new and very rich algebraic structure on the singular homology $H_*(\mathcal LM,\mathbb Z)$ and its circle equivariant version $H_*^{S^1}(\mathcal LM,\mathbb Z)$. This is a fascinating generalization to higher dimensional manifolds of W. Goldman's bracket \cite{Goldmancrochet} which lives on the $0$-th space $H_0^{S^1}(\mathcal L\Sigma,\mathbb Z)$ of the equivariant homology of the loops of a closed oriented surface $\Sigma$ (the free homotopy classes of curves in $\Sigma$). Thanks to further works of M. Chas and D. Sullivan \cite{Chas-Sullivan:closedLiebialgebra} $2$-dimensional topological field theories,  because they encode the ways strings can interact, became the classical algebraic apparatus to understand string topology operations (see also R. Cohen and V. Godin's paper \cite{1095.55006}).  
\\
Let us also mention that on the geometric side this theory is closely related to Floer homology of the cotangent bundle of $M$ and symplectic field theory. On its more algebraic side the subject relates to Hochschild cohomology and various generalizations of the Deligne's conjecture. 
\\
\\
More recently the theory was successfully extended to topological orbifolds \cite{Lupercio-Uribe-Xicotencatl:Orbistring} and topological stacks~\cite{Behrend-Ginot-Noohi-Xu}. K. Gruher and P. Salvatore also studied a Pro-spectrum version of string topology for the classifying space of a compact Lie group \cite{gruher-salvatore:genstringtopop}. When twisted topological complex $K$-theory is applied to this Pro-spectrum, the Pro-cohomology obtained is related to Freed-Hopkins-Teleman theory of twisted K-theory and the Verlinde algebra \cite{Gruher:duality}. With these last developments string topology enters the world of equivariant topology.
In this paper we explore the string topology of the classifying space of finite groups and of connected compact Lie groups, advocating that string topology could be applied very naturally in this setting.  
\\
Our main theorem is about the field theoretic properties of $\mathcal LBG$. We prove that the homology of $\mathcal LBG$ is a homological conformal field theory. Let us notice that V. Godin~\cite{Godin:higherstring} proved an analoguous result for $\mathcal LM$, but the techniques used in this setting (fat graphs, embeddings) are completely transverse to those of this paper.

{\em Acknowledgment:}
The authors would like to thank all the people who helped us in this paper:
Michael Crabb, Yves F\'elix, J\'erome Scherer, especially Jean-Claude Thomas
and Nathalie Wahl for their helpful comments, Craig Westerland for explaining the second
author the cactus action on $H_*(\mathcal{L}M)$~\cite[2.3.1]{Cohen-Hess-Voronov:stringtopacyclhom}.
\section{Plan of the paper and Results}
Our main intuition behind all these field theoretic structures was completely algebraic. The second author proved in \cite{MenichiL:BValgaccoHa} that the Hochschild cohomology of a symmetric Frobenius algebra is a Batalin-Vilkovisky algebra.
The group ring ${\Bbbk}[G]$ of a finite group $G$ is one of the classical
example of symmetric Frobenius algebras (Example~\ref{example d'algebre symetrique} 2)).
Therefore the Hochschild cohomology of ${\Bbbk}[G]$
with coefficients in its dual,
$HH^*({\Bbbk}[G];{\Bbbk}[G]^\vee)$,
is a Batalin-Vilkovisky algebra (See the beginning of Section~\ref{hochschild cohomology} for more details).
As we know from results of Burghelea-Fiederowicz \cite{Burghelea-Fiedorowicz:chak} and Goodwillie \cite{Goodwillie:cychdfl} that
the Hochschild cohomology $HH^*({\Bbbk}[G];{\Bbbk}[G]^\vee)$ is isomorphic as ${\Bbbk}$-modules to $H^*(\mathcal LBG;{\Bbbk})$.
Therefore, we obtain
\\
\\
{\bf Inspirational Theorem.} {\it Let $G$ be a finite group.
Then the singular cohomology with coefficients in any commutative ring ${\Bbbk}$, $H^*(LBG;{\Bbbk})$, is a Batalin-Vilkovisky algebra.}
\\
\\
The question of the geometric incarnation of this structure is natural and will be partially answered in this paper.
Note that this Batalin-Vilkovisky algebra is highly non-trivial.
For example, the underlying algebra, the Hochschild cohomology ring
 $HH^*({\Bbbk}[G];{\Bbbk}[G])$ is studied and computed in some cases in~\cite{Siegel-Witherspoon:Hochschildcoh}.      
\\
\\
{\bf Section 2} In order to build our "stringy" operations on $\mathcal LBG$ we use correspondences and Umkehr maps :
\\
transfer maps for dealing with finite groups,
\\
integration along the fiber for compact Lie groups.
\\
We recall the basic definitions and properties of these maps.  
\\
\\
{\bf Section 3} As the preceding section this one is expository. We recall the concepts of quantum field theories as axiomatized by Atiyah and Segal. In order to play with these algebraic structures we use the notion of props. The Segal prop of Riemann surfaces and its associated props is described in some details. 
\\
\\
{\bf Section 4}, {\bf section 5}, {\bf section 6}, {\bf section 7} In these sections we build evaluation products for a propic action of the homology of the Segal prop on the homology of the free loop spaces of various groups and topological groups. The aim of these sections is to give a proof of the following theorem.  
\\
\\
{\bf Main Theorem.}
(Theorems~\ref{main theoreme homologie groupe fini} and~\ref{main theoreme homologie groupe de Lie}) {\it
\\
(1) Let $G$ be a discrete finite group or 
\\
(2) Let $G$ be a connected topological group
such that its singular homology $H_*(G,\mathbb F)$
with coefficient in a field is finite dimensional.
\\
Then the singular homology of $\mathcal LBG$ taken with coefficients
in a field, $H_*(\mathcal{L}BG;\mathbb{F})$, is an homological conformal field theory.}
\\
\\
The condition (2) in the main theorem deserves some comments. We have
originally proved the main theorem for a connected compact Lie group $G$. The proof being completely homological, the main theorem can be extended for free to (2) which is obviously a weaker condition. Condition (2) is satisfied for finite loop spaces and since the discovery of the Hilton-Roitberg criminal \cite{Hilton-Roitbergcriminal} one knows that not every finite loop space is homotopy equivalent to a compact Lie group.
\\

When $\mathbb F=\mathbb Q$ one knows that every odd sphere $S^{2n+1}$ is rationally equivalent to an Eilenberg-MacLane space $K(\mathbb Q,2n+1)$, therefore condition (2) holds for the group $K(\mathbb Q,2n+1)$. And when $\mathbb F=\mathbb Z/p$ the condition is satisfied by $p$-compact groups \cite{Dwyer-Wilkerson:hfpmLg}.    
\\
In fact for all the groups $G$ satisfying (2), we will show
that their singular homology $H_*(G;\mathbb F)$ is a symmetric Frobenius algebra, since it is a
finite dimensional cocommutative connected Hopf algebra.
\\
This theorem when restricted to the genus zero and operadic part of the prop of riemann surfaces with boundary gives the topological counterpart of our "inspirational theorem". 
\begin{cor}(Particular case of Corollaries~\ref{Structure BV en cohomologie groupe fini}
and~\ref{Structure BV en cohomologie groupe de Lie})
Let ${\Bbbk}$ be any principal ideal domain.
\\
(1) Let $G$ be a finite group. Then $H^{*}(\mathcal LBG,{\Bbbk})$ is a Batalin-Vilkovisky algebra.
\\
(2) Let $G$ be a connected compact Lie group of dimension $d$.
Then $H^{*+d}(\mathcal LBG,{\Bbbk})$ is a Batalin-Vilkovisky algebra.
\end{cor}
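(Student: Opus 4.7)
The plan is to apply the Main Theorem and then restrict the resulting PROPic action to a well-chosen sub-operad. Concretely, take the part of the Segal PROP consisting of genus zero Riemann surfaces with $n$ incoming boundary components and exactly one outgoing boundary; by a classical result this moduli space is homotopy equivalent to the framed little $2$-discs operad $f\mathcal{D}_2(n)$. The Main Theorem therefore supplies an action of the homology operad $H_*(f\mathcal{D}_2;\mathbb{F})$ on $H_*(\mathcal{L}BG;\mathbb{F})$.

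By Getzler's theorem, algebras over $H_*(f\mathcal{D}_2)$ are exactly Batalin--Vilkovisky algebras: the commutative associative product comes from the generator of $H_0(f\mathcal{D}_2(2))$, and the degree one operator $\Delta$ from the generator of $H_1(f\mathcal{D}_2(1))$ corresponding to rotating a framed disc. This immediately yields a BV structure on $H_*(\mathcal{L}BG;\mathbb{F})$ in the finite group case, and on $H_{*+d}(\mathcal{L}BG;\mathbb{F})$ in the compact Lie group case of dimension $d$; the shift by $d$ arises because the Umkehr map entering the operadic product is integration along a fiber of dimension $d$ for Lie groups, whereas it is degree zero (a finite cover transfer) for finite groups.

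It remains to pass from homology over a field to cohomology over an arbitrary principal ideal domain $\mathbb{k}$. The defining correspondences together with their transfer or fiber-integration maps, constructed in Sections~2 and 4--7, are available at the chain level over any commutative ring; reading them cohomologically one obtains operations on $H^*(\mathcal{L}BG;\mathbb{k})$ and on $H^{*+d}(\mathcal{L}BG;\mathbb{k})$ respectively, which satisfy the BV axioms because these are verified as PROPic identities at the level of correspondences rather than as identities in one particular (co)homology theory. The main obstacle is precisely this last step: one must check that the chain-level constructions of the preceding sections remain well defined over any PID without invoking finiteness of $H_*(\mathcal{L}BG;\mathbb{k})$, and that the cohomological reading of the genus zero operadic piece -- in particular the degree shift by $d$ and the sign conventions in the BV seven-term relation -- is the expected one.
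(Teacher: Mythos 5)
Your first step---restricting the propic action of the Main Theorem to the genus-zero, one-output part of the surface prop, identifying it with the framed little $2$-discs operad and invoking Getzler---is exactly how the paper begins (Section~\ref{section structure BV}). But it only produces a structure on the homology $H_*(\mathcal{L}BG;\mathbb{F})$ with \emph{field} coefficients, whereas the corollary is about \emph{cohomology} over an arbitrary principal ideal domain, and this is precisely where your argument stops. Passing to cohomology is not just ``reading the correspondences cohomologically'': even over a field the paper must dualize the whole propic action (Theorem~\ref{main cotheoreme homologie groupe de Lie}), using that $H_*(\mathcal{L}BG;\mathbb{F})$ is of finite type and that the Segal prop is isomorphic to its opposite; one cannot simply dualize the resulting BV algebra, since the linear dual of an algebra is a coalgebra. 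Moreover there is no chain-level construction in the paper to fall back on: the homological evaluation products are built from Serre spectral sequence Umkehr maps and genuinely need field coefficients, both for the K\"unneth isomorphisms implicit in the prop action and for the identification of the top homology of the fibre $\Omega X^{\times(-\chi(F))}$ with $\mathbb{F}$. So the step you defer as ``the main obstacle'' is the actual content of this corollary, and the route you sketch for it is not the one that works.

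What the paper does over ${\Bbbk}$ is different in the two cases. For a connected compact Lie group (proof of Corollary~\ref{Structure BV en cohomologie groupe de Lie}), the evaluation products are re-defined directly in cohomology: by the universal coefficient and K\"unneth theorems the top cohomology of the fibre of $\rho_{in}$ satisfies $H^{-d\chi(F)}\cong H^{d}(\Omega X;{\Bbbk})^{\otimes -\chi(F)}\cong{\Bbbk}$ (this uses $H_d(\Omega X;{\Bbbk})\cong{\Bbbk}$, freeness of $H_{d-1}$ and finite generation, all satisfied because $\Omega BG\simeq G$ is a closed orientable $d$-manifold), so integration along the fibre exists in cohomology over ${\Bbbk}$; the factor $H_*(BDiff^+(F,\partial))$ is then absorbed by a slant product, and the identification of $BDiff^+(F_{g,1+q},\partial)_{q}$ with the opposite of the operad $BDiff^+(F_{g,q+1},\partial)_{q}$ yields the operad action on $H^*(\mathcal{L}BG;{\Bbbk})$. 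For a finite group (Corollary~\ref{Structure BV en cohomologie groupe fini}), the argument goes through the stable statement (Theorem~\ref{main theorem stable version}): the operations are maps of suspension spectra built from transfers, one applies singular cohomology with \emph{any} commutative ring of coefficients, and the slant product of Property~\ref{Slant product} converts the coalgebra structure over the stable operad into an algebra structure over its homology---giving even a unital BV algebra, strictly more than a field-coefficient homological argument can deliver. Your proposal contains neither mechanism, so the corollary as stated is not proved.
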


\noindent{\bf Section 8.} We recall some basic facts about Frobenius algebras and Hopf algebras. We prove that the homology of a connected Lie
group together with the Pontryagin product is a symmetric Frobenius algebra
(Theorem~\ref{homologie groupe de Lie algebre symetrique}), in fact we offer two different proofs.
The first is completely algebraic while the second is topological.
\\
\\
{\bf Section 9.} We extend our inspirational theorem from finite groups to Lie groups
(Theoremé\ref{Homologie de Hochschild groupe de Lie}):
{\it Let $G$ be a connected compact Lie group of dimension $d$.
Let $S_*(G)$ be the singular chains of $G$.
The Gerstenhaber algebra structure on the Hochschild cohomology
$HH^*(S_*(G),S_*(G))$ extends to a Batalin-Vilkovisky algebra.
There is an isomorphism of vector spaces
$HH^*(S_*(G),S_*(G))\cong H^{*+d}(\mathcal LBG)$.}]
\\
\\
{\bf Section 10.} In this section, we define
(Theorem~\ref{string bracket pour les classifiants}) a string bracket on
the $S^1$-equivariant cohomology $H^*_{S^1}(\mathcal{L}BG)$ when $G$ is a group
satisfying the hypotheses of our main theorem. We also define
(Theorem~\ref{string bracket sur la cohomologie cyclique}) a Lie bracket on the cyclic
cohomology $HC^*(S_*(G))$ of $S_*(G)$ when $G$ is a finite group or a connected compact Lie group.
\\
\\
Sections 8, 9 and 10 can be read independently from the rest of the paper. The interested reader might first consider the mod $2$ version of Theorem~\ref{main theoreme homologie groupe de Lie}. Indeed over $\mathbb{F}_2$, there is no sign and orientation issues for integration along the fiber.

\section{Wrong way maps}
\vspace{3mm}

An {\it Umkehr map} or wrong way map is a map $f_!$ in homology related to an original continuous map $f:X\rightarrow Y$ which reverses the arrow. Umkehr maps can also be considered in cohomology and some of them are refined to stable maps. 
\\
A typical example is given when one considers a continuous map $f:M^m\rightarrow N^n$ between two oriented closed manifolds. Then using Poincar\'e duality one defines the associated Umkehr, Gysin, wrong way, surprise or transfer map (depending on your prefered name)
$$f_!:H_*(N^n)\rightarrow H_{*+m-n}(M^m).$$
In this paper we will deal with two types of Umkehr maps : transfer and integration along the fiber, both types of Umkehr maps being associated to fibrations. 
\\
In the next two sections we review their constructions and in a third section we give a list of their common properties. We refer the reader to chapter $7$ of J. C. Becker and D. H. Gottlieb's paper \cite{Becker-Gottlieb:history} for a nice survey on Umkehr maps. 
  
\subsection{Transfer maps}
\subsubsection{Transfer for coverings}
Let $p:E\twoheadrightarrow B$ be a covering.
Following~\cite[Beginning of Section 1.3]{Hatcher:algtop}, we don't require that a covering is surjective.
Suppose that all the fibers, $p^{-1}(b)$, $b\in B$ are of finite cardinal.
As pointed by~\cite[p. 100]{Adams:infiniteloop}, ``there is no need to assume that they all have the same cardinal
if $B$ is not connected''.
For example, in~\cite[(4.3.4)]{Adams:infiniteloop}, Adams considers the example of an injective covering $p$ with $1$-point
fibers and $0$-point fibers.
Then one can define a map of spectra~\cite[Construction 4.1.1]{Adams:infiniteloop}
$$\tau_p:\Sigma^{\infty}B_+\longrightarrow \Sigma^{\infty}E_+$$
where $\Sigma^{\infty}X_+$ denotes the suspension spectrum of the topological space $X$ with a disjoint basepoint added. This map induces in singular homology the transfer map :
$$p_!:H_*(B)\rightarrow H_*(E).$$

\subsubsection{Becker-Gottlieb transfer maps}
Let $p:E\rightarrow B$ be a fibration over a path-connected base space $B$.
Up to homotopy, we have an unique fiber.
Suppose that the fiber $F$ of $p$ has the (stable) homotopy type of a finite complex then one has a stable map
$$\tau_p:\Sigma^{\infty}B_+\longrightarrow \Sigma^{\infty}E_+.$$
Originally constructed by Becker and Gottlieb for smooth fiber bundles with compact fibers \cite{Becker-Gottlieb:transferfiberbundle}, they generalize it to fibrations with finite fibres and finite dimensional base spaces~\cite{Becker-Gottlieb:transferfibration}. The finiteness condition on the basis has been removed by M. Clapp using duality theory in the category of ex-spaces \cite{Clapp:duality}. 

\subsubsection{Dwyer's transfer}\label{Dwyer transfer}
 At some point we will need to use W. Dwyer's more general version of the transfer \cite{Dwyer:transfer}. It has the same properties as M. Clapp's transfer (or any other classical version)
and is equal to it in the case of the sphere spectrum $S^0$.
\\
Let us consider a ring spectrum $R$.
By definition, a space $F$ is {\it $R$-small}~\cite[Definition 2.2]{Dwyer:transfer}
if the canonical map of spectra
$$
map(\Sigma^{\infty}F_+,R)\wedge \Sigma^{\infty}F_+
\rightarrow
map(\Sigma^{\infty}F_+,R\wedge\Sigma^{\infty}F_+)
$$
is an equivalence. If $R$ is the sphere spectrum $S^0$, a space $F$ is $R$-small if
$F$ is (stably) homotopy equivalent to a finite CW-complex.
If $R$ is  an Eilenberg-MacLane spectrum $H\mathbb Q$ or $H \mathbb F_p$ or a Morava $K$-theory spectrum $K(n)$, a space $F$ is $R$-small if  $\pi_*(R\wedge \Sigma^{\infty} F_+)$  is finitely generated as a $\pi_*(R)$-module~\cite[Exemple 2.15]{Dwyer:transfer}.
\\
Now let $p:E\twoheadrightarrow B$ be a fibration over a path-connected base $B$ and suppose that
the fiber $F$ is  $R$-small.
Then W. Dwyer has build a transfer map~\cite[Remark 2.5]{Dwyer:transfer}
$$\tau_p:R\wedge\Sigma^{\infty}B_+\rightarrow R\wedge\Sigma^{\infty}E_+.$$
 
\subsubsection{Transfer for non-surjective fibrations}
We would like that the Becker-Gottieb (or Dwyer's) transfer extends the transfer for coverings.
(Recall that a covering is a fibration).

Let $p:E\twoheadrightarrow B$ be a fibration. We don't require that $p$ is surjective.
Suppose that all the fibers, $p^{-1}(b)$, $b\in B$, are (stably) homotopy equivalent to a finite CW-complex.
Then we have a Becker-Gottlieb transfer map
$$\tau_p:\Sigma^{\infty}B_+\longrightarrow \Sigma^{\infty}E_+.$$
\begin{proof}
Let $\alpha\in\pi_0(B)$.
Denote by $B_\alpha$ the path-connected component of $B$ corresponding to $\alpha$.
Let $E_\alpha:=p^{-1}(B_\alpha)$ be the inverse image of $B_\alpha$ by $p$.
Let $p_\alpha:E_\alpha\twoheadrightarrow B_\alpha$ the restriction of $p$ to $E_\alpha$.
Either $p$ is surjective or $E_\alpha$ is the empty set $\emptyset$.
By pull-back, we have the two weak homotopy equivalence
$$
\xymatrix{
\coprod_{\alpha\in\pi_0(B)}E_\alpha
\ar[d]_{\coprod_{\alpha\in\pi_0(B)}p_\alpha}\ar[r]^-{\simeq}
&E\ar[d]^p\\
\coprod_{\alpha\in\pi_0(B)} B_\alpha
\ar[r]^-\simeq
&B
}
$$
Since $p_\alpha:E_\alpha\twoheadrightarrow B_\alpha$ is a fibration over a path-connected basis,
whose fiber is (stably) homotopy equivalent to a finite CW-complex,
we have a Becker-Gottlieb transfer
$$\tau_{p_\alpha}:\Sigma^{\infty}B_{\alpha+}\longrightarrow \Sigma^{\infty}E_{\alpha+}.$$
We define the transfer of $p$ by
$$\tau_{p}:=\vee_{\alpha\in\pi_0(B)}\tau_{p_\alpha}:
\Sigma^{\infty}B_{+}\simeq\vee_{\alpha\in\pi_0(B)}\Sigma^{\infty}B_{\alpha+}
\longrightarrow \vee_{\alpha\in\pi_0(B)}\Sigma^{\infty}E_{\alpha+}\simeq \Sigma^{\infty}E_{+}.$$
In singular homology, we have the linear map of degree $0$
$$p_!:=\oplus_{\alpha\in\pi_0(B)}p_{\alpha!}:
H_*(B)\cong\oplus_{\alpha\in\pi_0(B)}H_*(B_{\alpha})
\longrightarrow \oplus_{\alpha\in\pi_0(B)}H_*(E_{\alpha})\cong H_*(E).$$
\end{proof}
\subsection{Integration along the fiber} 

If we have a smooth  oriented fiber bundle $p:E\rightarrow B$, the integration along the fiber $F$ can be defined at the level of the de Rham cochain complex by integrating differential forms on $E$ along $F$. This defines a map in cohomology 
$$p^!:H^*(E,\mathbb R)\rightarrow H^*(B,\mathbb R)$$ 
this was the very first definition of integration along the fiber and it goes back to A. Lichnerowicz \cite{Lichnerowicz:integrationfibre}. 
\\
We review some well-known generalizations of  this construction, for our purpose we need to work with fibrations over an infinite dimensional basis. As we just need to work with singular homology (at least in this paper), we will use Serre's spectral sequence.

\subsubsection{A spectral sequence version}\label{integration suite spectrale}
(\cite[Section 2]{Gottlieb:bundleEuler}, \cite[Chapter 2, Section 3]{thesedeKitchloo}, \cite[p. 106]{Adams:infiniteloop} or \cite[Section 4.2.3]{Morita:characteristic})
Let $F\hookrightarrow E\stackrel{p}{\twoheadrightarrow} B$ be a fibration over a path-connected base $B$. We suppose that the homology of the fiber $H_*(F,\mathbb{F})$
is concentrated in degree less than $n$ and
has a top non-zero homology group $H_n(F,\mathbb{F})\cong \mathbb{F}$. Let us
assume that the action of the fundamental group $\pi_1(B)$ on $H_n(F,\mathbb{F})$ induced by the holonomy is trivial. 
Let $\omega$ be a generator of $H_n(F,\mathbb{F})$ i.e. an orientation class.
We shall refer to such data as an {\it oriented fibration}. 
\\
Using the Serre spectral sequence, one can define the integration along the fiber as a map
$$p_!:H_*(B)\rightarrow H_{*+n}(E).$$ 
Let us recall the construction, we consider the spectral sequence with local coefficients~\cite{Mimura-Toda:topliegroups}.
As the Serre spectral sequence is concentrated under the $n$-th line, the filtration on the abutment
$H_{l+n}(E)$ is of the
form
$$0=F^{-1}=F^0=\cdots=F^{l-1}\subset F^{l}\subset F^{l+1}\subset\cdots\subset  F^{l+n}=H_{l+n}(E).$$
As the local coefficients are trivial by hypothesis,
the orientation class $\omega$ defines an isomorphism of local coefficients
$\tau:\mathbb{F}\rightarrow \mathcal H_n(F_b,\mathbb{F})$. By definition $p_!$ is the composite
$$
p_!:H_l(B,\mathbb{F})\buildrel{H_l(B;\tau)}\over\rightarrow H_l(B,\mathcal H_n(F_b,\mathbb{F}))=E^2_{l,n}\twoheadrightarrow E^\infty_{l,n}=\frac{F^l}{F^{l-1}}=F^l\subset H_{l+n}(E,\mathbb{F}).
$$

\subsection{Properties of Umkehr maps}\label{proprietes umkehr maps}

Let us give a list of properties that are satisfied by transfer maps and integration along the fibers. In fact all reasonable notion of Umkehr map must satisfy this Yoga. We write these properties for integration along the fiber taking into account the degree shifting, we let the reader do the easy translation for transfer maps. 
\\
\\
{\bf Naturality~\cite[p. 29]{thesedeKitchloo}}: Consider a commutative diagram
$$\xymatrix{
E_1\ar[r]^{g}\ar@{>>}[d]^{p_1}
&E_2\ar@{>>}[d]^{p_2}\\
B_1\ar[r]^{h}
&B_2
}$$
where $p_1$ is a fibration over a path-connected base
and $p_2$ equipped with the orientation class $w_2\in H_n(F_2)$ is an oriented
fibration. Let $f:F_1\rightarrow F_2$ the map induced between the fibers.
Suppose that $H_*(f)$ is an isomorphism.
Then the fibration
$p_1$ equipped with the orientation class $w_1:=H_n(f)^{-1}(w_2)$
is an oriented fibration
and the following diagram commutes
$$\xymatrix{
H_{*+n}(E_1)\ar[r]^{H_*(g)}
&H_{*+n}(E_2)\\
H_*(B_1)\ar[r]^{H_*(h)}\ar[u]^{p_1!}
&H_*(B_2)\ar[u]^{p_2!}
}$$
\begin{proof}
With respect to the morphism of groups $\pi_1(h):\pi_1(B_1)\rightarrow \pi_1(B_2)$, $H_n(f):H_n(F_1)\rightarrow H_n(F_2)$ is an isomorphism of $\pi_1(B_1)$-modules.
Since $\pi_1(B_2)$ acts trivially on $H_n(F_2)$,
$\pi_1(B_1)$ acts trivially on $H_n(F_1)$.
By definition of the orientation class $w_1$ and by naturality of the Serre
spectral sequence, the diagram follows. 
\end{proof}
We will apply the naturality property in the following two cases
\\
{\bf Naturality with respect to pull-back:}~\cite[chapter 9.Section 2.Theorem 5 b)]{Spanier:livre}
 Suppose that we have a pull-back,
then $f$ is an homeomorphism.
\\
{\bf Naturality with respect to homotopy equivalences:} Suppose that
$g$ and $h$ are homotopy equivalences, then $f$ is a homotopy equivalence.
\\
\\
{\bf Composition:}
Let $f:X\twoheadrightarrow Y$ be an oriented fibration with path-connected
fiber $F_f$ and orientation class $w_f\in H_m(F_f)$.
Let $g:Y\twoheadrightarrow Z$ be a second
oriented fibration with path-connected
fiber $F_g$ and orientation class $w_g\in H_n(F_g)$.
Then the composite $g\circ f:X\rightarrow Z$
is an oriented fibration with path-connected
fiber $F_{g\circ f}$.
By naturality with respect to pull-back,
we obtain an oriented fibration
$f':F_{g\circ f}\twoheadrightarrow F_g$ with orientation class
$w_f\in H_m(F_f)$.
By definition, the orientation class of $g\circ f$
is
$w_{g\circ f}:=f'_!(w_g)\in H_{n+m}(F_{g\circ f})$.
Then we have the commutative diagram
$$\xymatrix{
& H_{*+n}(Y)\ar[dr]^{f!}\\
H_*(Z)\ar[ur]^{g!}\ar[rr]^{(g\circ f)!}
& & H_{*+m+n}(X).
}$$
{\bf Product:}
Let $p:E\twoheadrightarrow B$ be an oriented fibration with 
fiber $F$ and orientation class $w\in H_m(F)$.
Let $p':E'\twoheadrightarrow B'$ be a second oriented fibration with 
fiber $F'$ and orientation class $w'\in H_n(F')$.
Then if one work with homology with field coefficients,
$p\times p':E\times E'\twoheadrightarrow B\times B'$ is a third oriented fibration with 
fiber $F\times F'$ and orientation class $w\times w'\in 
H_{m+n}(F\times F')$ and 
one has for $a\in  H_*(B)$ and $b\in  H_*(B')$, 
$$(p\otimes p')_!(a\otimes b)=(-1)^{\vert a\vert n}p_!(a)\otimes p'_!(b).$$
Notice that since $p'_!$ is of degree $n$, the sign
$(-1)^{\vert a\vert n}$  agrees with the Koszul rule.
\\
{\bf Borel construction:}
Let $G$ be a topological group acting continuously on two topological spaces $E$ and $B$,
we also suppose that we have a continuous $G$-equivariant map
$$p:E\rightarrow B$$
the induced map on homotopy $G$-quotients (we apply the Borel functor $EG\times_G-$ to $p$) is denoted by
$$p_{hG}:E_{hG}\rightarrow B_{hG}.$$ 

We suppose that the action of $G$ on $B$ has a fixed point $b$
and that $p:E\twoheadrightarrow B$ is an oriented fibration with 
fiber $F:=p^{-1}(b)$ and orientation class $w\in H_n(F)$.
This fiber $F$ is a sub $G$-space of $E$.
Then we suppose that the action of $G$ preserves the orientation, to be more precise we suppose that the action of $\pi_0(G)$ on $H_n(F)$ is trivial. Then $$p_{hG}:E_{hG}\twoheadrightarrow B_{hG}$$
is locally an oriented (Serre) fibration and therefore
is an oriented (Serre)
fibration~\cite[Chapter 2.Section 7.Theorem 13]{Spanier:livre}
 with fiber $F$ and orientation class $w\in H_n(F)$.
Note that under the same hypothesis, $p_!$ is $H_*(G)$-linear (Compare with Lemma~\ref{shriek commute avec delta}).
\subsection{The yoga of correspondences}
Let us finish this section by an easy lemma on Umkehr maps, once again we give it for integration along the fiber and let the reader translate it for transfers. 
\\
We will formulate it in the language of oriented correspondences, the idea of using correspondences in string topology is due to S. Voronov \cite[section 2.3.1]{Cohen-Hess-Voronov:stringtopacyclhom}. We introduce a category of oriented correspondences denoted by ${\bf Corr_{or}}$ :
\\
- the objects of our category will be path connected spaces with the homotopy type of a CW-complex
\\
- $Hom_{\bf Corr_{or}}(X,Y)$ is given by the set of oriented correspondences between $X$ and $Y$ a correspondence will be a sequence of continuous maps :
$$Y\stackrel{r_2}{\leftarrow} Z\stackrel{r_1}{\rightarrow} X$$ 
such that $r_1$ is an oriented fibration. 
\\
Composition of morphisms is given by pull-backs, if we consider two correspondences
$$Y\stackrel{r_2}{\leftarrow} Z\stackrel{r_1}{\rightarrow} X$$ 
and 
$$U\stackrel{r'_2}{\leftarrow} T\stackrel{r'_1}{\rightarrow} Y$$
then the composition of the correspondences is 
$$U\stackrel{r"_2}{\leftarrow} T\times_Y Z\stackrel{r"_1}{\rightarrow} X.$$
We have to be a little bit more careful in the definition of morphisms, composition as defined above is not strictly associative (we let the reader fix the details). 
\\
\\

\begin{lem}\label{composition lemma}
{\bf Composition lemma.} {\it The singular homology with coefficients in a field defines a symmetric monoidal functor 
$$\mathbb H(-,\mathbb F):{\bf Corr_{or}}\longrightarrow \mathbb F-{\bf vspaces}$$
to a morphism $Y\stackrel{r_2}{\leftarrow} Z\stackrel{r_1}{\rightarrow} X$ it associates $(r_2)_*\circ(r_1)_!:H_*(X,\mathbb F)\rightarrow H_{*+d}(Y,\mathbb F)$ (where $d$ is the homological dimension of the fiber of $r_1$).}
\end{lem}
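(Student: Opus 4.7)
The plan is to verify the three conditions making $\mathbb H(-,\mathbb F)$ a symmetric monoidal functor: preservation of identities, of composition, and of the monoidal product. Preservation of identities is immediate: the identity morphism at $X$ is the correspondence $X\stackrel{\mathrm{id}}{\leftarrow}X\stackrel{\mathrm{id}}{\rightarrow}X$; the identity map, viewed as a trivial oriented fibration with point fibre and orientation $1\in H_0(\mathrm{pt})$, satisfies $\mathrm{id}_!=\mathrm{id}$, so that $\mathbb H(\mathrm{id}_X)=\mathrm{id}_*\circ\mathrm{id}_!=\mathrm{id}$.

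The heart of the argument is preservation of composition. Given $\varphi=(Y\stackrel{r_2}{\leftarrow}Z\stackrel{r_1}{\rightarrow}X)\in Hom_{\bf Corr_{or}}(X,Y)$ and $\psi=(U\stackrel{r'_2}{\leftarrow}T\stackrel{r'_1}{\rightarrow}Y)\in Hom_{\bf Corr_{or}}(Y,U)$, the composite $\psi\circ\varphi$ is $U\stackrel{r'_2\circ\pi_T}{\leftarrow}T\times_Y Z\stackrel{r_1\circ\pi_Z}{\rightarrow}X$, with $\pi_Z$ and $\pi_T$ the projections in the pullback square
$$
\xymatrix{
T\times_Y Z\ar[r]^-{\pi_T}\ar@{>>}[d]_{\pi_Z}&T\ar@{>>}[d]^{r'_1}\\
Z\ar[r]^{r_2}&Y.
}
$$
Since $r'_1$ is an oriented fibration, the naturality with respect to pull-back from Section~2.3 endows $\pi_Z$ with an oriented fibration structure (same fibre, orientation pulled back through the canonical fibrewise homeomorphism), and the composition property for Umkehr maps then gives $(r_1\circ\pi_Z)_!=(\pi_Z)_!\circ(r_1)_!$. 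Therefore
$$\mathbb H(\psi\circ\varphi)=(r'_2)_*\circ(\pi_T)_*\circ(\pi_Z)_!\circ(r_1)_!,$$
while $\mathbb H(\psi)\circ\mathbb H(\varphi)=(r'_2)_*\circ(r'_1)_!\circ(r_2)_*\circ(r_1)_!$. These coincide once one establishes the base-change identity
$$(\pi_T)_*\circ(\pi_Z)_!=(r'_1)_!\circ(r_2)_*,$$
which is precisely the naturality property applied to the pullback square above. This base-change identity is the only non-formal ingredient, and is the main obstacle; the hypothesis that $r_1$ be an oriented fibration is essential, since only for oriented fibrations is the Umkehr map natural with respect to pullback.

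For the symmetric monoidal compatibility, the tensor product of two correspondences is taken componentwise in all three slots, and the product property of Umkehr maps from Section~2.3, combined with the K\"unneth isomorphism over $\mathbb F$, shows that $\mathbb H$ sends a monoidal product of morphisms to the Koszul-signed tensor product of the corresponding linear maps. The symmetry isomorphism on ${\bf Corr_{or}}$ given by swapping factors and that on $\mathbb F-{\bf vspaces}$ given by the Koszul twist match because the sign in the product formula coincides with the Koszul sign. The non-strict associativity of composition of correspondences flagged by the authors is a separate, purely categorical matter invisible to homology, since any two choices of iterated pullback differ by a canonical homeomorphism and therefore induce the same linear map after applying $H_*(-,\mathbb F)$.
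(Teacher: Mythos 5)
Your proposal is correct and follows essentially the same route as the paper: the composition axiom is reduced to the base-change identity $(\pi_T)_*\circ(\pi_Z)_!=(r'_1)_!\circ(r_2)_*$ (the naturality of integration along the fiber with respect to the pullback square) together with the composition property $(r_1\circ\pi_Z)_!=(\pi_Z)_!\circ(r_1)_!$, and monoidality to the product property of Umkehr maps. Your additional remarks on identities, Koszul signs, and the associativity-up-to-canonical-homeomorphism issue only make explicit what the paper leaves to the reader.
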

\begin{proof}
The fact that the functor is monoidal follows from the product property of the integration along the fiber.
\\
Let $c$ and $c'$ be two oriented correspondences the fact that $\mathbb H(c'\circ c,\mathbb F)=\mathbb H(c',\mathbb F)\circ\mathbb H(c,\mathbb F)$ follows from an easy inspection of the following diagram
$$\xymatrix{&&X\\
&\ar[ld]_{r"_2}\ar[d]_{f_2}T\times_Y Z\ar[ru]^{r"_1}\ar[r]^{f_1}& Z\ar[d]^{r_2}\ar[u]_{r_1}\\
U&\ar[l]^{r'_2}T\ar[r]_{r'_1}& Y}$$
in fact we have $(r'_2)\circ(r'_1)_!\circ(r_2)_*\circ(r_1)_!=(r'_2)_*\circ(f_2)_*\circ(f_1)_!\circ (r_1)_!$ by the naturality property and by the composition property we have $(f_1)_!\circ(r_1)_!=(r"_1)_!$.
\end{proof}


\section{Props and field theories}
The aim of this section is to introduce the algebraic notions that encompass the "stringy" operations acting on $\mathcal LBG$. This section is mainly expository.
\subsection{props} We use props and algebras over them as a nice algebraic framework in order to deal with $2$-dimensional field theories. We could in this framework use the classical tools from algebra and homological algebra exactly as for algebras over operads.  

\begin{defin}~\cite[Definition 54]{Markl:operadprop}
 A {\it prop} is a symmetric (strict) monoidal category $\mathcal P$
~\cite[3.2.4]{Kock:Frob2TQFT} whose set of objects is identified with the set $\mathbb Z_+$ of nonnegative numbers. The tensor law on objects should be given by addition of integers $p\otimes q=p+q$. Strict monoidal means that 
the associativity and neutral conditions are the identity. 
\end{defin}
We thus have two composition products on morphisms a horizontal one given by the tensor law :
$$-\otimes -:\mathcal P(p,q)\otimes \mathcal P(p',q')\longrightarrow \mathcal P(p+p',q+q'),$$
and a vertical one given by composition of morphisms :
$$-\circ-:\mathcal P(q,r)\otimes \mathcal P(p,q)\longrightarrow \mathcal P(p,r).$$  
\begin{ex} Let $V$ be a fixed vector space.
A fundamental example of prop is given by the {\it endomorphims prop of $V$} denoted $\mathcal End_V$. The set of morphisms is defined as $\mathcal End_V(p,q)=Hom(V^{\otimes p},V^{\otimes q})$. The horizontal composition product is just the tensor while the vertical is the composition of morphisms.
\end{ex}
A morphism of props is a symmetric (strict) monoidal
functor~\cite[3.2.48]{Kock:Frob2TQFT} $F$ such that $F(1)=1$.
Let $\mathcal P$ be a linear prop i.e. we suppose that $\mathcal P$ is enriched in the category of vector spaces (graded, differential graded) and $V$ be a vector space (graded, differential graded,....). 
\begin{defin}~\cite[Definition 56]{Markl:operadprop}
The vector space $V$ is said to be a
{\it $\mathcal P$-algebra} if there is a morphism of linear props
$$F:\mathcal P\longrightarrow \mathcal End_V.$$
\end{defin}
This means that we have a 
a family of linear morphisms $$F:\mathcal{P}(m,n)
\rightarrow\text{Hom}(V^{\otimes m},V^{\otimes n})$$
such that
\begin{itemize}
\item[(monoidal)]$
F(f\otimes g)=
F(f)\otimes F(g)
$
for $f\in \mathcal P(m,n)$ and $g\in \mathcal P(m',n')$.
\item[(identity)] The image $F(id_n)$
of the identity morphism $id_n\in\mathcal{P}(n,n)$,
is equal to the identity morphism of $V^{\otimes n}$.
\item[(symmetry)]$
F(\tau_{m,n})=\tau_{V^{\otimes m},V^{\otimes n}}
$.
Here $\tau_{m,n}:m\otimes n\rightarrow n\otimes m$ denotes the natural
twist isomorphism of $\mathcal{P}$.
And for any graded vector spaces $V$ and $W$,
$\tau_{V,W}$ is the isomorphism $\tau_{V,W}:V\otimes W\rightarrow W\otimes V$,
$v\otimes w\mapsto (-1)^{\vert v\vert\vert w\vert}w\otimes v$.
\item[(composition)]
$
F(g\circ f)=F(g)\circ F(f)
$
for $f\in \mathcal P(p,q)$ and $g\in \mathcal P(q,r)$.
\end{itemize}

By adjunction, this morphism determines evaluation products 
$$\mu:\mathcal P(p,q)\otimes V^{\otimes p}\longrightarrow V^{\otimes q}.$$

{\bf Remarks.} One can also notice that the normalized singular chain functor sends topological props to props in the category of differential graded modules. If the homology of a topological prop is also a prop, this is not always the case for algebras over props (because props also encode coproducts), one has to consider homology with coefficients in a field. This is the main reason, why in this
paper, we have chosen to work over an arbitrary field $\mathbb{F}$.

\subsection{Topological Quantum Field Theories}
Let $F_1$ and $F_2$ be two smooth closed oriented $n$-dimensional manifolds not necessarily path-connected.
\begin{defin}~\cite[p. 201]{Milnor-Stasheff}\label{oriented cobordism}
An {\it oriented cobordism} from $F_1$ to $F_2$ is a $n$-dimensional smooth compact oriented manifold $F$
not necessarily path-connected  with boundary $\partial F$, equipped with an orientation preserving
diffeomorphism $\varphi$ from the disjoint union $-F_1\coprod F_2$ to $\partial F$
(The orientation on $\partial F$ being the one induced by the orientation of $F$).
\end{defin}
We call {\it in-coming boundary} map $in:F_1\hookrightarrow F$ the composite of the restriction of $\varphi$ to $F_1$ and of
the inclusion map of $\partial F$ into $F$.
We call {\it out-coming boundary} map $out:F_2\hookrightarrow F$ the composite of the restriction of $\varphi$ to $F_2$ and of
the inclusion map of $\partial F$ into $F$.
\\
Let $F$ and $F'$ be two oriented  cobordisms from $F_1$ to $F_2$.
\begin{defin}~\cite[1.2.17]{Kock:Frob2TQFT}\label{cobordisms equivalents}
We say that $F$ and $F'$ are {\it equivalent} if there is an orientation preserving diffeomorphism $\phi$ from $F$ to $F'$
such that the following diagram commutes
$$\xymatrix{
& F\ar[dd]^{\phi}_\cong\\
F_1\ar[ru]^{in}\ar[rd]_{in}
&&F_2\ar[lu]_{out}\ar[ld]^{out}\\
&F'
}$$
\end{defin}
\begin{defin}~\cite[1.3.20]{Kock:Frob2TQFT}
The category of oriented cobordisms $n-Cob$ is the discrete category whose objets are smooth closed oriented,
 not necessarily path-connected, $n-1$-dimensional manifolds. The morphisms from $F_1$ to $F_2$ are the set
of equivalent classes of oriented cobordisms from $F_1$ to $F_2$.
Composition is given by gluing cobordisms.
Disjoint union gives $n$-cob a structure of symmetric monoidal category.
\end{defin}
\begin{defin}~\cite[1.3.32]{Kock:Frob2TQFT}
A $n$-dimensional Topological Quantum Field Theory ($n$-TQFT) as axiomatized by Atiyah is a symmetric monoidal
functor from the category of oriented cobordisms  $n$-Cob to the category of vector spaces.
\end{defin}
\subsection{The linear prop defining 2-TQFTs $\mathbb{F}[sk(2-Cob)]$}\label{prop des tqfts}
We now restrict to $n=2$, i. e. to $2$-TQFTs.
By~\cite[1.4.9]{Kock:Frob2TQFT}, the skeleton of the category $2-Cob$, $sk(2-Cob)$,
 is the full subcategory of  $2-Cob$ whose objects are disjoint union of circles $\coprod_{i=1}^n S^1$, $n\geq 0$.
Therefore $sk(2-Cob)$ is a discrete prop.
For any set $X$, denote by $\mathbb{F}[X]$ the free vector space with basis $X$.
By applying the functor  $\mathbb{F}[-]$ to $sk(2-Cob)$, we obtain a linear
prop $\mathbb{F}[sk(2-Cob)]$. And now a vector space $V$ is a $2$-Topological Quantum Field Theory if and only if $V$ is an algebra over this prop
$\mathbb{F}[sk(2-Cob)]$.

\subsection{Segal prop of Riemann surfaces $\mathfrak M$}(~\cite[Example 2.1.2]{Cohen-Hess-Voronov:stringtopacyclhom}, compare with~\cite[p. 24 and p. 207]{Markl-Shnider-Stasheff:opeatp})\label{la prop de Segal}
Let $p$ and $q\geq 0$.

A ``complex cobordism'' from the disjoint union $\coprod_{i=1}^p S^1$ to
  $\coprod_{i=1}^q S^1$ is a closed complex curve $F$, not necessarily
path connected equipped with two holomorphic embeddings of
disjoint union of closed disks into $F$,
$in:\coprod_{i=1}^p D^2\hookrightarrow F$
and $out:\coprod_{i=1}^q D^2\hookrightarrow F$.

Let $F$ and $F'$ be two ``complex cobordisms'' from $\coprod_{i=1}^p S^1$
to $\coprod_{i=1}^q S^1$.

We say that $F$ and $F'$ are {\it equivalent} if there is a biholomorphic map
$\phi$ from $F$ to $F'$
such that the following diagram commutes
$$\xymatrix{
& F\ar[dd]^{\phi}_\cong\\
\coprod_{i=1}^p D^2\ar[ru]^{in}\ar[rd]_{in}
&&\coprod_{i=1}^q  D^2\ar[lu]_{out}\ar[ld]^{out}\\
&F'
}$$
The Segal prop $\mathfrak M$ is the topological category
whose objects are disjoint union of circles $\coprod_{i=1}^n S^1$, $n\geq 0$,
identified with non-negative numbers.
The set of morphisms from $p$ to $q$, denoted $\mathfrak M(p,q)$, is
the set of equivalent classes of ``complex cobordisms'' from $\coprod_{i=1}^p S^1$
to $\coprod_{i=1}^q S^1$. The moduli space $\mathfrak M(p,q)$ is equipped
with a topology difficult to define~\cite[p. 207]{Markl-Shnider-Stasheff:opeatp}.

By applying the singular homology functor with coefficients in a field, $H_*(-)$ to the Segal topological prop
 $\mathfrak M$, one gets a graded linear prop $H_*(\mathfrak M)$.
Explicitly 
$$H_*(\mathfrak M)(p,q):=H_*(\mathfrak M(p,q)).$$
\begin{defin}~\cite[3.1.2]{Cohen-Hess-Voronov:stringtopacyclhom}\label{unital counital HCFT}
A graded vector space $V$ is a {\it (unital counital) homological conformal field theory} or HCFT for short if
$V$ is an algebra over the graded linear prop $H_*(\mathfrak M)$.
\end{defin}
\subsection{The props isomorphism $\pi_0(\mathfrak{M})\cong sk(2-Cob)$}\label{composantes connexes de la Segal prop}
Let
$$\coprod_{i=1}^p D^2\buildrel{in}\over\hookrightarrow F\buildrel{out}\over\hookleftarrow\coprod_{i=1}^q D^2$$ be a ``complex cobordism'' from $\coprod_{i=1}^p S^1$
to $\coprod_{i=1}^q S^1$.
By forgetting the complex structure and removing the interior of the $p+q$ disks,
we obtain an oriented cobordism $F-\coprod_{i=1}^{p+q} Int D^2$ from  $-\coprod_{i=1}^p S^1$
to $\coprod_{i=1}^q S^1$.
Indeed the restrictions of $in$ to  $\coprod_{i=1}^p S^1$, $in_{|\coprod_{p} S^1}$,  and the restriction of $out$ to
$\coprod_{i=1}^p S^1$, $out_{|\coprod_{q} S^1}$, are orientation preserving diffeomorphims.
By composing $in_{|\coprod_{p} S^1}$ with a reversing orientation diffeomorphism, $F-\coprod_{i=1}^{p+q} Int D^2$ becomes an  oriented cobordism from  $\coprod_{i=1}^p S^1$
to $\coprod_{i=1}^q S^1$.
Therefore, we have defined a morphism of props
$Forget:\mathfrak{M}\rightarrow sk(2-Cob)$.

The topology on the moduli spaces $\mathfrak{M}(p,q)$ is defined such that
this morphism of props $Forget$ is continuous and can be identified with the canonical
surjective morphism of topological props
$\mathfrak{M}\twoheadrightarrow\pi_0(\mathfrak{M})$ from the Segap prop
to the discrete prop obtained by taking its path components.

\subsection{Tillmann prop.}\label{Tillmann prop}
 Following U. Tillman's topological approach to the study of the Moduli spaces of complex curves we introduce a topological prop
$\propBD$ homotopy equivalent to a sub prop of Segal prop of Riemann surfaces $\mathfrak M$.

For $p$ and $q$, consider the groupoid $\mathcal{E}(p,q)$~\cite[p. 69]{thesedewahl}. An object of $\mathcal{E}(p,q)$ is an oriented cobordism $F$ from $\amalg_{i=1}^p S^1$ to $\amalg_{i=1}^p S^1$
(Definition~\ref{oriented cobordism}).
The set of morphisms from  $F_1$ to $F_2$, is
$$\text{Hom}_{\mathcal{E}(p,q)}(F_1,F_2):=\pi_0 Diff^+(F_1;F_2; \partial)$$
the connected components of orientation preserving diffeomorphisms $\phi$
that fix the boundaries of $F_1$ and $F_2$ pointwise:
$\phi\circ in=in$ and $\phi\circ out=out$.
Remark that if $F_1=F_2$ is a connected surface $F_{g,p+q}$,
then $\text{Hom}_{\mathcal{E}(p,q)}(F_{g,p+q},F_{g,p+q})$
is the mapping class group
$$
\Gamma_{g,p+q}:=\pi_0 Diff^+(F_{g,p+q}; \partial).
$$

 In \cite{Tillmann:htpysmcg} U. Tillman studies the homotopy type of a surface $2$-category
$\deuxcat$.
Roughly speaking, the objects of $\deuxcat$ are natural numbers representing circles.
The enriched set of
morphisms from $p$ to $q$ is the category $\mathcal{E}(p,q)$.
The composition in  $\deuxcat$ is the functor induced by gluing
  $\mathcal{E}(q,r)\times\mathcal{E}(p,q)\rightarrow\mathcal{E}(p,r)$.
U. Tillmann's surface category  $\deuxcat$
has the virtue to be a symmetric strict monoidal 2-category where the tensor product is given by disjoint union of cobordisms.

Let $\mathcal{C}$ be a small category.
The nerve of $\mathcal{C}$ is a simplicial set $N(\mathcal{C})$.
The classifying space of $\mathcal{C}$, $B(\mathcal{C})$, is by definition
the geometric realization of this simplicial set, $N(\mathcal{C})$.
Applying the classifying space functor to $\deuxcat$,
one gets a topological symmetric (strict) monoidal category.
We thus have a topological prop $\mathcal B\deuxcat$.
By definition, $\mathcal B\deuxcat(p,q):=B(\mathcal{E}(p,q))$. 

These categories and their higher dimensional analogues have been studied extensively because of their fundamental relationship with conformal field theories and Mumford's conjecture~\cite{galatiusmadsentillmannweiss,Madsen-Tillmann:stablemapping}.

Recall that if $\mathcal{C}$ is a groupoid, then $\pi_0(B\mathcal{C})$
is the set of isomorphisms classes of $\mathcal{C}$.
Therefore the skeleton of the category of oriented cobordisms, $sk(2-Cob(p,q))$ (Section~\ref{prop des tqfts}),
is exactly the discrete prop obtained by taking the path-components
of the topological prop $\mathcal B\deuxcat$ (Compare with Section~\ref{composantes connexes de la Segal prop}).
In particular,
$$sk(2-Cob(p,q)):=\pi_0(\mathcal B\deuxcat(p,q))=\pi_0(B(\mathcal{E}(p,q)).$$
By considering the skeleton of a groupoid $\mathcal{C}$,
we have the homotopy equivalence
$$
\coprod_x B(\text{Hom}(x,x))
\buildrel{\thickapprox}\over\rightarrow B\mathcal{C}.
$$
where the disjoint union is taken over a set of representatives $x$
of isomorphism classes in $\mathcal{C}$
and  $B(\text{Hom}(x,x))$ is the classifying space of the discrete group
$\text{Hom}(x,x)$~\cite[5.10]{Dwyer-Henn:barcelone}.
Therefore the morphism spaces $B\deuxcat(p,q)$ have a connected component
for each oriented cobordism class $F$ (Definition~\ref{cobordisms equivalents}).
The connected component corresponding to $F$ has the homotopy type
of $B(\pi_0(Diff^+(F,\partial)))$~\cite[p. 264]{Tillmann:htpysmcg}.

The cobordism $F_{g,p+q}$ is the disjoint union of its path components:
$$
F_{g,p+q}\cong F_{g_1,p_1+q_1}\amalg\dots\amalg F_{g_k,p_k+q_k}.
$$
Here $F_{g_i,p_i+q_i}$ denotes a surface of genus $g_i$ with $p_i$ incoming and $q_i$ outgoing circles target.
We have $g=\sum_i g_i$, $p=\sum_i p_i$ and $q=\sum_i q_i$.
We suppose that each path component $F_{g_i,p_i+q_i}$ has at least
one boundary component.
That is, we suppose that $\forall 1\leq i\leq k$, $p_i+q_i\geq 1$.
Since a diffeomorphism fixing the boundaries pointwise cannot exchange
the path-components of $F$, we have the isomorphism of topological groups
$$
Diff^+(F; \partial)\cong
Diff^+(F_{g_1,p_1+q_1};\partial)\times\dots\times
Diff^+(F_{g_k,p_k+q_k}; \partial)
$$
Again since $p_i+q_i\geq 1$, by~\cite{Earle-Schatz},
the canonical surjection from
$$Diff^+(F_{g_i,p_i+q_i};\partial)
\buildrel{\thickapprox}\over\longrightarrow
\Gamma_{g_i,p_i+q_i}:=\pi_0 Diff^+(F_{g_i,p_i+q_i}; \partial).
$$
with the discrete topology is a homotopy equivalence.
Therefore the canonical surjection:
$$Diff^+(F;\partial)
\buildrel{\thickapprox}\over\longrightarrow
\pi_0 Diff^+(F; \partial)
$$
is also a homotopy equivalence: Earle and Schatz result~\cite{Earle-Schatz} extends to
a non-connected surface $F$ if each component has at least one boundary
component.
Therefore we have partially recovered the following proposition:

\begin{proposition}~\cite[3.2]{TillmannICM02}\label{equivalence des trois props}
For $p\in\mathbb{Z}^+$ and $q\in\mathbb{Z}^+$,
define the collection of topological spaces
$$\propBD(p,q):=\coprod_{F_{p+q}} BDiff^+(F; \partial).$$
Here the disjoint union is taken over a set of representatives $F_{p+q}$
of the oriented cobordism classes from $\amalg_{i=1}^p S^1$ to
$\amalg_{i=1}^q S^1$ (Definition~\ref{cobordisms equivalents}).
This collection $\propBD$
of spaces forms a topological prop up to homotopy (it is a prop in the homotopy category of spaces).
If we consider only cobordisms $F$ whose path components  have
at least one outgoing-boundary component, i. e. $q_i\geq 1$,
(this is the technical condition of~\cite[p. 263]{Tillmann:htpysmcg}),
the resulting three sub topological props of $\propBD$, Tillmann's prop $B\deuxcat$ and
Segal prop $\mathfrak{M}$ are all homotopy equivalent.
\end{proposition}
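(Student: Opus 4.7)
The strategy is to establish each of the three equivalences by combining the groupoid-theoretic decomposition of $B\deuxcat$, the classical identification of moduli of Riemann surfaces as classifying spaces of mapping class groups, and the Earle--Schatz contractibility of identity components of diffeomorphism groups of bordered surfaces. The prop structure on $\propBD$ will be transported from $B\deuxcat$ along the resulting weak equivalence, and one checks that the three equivalences are compatible with vertical gluing and horizontal disjoint union.

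For the identification $B\deuxcat(p,q)\simeq \propBD(p,q)$ I would assemble pieces already in the preceding pages. The groupoid skeleton decomposition
$$
\coprod_x B\,\mathrm{Hom}_{\mathcal{C}}(x,x)\xrightarrow{\;\simeq\;} B\mathcal{C},
$$
applied to $\mathcal{C}=\mathcal{E}(p,q)$, whose isomorphism classes are exactly oriented cobordism classes $F=F_{p+q}$ and whose automorphism groups are the mapping class groups $\pi_0 Diff^+(F,\partial)$, gives a weak equivalence with the coproduct of the spaces $B(\pi_0 Diff^+(F,\partial))$. The Earle--Schatz theorem, valid under the hypothesis that every path component of $F$ has at least one boundary circle, then provides $Diff^+(F,\partial)\xrightarrow{\simeq}\pi_0 Diff^+(F,\partial)$ with the discrete topology, hence $BDiff^+(F,\partial)\simeq B(\pi_0 Diff^+(F,\partial))$. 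Concatenating gives the desired equivalence.

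For the identification $\mathfrak{M}(p,q)\simeq \propBD(p,q)$ I would use that $\mathfrak{M}(p,q)$ is the moduli space of closed complex curves equipped with $p+q$ disjoint holomorphic parametrized disks. Its connected components are indexed by the underlying oriented cobordism classes $F$, and each component is the quotient of the Teichm\"uller space $\mathcal{T}(F)$ by the mapping class group $\Gamma_{g,p+q}$. Under the stability hypothesis $q_i\geq 1$ (more generally, under $p_i+q_i\geq 1$ for each component), $\mathcal{T}(F)$ is contractible, so each component is a $K(\Gamma_{g,p+q},1)$, and Earle--Schatz identifies this with the homotopy type of $BDiff^+(F,\partial)$. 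This yields the component-by-component homotopy equivalence $\mathfrak{M}(p,q)\simeq \propBD(p,q)$.

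The main obstacle, in my view, is promoting the two weak equivalences above to morphisms of topological props up to homotopy, which requires compatibility with horizontal composition (disjoint union) and vertical composition (gluing along matching boundary circles). Horizontal compatibility is straightforward since disjoint union commutes with classifying-space and moduli constructions. Vertical composition must be defined on $\propBD$ via a choice of collar neighborhoods, on $B\deuxcat$ via the categorical gluing functor $\mathcal{E}(q,r)\times \mathcal{E}(p,q)\to \mathcal{E}(p,r)$, and on $\mathfrak{M}$ via biholomorphic gluing along the parametrized disks. The technical heart is then to check that these three gluing operations induce the same map after passing to the equivalent models, which reduces to showing that gluing descends coherently from the level of diffeomorphism groups, respectively biholomorphism groupoids, to their classifying spaces compatibly with the Earle--Schatz and Teichm\"uller-contractibility equivalences.
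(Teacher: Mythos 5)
Your first equivalence coincides with the paper's own argument: the paper obtains $B\deuxcat(p,q)\simeq\propBD(p,q)$ by exactly the two steps you describe, namely the skeleton decomposition $\coprod_x B\,\mathrm{Hom}(x,x)\buildrel{\thickapprox}\over\rightarrow B\mathcal{E}(p,q)$ (isomorphism classes are oriented cobordism classes, automorphism groups are the mapping class groups $\pi_0 Diff^+(F;\partial)$), followed by Earle--Schatz~\cite{Earle-Schatz}, extended componentwise to disconnected $F$ using $p_i+q_i\geq 1$. Where you go further is precisely where the paper stops: the paper states that it has only ``partially recovered'' the proposition and delegates the rest --- the fact that $\propBD$ is a prop up to homotopy, and the comparison with the Segal prop $\mathfrak{M}$ --- to the citation \cite[3.2]{TillmannICM02}. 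Your Teichm\"uller-theoretic sketch of $\mathfrak{M}(p,q)\simeq\propBD(p,q)$ (components indexed by cobordism type, each the quotient of a contractible Teichm\"uller space by a free mapping class group action because the parametrized disks kill automorphisms, hence a $K(\Gamma_{g,p+q},1)$, then Earle--Schatz) is the standard route and is essentially what the cited sources do. Two small corrections there: contractibility of Teichm\"uller space does not require $q_i\geq 1$; boundary is needed for the freeness of the action and for Earle--Schatz, and the condition $q_i\geq 1$ in the statement is Tillmann's technical condition from~\cite[p. 263]{Tillmann:htpysmcg}, needed so that gluing is defined in her surface category, not for the homotopy equivalences themselves.

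Be aware, though, that your closing paragraph only names the decisive check --- that the three gluing and disjoint-union structures agree under these equivalences --- without carrying it out. That coherence statement is exactly the content the paper imports from \cite[3.2]{TillmannICM02}, so as written your proposal reproduces the paper's partial recovery plus a plausible sketch of the moduli comparison, rather than a self-contained proof of the full proposition; a complete argument would have to verify, e.g.\ along the lines of Tillmann's construction, that categorical gluing in $\deuxcat$, holomorphic gluing in $\mathfrak{M}$, and the collar-gluing on $\propBD$ are intertwined up to homotopy by the skeleton and Teichm\"uller equivalences.
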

\subsection{Non-unital and non-counital homological conformal field theory}\label{unital counital ou pas}
\begin{defin}(Compare with definition~\ref{unital counital HCFT})
A graded vector space $V$ is a {\it unital non-counital} homological conformal field theory if
$V$ is an algebra over the graded linear prop obtained by applying singular homology
to one of the three sub topological props defined in the previous Proposition.
\end{defin}
In~\cite{Godin:higherstring}, Godin uses the term homological conformal field theory {\it with positive boundary}
instead of unital non-counital.

If instead, we consider only cobordisms $F$ whose  path components  have
at least one in-boundary component, i. e. $p_i\geq 1$, we will say that we have a
 {\it counital non-unital} homological conformal field theory.

In this paper, we will deal mainly with  {\it non-unital non-counital}
homological conformal field theory: this is when we consider only cobordisms $F$
whose path components  have at least
one in-boundary component and also at least one outgoing-boundary component,
i. e. $\forall 1\leq i\leq k$, $p_i\geq 1$ and $q_i\geq 1$.

\section{Definition of the operations}\label{Definition des evaluation products}

The goal of this section is to define the evaluation products of the propic action of the homology of the Segal prop.

\subsection{The definition assuming Propositions~\ref{cofibre de in}, \ref{action du pi1 sur in est trivial} 
and Theorem~\ref{action du mapping class group est trivial}}
Let $F_{g,p+q}$ be an oriented cobordism from $\amalg_{i=1}^p S^1$ to
  $\amalg_{i=1}^q S^1$.
Let $k$ be its number of path components and let $g$ be the sum of the genera of its path
components.
Let $$\chi(F)=2k-2g-p-q$$ 
be its Euler characteristic.
Let $Diff^+(F;\partial)$ be the group of orientation preserving diffeomorphisms
that fix the boundaries pointwise.

\begin{defin}\label{definition evaluation product}
Let $X$ be a simply-connected space such that its pointed loop homology $H_*(\Omega X)$
is a finite dimensional vector space.
Denote by $d$ the top degree such that $H_d(\Omega X)$ is not zero.

Suppose that every path component of  $F_{g,p+q}$ has at least one in-boundary component
and at least one outgoing-boundary component.
Then we can define the {\it evaluation product} associated to $F_{g,p+q}$.
$$
\mu(F):H_*(BDiff^+(F;\partial))\otimes H_*(\mathcal{L}X)^{\otimes p}\rightarrow H_*(\mathcal{L}X)^{\otimes q}.
$$  
It is a linear map of degre $-d\chi(F)$.
\end{defin}
\begin{proof}[Proof of Definition~\ref{definition evaluation product}]
Let $F_{g,p+q}$ be an oriented cobordism (not necessarily path-connected) with $p+q$ boundary components equipped with a given ingoing map
$$in:\partial_{in} F_{g,p+q}=\amalg_{i=1}^p S^1\hookrightarrow F_{g,p+q}$$
and an outgoing map 
$$out:\partial_{out}F_{g,p+q}=\amalg_{i=1}^q S^1\hookrightarrow F_{g,p+q}.$$
These two maps are cofibrations.
The cobordism $F_{g,p+q}$ is the disjoint union of its path components:
$$
F_{g,p+q}\cong F_{g_1,p_1+q_1}\amalg\dots\amalg F_{g_k,p_k+q_k}.
$$
Recall that we suppose
that $\forall 1\leq i\leq k$, $p_i\geq 1$ and $q_i\geq 1$.
Therefore, by Proposition~\ref{cofibre de in}, one obtain that the cofibre of $in$, $F/\partial_{in} F$,
is homotopy equivalent to the wedge
$\vee_{-\chi(F)} S^1$ of $-\chi(F)=2g+p+q-2k$ circles.
When we apply the mapping space $map(-,X)$ to the cofibrations $in$ and $out$, one gets the two fibrations
$$map(in,X):map(F_{g,p+q},X)\twoheadrightarrow map(\partial_{in}F_{g,p+q},X)\cong \mathcal LX^{\times p}$$
and
$$map(out,X):map(F_{g,p+q},X)\twoheadrightarrow map(\partial_{out}F_{g,p+q},X)\cong \mathcal LX^{\times q}.$$
The fiber of the continuous map $map(in,X)$ is the pointed mapping space $map_*(F/\partial_{in} F,X)$
and is therefore homotopy equivalent to the product of pointed loop spaces $\Omega X^{-\chi(F)}$.

Since $H_*(\Omega X)$ is a Hopf algebra and is finite dimensional, by~\cite[Proof of Corollary 5.1.6 2)]{Sweedler:livre}, $H_*(\Omega X)$ is a Frobenius algebra: i. e. there exists an isomorphism 
 $$H_*(\Omega X)\buildrel{\cong}\over\rightarrow H_*(\Omega X)^\vee\cong  H^*(\Omega X)$$
of left $H_*(\Omega X)$-modules.
Since $H_*(\Omega X)$ is concentrated in degre between $0$ and $d$, and $H_0(\Omega X)$ and  $H_d(\Omega X)$
are not trivial vector spaces, this isomorphism is of lower degre $-d$:
 $$H_p(\Omega X)\buildrel{\cong}\over\rightarrow H_{d-p}(\Omega X)^\vee\cong  H^{d-p}(\Omega X).$$
In particular, since $X$ is simply connected,
$H_d(\Omega X)\cong  H_0(\Omega X)^\vee\cong  H^0(\Omega X)\cong\mathbb{F}$
is of dimension $1$.

Therefore the homology of the fibre of $map(in,X)$ is concentrated in  degre less or equal than $-d\chi(F)$
and $H_{-d\chi(F)}(map_*(F/\partial_{in} F,X))\cong\mathbb{F}$ is also of dimension $1$.

By Proposition~\ref{action du pi1 sur in est trivial}, we have that in fact
$map(in,X):map(F_{g,p+q},X)\twoheadrightarrow \mathcal LX^{\times p}$
is an oriented fibration. 

We set $D_{g,p+q}:=Diff^+(F_{g,p+q},\partial)$.
We also denote the Borel construction
$$\mathcal M_{g,p+q}(X):=(map(F_{g,p+q},X))_{hD_{g,p+q}}.$$
Applying the Borel construction $(-)_{hD_{g,p+q}}$ to the
fibrations $map(in,X)$ and $map(out,X)$ yields the following two fibrations
$$\rho_{in}:=map(in,X)_{hD_{g,p+q}}:\mathcal M_{g,p+q}(X){\longrightarrow}BD_{g,p+q}\times\mathcal LX^{\times p}.$$
     $$\rho_{out}:\mathcal M_{g,p+q}(X)\stackrel{map(out,X)_{hD_{g,p+q}}}{\longrightarrow}BD_{g,p+q}\times\mathcal LX^{\times q}
\buildrel{proj_2}\over\longrightarrow LX^{\times q}.$$
Here $proj_2$ is the projection on the second factor.

By Theorem~\ref{action du mapping class group est trivial}, $\pi_0(D_{g,p+q})$ acts trivially on
$H_{-d\chi(F)}(map_*(F/\partial_{in} F,X))$.
Under this condition, the Borel construction $(-)_{hD_{g,p+q}}$
preserves oriented (Serre) fibration.
Therefore $\rho_{in}$ is also an oriented fibration with fibre  $map_*(F/\partial_{in} F,X)$.
After choosing an orientation class (Proposition~\ref{choix classe orientation})
$$\omega_{F}\in H_{-d\chi(F)}(map_*(F/\partial_{in} F,X)),$$
we have a well defined integration along the fiber map for $\rho_{int}$:
$$\rho_{in}!:H_*(BD_{g,p+q}\times \mathcal LX^{\times p})
\longrightarrow H_{*-d\chi_F}(\mathcal M_{q,p+p}(X)).$$
By composing with $H_*(\rho_{out}):H_*(\mathcal M_{g,p+q}(X)){\longrightarrow}H_*(\mathcal LX^{\times q})$, one gets a map
$$\mu(F_{g,p+q}):H_l(BD_{g,p+q})\otimes H_{m_1}({\mathcal LX})\otimes\cdots
\otimes H_{m_p}(\mathcal LX)\rightarrow H_{l+m_1+\cdots+m_p-d\chi_F}({\mathcal LX}^{\times q}).$$
As we restrict ourself either
to homology with coefficients in a field, one
finally gets an evaluation product of degree $-d\chi_F=d(2g+p+q-2k)$
\\
\begin{center}
\begin{tabular}{|c|}
\hline 
$\mu(F_{g,p+q}):H_*(BD_{g,p+q})\otimes H_*({\mathcal LX})^{\otimes p}
\rightarrow H_*({\mathcal LX})^{\otimes q}$\tabularnewline
\hline
\end{tabular}
\end{center}
\end{proof}

\subsection{Orientability of the fibration $map(in,X):map(F,X)\twoheadrightarrow\mathcal{L}X^{\times p}$}

\begin{proposition}\label{cofibre de in}
Let $F_{g,p+q}$ be the path-connected compact oriented surface of genus $g$ with $p$
incoming boundary circles and $q$ outgoing boundary circles.
Denote by $F/\partial_{in} F$, the cofibre of
$$in:\partial_{in} F_{g,p+q}=\amalg_{i=1}^p S^1\hookrightarrow F_{g,p+q}.$$
If $p\geq 1$ and $q\geq 1$ then  $F/\partial_{in} F$ is homotopy equivalent to the wedge
$\vee_{-\chi(F)} S^1$ of $-\chi(F)=2g+p+q-2$ circles.
\end{proposition}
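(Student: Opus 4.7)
The plan is to show that $F:=F_{g,p+q}$ strong deformation retracts onto a $1$-dimensional CW subcomplex $G$ that contains $\partial_{in}F$ as a subcomplex. Once this is established, $F/\partial_{in}F\simeq G/\partial_{in}F$ is a finite graph, and the conclusion follows from an Euler characteristic computation. The assumption $q\geq 1$ enters crucially to ensure that such a spine $G$ containing $\partial_{in}F$ exists.

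The core geometric step is the construction of the spine. Since $F$ is a connected $2$-dimensional cobordism from $\partial_{in}F$ to $\partial_{out}F$ with both sets of boundary circles non-empty ($p\geq 1$, $q\geq 1$), I would use Morse theory (or equivalently a handle decomposition) relative to $\partial_{in}F$. A standard Smale-type handle cancellation argument for a $2$-cobordism between non-empty $1$-manifolds produces a relative handle decomposition of $F$ with neither $0$-handles (creating spurious components) nor $2$-handles (these would be cancellable since $\partial_{out}F\neq\emptyset$, i.e., become $0$-handles after turning the cobordism upside down). One then sets $G$ to be the union of $\partial_{in}F$ with the cores of the remaining $1$-handles; this is a $1$-dimensional CW subcomplex containing $\partial_{in}F$, and the standard deformation retraction of each $1$-handle onto its core, combined with the retraction of the collar of $\partial_{in}F$, provides a strong deformation retraction of $F$ onto $G$. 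An equivalent argument proceeds by collapse theory: choose a triangulation of $F$ with $\partial F$ as a subcomplex, then iteratively perform elementary collapses of $2$-simplices through free edges lying in $\partial_{out}F$ (or newly freed by prior collapses), never touching $\partial_{in}F$; the hypothesis $q\geq 1$ ensures the process is non-empty and terminates with a $1$-dimensional subcomplex.

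With the spine $G\supseteq \partial_{in}F$ in hand, the deformation retraction descends to a homotopy equivalence $F/\partial_{in}F\simeq G/\partial_{in}F$. Since $F$ is connected, so is $G$, and since $p\geq 1$ (so $\partial_{in}F\neq\emptyset$), the quotient $G/\partial_{in}F$ is a finite connected $1$-dimensional CW complex. Any such complex is homotopy equivalent to a wedge $\vee_n S^1$ with $n=1-\chi$.

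It remains to compute the Euler characteristic. From $\chi(G/\partial_{in}F)=\chi(G)-\chi(\partial_{in}F)+1=\chi(F)-0+1=(2-2g-p-q)+1$, one gets $n=1-\chi(G/\partial_{in}F)=2g+p+q-2=-\chi(F)$, as required. The main obstacle is the first step: the honest verification that $q\geq 1$ allows the spine to be chosen containing all of $\partial_{in}F$ (the homology computation via the long exact sequence of the pair $(F,\partial_{in}F)$ confirms the expected answer $H_1(F,\partial_{in}F)\cong \mathbb{Z}^{-\chi(F)}$ and $H_2(F,\partial_{in}F)=0$, but homology alone does not rule out twisted $2$-cells in the cofibre — one genuinely needs the geometric collapse).
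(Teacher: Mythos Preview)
Your argument is correct. The handle-theoretic construction of a spine $G\supset\partial_{in}F$ using only $1$-handles (possible precisely because $p\geq 1$ kills $0$-handles and $q\geq 1$ kills $2$-handles after dualising) is standard, and once you have it the quotient $G/\partial_{in}F$ is a connected finite graph whose Euler characteristic you compute correctly.

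The paper takes a different and somewhat more elementary route that avoids relative handle theory. It first fills the $p$ incoming circles by disks, obtaining a pushout identifying $F_{g,p+q}/\partial_{in}F$ with $F_{g,0+q}/(\coprod_p D^2)$; the latter is homotopy equivalent to the mapping cone of $p$ points into the surface $F_{g,0+q}$. Since $q\geq 1$, the surface $F_{g,0+q}$ still has boundary and therefore deformation retracts onto a connected graph by the classical polygon description; coning off $p$ points in a graph yields another graph. The homology computation (in particular $H_2(F,\partial_{in}F)=0$) is done separately via Poincar\'e--Lefschetz duality $H_2(F,\partial_{in}F)\cong H^0(F,\partial_{out}F)$, which again uses $q\geq 1$. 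Your approach packages both the homotopy type and the rank computation into the single spine construction, at the cost of invoking handle cancellation; the paper's approach trades that for the disk-filling trick and a separate duality argument. Both are clean; yours is perhaps more geometric, the paper's more combinatorial.
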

\begin{proof}
We first show that  $F/\partial_{in} F$ and  the wedge of $-\chi(F)$ circles have the same
homology.
If  $p\geq 1$ then $\tilde{H}_0(F/\partial_{in} F)=\{0\}$.
By excision and Poincar\'e duality~\cite[Theorem 3.43]{Hatcher:algtop}, if $q\geq 1$,

 $$\tilde{H}_2(F/\partial_{in} F)\cong H_2(F,\partial_{in} F)\cong H^0(F,\partial_{out} F)\cong
\tilde{H}^0(F/\partial_{out} F)=\{0\}.$$
Using the long exact sequence associated to the pair $(\partial_{in} F, F)$, we obtain
that $\chi(H_*(F))=\chi(H_*(\partial_{in} F))+\chi(H_*(F,\partial_{in} F))$.
Therefore the Euler characteristics  $\chi(H_*(F))$ of  $H_*(F)$ is equal to
the Euler characteristics  $\chi(H_*(F,\partial_{in} F))$ of  $H_*(F,\partial_{in} F)$.
Therefore $\tilde{H}_1(F/\partial_{in} F)=\tilde{H}_*(F/\partial_{in} F)$ is of dimension
$-\chi(F)$.

We show that $F/\partial_{in} F$ is homotopy equivalent to a wedge of circles.
The surface $F_{g,n-1}$, $n\geq 1$, can be constructed from a full polygon with
$4g+n-1$ sides by identifying pairs of the $4g$ edges~\cite[p. 5]{Hatcher:algtop}.
Therefore the surface with one more boundary component, $F_{g,n}$, $n\geq 1$, is the mapping cylinder
of the attaching map from $S^1$ to $F_{g,n-1}^{(1)}$, the $1$-skeleton of $F_{g,n-1}$ (in the case $n=1$, see~\cite[Example 1B.14]{Hatcher:algtop}). Therefore any surface with boundary deformation retracts onto a connected graph~\cite[Example 1B.2]{Hatcher:algtop}.

By filling the $p$ incoming circles $S^1$ by $p$ incoming disks $D^2$, we have the push-out
$$\xymatrix{
\coprod_{i=1}^p S^1 \ar^{in}[r]\ar[d]
& F_{g,p+q}\ar[d]\\
\coprod_{i=1}^p D^2\ar[r] & F_{g,0+q}.
}
$$
So we obtain that $$\frac{F_{g,p+q}}{\coprod_{i=1}^p S^1}\cong \frac{F_{g,0+q}}{\coprod_{i=1}^p D^2}$$
is homotopy equivalent to the mapping cone of an application from $p$ distinct points to a path-connected graph.
Therefore $\frac{F_{g,p+q}}{\coprod_{i=1}^p S^1}$ is homotopy equivalent to a path-connected graph and therefore
to a wedge of circles~\cite[Example 0.7]{Hatcher:algtop}.
\end{proof}
Now we prove that the fibration is oriented by studying the action of the fundamental group of the basis on the homology of the fiber.
\begin{proposition}\label{action du pi1 sur in est trivial}
Let $F_{g,p+q}$ be a path-connected cobordism from
$\coprod_{i=1}^p S^1$ to $\coprod_{i=1}^q S^1$.
Let $X$ be a simply connected space such that $H_*(\Omega X,\mathbb F)$ is finite dimensional.
If $p\geq 1$ and $q\geq 1$ then the fibration obtained by restriction to the in-boundary
components 
$$map(in,X):map(F_{g,p+q},X)\twoheadrightarrow \mathcal LX^{\times p}$$ 
is $H_*(-,\mathbb F)$-oriented.
\end{proposition}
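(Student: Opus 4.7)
My plan is to verify the two conditions defining an $H_*(-,\mathbb{F})$-oriented fibration: the fiber should have one-dimensional top homology concentrated in a single top degree, and the monodromy action of $\pi_1$ of the base on this top class should be trivial. For the first condition, I would combine Proposition~\ref{cofibre de in} with the Hopf algebra structure of $H_*(\Omega X; \mathbb{F})$: the equivalence $F/\partial_{in}F \simeq \vee_{-\chi(F)} S^1$ identifies the fiber $map_*(F/\partial_{in}F, X)$ with $(\Omega X)^{-\chi(F)}$; since $H_*(\Omega X; \mathbb{F})$ is a finite-dimensional cocommutative connected Hopf algebra, it is a Frobenius algebra concentrated in degrees $0$ to $d$ with $H_d(\Omega X) \cong \mathbb{F}$, so the top homology $H_{-d\chi(F)}(map_*(F/\partial_{in}F, X)) \cong \mathbb{F}$ is one-dimensional.

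For the monodromy triviality, I would first reduce to loops in a single $\mathcal{L}X$ factor. Since $X$ is simply connected, the evaluation fibration $\Omega X \to \mathcal{L}X \to X$ admits a section via constant loops; its long exact sequence then gives $\pi_1(\mathcal{L}X) \cong \pi_1(\Omega X) = \pi_2(X)$, and $\pi_1(\mathcal{L}X^{\times p}) \cong \pi_2(X)^{p}$. By freezing all but one coordinate to the constant loop at the basepoint, naturality reduces the problem to showing that a single loop $\alpha$ in $\mathcal{L}X$, arising from a class in $\pi_2(X)$, acts trivially on the top homology of the fiber.

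Next I would identify the monodromy explicitly. The loop $\alpha$ is based at a constant loop and is adjoint to a pointed map $S^1 \wedge S^1 \to X$. Choosing a collar neighborhood of the designated incoming boundary circle inside $F$, I would lift $\alpha$ to a homotopy $F \times I \to X$ that is constant outside the collar, realizing the monodromy self-map of the fiber $(\Omega X)^{-\chi(F)}$ (up to homotopy) as left translation by a specific element $h \in \Omega X$ on one factor and the identity on the others.

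Finally I would invoke the H-space argument: since $X$ is simply connected, $\Omega X$ is a path-connected H-space, and left translation $L_h: \Omega X \to \Omega X$ is homotopic to the identity via $(x, t) \mapsto \gamma(t) \cdot x$ for any path $\gamma$ from the basepoint to $h$. Hence the monodromy is homotopic to the identity on the fiber and acts trivially on all of its homology, in particular on the top class. The main obstacle is the concrete identification of the monodromy as such a translation in the third step; this requires careful path lifting for the cofibration $in$ together with a choice of homotopy equivalence $F/\partial_{in}F \simeq \vee_{-\chi(F)} S^1$ compatible with the collar decomposition.
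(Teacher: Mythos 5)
Your strategy is essentially correct, but it is a genuinely different route from the paper's. The paper does not compute any monodromy: it replaces $F_{g,p+q}$ by a Sullivan chord diagram $c_{g,p+q}$ and exhibits $map(in,X)$, up to the homotopy equivalence $map(F_{g,p+q},X)\simeq map(c_{g,p+q},X)$, as the pull-back of the fibration $\prod_{v\in\sigma(c)}map(v,X)\twoheadrightarrow X^{\# v(c)}$ over the \emph{simply connected} base $X^{\# v(c)}$ (diagram~(\ref{pull-back diagramme de cordes})); orientability then follows because orientation is preserved by pull-backs and homotopy equivalences. Your plan instead identifies $\pi_1(\mathcal{L}X^{\times p})\cong\pi_2(X)^p$ via the section by constant loops and analyses the holonomy directly by lifting through a boundary collar, concluding via the $H$-space structure of the path-connected space $\Omega X$ that the monodromy is homotopic to the identity. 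What your approach buys is an explicit, self-contained description of the holonomy (and in fact triviality of its action on \emph{all} of $H_*$ of the fiber, which the paper's pull-back argument also yields implicitly); what the paper's approach buys is that it sidesteps any holonomy computation and reuses the same chord-diagram pull-back later (finiteness of fibers, Proposition~\ref{toutes les fibres homotopes}). Your first paragraph (top homology of the fiber is $\mathbb{F}$ in degree $-d\chi(F)$ via Proposition~\ref{cofibre de in} and the Frobenius/Hopf argument) matches what the paper does in the proof of Definition~\ref{definition evaluation product}.

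One correction is needed at the step you yourself flag as the main obstacle. The holonomy of a loop $\alpha$ in one $\mathcal{L}X$-factor is \emph{not} left translation by a single element on a single $\Omega X$-factor. After choosing a homotopy equivalence $\vee_{-\chi(F)}S^1\simeq F/\partial_{in}F$, every wedge circle passes through the collapsed incoming boundary, so the collar-lift of $\alpha$ modifies in general \emph{every} coordinate, and it does so by a two-sided translation: the $i$-th coordinate $g_i$ is replaced, up to homotopy, by $h_{a_0(i)}^{-1}\cdot g_i\cdot h_{a_1(i)}$, where $a_0(i)$, $a_1(i)$ are the points where the $i$-th wedge circle enters and exits the collar and $h_a\in\Omega X$ is the loop $t\mapsto\alpha(t)(a)$ (well defined since $\alpha$ begins and ends at constant loops). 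This does not damage your argument: each left or right translation by an element of the path-connected monoid $\Omega X$ is homotopic to the identity (slide the translating element to the unit along a path), and these homotopies can be performed simultaneously in all coordinates, so the monodromy is still homotopic to the identity and in particular acts trivially on $H_{-d\chi(F)}$ of the fiber. With that repair, and the minor standard adjustment of first homotoping a point of the fiber to be constant on the collar before lifting, your proof goes through.
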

\begin{proof}
The key for the proof of Proposition~\ref{cofibre de in} was that
a surface $F_{g,p+q}$ with boundary is homotopy equivalent to a graph.
For the proof of this Proposition, we are more precise: we use a special kind
of graphs, called the Sullivan Chord diagrams.
If $p\geq 1$ and $q\geq 1$ then
the surface $F_{g,p+q}$ is homotopy equivalent to a Sullivan's chord diagram $c_{g,p+q}$ of type $(g;p,q)$. 

A Sullivan's Chord diagram $c_{g,p+q}$ of type $(g;p,q)$~\cite[Definition 2]{1095.55006} consists of an union of $p$ disjoint circles together with the disjoint union
of path-connected trees. The endpoints of the trees are joined at
distincts points to the $p$ circles.

Denote by $\sigma(c)$ this set of path-connected trees.
The endpoints of the trees lying on the $p$ circles are called the circular vertices and the set of circular vertices of  $c_{g,p+q}$ is denoted $v(c)$.
For each tree $v\in\sigma(c)$, denoted by $\mu(v)$ the endpoints of the tree $v$
which are on the $p$ circles. We have the disjoint union
$v(c)=\coprod_{v\in\sigma(c)} \mu(v)$.
Therefore, with the notations introduced
(that follows the notations of Cohen and
Godin~\cite[Section 2]{1095.55006}), we have the push-out
$$
\xymatrix{
v(c)=\coprod_{v\in\sigma(c)} \mu(v)\ar[r]\ar[d]
& \coprod_{i=1}^p S^1\ar[d]^{in}\\
\coprod_{v\in\sigma(c)} v\ar[r]
& c_{g,p+q}
}
$$
Up to a homotopy equivalence between  $F_{g,p+q}$ and $c_{g,p+q}$,
the $p$ circles in $c_{g,p+q}$ represent the incoming boundary components of $F_{g,p+q}$, i. e.
we have the commutative triangle
$$\xymatrix{
\coprod_{i=1}^p S^1\ar[rr]^{in}\ar[dr]_{in}
&& c_{g,p+q} \\
& F_{g,p+q}\ar[ur]_{\thickapprox}
}
$$
Let $\#\mu(v)$, $\# v(c)$ and $\#\sigma(c)$ denote the cardinals of
the sets  $\mu(v)$, $v(c)$ and $\sigma(c)$.
By applying the mapping space $map(-,X)$, we have the commutative diagram
where the square is a pull-back.
\begin{equation}\label{pull-back diagramme de cordes}\xymatrix{
map(F_{g,p+q},X)\ar[dr]_{map(in,X)}
&map(c_{g,p+q},X)\ar[d]\ar[d]^{map(in,X)}\ar[r]\ar[l]_{\thickapprox}
&\prod_{v\in\sigma(c)}map(v,X)\ar[d]\\
&({\mathcal LX})^{\times p}\ar[r]
&\prod_{v\in\sigma(c)} X^{\#\mu(v)}=X^{\# v(c)}}
\end{equation}
As $X^{\# v(c)}$ is simply connected,
the fibration
$$\prod_{v\in\sigma(c)}map(v,X)\twoheadrightarrow\prod_{v\in\sigma(c)} X^{\#\mu(v)}=X^{\# v(c)}$$
is oriented. As orientation is preserved by pull-back and homotopy equivalence,
the fibration $map(in,X):map(F_{g,p+q},X)\twoheadrightarrow({\mathcal LX})^{\times p}$ is also oriented:  
the action of $\pi_1(\mathcal LX^{\times p})$ preserves the orientation class
 $\omega_{F,\theta}\in H_{-d\chi(F)}(map_*(F/\partial_{in} F,X))$.

Remark that using Sullivan Chord diagrams, we can give a second but
more complicated proof of Proposition~\ref{cofibre de in}:
Since the path-connected trees $v\in\sigma(c)$ are contractile
and since the cardinal of $\mu(v)$, $\#\mu(v)$ is always not zero,
the cofiber of the cofibration $\mu(v)\hookrightarrow v$ is homotopy
equivalent to a wedge $\vee_{\#\mu(v)-1} S^1$ of $\#\mu(v)-1$ circles.
Therefore the fiber of the fibration 
$$\prod_{v\in\sigma(c)}map(v,X)\twoheadrightarrow\prod_{v\in\sigma(c)} X^{\#\mu(v)}=X^{\# v(c)}$$
is homotopy equivalent to the product
$\prod_{v\in\sigma(c)} \Omega X^{\#\mu(v)-1}=\Omega X^{\# v(c)-\#\sigma(c)}.
$
By Mayer-Vietoris long exact sequence, we have the additivity formula
for the Euler characteristics
$$
\chi(F_{g,p+q})=\chi(c_{g,p+q})=\chi(\coprod_{i=1}^p S^1)
+\chi(\coprod_{v\in\sigma(c)} v)
-\chi(\coprod_{v\in\sigma(c)} \mu(v))=0+\#\sigma(c)-\# v(c).
$$
Since fibers are preserved by pull-backs, we recover that the fiber
of  $map(in,X)$ is homotopy equivalent to $\Omega X^{-\chi(F_{g,p+q})}$.
\end{proof}
{\bf Examples :} 
\\
1) In order to illustrate the proof of the preceding proposition let us consider the fundamental example of the pair of pants $P$, viewed as a cobordism between two ingoing circles and one outgoing circle.
In this particular case one has $c_{0,2+1}=O-O$. One replaces the space $map(P,X)$ by $map(O-O,X)$ and as $\sharp\sigma(O-O)=1$ and $\sharp v(O-O)=2$ we have to deal with the pull-back diagram
$$\xymatrix{
map(O-O,X)\ar[r]\ar[d]_{map(in,X)}& map(I,X)\ar[d]^{(ev_0,ev_1)}\\
({\mathcal LX})^{\times 2}\ar[r]_{ev_0\times ev_0}
&X^{\times 2}.}
$$
The product on $H_*({\mathcal LX})$ is the composite
$$H_*({\mathcal LX}\times {\mathcal LX})
\buildrel{map(in,X)_!}\over\rightarrow H_{*+d}(map(O-O,X))
\buildrel{H_*(out)}\over\rightarrow H_{*+d}({\mathcal LX}).$$
\\
2) Let us consider again the pair of pants $P$, viewed this time
as a cobordism between one ingoing circle and two outgoing circles.
In this case, $c_{0,1+2}=\O$ and we have the pull-back diagram
$$\xymatrix{
map(\O,X)\ar[r]\ar[d]_{map(in,X)}&map(I,X)\ar[d]^{(ev_0,ev_1)}\\
{\mathcal LX}\ar[r]_{(ev_0,ev_{1/2})}&X^{\times 2}.}
$$

The coproduct on $H_*({\mathcal LX})$ is the composite
$$H_*({\mathcal LX})
\buildrel{map(in,X)!}\over\rightarrow H_{*+d}(map(\O,X))
\buildrel{H_*(out)}\over\rightarrow H_{*+d}({\mathcal LX}\times {\mathcal LX}).$$
Our pull-back square~(\ref{pull-back diagramme de cordes}) is the same as the one
considered in~\cite[section 3]{1095.55006}, except that we did not collapse the trees $v\in\sigma(c)$ (or the ``ghost
edges'' of $c_{g,p+q}$ with the
terminology of~\cite[section 3]{1095.55006}), since we want oriented fibrations and Cohen and Godin wanted embeddings
in order to have shriek maps.

For example, the two Sullivan's Chord diagrams $O-O$ and $\O$ give both after
collapsing the unique edge of their unique tree (or the unique ``ghost edge'')
the famous figure eight $\infty$, considered by Chas
and Sullivan.

Therefore up to this collapsing, our product on $H_*(\mathcal{L}X)$ is defined as the Chas-Sullivan loop
product on $H_*(\mathcal{L}M)$ for manifolds.
Our coproduct on $H_*(\mathcal{L}X)$ is defined as the Cohen-Godin loop
coproduct on $H_*(\mathcal{L}M)$.
\subsection{Orientability of the fibration $\rho_{in}$}
\begin{theor}\label{action du mapping class group est trivial}
Let $F_{g,p+q}$ be a path-connected cobordism from $\coprod_{i=1}^p S^1$
to $\coprod_{i=1}^q S^1$. Assume that $p\geq 1$ and that $q\geq 1$ and
let $\chi(F)=2-2g-p-q$ be the Euler characteristics of $F_{g,p+q}$.
Let $X$ be a simply connected space such that $H_*(\Omega X)$ is a finite
dimensional vector space.
Denote by $d$ the top degree such that $H_d(\Omega X)\neq \{0\}$.
Then the action of $Diff^+(F,\partial)$ on
$H_{-d\chi(F)}(map_*(F/\partial_{in}F,X))$ is trivial.
\end{theor}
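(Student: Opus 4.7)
By Proposition~\ref{cofibre de in}, $F/\partial_{in}F$ is homotopy equivalent to the wedge $W:=\bigvee_n S^1$ with $n=-\chi(F)$, so $map_*(F/\partial_{in}F,X)\simeq(\Omega X)^n$ and the top homology $H_{-d\chi(F)}((\Omega X)^n)\cong H_d(\Omega X)^{\otimes n}$ is one-dimensional, spanned by $\Omega:=\omega^{\otimes n}$. The action of $\mathrm{Diff}^+(F,\partial)$ on this line factors through the mapping class group $\Gamma_{g,p+q}=\pi_0\mathrm{Diff}^+(F,\partial)$ and defines a character $\lambda:\Gamma_{g,p+q}\to\mathbb{F}^\times$; the plan is to show $\lambda\equiv 1$.

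The key algebraic input is that $\omega$ is a two-sided integral of the Hopf algebra $H_*(\Omega X)$: since $H_{>d}(\Omega X)=0$, for every $a\in H_*(\Omega X)$ one has $\omega\cdot a=\varepsilon(a)\omega=a\cdot\omega$. I would then check $\lambda$ on a system of generators of $\Gamma_{g,p+q}$. By Lickorish's theorem, $\Gamma_{g,p+q}$ is generated by Dehn twists $T_c$ along simple closed curves $c\subset F$; by the Picard--Lefschetz formula, $T_c$ acts on $H_1(F,\partial_{in}F;\mathbb{Z})\cong\mathbb{Z}^n$ as the transvection $x\mapsto x+\langle x,[c]\rangle[c]$, in particular with determinant $+1$. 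After a suitable change of basis of $W$ such a transvection becomes an elementary transvection $E_{ij}(1)$, realised up to homotopy at the level of $W$ by the free-group automorphism $e_i\mapsto e_ie_j$ with other generators fixed, corresponding on $(\Omega X)^n$ to the self-map $\psi:(x_1,\dots,x_n)\mapsto(x_1,\dots,x_i\cdot x_j,\dots,x_n)$.

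The heart of the proof is then the Künneth calculation $\psi_*(\Omega)=\Omega$: writing $\psi_*=(\psi_{1*}\otimes\cdots\otimes\psi_{n*})\circ\Delta^{(n-1)}_{(\Omega X)^n}$ and expanding $\Delta(\omega)=\omega\otimes 1+1\otimes\omega+\sum a'\otimes a''$ with $|a'|,|a''|>0$, every contribution in which $\omega$ is Pontryagin-multiplied by a positive-degree class is killed by the integral property, and only $\Omega$ itself survives. Completely analogous computations would handle the Magnus generators $e_i\mapsto e_j^{-1}e_ie_j$ and $e_i\mapsto e_i[e_j,e_k]$ of $IA_n=\ker(\mathrm{Aut}(F_n)\to GL_n(\mathbb{Z}))$, so that $\lambda$ in fact depends only on the image of the diffeomorphism in $SL_n(\mathbb{Z})$. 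Since $SL_n(\mathbb{Z})$ is generated by elementary transvections for $n\geq 2$, one concludes $\lambda\equiv 1$; the degenerate cases $n\leq 1$ can be treated by direct inspection.

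The main obstacle in executing this plan is the careful bookkeeping of Koszul signs in the iterated Künneth formula when $d$ is odd, together with checking that the reduction to an elementary transvection by change of basis really commutes with the scalar $\lambda$. A cleaner packaging of the argument would isolate a single lemma: \emph{any self-equivalence of $W$ whose induced map on $H_1(W;\mathbb{Z})$ lies in $SL_n(\mathbb{Z})$ acts trivially on the top homology of $(\Omega X)^n$}; granting this lemma, the theorem follows at once from Lickorish's theorem and the orientation-preservation of $\mathrm{Diff}^+(F,\partial)$.
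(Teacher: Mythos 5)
Your proposal is correct in outline, but it proves the theorem by a genuinely different route than the paper, so let me compare. Both arguments start identically: by Proposition~\ref{cofibre de in} one replaces $F/\partial_{in}F$ by $\vee_n S^1$ with $n=-\chi(F)$, observes that $H_{dn}(map_*(\vee_nS^1,X))\cong H_d(\Omega X)^{\otimes n}$ is one-dimensional, and views the action as a character of $\pi_0\mathrm{aut}_*(\vee_nS^1)\cong \mathrm{Aut}(F_n)$ with values in $\mathbb{F}^\times$. The paper then proves two statements: Proposition~\ref{action sur la top class}, that a pointed self-equivalence $h$ acts on the top class by $(\det H_1(h;\mathbb{Z}))^d$ -- proved not by computing on generators but by noting that the character factors through $(\mathrm{Aut}F_n)_{Ab}\cong\mathbb{Z}/2$ for $n\geq 3$ (with a stabilization trick $h\vee S^1$ for small $n$) and evaluating it on a single transposition of wedge summands, which contributes the Koszul sign $(-1)^d$; and Proposition~\ref{orientation et determinant}, that an orientation-preserving homeomorphism fixing $\partial F$ pointwise has determinant $+1$ on $H^1(F,\partial_{in}F;\mathbb{Z})$, proved elementarily by capping off the boundary, using the symplectic form on $H^1(F_g)$ and determinant additivity in exact sequences. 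You replace the first ingredient by a direct K\"unneth computation, using that the top class $\omega$ is a two-sided integral of the finite-dimensional Hopf algebra $H_*(\Omega X)$, on Nielsen transvections $e_i\mapsto e_ie_j$ and on Magnus's generators of $IA_n$; this is correct (your two flagged worries are in fact harmless: conjugation-invariance is automatic since $\mathbb{F}^\times$ is abelian, and in the surviving top-degree terms only degree-zero classes get shuffled, so no Koszul signs appear), but it trades the paper's single input ``$H_1(\mathrm{Aut}F_n)=\mathbb{Z}/2$'' for Magnus's generation theorem for $IA_n$, and it only yields the weaker statement that determinant-$+1$ equivalences act trivially, whereas the paper's $(\det)^d$ formula is reused later (in the appendix, to construct the morphism of props $\mathrm{det}\,H_1(F,\partial_{in};\mathbb{Z})^{\otimes d}\to H_{-d\chi(F)}(map_*(F/\partial_{in},X))$). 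You replace the second ingredient by Lickorish's Dehn-twist generation of $\Gamma_{g,p+q}$ together with the transvection formula on the relative group $H_1(F,\partial_{in}F;\mathbb{Z})$; this also works (a twist is supported in an annulus, so its action on relative $H_1$ is unipotent, hence of determinant $+1$), but it imports a substantially heavier theorem about mapping class groups of surfaces with boundary than the paper's self-contained linear-algebra argument, and one should state carefully which intersection pairing is being used on $H_1(F,\partial_{in}F)$. In short: your plan is sound and would give a complete proof once the generator computations are written out, but it is not the paper's proof, and the paper's stronger Proposition~\ref{action sur la top class} is needed elsewhere, so it cannot simply be discarded in favour of your lemma.
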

To prove this Theorem, we will need the following Propositions~\ref{orientation et determinant}
and~\ref{action sur la top class}.
\begin{propriete}(Compare with~\cite[Lemma 1 p. 1176]{Conant-Vogtmann:theokontsevich})\label{determinant et suite exacte}
Consider a commutative diagram of exact sequences of abelian groups
$$
\xymatrix{
0\ar[r] & A\ar[r]\ar[d]_a & B\ar[r]\ar[d]_b  & C\ar[r]\ar[d]_c & D\ar[r]\ar[d]_d & 0\\
0\ar[r] & A\ar[r] & B\ar[r] & C\ar[r] & D\ar[r] & 0.
}$$
If $A$, $C$ and $D$ are free and finitely generated then $B$
is also free and finitely generated and the determinants of the vertical
morphisms satisfy the equality
$$
\text{det}(a)\text{det}(c)=\text{det}(b)\text{det}(d).
$$
\end{propriete}
\begin{proof}
First consider the case $A=\{0\}$ where we have short exact sequences.
A splitting $D\rightarrow C$, a basis of $B$ and a basis of $D$ gives
a basis of $C$ where the matrix of $c$ is a triangular by block matrix of
the form$
\left(\begin{array}{cc}
b & ?\\
0 & d
\end{array}\right)
$
Therefore $\text{det}(c)=\text{det}(b)\text{det}(d)$.
The general case follows by splitting $\xymatrix@1{
0\ar[r] & A\ar[r] & B\ar[r] & C\ar[r] & D\ar[r] & 0}$ into two short exact sequences.
\end{proof}
\begin{proposition}\label{orientation et determinant}
Let $F_{g,p+q}$ be a path-connected cobordism from
$\coprod_{i=1}^p S^1$ to $\coprod_{i=1}^q S^1$ with $p\geq 0$ and $q\geq 0$.
Let 
 $f\in Homeo^+(F,\partial)$ be an homeomorphism from
$F_{g,p+q}$ to $F_{g,p+q}$ that fixes the boundary pointwise and
preserve the orientation (for example if  $f\in Diff^+(F,\partial)$).
Then the induced isomorphism in singular cohomology
$$H^1(f;\mathbb{Z}):H^1(F,\partial_{in} F;\mathbb Z)\rightarrow H^1(F,\partial_{in} F;\mathbb Z)$$
is of determinant $+1$.
\end{proposition}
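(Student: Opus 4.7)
The plan is to apply the determinant property~\ref{determinant et suite exacte} to the long exact sequence in singular cohomology of the pair $(F,\partial_{in}F)$. Using $H^2(F;\mathbb Z)=0$ (since $F$ has non-empty boundary) and $H^2(\partial_{in}F;\mathbb Z)=0$, this reduces to the six-term exact sequence of finitely generated free abelian groups
\[
0\to H^0(F)\to H^0(\partial_{in}F)\to H^1(F,\partial_{in}F)\to H^1(F)\to H^1(\partial_{in}F)\to H^2(F,\partial_{in}F)\to 0.
\]
Because $f$ fixes $\partial_{in}F$ pointwise and $F$ is path-connected, $f^*$ is the identity on $H^0(F)$, $H^0(\partial_{in}F)$, and $H^1(\partial_{in}F)$. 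Splitting this into short exact sequences and iterating Property~\ref{determinant et suite exacte}, one obtains the identity
\[
\det(f^*|_{H^1(F,\partial_{in}F)})=\det(f^*|_{H^1(F)})\cdot\det(f^*|_{H^2(F,\partial_{in}F)}).
\]

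The factor $\det(f^*|_{H^2(F,\partial_{in}F)})$ is $+1$ for elementary reasons. If $q\geq 1$, Lefschetz duality yields $H^2(F,\partial_{in}F)\cong H_0(F,\partial_{out}F)=0$. If $q=0$, then $\partial_{in}F=\partial F$ and $H^2(F,\partial F)\cong\mathbb Z$ is generated by the Lefschetz dual of the fundamental class, on which $f^*$ acts trivially since $f$ is orientation-preserving.

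It remains to prove $\det(f^*|_{H^1(F;\mathbb Z)})=+1$; by the freeness of $H_1(F;\mathbb Z)$ and universal coefficients this is equivalent to $\det(f_*|_{H_1(F;\mathbb Z)})=+1$. I would proceed by capping off: form the closed oriented genus $g$ surface $\hat F$ by gluing a disk to each of the $n:=p+q$ boundary circles of $F$. Since $f$ is the identity on $\partial F$, it extends by the identity on each cap to an orientation-preserving self-homeomorphism $\hat f$ of $\hat F$. The long exact sequence of the pair $(\hat F,F)$, together with the excision $H_*(\hat F,F)\cong\bigoplus_{i=1}^{n}H_*(D^2,S^1)$, yields the short exact sequence
\[
0\to\mathbb Z^{n-1}\to H_1(F;\mathbb Z)\to H_1(\hat F;\mathbb Z)\to 0,
\]
whose kernel is generated by the classes of the boundary circles (modulo the relation that their signed sum vanishes). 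Since $f$ fixes $\partial F$ pointwise, $f_*$ acts as the identity on this kernel. Since $\hat f$ is an orientation-preserving homeomorphism of the closed oriented surface $\hat F$, it preserves the non-degenerate alternating intersection form on $H_1(\hat F;\mathbb Z)$, so $\hat f_*$ lies in the symplectic group $Sp(2g,\mathbb Z)\subset SL(2g,\mathbb Z)$; in particular $\det(\hat f_*)=+1$. One more application of Property~\ref{determinant et suite exacte} to the above short exact sequence then gives $\det(f_*|_{H_1(F)})=+1$.

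The main obstacle is the last step $\det(\hat f_*|_{H_1(\hat F)})=+1$, which relies on the classical fact that an orientation-preserving homeomorphism of a closed oriented surface preserves its intersection form. All the remaining steps reduce to bookkeeping with the determinant property, which applies cleanly because every group appearing above is finitely generated and free (cohomology of surfaces with boundary, of disjoint unions of circles, or of cofibers homotopy equivalent to wedges of circles via Proposition~\ref{cofibre de in}).
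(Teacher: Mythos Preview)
Your proof is correct and rests on the same key idea as the paper's: cap off the boundary to obtain a closed oriented surface and use that an orientation-preserving self-homeomorphism acts symplectically on $H_1$, hence with determinant $+1$. The difference is purely organisational. The paper caps off all boundary components, proves the closed case, and then works back by an induction which removes the outgoing caps one at a time (establishing $\det H^1(\tilde f|_{F_{g,0+i}})=+1$ for each $i$), followed by a separate long exact sequence argument to pass from $H^1(F_{g,0+q},\coprod_p D^2)$ to $H^1(F_{g,p+q},\partial_{in}F)$. You instead go directly from $H^1(F,\partial_{in}F)$ to $H^1(F)$ via the long exact sequence of the pair, and then from $H_1(F)$ to $H_1(\hat F)$ by capping off all $p+q$ circles at once, reading off the kernel $\mathbb Z^{n-1}$ from excision. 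Your route is shorter and avoids the induction entirely; the paper's route has the mild advantage of never needing to identify the kernel of $H_1(F)\to H_1(\hat F)$ explicitly.

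One small presentational point: your six-term sequence as written starts with $0\to H^0(F)\to H^0(\partial_{in}F)$, which is only correct when $p\geq 1$ (so that $H^0(F,\partial_{in}F)=0$). When $p=0$ one has $\partial_{in}F=\emptyset$, so $H^1(F,\partial_{in}F)=H^1(F)$ and the proposition reduces immediately to your Step~3; it would be cleaner to say so explicitly rather than to force this case through the six-term sequence. Similarly, the hypothesis ``$F$ has non-empty boundary'' used for $H^2(F)=0$ tacitly excludes $p=q=0$, but again that case is exactly the closed symplectic case handled by your Step~3 with $\hat F=F$.
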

Remark that if $p+q\geq 1$, by the exact sequence
$$0=H_2(F_{g,p+q};\mathbb Z)\rightarrow H_2(F_{g,p+q},\coprod_{p+q} S^1;\mathbb Z)
\rightarrow H_1(\coprod_{p+q} S^1;\mathbb Z),$$ continous maps
$f:(F_{g,p+q},\coprod_{p+q} S^1)\rightarrow (F_{g,p+q},\coprod_{p+q} S^1)$
that fix the boundary $\coprod_{p+q} S^1$ are automatically orientation
preserving.
\begin{proof}
By filling the boundary of $ F_{g,p+q}$ with $p+q$ closed disks $ D^2$, we have the push out
$$
\xymatrix{
\coprod_{p+q} S^1\ar[r]\ar[d]
& F_{g,p+q}\ar[d]\ar[dr]^f\\
\coprod_{p+q} D^2\ar[r]\ar[dr]_{id}
& F_g \ar@{.>}[dr]^{\tilde{f}}&  F_{g,p+q}\ar[d]\\
& \coprod_{p+q} D^2\ar[r]
&  F_g
}$$
Denote by $Homeo^+(F_g,\coprod_{p+q} D^2)$ the group of orientation preserving
homeomorphisms from $F_g$ to $F_g$ that fix the $p+q$ embedded disks pointwise.
By universal property of push outs, any $f\in  Homeo^+(F,\partial)$
can be extended to an unique $\tilde{f}\in Homeo^+(F_g,\coprod_{p+q} D^2)$.
Since $$H_2(F_g)\cong H_2(F_g,\coprod_{p+q} D^2)\cong H_2(F_{g,p+q},\coprod_{p+q} S^1),$$ $H_2(\tilde{f}):H_2(F_g)\rightarrow H_2(F_g)$
is the identity if and only if
$$H_2(f):H_2(F_{g,p+q},\coprod_{p+q} S^1)\rightarrow H_2(F_{g,p+q},\coprod_{p+q} S^1)$$ is also the identity.
Therefore $\tilde{f}$ preserves the orientation if and only if
$f$ also preserves the orientation.
Since the restriction of  $\tilde{f}$ to $F_{g,p+q}$ is f, the two groups
$Homeo^+(F_g,\coprod_{p+q} D^2)$ and $Homeo^+(F,\partial)$ are isomorphic.
Observe that with diffeomorphisms instead of homeomorphisms, we don't have
an isomorphism between $Diff^+(F_g,\coprod_{p+q} D^2)$ and $Diff^+(F,\partial)$ althought the two groups are usually identified~\cite[p. 169]{Morita:characteristic}.

Since $H^*(\tilde{f})$ preserves the cup product $\cup$ and the fundamental
class $[F_g]\in Hom_\mathbb{Z}(H^2(F_g),\mathbb{Z})$,  $H^1(\tilde{f})$
preserves the symplectic bilinear form on $H^1(F_g)\cong\mathbb{Z}^{2g}$
defined by $<a,b>:=[F_g](a\cup b)$ for $a$ and $b\in H^1(F_g)$.
Since symplectic automorphisms are of determinant $+1$,
$H^1(\tilde{f})$ is of determinant $+1$ and the
well known~\cite[Definition of Torelli group]{Morita:characteristic} case $p=q=0$
is proved.

We now prove by induction that $\forall i$ such that $0\leq i\leq q$,
$H^1(\tilde{f}_{|F_{g,0+i}}):H^1(F_{g,0+i})\rightarrow H^1(F_{g,0+i})$
is of determinant $+1$. This will prove the case $p=0$.
If $i\geq 1$, consider the commutative diagram of long exact sequences
$$
\xymatrix{
0=\tilde{H}^0(S^1)\ar[r] 
& H^1(F_{g,0+i-1})\ar[r]\ar[d]_{H^1(\tilde{f}_{|F_{g,0+i-1}})}
& H^1(F_{g,0+i})\ar[r]\ar[d]_{H^1(\tilde{f}_{|F_{g,0+i}})} 
& H^1(S^1)\ar[r]\ar[d]_{H^1(f_{|S^1})}
&  H^2(F_{g,0+i-1})\ar[r]\ar[d]_{H^2(\tilde{f}_{|F_{g,0+i-1}})}
& H^2(F_{g,0+i})=0\\
0=\tilde{H}^0(S^1)\ar[r] & H^1(F_{g,0+i-1})\ar[r] & 
H^1(F_{g,0+i})\ar[r]  & H^1(S^1)\ar[r] &  H^2(F_{g,0+i-1})\ar[r] & H^2(F_{g,0+i})=0.
}$$
Since the restriction of $f$ to $S^1$, $f_{|S^1}$, is the identity morphism, 
$H^2(\tilde{f}_{|F_{g,0+i-1}})$ is also the identity morphism.
Therefore by Property~\ref{determinant et suite exacte},
the determinant of $H^1(\tilde{f}_{|F_{g,0+i}})$ is equal to the determinant
of $H^1(\tilde{f}_{|F_{g,0+i-1}})$ which is by induction hypothesis $+1$.
This finishes the induction.

Suppose now that $p\geq 1$ and $q\geq 0$.
Consider the commutative diagram of long exact sequences
$$
\xymatrix{
0=H^0(F_{g,0+q},\coprod_p D^2)\ar[d]
&H^0(F_{g,0+q},\coprod_p D^2)=0\ar[d]\\
H^0(F_{g,0+q})\ar[d]\ar[r]^{H^0(\tilde{f}_{|F_{g,0+q}})}
&H^0(F_{g,0+q})\ar[d]\\
H^0(\coprod_p D^2)\ar[d]\ar[r]^{H^0(\tilde{f}_{|\coprod_p D^2})}&
H^0(\coprod_p D^2)\ar[d]\\
H^1(F_{g,0+q},\coprod_p D^2)\ar[d]\ar[r]^{H^1(\tilde{f}_{|F_{g,0+q}})}
&H^1(F_{g,0+q},\coprod_p D^2)\ar[d]\\
H^1(F_{g,0+q})\ar[d]\ar[r]^{H^1(\tilde{f}_{|F_{g,0+q}})}
&H^1(F_{g,0+q})\ar[d]\\
0=H^1(\coprod_p D^2)&H^1(\coprod_p D^2)=0
}
$$
Since the restriction $\tilde{f}_{|\coprod_p D^2}$ is the identity,
$H^0(\tilde{f}_{|F_{g,0+q}})$ is also the identity.
We have proved that
$$ H^1(\tilde{f}_{|F_{g,0+q}}):H^1(F_{g,0+q})\rightarrow H^1(F_{g,0+q})$$
is of determinant $+1$. Therefore,
by Property~\ref{determinant et suite exacte},
$$H^1(\tilde{f}_{|F_{g,0+q}}):H^1(F_{g,0+q},\coprod_p D^2)
\rightarrow H^1(F_{g,0+q},\coprod_p D^2)$$
is also of determinant $+1$. Since $H^1(F_{g,p+q},\partial_{in}F)
\cong  H^1(F_{g,0+q},\coprod_p D^2)$, the determinants of
$$H^1(f):H^1(F_{g,p+q},\partial_{in} F)\rightarrow
H^1(F_{g,p+q},\partial_{in} F)$$ and $$H^1(\tilde{f}_{|F_{g,0+q}}):H^1(F_{g,0+q},\coprod_p D^2)\rightarrow H^1(F_{g,0+q},\coprod_p D^2)$$ are equal.
\end{proof}
\begin{proposition}\label{action sur la top class}
Let $n\geq 0$ be a non-negative integer.
Let $h:\vee_n S^1\buildrel{\thickapprox}\over\rightarrow \vee_n S^1$ be a pointed homotopy
equivalence from a wedge of $n$ circles to itself.
Let $X$ be a simply connected space such that $H_*(\Omega X)$ is finite dimensional.
Let $d$ be the top degre such that $H_d(\Omega X)\neq\{0\}$.
Then in the top degre,
$$H_{dn}(map_*(h,X);\mathbb{F}):H_{dn}(map_*(\vee_n S^1,X);\mathbb{F})
\rightarrow H_{dn}(map_*(\vee_n S^1,X);\mathbb{F})
$$
is the multiplication by $(det H_1(h;\mathbb{Z}))^d$, the $d$-th power of the determinant
of $$H_1(h;\mathbb{Z}):H_1(\vee_n S^1\mathbb{Z})=\mathbb{Z}^n\rightarrow H_1(\vee_n S^1\mathbb{Z})=\mathbb{Z}^n.$$
\end{proposition}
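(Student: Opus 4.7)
The plan is to identify the mapping space with $(\Omega X)^n$, translate the question into one about $\mathrm{Aut}(F_n)$ where $F_n=\pi_1(\vee_n S^1)$, and reduce to a check on Nielsen generators. The canonical homeomorphism $map_*(\vee_n S^1,X)=(\Omega X)^n$ (restriction to each circle) together with K\"unneth identifies $H_{dn}(map_*(\vee_n S^1,X);\mathbb{F})$ with $H_d(\Omega X)^{\otimes n}$, a one-dimensional space spanned by $c^{\otimes n}$ where $c$ generates $H_d(\Omega X)\cong\mathbb{F}$. Hence $H_{dn}(map_*(h,X))$ is multiplication by some scalar $\psi(h)\in\mathbb{F}^{\times}$. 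Since $\vee_n S^1=K(F_n,1)$, pointed self-homotopy-equivalences of $\vee_n S^1$ correspond bijectively to elements of $\mathrm{Aut}(F_n)$, under which the abelianization $\mathrm{Aut}(F_n)\to GL_n(\mathbb{Z})$ agrees with $h\mapsto H_1(h;\mathbb{Z})$. Both $\psi$ and $h\mapsto(\det H_1(h))^d$ are multiplicative, so it suffices to verify equality on a Nielsen generating set: the permutations $\sigma$ of $\{x_1,\ldots,x_n\}$, the inversions $\iota_i\colon x_i\mapsto x_i^{-1}$, and the transvections $\tau_{ij}\colon x_i\mapsto x_i x_j$.

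For a permutation $\sigma$, the induced map on $(\Omega X)^n$ permutes the factors and the Koszul sign rule gives $\psi(\sigma)=\mathrm{sgn}(\sigma)^d=(\det P_\sigma)^d$. For a transvection $\tau_{ij}$, the induced map is the identity on factors other than $i$ and $j$ and sends the pair $(\alpha_i,\alpha_j)$ to $(\alpha_i\cdot\alpha_j,\alpha_j)$ by loop concatenation. On homology this is the composite $H_*(\Omega X)^{\otimes 2}\xrightarrow{\mathrm{id}\otimes\Delta}H_*(\Omega X)^{\otimes 3}\xrightarrow{\mu\otimes\mathrm{id}}H_*(\Omega X)^{\otimes 2}$ applied to $c\otimes c$. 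Writing $\Delta(c)=c\otimes 1+1\otimes c+\sum c'_{(1)}\otimes c'_{(2)}$ where each $c'_{(1)},c'_{(2)}$ has positive degree strictly less than $d$, the degree-forced vanishings $c\cdot c=0$ (since $2d>d$) and $c\cdot c'_{(1)}=0$ (since $d+|c'_{(1)}|>d$) leave only the $c\otimes 1\otimes c$ summand, yielding $c\otimes c$. Hence $\psi(\tau_{ij})=1=(\det H_1(\tau_{ij}))^d$.

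The hardest part will be the inversion case: the induced self-map is loop-reversal on the $i$-th factor, which realizes the antipode $S$ of the Hopf algebra $H:=H_*(\Omega X;\mathbb{F})$, and one must prove $S(c)=(-1)^d c$ on the top class. Since $H$ is a finite-dimensional connected cocommutative Hopf algebra, classical structure theory (graded commutativity together with Borel's theorem) presents $H$ as a tensor product of exterior algebras $\Lambda(y_j)$ on odd-degree primitive generators and, in positive characteristic, truncated polynomial algebras $\mathbb{F}_p[x_i]/x_i^{p^{k_i}}$ on even-degree primitive generators. On each factor, $S$ negates the primitive generator, and a direct inspection shows that $S$ acts on the top class of that factor as $(-1)$ to the power of its degree: trivially on a truncated-polynomial factor, whose top class has even degree (since $(-1)^{p^{k_i}-1}=1$ for odd $p$, while signs collapse in characteristic $2$), and by $-1$ on each odd-degree exterior factor. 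Multiplying across factors yields $S(c)=(-1)^d c$, matching $(\det H_1(\iota_i))^d=(-1)^d$. This antipode computation is the only substantive algebraic step in the proof; the rest is routine bookkeeping with Koszul signs and Nielsen generators.
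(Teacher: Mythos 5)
Your reduction to $\mathrm{Aut}(F_n)$ and the verification on permutations (Koszul sign $\mathrm{sgn}(\sigma)^d$) and on transvections (the degree argument that kills every term of $\Delta(c)$ except $c\otimes 1\otimes c$) are correct. The gap is exactly at the step you yourself single out as the only substantive one: the inversion generator. You justify $S(c)=(-1)^d c$ by asserting that $H:=H_*(\Omega X;\mathbb{F})$ is graded commutative and then invoking Borel's theorem to present $H$ as a tensor product of exterior and truncated polynomial algebras on \emph{primitive} generators, with the antipode acting factor-wise. But $H_*(\Omega X)$ with the Pontryagin product is only cocommutative, not commutative in general, so Borel's theorem does not apply to it; if instead you apply Borel to the commutative dual $H^*(\Omega X)$ (which also requires a perfect field -- fixable by base change to $\overline{\mathbb{F}}$), you only get a decomposition as \emph{algebras}: the monogenic generators need not be primitive and the splitting is not one of Hopf algebras, so there is no reason for $S$ to preserve the factors or to negate the generators. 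Hence ``$S$ negates the primitive generator and acts on the top class of each factor by $(-1)^{\deg}$'' is unjustified as written. The conclusion $S(c)=(-1)^dc$ is in fact true, but it needs a real argument: for instance, dualize to reduce to the commutative case, observe that $S\equiv-\mathrm{id}$ on indecomposables modulo decomposables, so that $S$ acts by $(-1)^k$ on the $k$-th layer of the augmentation filtration, and then check (using Borel's algebra decomposition over $\overline{\mathbb{F}}$, char $2$ being trivial for signs) that the filtration length $k$ of the top class has the same parity as $d$. Without something of this sort the inversion case, and with it the cases $n=1,2$ at the very least, is not proved.

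For comparison, the paper's proof avoids the antipode computation altogether: it first reduces to $n\geq 3$ by wedging on extra circles (if the statement holds for $n+1$ it holds for $n$), and then uses that both $h\mapsto H_{dn}(map_*(h,X))$ and $h\mapsto\det H_1(h;\mathbb{Z})$ factor through the abelianization of $\mathrm{Aut}(F_n)$, which is $\mathbb{Z}/2\mathbb{Z}$ for $n\geq 3$ and is detected by the determinant; it therefore suffices to evaluate both sides on a single transposition of wedge summands, where the Koszul sign gives $(-1)^d$ -- precisely your easy permutation case. If you want to keep your generator-by-generator scheme, you should either supply a correct proof of the antipode statement along the lines sketched above, or borrow the paper's reduction to large $n$ and its abelianization argument so that the inversion generator never has to be evaluated.
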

\begin{proof}
If $n=0$ then $h$ is the identity map and the proposition follows since
$H_1(h;\mathbb{Z}):\{0\}\rightarrow \{0\}$ is of determinant $+1$.
Denote by
$h\vee S^1:\left(\vee_n S^1\right)\vee S^1÷\buildrel{\thickapprox}\over\rightarrow\left(\vee_n S^1\right)\vee S^1$ the homotopy equivalence extending $h$ and the identity of $S^1$.
Note that $\text{det } H_1(h\vee S^1;\mathbb{Z})=\text{det } H_1(h;\mathbb{Z})$.
Since $H_d(\Omega X)$ is of dimension $1$ and $map_*(h\vee S^1,X)=map_*(h,X)÷\times\Omega X$,
by naturality of Kunneth theorem, we have the commutative diagram of vector spaces
$$
\xymatrix{
H_{dn}map_*(\vee_n S^1,X)\ar[r]^-{\cong}\ar[d]|{H_{dn}(map_*(h,X))}
& H_{dn}map_*(\vee_n S^1,X)\otimes H_d(\Omega X)
\ar[r]^-{\cong}\ar[d]|{H_{dn}(map_*(h,X))\otimes H_d(\Omega X) }
& H_{d(n+1)}map_*(\vee_{n+1} S^1,X)\ar[d]|{H_{d(n+1)}(map_*(h\vee S^1,X))}\\
H_{dn}map_*(\vee_n S^1,X)\ar[r]_-{\cong}
& H_{dn}map_*(\vee_n S^1,X)\otimes H_d(\Omega X)\ar[r]_-{\cong}
& H_{d(n+1)}map_*(\vee_{n+1} S^1,X)
}
$$
where the horizontal morphisms are isomorphisms. So if $H_{d(n+1)}(map_*(h\vee S^1,X);\mathbb{F})$
is the multiplication by $(\text{det }H_1(h\vee S^1;\mathbb{Z}))^d$, then
$H_{dn}(map_*(h,X);\mathbb{F})$ is also the multiplication by
$(\text{det }H_1(h\vee S^1;\mathbb{Z}))^d=(\text{det }H_1(h;\mathbb{Z}))^d$.
Therefore if the proposition is proved for an integer $n+1$, the proposition is also proved
for the previous integer $n$.

It remains to prove the proposition for large $n$: we assume now that $n\geq 3$.

Let $\text{aut}_*\vee_n S^1$ be the monoid of pointed self equivalences of the wedge of $n$ circles.
Recall that $\pi_1(\vee_n S^1)$ is isomorphic to the free group $F_n$ on $n$ letters.
Denote by $\text{Aut}F_n$ the groups of automorphisms of $F_n$.
Since $\vee_n S^1$ is a $K(\pi,1)$, by~\cite[Chapter 1 Proposition 1B.9]{Hatcher:algtop} and Whitehead theorem,
the morphism of groups $\pi_1(-):\pi_0(\text{aut}_*\vee_n S^1)\rightarrow \text{Aut}F_n$ sending the homotopy class $[h]$
of $h\in\text{aut}_*\vee_n S^1$ to the group automorphism $\pi_1(h)$, is an isomorphism of groups.

The monoid $\text{aut}_*\vee_n S^1$ acts on the right on $map_*(\vee_n S^1,X)$ by composition.
Therefore, we have a morphism of monoids
$$map_*(-,X):\text{aut}_*\vee_n S^1\rightarrow map\left(map_*(\vee_n S^1,X),map_*(\vee_n S^1,X)\right)^{op}.$$
Here $op$ denotes the opposite monoid.
Passing to homology, we have a morphism of groups into the opposite of the general linear group
of the $\mathbb{F}$-vector space $H_{dn}map_*(\vee_n S^1,X)$. 
$$
H_{dn}(map_*(-,X)):\pi_0(\text{aut}_*\vee_n S^1)\rightarrow\text{GL}_{\mathbb{F}}(H_{dn}map_*(\vee_n S^1,X))^{op}.
$$
Since the vector space $H_{dn}(map_*(\vee_n S^1,X))$ is of dimension $1$, the trace map
$$\text{Trace}:\text{GL}_{\mathbb{F}}(H_{dn}map_*(\vee_n S^1,X))\buildrel{\cong}\over\longrightarrow \mathbb{F}-\{0\}
$$
is an isomorphism of abelian groups.
Denote by $(-)_{Ab}$ the Abelianisation functor from groups to abelian groups.
Since $\mathbb{F}-\{0\}$ is an abelian group, by universal property of abelianisation,
we have a commutative diagram of groups
\begin{equation}\label{trace et abelianisation}
\xymatrix{
\text{Aut}F_n\ar@{>>}[d]
&\pi_0(\text{aut}_*\vee_n S^1)\ar[l]_-{\pi_1(-)}^-{\cong}\ar[rr]^-{H_{dn}(map_*(-,X))}
&&\text{GL}_{\mathbb{F}}(H_{dn}map_*(\vee_n S^1,X))^{op}\ar[d]^{Trace}_{\cong}\\
(\text{Aut}F_n)_{Ab}\ar@{.>}[rrr]
&&& \mathbb{F}-\{0\}
}
\end{equation}
The abelianisation of $F_n$,  $(F_n)_{Ab}$ is isomorphic to $\mathbb{Z}^n$.
Denote by 
$$\text{Ab}(-):\text{Aut}F_n\rightarrow \text{Aut}\left((F_n)_{Ab}÷\right)=\text{GL}_n(\mathbb{Z}),$$
the morphism of groups sending $f:F_n\rightarrow F_n$ to  $f_{Ab}:\mathbb{Z}^n\rightarrow \mathbb{Z}^n$.
Since $\mathbb{Z}/2\mathbb{Z}$ is an abelian group, by universal property of abelianisation,
we have a commutative diagram of groups
\begin{equation}\label{determinant et abelianisation}
\xymatrix{
\text{Aut}F_n\ar@{>>}[r]^{\text{Ab(-)}}\ar@{>>}[d]
& \text{GL}_n(\mathbb{Z})\ar@{>>}[d]^{Det}\\
(\text{Aut}F_n)_{Ab}\ar@{.>}[r]_{\cong}
& \mathbb{Z}/2\mathbb{Z}
}
\end{equation}
Since both the determinant map $\text{Det}:\text{GL}_n(\mathbb{Z})\twoheadrightarrow \mathbb{Z}/2\mathbb{Z}$
and $\text{Ab}(-):\text{Aut}F_n\twoheadrightarrow\text{GL}_n(\mathbb{Z})$
are surjective~\cite[Chapter I.Proposition 4.4]{Lyndon-Schupp:livre},
the morphism $(\text{Aut}F_n)_{Ab}\twoheadrightarrow \mathbb{Z}/2\mathbb{Z}$ is surjective.
By~\cite[Section 5.1]{VogtmannICM06}, since $n\geq 3$, the abelianisation of $\text{Aut}F_n$, $(\text{Aut}F_n)_{Ab}$
is isomorphic to  $\mathbb{Z}/2\mathbb{Z}$.
Therefore this surjective morphism of abelian groups
$(\text{Aut}F_n)_{Ab}\buildrel{\cong}\over\twoheadrightarrow \mathbb{Z}/2\mathbb{Z}$
is in fact an isomorphism.

The composition
$\pi_0(\text{aut}_* \vee_n S^1)\buildrel{\pi_1(-)}\over\rightarrow
\text{Aut}F_n\buildrel{\text{Ab}(-)}\over\rightarrow\text{GL}_n(\mathbb{Z})$
coincides with the morphism of groups
$H_1(-;\mathbb{Z}):\pi_0(\text{aut}_* \vee_n S^1)\rightarrow\text{GL}_n(\mathbb{Z})$
sending the homotopy class $[h]$ of $h\in \text{aut}_* \vee_n S^1$ to the isomorphism of abelian groups
$H_1(h;\mathbb{Z}):\mathbb{Z}^n\buildrel{\cong}\over\rightarrow \mathbb{Z}^n$.
Therefore putting side by side diagram~(\ref{trace et abelianisation})
and diagram~(\ref{determinant et abelianisation}), we obtain the commutative diagram of groups
\begin{equation}\label{trace et determinant}
\xymatrix{
\text{GL}_n(\mathbb{Z})\ar@{>>}_{Det}[d]
&\pi_0(\text{aut}_*\vee_n S^1)\ar[l]_-{H_1(-;\mathbb{Z})}\ar[rr]^-{H_{dn}(map_*(-,X))}
&&\text{GL}_{\mathbb{F}}(H_{dn}map_*(\vee_n S^1,X))^{op}\ar[d]^{Trace}_{\cong}\\
\mathbb{Z}/2\mathbb{Z}\ar@{.>}[rrr]_{i}
&&& \mathbb{F}-\{0\}
}
\end{equation}
We want to compute the morphism of groups $i:\mathbb{Z}/2\mathbb{Z}\rightarrow  \mathbb{F}-\{0\}$.
In order to distinguish the circles in $\vee_{i=1}^n S^1$, we consider $\vee_{i=1}^n S^1$
as the quotient space $\frac{S^1\times\{1,\cdots,n\}}{*\times\{1,\cdots,n\}}$:
an element of $\vee_{i=1}^n S^1$ is the class of $(x,i)$, $x\in S^1$, $i\in\{1,\cdots,n\}$.

Any permutation $\sigma\in\Sigma_n$ induces a pointed homeomorphism
$\sigma\cdot -:\vee_{i=1}^n S^1\buildrel{\cong}\over\rightarrow \vee_{i=1}^n S^1$ defined by
$\sigma\cdot (x,i)=(x,\sigma(i))$.
The matrix of $H_1(\sigma\cdot -;\mathbb{Z})$, which is the image of $\sigma\cdot -$ by the morphism
$H_1(-;\mathbb{Z}):\pi_0(\text{aut}_*\vee_n S^1)\rightarrow \text{GL}_n(\mathbb{Z})$,
is the permutation matrix $M_\sigma:=\left(m_{ij}\right)_{i,j}$ defined
by $$m_{\sigma(i)j}=\begin{cases}
1&\text{Si $j=i$},\\
0& \text{Si $j\neq i$}.
\end{cases}$$
Since the determinant of the permutation matrix $M_\sigma$ is the sign of
$\sigma$, $\varepsilon(\sigma)$,
$$\text{Det}\circ  H_1(-;\mathbb{Z})(\sigma\cdot -)=\varepsilon(\sigma).$$
On the other hand,
$H_*(map_*(\sigma\cdot -,X)):H_*(\Omega X)^{\otimes n}\rightarrow
H_*(\Omega X)^{\otimes n}$ maps $w_1\otimes\cdots\otimes w_n$ to
 $\pm w_{\sigma(1)}\otimes\cdots\otimes w_{\sigma(n)}$ where $\pm$ is the
Koszul sign.
In particular, let $\tau$ be the transposition $(12)$ of $\Sigma_n$,
$H_{dn}(map_*(\tau\cdot -,X)):H_d(\Omega X)^{\otimes n}\rightarrow
H_d(\Omega X)^{\otimes n}$ is the multiplication by $(-1)^d$.
Since the sign of $\tau$ is $-1$, by commutativity of diagram~(\ref{trace et determinant}), we have $$i(-1)=i(\varepsilon(\tau))=i\circ\text{Det}\circ  H_1(-;\mathbb{Z})(\tau\cdot -)=\text{Trace}H_{dn}(map_*(\tau\cdot -,X))=(-1)^d.$$
\end{proof}
\begin{proof}[Proof of Theorem~\ref{action du mapping class group est trivial}]
Let $f\in Diff^+(F,\partial)$.
Since the diffeomorphism $f$ fixes the boundary pointwise, $f$ induces
a pointed homeomorphism
$\bar{f}:F/\partial_{in}F\buildrel{\cong}\over\rightarrow F/\partial_{in}F
$. The action of $f$ on $map_*(F/\partial_{in}F,X)$ is given by
$map_*(\bar{f},X)$.
Since $p\geq 1$ and $q\geq 1$, by Proposition~\ref{cofibre de in},
there exists a pointed homotopy equivalence
$g:\vee_{-\chi(F)} S^1\buildrel{\thickapprox}\over\rightarrow F/\partial_{in}F$.
Consider a pointed homotopy equivalence
$h:\vee_{-\chi(F)} S^1\buildrel{\thickapprox}\over\rightarrow\vee_{-\chi(F)} S^1$
such that $g\circ h$ is (pointed) homotopic to $\bar{f}\circ g$.
Since $map_*(-,X)$ preserves pointed homotopies, we have the commutative
diagram of $\mathbb{F}$-vector spaces
$$
\xymatrix{
H_{-d\chi(F)}(map_*(F/\partial_{in}F,X))\ar[rr]^{H_{-d\chi(F)}(map_*(\bar{f},X))}
\ar[d]_{H_{-d\chi(F)}(map_*(g,X))}^\cong
&& H_{-d\chi(F)}(map_*(F/\partial_{in}F,X))\ar[d]^{H_{-d\chi(F)}(map_*(g,X))}_\cong\\
H_{-d\chi(F)}(map_*(\vee_{-\chi(F)} S^1,X))\ar[rr]_{H_{-d\chi(F)}(map_*(h,X))}
&& H_{-d\chi(F)}(map_*(\vee_{-\chi(F)} S^1,X))}.
$$
Since by Proposition~\ref{action sur la top class} applied to $h$ with $n=-\chi(F)$, $H_{-d\chi(F)}(map_*(h,X))$ is the multiplication by
$(detH_1(h;\mathbb{Z}))^d$, $H_{-d\chi(F)}(map_*(\bar{f},X))$ is also
 the multiplication by $(detH_1(h;\mathbb{Z}))^d$.
Up to the isomorphisms
$$
H_1(F,\partial_{in}F)\buildrel{\cong}\over\rightarrow H_1(F/\partial_{in}F)
\build\rightarrow_{\cong}^{H_1(g)^{-1}} H_1(\vee_{-\chi(F)} S^1),$$
$H_1(f;\mathbb{Z}):H_1(F,\partial_{in}F)\rightarrow H_1(F,\partial_{in}F)$
coincides with
$H_1(h;\mathbb{Z}):H_1(\vee_{-\chi(F)} S^1)\rightarrow H_1(\vee_{-\chi(F)} S^1)$.
In particular, their determinants, $detH_1(f;\mathbb{Z})$ and $detH_1(h;\mathbb{Z})$, are equals.
Since $f$ is orientation preserving, by Proposition~\ref{orientation et determinant}, $detH_1(f;\mathbb{Z})$
is $+1$. So finally,
$$H_{-d\chi(F)}(map_*(\bar{f},X)):H_{-d\chi(F)}(map_*(F/\partial_{in}F,X))\rightarrow H_{-d\chi(F)}(map_*(F/\partial_{in}F,X))$$
is the identity morphism.
\end{proof}
\section{Prop structure}\label{propic}
In this section we prove that the action of evaluation products is propic, we thus have to prove that the evaluation products are compatible with the action of the symmetric group on the boundary components, with the gluing of surfaces along their boundaries and with the disjoint union of surfaces. 
\begin{proposition}
If $F_1$ and $F_2$ are two equivalent smooth cobordisms (Definition~\ref{cobordisms equivalents})
then the evaluation products $\mu(F_1)$ and $\mu(F_2)$ coincides.
\end{proposition}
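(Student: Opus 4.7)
The plan is to reduce the statement to naturality of integration along the fiber. Let $\phi:F_1\xrightarrow{\cong}F_2$ be the orientation-preserving diffeomorphism witnessing the equivalence, so $\phi\circ in_1=in_2$ and $\phi\circ out_1=out_2$.

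First I would produce, from $\phi$, the following compatible structure. Conjugation by $\phi$ yields an isomorphism of topological groups $c_\phi:D_1:=\mathrm{Diff}^+(F_1,\partial)\xrightarrow{\cong}D_2:=\mathrm{Diff}^+(F_2,\partial)$. Precomposition with $\phi$ gives a $c_\phi$-equivariant homeomorphism $\phi^*:map(F_2,X)\xrightarrow{\cong}map(F_1,X)$. The identities $\phi\circ in_1=in_2$ and $\phi\circ out_1=out_2$ imply that $\phi^*$ commutes with the restriction maps $map(in_i,X)$ and $map(out_i,X)$, the target maps $\mathcal{L}X^{\times p}$ and $\mathcal{L}X^{\times q}$ being common to both. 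Applying the Borel construction yields a homeomorphism $\Phi:\mathcal{M}_{g,p+q}^{(2)}(X)\xrightarrow{\cong}\mathcal{M}_{g,p+q}^{(1)}(X)$ covering a homotopy equivalence $Bc_\phi^{-1}:BD_2\xrightarrow{\simeq}BD_1$, fitting into commutative squares
$$\xymatrix{
\mathcal{M}_{g,p+q}^{(2)}(X)\ar[r]^-{\Phi}\ar[d]_{\rho_{in}^{(2)}}
&\mathcal{M}_{g,p+q}^{(1)}(X)\ar[d]^{\rho_{in}^{(1)}}\\
BD_2\times\mathcal{L}X^{\times p}\ar[r]_-{Bc_\phi^{-1}\times\mathrm{id}}
&BD_1\times\mathcal{L}X^{\times p}
}\qquad
\xymatrix{
\mathcal{M}_{g,p+q}^{(2)}(X)\ar[r]^-{\Phi}\ar[dr]_{\rho_{out}^{(2)}}
&\mathcal{M}_{g,p+q}^{(1)}(X)\ar[d]^{\rho_{out}^{(1)}}\\
&\mathcal{L}X^{\times q}
}$$

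Next, $\phi$ induces a pointed homeomorphism $\bar\phi:F_1/\partial_{in}F_1\xrightarrow{\cong}F_2/\partial_{in}F_2$, which in turn induces a homeomorphism between the fibres of $\rho_{in}^{(1)}$ and $\rho_{in}^{(2)}$. The main point is to check that under this homeomorphism the chosen orientation classes correspond. Since $\phi\in\mathrm{Diff}^+(F,\partial)$ is orientation-preserving and fixes the boundary pointwise, Proposition~\ref{orientation et determinant} gives $\det H_1(\bar\phi;\mathbb{Z})=+1$, and then Proposition~\ref{action sur la top class} asserts that the induced map on the top-degree homology $H_{-d\chi(F)}(map_*(-,X))$ is multiplication by $(\det H_1(\bar\phi;\mathbb{Z}))^d=1$, i.e.\ the identity. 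Consequently the orientation classes correspond under $H_*(\Phi)$ restricted to fibres.

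Finally, naturality of integration along the fibre (Section~\ref{proprietes umkehr maps}, naturality with respect to homotopy equivalences of oriented fibrations) applied to the left square yields
$$\rho_{in!}^{(1)}\circ H_*(Bc_\phi^{-1}\times\mathrm{id})=H_*(\Phi)\circ\rho_{in!}^{(2)}.$$
Composing with $H_*(\rho_{out}^{(1)})\circ H_*(\Phi)=H_*(\rho_{out}^{(2)})$ from the right square and identifying $H_*(BD_1)$ with $H_*(BD_2)$ via the isomorphism $H_*(Bc_\phi)$ shows that $\mu(F_1)$ and $\mu(F_2)$ define the same linear map. The principal technical obstacle is the identification of orientation classes under $\bar\phi$, which is precisely what Propositions~\ref{orientation et determinant} and~\ref{action sur la top class} are tailored for; the rest is formal naturality.
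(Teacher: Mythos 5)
The paper states this proposition without any proof, so there is nothing to compare against line by line; your reconstruction by transport along the equivalence $\phi$ (the conjugation isomorphism of the groups $Diff^+(F_i,\partial)$, the equivariant homeomorphism $\phi^*$ of mapping spaces, its compatibility with $map(in,X)$ and $map(out,X)$, the Borel construction, and naturality of integration along the fibre with respect to pull-backs and homotopy equivalences) is the natural argument, and that formal part of your proof is correct.

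The gap is in the orientation-class step. Proposition~\ref{orientation et determinant} is about a \emph{self}-map $f\in Homeo^+(F,\partial)$ of one cobordism fixing the boundary pointwise; your $\bar\phi$ is a map between the two different groups $H_1(F_1,\partial_{in}F_1;\mathbb{Z})$ and $H_1(F_2,\partial_{in}F_2;\mathbb{Z})$, and ``$\det H_1(\bar\phi;\mathbb{Z})$'' is simply not defined until one chooses bases on both sides --- equivalently, pointed homotopy equivalences with wedges of circles, i.e.\ essentially the orientation classes themselves --- so the step ``$\det=+1$, hence the chosen orientation classes correspond under $\Phi$'' is circular. If $\omega_{F_1}$ and $\omega_{F_2}$ are chosen independently, as Definition~\ref{definition evaluation product} allows, they need only correspond up to the sign $(\pm 1)^d$ (this is exactly the ambiguity analysed in Section~\ref{choix classe orientation}), so $\mu(F_1)=\mu(F_2)$ could fail by a sign when $d$ is odd. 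What Propositions~\ref{orientation et determinant} and~\ref{action sur la top class}, i.e.\ Theorem~\ref{action du mapping class group est trivial}, actually buy you is different and is the missing ingredient: any two equivalences $\phi,\phi'\colon F_1\to F_2$ differ by $\phi^{-1}\phi'\in Diff^+(F_1,\partial)$, which acts trivially on $H_{-d\chi(F)}(map_*(F_1/\partial_{in}F_1,X))$, so the class transported along $\Phi$ is independent of the chosen equivalence. This is what makes it legitimate to \emph{choose} the orientation classes coherently on each equivalence class of cobordisms (transport $\omega_{F_2}$ to define $\omega_{F_1}$); with that convention stated, your naturality argument closes the proof, and alternatively one can work with the determinant prop $\text{det}H_1(F,\partial_{in};\mathbb{Z})^{\otimes d}$ of Section~\ref{choix classe orientation}, where no choice is made and the sign ambiguity disappears.
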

\begin{notation}
Let $S_n$ denote the disjoint union of $n$ circles, $\coprod_{i=1}^n S^1$.
\end{notation}
\begin{proposition}\label{restriction evaluation product}
For any cobordism $F$, the restriction of the evaluation product
$$
\mu(F):H_*(BD(F))\otimes H_*(\mathcal LX)^{\otimes p}
\buildrel{H_*(\rho_{out})\circ \rho_{in}!}\over\rightarrow H_*(\mathcal LX)^{\otimes q}
$$
to $H_0(BD(F))$ coincides with the operation induced by $F$
$$
H_*(\mathcal{L}X)^{\otimes p}\buildrel{in!}\over\rightarrow
H_*(map(F,X))\buildrel{H_*(out)}\over\rightarrow
H_*(\mathcal{L}X)^{\otimes q}.
$$
\end{proposition}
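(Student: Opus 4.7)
My plan is to exploit the naturality of integration along the fiber with respect to pull-back, applied to the restriction of the Borel fibration over a basepoint of $BD(F)$.

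First I would pick a basepoint $* \in BD(F)$. The inclusion of the fiber of the fibration $\mathcal{M}_{g,p+q}(X) \twoheadrightarrow BD_{g,p+q}$ over $*$ gives an inclusion
$i: map(F,X) \hookrightarrow \mathcal{M}_{g,p+q}(X).$
By construction of $\rho_{in}$ via the Borel functor, the following square is a pull-back:
$$\xymatrix{
map(F,X) \ar@{^{(}->}[r]^{i} \ar@{>>}[d]_{map(in,X)}
& \mathcal{M}_{g,p+q}(X) \ar@{>>}[d]^{\rho_{in}} \\
\mathcal{L}X^{\times p} \ar@{^{(}->}[r]^-{\iota_*}
& BD_{g,p+q}\times \mathcal{L}X^{\times p}
}$$
where $\iota_*$ denotes the inclusion $a \mapsto (*,a)$. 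By the pull-back naturality of the orientation class (Section~\ref{proprietes umkehr maps}), the orientation chosen for $\rho_{in}$ restricts to the orientation used to define $map(in,X)_!$, so naturality of integration along the fiber yields
$$\rho_{in!}\circ H_*(\iota_*) = H_*(i)\circ map(in,X)_!.$$

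Next, I would post-compose both sides with $H_*(\rho_{out})$. Because the definition of $out: map(F,X) \to \mathcal{L}X^{\times q}$ factors through the Borel construction as $out = \rho_{out}\circ i$ (the second component of $\rho_{out}$ on the fiber over $*$ is precisely the evaluation $map(out,X)$, and the projection onto $\mathcal{L}X^{\times q}$ is the second factor), we obtain
$$H_*(\rho_{out})\circ \rho_{in!}\circ H_*(\iota_*) = H_*(out)\circ map(in,X)_!.$$
The left-hand side is $\mu(F)\circ H_*(\iota_*)$. Letting $[\mathrm{pt}] \in H_0(BD(F))$ denote the class of $*$, the map $H_*(\iota_*)$ sends $a \in H_*(\mathcal{L}X^{\times p})$ to $[\mathrm{pt}]\otimes a$, which is exactly the restriction of $\mu(F)$ to the $H_0(BD(F))$-component evaluated on $a$. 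Identifying $in_! = map(in,X)_!$, this is exactly the identity claimed.

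The main obstacle is verifying that the square above really is a pull-back with the correct orientation identification, rather than merely up to homotopy. Since $\mathcal{M}_{g,p+q}(X) \to BD_{g,p+q}$ is obtained from $map(F,X)$ by the Borel construction $ED_{g,p+q}\times_{D_{g,p+q}} -$, the genuine (topological) fiber over a point is a translate of $map(F,X)$, and the two fibrations $map(in,X)$ and $\rho_{in}$ are obtained by Borel-construction on the same $D_{g,p+q}$-equivariant map; hence the square is a strict pull-back. The orientation compatibility is then forced by the naturality statement in Section~\ref{proprietes umkehr maps}, since on fibers the map $i$ induces the identity on $map_*(F/\partial_{in}F,X)$, on which the chosen orientation class $\omega_F$ lives.
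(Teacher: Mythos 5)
Your proof is correct, and it reaches the conclusion by a somewhat different route than the paper's. The paper never chooses a basepoint: it inserts the intermediate space $ED(F)\times map(F,X)$ between $map(F,X)$ and the Borel construction $(map(F,X))_{hD(F)}$, notes that the squares formed with the quotient maps $ED(F)\times map(S_p,X)\rightarrow BD(F)\times map(S_p,X)$ are pull-backs while the projections $proj_2:ED(F)\times map(S_p,X)\rightarrow map(S_p,X)$ (and similarly for $F$ and $S_q$) are homotopy equivalences, and then invokes naturality of integration along the fiber with respect to pull-backs \emph{and} with respect to homotopy equivalences. You instead pull back along the basepoint inclusion $\mathcal{L}X^{\times p}\cong\{*\}\times\mathcal{L}X^{\times p}\hookrightarrow BD(F)\times\mathcal{L}X^{\times p}$ and identify the strict fibre of the Borel fibration with $map(F,X)$, so you only need pull-back naturality; the price is (i) a choice of a point of $ED(F)$ over $*$ to make that identification --- harmless because $D(F)$ fixes the boundaries pointwise, so changing the choice changes nothing in $map(in,X)$ and $map(out,X)$, and it preserves the orientation class $\omega_F$ by Theorem~\ref{action du mapping class group est trivial} --- and (ii) the observation that $BD(F)$ is path-connected, so $[\mathrm{pt}]$ spans $H_0(BD(F))$ and restricting $\mu(F)$ to $H_0$ is exactly evaluation at $[\mathrm{pt}]\otimes -$ (your identification $H_*(\iota_*)(a)=[\mathrm{pt}]\otimes a$ uses field coefficients and K\"unneth, which is the paper's standing assumption). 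Both arguments rest on the same yoga of Section~\ref{proprietes umkehr maps}: the paper's version avoids fibre identifications and basepoints, yours avoids the homotopy-equivalence step and the auxiliary space $ED(F)\times map(F,X)$.
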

\begin{proof}
Consider the following commutative diagram where all the squares
are pull-backs and the horizontal maps, fibrations.

\xymatrix{
BD(F)\times map(S_p,X)
& (map(F,X))_{hD(F)}\ar[l]_{\rho_{in}}\ar[r]
& BD(F)\times map(S_q,X)\\
ED(F)\times map(S_p,X)\ar[u]\ar[d]_{proj_2}^{\thickapprox}
& ED(F)\times map(F,X)\ar[l]^{Id\times in}
\ar[r]_{Id\times out}\ar[u]\ar[d]^{proj_2}_{\thickapprox}
& ED(F)\times map(S_q,X)\ar[u]\ar[d]^{proj_2}_{\thickapprox}\\
map(S_p,X)
& map(F,X)\ar[l]^{in}\ar[r]_{out}
& map(S_q,X)
}
Here $proj_2$ is the projection on the second factor.
By definition, $\rho_{out}$ fits into the commutative diagram

$$\xymatrix{
map(F,X)_{hD(F)}\ar[r]\ar[d]_{\rho_{out}}
& BD(F)\times map(S_q,X)\ar[dl]_{proj_2}\\
map(S_q,X)
& ED(F)\times map(S_q,X)\ar[l]^{proj_2}_{\thickapprox}\ar[u].
}$$
By naturatility of integration along the fibers with respect to pull-backs,
we obtain the proposition.
\end{proof}
For $\varepsilon=0$ or $1$,
let $i_\varepsilon:S_n\hookrightarrow S_n\times I$,
$x\mapsto (x,\varepsilon)$, be the two canonical inclusions of $S_n$ into
the cylinder $S_n\times I$.
\begin{proposition}
Let $\phi:S_n\rightarrow S_n$ be a diffeomorphism.
Consider the cylinder $S_n\times I$ as the cobordism equipped with
the in-boundary
$S_n\buildrel{i_0\circ \phi}\over\hookrightarrow S_n\times I$
and the out-boundary
$S_n\buildrel{i_1}\over\hookrightarrow S_n\times I$.
Following~\cite[1.3.22]{Kock:Frob2TQFT}, we denote this cobordism $C_{\phi}$.
The operation induced by $C_{\phi}$:
$$H_*(map(i_1,X))\circ map(i_0\circ\phi,X)!:
H_*(\mathcal{L}X)^{\otimes n}\rightarrow
H_*(map(C_{\phi},X))\rightarrow
H_*(\mathcal{L}X)^{\otimes n}
$$
coincides with $H_*(map(\phi,X))^{-1}$.
\end{proposition}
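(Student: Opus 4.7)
The plan is to exploit the fact that $C_\phi$ is a cylinder, so the map $i_0 \circ \phi: S_n \hookrightarrow S_n \times I$ is a cofibration \emph{and} a homotopy equivalence. Consequently the fibration
\[
p_\phi := map(i_0 \circ \phi, X) : map(S_n \times I, X) \twoheadrightarrow \mathcal{L}X^{\times n}
\]
is a weak equivalence whose fiber $map_*((S_n \times I)/(i_0 \circ \phi)(S_n), X)$ is contractible. In particular it carries a canonical orientation, the generator of $H_0$ of the fiber, and the induced shriek map $p_\phi{}_!$ is of degree $0$.

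Next, I will identify $p_\phi{}_!$ concretely. When the fiber is contractible and the orientation class is $1 \in H_0$, unraveling the spectral-sequence definition of integration along the fiber given in Section~\ref{integration suite spectrale} shows that $p_\phi{}_!$ is exactly the inverse of the isomorphism $H_*(p_\phi)$. Equivalently, $p_\phi{}_!$ is induced by any homotopy inverse of $p_\phi$. A convenient such inverse is the ``constant-in-$I$'' section
\[
s_\phi : \mathcal{L}X^{\times n} \longrightarrow map(S_n \times I, X), \qquad s_\phi(\alpha)(x,t) := \alpha\bigl(\phi^{-1}(x)\bigr),
\]
for which one checks directly that $p_\phi \circ s_\phi = \mathrm{id}$ and $s_\phi \circ p_\phi \simeq \mathrm{id}$ via the straight-line homotopy in the $I$-coordinate.

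Finally I compose with the out-boundary: $p_1 \circ s_\phi(\alpha)(x) = s_\phi(\alpha)(x,1) = \alpha \circ \phi^{-1}(x)$, which is precisely $map(\phi,X)^{-1}(\alpha)$. Therefore
\[
H_*(map(i_1, X)) \circ p_\phi{}_! \;=\; H_*(p_1 \circ s_\phi) \;=\; H_*\bigl(map(\phi,X)^{-1}\bigr) \;=\; H_*(map(\phi,X))^{-1},
\]
as claimed. The only point requiring a small verification is the identification of $p_\phi{}_!$ with $H_*(s_\phi)$ in step two; since the relevant Serre spectral sequence is trivially concentrated on one row and collapses at $E^2$, this is essentially bookkeeping and poses no real obstacle.
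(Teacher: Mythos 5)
Your proof is correct, but it runs along a slightly different track than the paper's. The paper argues formally: it first treats the case $\phi=id$, using naturality of integration along the fiber with respect to homotopy equivalences to get $H_*(map(i_0,X))\circ map(i_0,X)_!=id$ and then the homotopy $i_0\simeq i_1$ to replace $i_0$ by $i_1$; the general case then follows from the composition property $map(i_0\circ\phi,X)_!=map(i_0,X)_!\circ map(\phi,X)_!$ together with $map(\phi,X)_!=H_*(map(\phi,X))^{-1}$. You instead identify the shriek of the acyclic fibration $map(i_0\circ\phi,X)$ in one stroke as $H_*$ of an explicit homotopy inverse (your constant-in-$I$ section $s_\phi$) and then simply evaluate $map(i_1,X)\circ s_\phi=map(\phi,X)^{-1}$. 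The trade-off: the paper's route uses only the formal umkehr properties listed in Section 2.3 (naturality, composition, shriek of a homeomorphism), so it carries over verbatim to the transfer/stable setting, while your route is more concrete but requires justifying the edge-homomorphism fact that for a fibration with $H_*$-acyclic fiber and orientation class the canonical generator of $H_0$ one has $p_!=H_*(p)^{-1}$ — which is indeed routine from the spectral-sequence definition (or, as the paper would do it, from the naturality property applied to the square comparing $p$ with the identity fibration), and your explicit verification of the section and the out-boundary composite is sound. Both arguments rest on the same implicit normalization, namely that the orientation class of the cylinder's fiber is the canonical generator of $H_0$, which is the convention forced by the paper's treatment of orientation classes (and by the identity axiom of the prop action).
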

\begin{proof}
Since $map(-,X)$ preserves homotopies, the fibration $map(i_0,X)$
is a homotopy equivalence. So by naturality of integration along
the fiber with respect to homotopy equivalences,
$$
H_*(map(i_0,X))\circ map(i_0,X)!=id!\circ id=id.
$$
But since $map(i_0,X)$ is homotopic to $map(i_1,X)$,
$H_*(map(i_0,X))$ is equal to $H_*(map(i_1,X))$.
Therefore we have proved the proposition in the case $\phi=id$.
The general case follows since
$map(\phi,X)!=H_*(map(\phi,X))^{-1}$ and
$map(i_0\circ\phi,X)!=map(i_0,X)!\circ map(\phi,X)!$.
\end{proof}
Let $id:S_n\rightarrow S_n$ be the identity diffeomorphism.
Denote by $id_n\in H_0(BD(C_{id}))$ the identity morphisms of the prop.
Let $\phi:S_m\coprod S_n\buildrel{\cong}\over\rightarrow S_n\coprod S_m$
be the twist diffeomorphism.
Denote by $\tau_{m,n}\in H_0(BD(C_{\phi}))$ the symmetry isomorphism of the
prop. From the previous two propositions, we immediately obtain
\begin{cor}
i) (identity) The evaluation product for the cylinder satisfies

$\mu(C_{id})(id_n\otimes a_1\otimes\dots\otimes a_n)=
a_1\otimes\dots\otimes a_n$
for $a_1$, \dots, $a_n\in H_*(\mathcal{L}X)$.

\noindent ii) (symmetry) The evaluation product for $C_{\phi}$ satisfies

$\mu(C_{\phi})(\tau_{m,n}\otimes v\otimes w)=(-1)^{\vert v\vert\vert w\vert}
w\otimes v$ for $v\in H_*(\mathcal{L}X)^{\otimes m}$ and
$w\in H_*(\mathcal{L}X)^{\otimes n}$.
\end{cor}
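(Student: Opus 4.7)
The plan is to deduce both statements directly from the two preceding propositions. Proposition~\ref{restriction evaluation product} identifies the restriction of $\mu(F)$ to $H_0(BD(F))$ with the operation $H_*(out)\circ in_!$ coming from the underlying correspondence, while the proposition immediately preceding the corollary computes this same operation for a cylinder cobordism $C_\phi$ as $H_*(map(\phi,X))^{-1}$. Since both $id_n$ and $\tau_{m,n}$ live in degree zero, in each case the verification reduces to identifying the inverse self-map $H_*(map(\phi,X))^{-1}$ for an appropriate diffeomorphism $\phi$.

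For part (i), take $\phi = id_{S_n}$. The class $id_n\in H_0(BD(C_{id}))$ is by definition the class of a point in any connected component of $BD(C_{id})$, so Proposition~\ref{restriction evaluation product} and the cylinder proposition together give
\[
\mu(C_{id})(id_n\otimes a_1\otimes\cdots\otimes a_n)
= H_*(map(id_{S_n},X))^{-1}(a_1\otimes\cdots\otimes a_n)
= a_1\otimes\cdots\otimes a_n,
\]
as required.

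For part (ii), take $\phi:S_m\coprod S_n \to S_n\coprod S_m$ the twist diffeomorphism; then $\tau_{m,n}\in H_0(BD(C_\phi))$ is again the class of a point. The induced homeomorphism $map(\phi,X)$ is, up to the canonical identifications, the twist map between $\mathcal{L}X^{\times n}\times\mathcal{L}X^{\times m}$ and $\mathcal{L}X^{\times m}\times\mathcal{L}X^{\times n}$, so on homology (with field coefficients) $H_*(map(\phi,X))$ is the Koszul symmetry $w\otimes v\mapsto (-1)^{|v||w|}v\otimes w$; its inverse therefore sends $v\otimes w$ to $(-1)^{|v||w|}w\otimes v$. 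Combining this with Proposition~\ref{restriction evaluation product} yields the stated formula.

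There is no genuine obstacle to overcome; the only point requiring a little care is the purely bookkeeping check that the distinguished $H_0$-classes $id_n$ and $\tau_{m,n}$ really do correspond to the basepoint components of $BD(C_{id})$ and $BD(C_\phi)$ respectively, so that Proposition~\ref{restriction evaluation product} applies. The Koszul sign in part (ii) comes for free from the symmetric monoidal structure used throughout, and after this both parts are immediate.
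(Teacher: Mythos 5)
Your argument is correct and is exactly the route the paper intends: the corollary is stated as an immediate consequence of Proposition~\ref{restriction evaluation product} together with the cylinder proposition, and your identification of $H_*(map(\phi,X))^{-1}$ (identity for $\phi=id$, Koszul twist inverse for the swap) is the whole content. The paper gives no further detail, so your write-up matches its (implicit) proof.
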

The following Lemma is obvious.
\begin{lem}\label{middle exchange}
Let $D$ and $D'$ two topological groups.
Let $X$ be a left $D$-space. Let $X'$ be a left $D'$-space.
Let $Y$ be a topological space that we considered to be a trivial 
left $D$-space and a trivial left $D'$-space.
Let $f:X\rightarrow Y$ be a $D$-equivariant map.
Let $f':X'\rightarrow Y$ be a $D'$-equivariant map.
Consider the pull-back
$$\xymatrix{
X'' \ar[r]\ar[d]
&X\ar[d]^f\\
X'\ar[r]_{f'} & Y
}$$
Then 

i) $X"$ is a sub $D\times D'$-space of $X\times X'$
and we have the pull-back

\[
\xymatrix{
(X'')_{hD\times D'} \ar[r]\ar[d]
&(X)_{hD}\ar[d]\\
(X')_{hD'}\ar[r] & Y.
}\]

ii) In particular, we have a natural homeomorphism

$$(X\times X')_{hD\times D'}\cong (X)_{hD}\times (X')_{hD'}.$$
\end{lem}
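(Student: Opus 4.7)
The lemma is asserted to be obvious, so the plan is to verify each claim by unpacking definitions; the content is mostly a book-keeping exercise about Borel constructions and fibred products.

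For part (i), the first step is to check that $X''=X\times_{Y}X'$ is preserved by the diagonal action of $D\times D'$ on $X\times X'$. Given $(x,x')\in X''$ and $(d,d')\in D\times D'$, I compute
$$f(d\cdot x)=d\cdot f(x)=f(x)=f'(x')=d'\cdot f'(x')=f'(d'\cdot x'),$$
where the first and last equalities use the $D$- and $D'$-equivariance of $f$ and $f'$, and the middle two use that $Y$ carries trivial $D$- and $D'$-actions. Hence $(d\cdot x,d'\cdot x')\in X''$.

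Next I address the pull-back square of Borel constructions. The key identification is the natural homeomorphism $E(D\times D')\simeq ED\times ED'$, so that
$$(X'')_{hD\times D'}=(ED\times ED')\times_{D\times D'}(X\times_{Y}X').$$
On the other side of the square, $(X)_{hD}=ED\times_{D}X$ and $(X')_{hD'}=ED'\times_{D'}X'$ map to $Y$ by the well-defined assignments $[e,x]\mapsto f(x)$ and $[e',x']\mapsto f'(x')$, well-defined because $Y$ carries the trivial $D$- and $D'$-actions. The natural map
$$(ED\times ED')\times_{D\times D'}(X\times_{Y}X')\longrightarrow(ED\times_{D}X)\times_{Y}(ED'\times_{D'}X'),\qquad [(e,e'),(x,x')]\longmapsto\bigl([e,x],[e',x']\bigr)$$
is then shown to be a homeomorphism. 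Bijectivity is immediate from the definitions, and the topological statement follows from the fact that the quotient by the product group $D\times D'$ may be taken in two stages, first by $D$ on the $X$-factor and then by $D'$ on the $X'$-factor.

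Part (ii) is then obtained by specialising part (i) to the case $Y=\ast$, in which the fibre product $X\times_{Y}X'$ is simply $X\times X'$ and the pull-back square degenerates to the product. The only step I expect to require a slight moment of care is verifying that the two-stage quotient identification is indeed a homeomorphism (not merely a continuous bijection), but this follows from the standard fact that Borel constructions commute with products in the relevant sense; everything else is a straight diagram chase.
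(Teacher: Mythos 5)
Your proof is correct, and since the paper offers no argument for this lemma (it is declared obvious), your unpacking of the definitions is exactly the intended one. The only points worth making explicit are that the asserted homeomorphism is taken with $ED\times ED'$ as the model for $E(D\times D')$ — which is precisely how the paper uses the lemma, the comparison with $E(D\times D')$ being handled separately as a homotopy equivalence — and that your continuous bijection is indeed a homeomorphism because orbit maps are open, so the product $q\times q'$ of the two quotient maps $ED\times X\to X_{hD}$, $ED'\times X'\to X'_{hD'}$, as well as its restriction to the saturated subspace lying over $X_{hD}\times_Y X'_{hD'}$, remain open surjections and hence quotient maps.
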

\begin{proposition}(Monoidal, i.e. disjoint union)
Let $F_{p+q}$ and $F'_{p'+q'}$ be two surfaces.
For $a\otimes v\in H_*(BD_{p+q}\times\mathcal{L}X^{\times p})$ and
  $b\otimes w\in H_*(BD'_{p+q}\times\mathcal{L}X^{\times p'})$
The evaluation product satisfies
$$\mu(F\amalg F')(a\otimes b\otimes v\otimes w)
=(-1)^{\vert b\vert\vert v\vert-d\chi_{F'}(\vert a\vert+\vert v\vert)}
\mu(F)(a\otimes v)\otimes \mu(F')(b\otimes w).$$
\end{proposition}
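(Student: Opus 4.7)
The strategy is to reduce the evaluation product for the disjoint union $F\amalg F'$ to a product of the two individual evaluation products, using three ingredients: the compatibility of Borel constructions with products supplied by Lemma~\ref{middle exchange}, the product property of integration along the fiber recalled in Section~\ref{proprietes umkehr maps}, and the K\"unneth isomorphism (which is why we work with field coefficients).

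First I would unpack the definition of $\mu(F\amalg F')$. Since $D(F\amalg F')=D(F)\times D(F')$ and $map(F\amalg F',X)=map(F,X)\times map(F',X)$, part (ii) of Lemma~\ref{middle exchange} yields a natural homeomorphism $\mathcal{M}_{F\amalg F'}(X)\cong \mathcal{M}_F(X)\times \mathcal{M}_{F'}(X)$. Under this identification, and after the canonical shuffling of factors on the base
$$BD_{F\amalg F'}\times \mathcal{L}X^{\times(p+p')}\cong (BD_F\times \mathcal{L}X^{\times p})\times (BD_{F'}\times \mathcal{L}X^{\times p'}),$$
the fibration $\rho_{in}$ associated to $F\amalg F'$ is identified with the external product $\rho_{in_F}\times \rho_{in_{F'}}$, whose fiber is the product of the two fibers and whose orientation class can be taken to be $\omega_F\times\omega_{F'}$. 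The same discussion applies to $\rho_{out}$.

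Next I would apply the product property of integration along the fiber from Section~\ref{proprietes umkehr maps}. Since $\rho_{in_{F'}!}$ has homological degree $-d\chi(F')$, the initial reshuffling of $(a\otimes b)\otimes(v\otimes w)$ into $(a\otimes v)\otimes(b\otimes w)$ on the base produces a Koszul sign $(-1)^{\vert b\vert\vert v\vert}$, after which the product formula contributes the sign $(-1)^{(\vert a\vert+\vert v\vert)(-d\chi(F'))}$. Composing with $H_*(\rho_{out})$, which K\"unneth identifies with $H_*(\rho_{out_F})\otimes H_*(\rho_{out_{F'}})$, then produces $\mu(F)(a\otimes v)\otimes \mu(F')(b\otimes w)$ decorated with precisely the sign $(-1)^{\vert b\vert\vert v\vert-d\chi_{F'}(\vert a\vert+\vert v\vert)}$ asserted in the statement.

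The main bookkeeping concern is to verify that the Borel construction, K\"unneth, and the shuffling of tensor factors genuinely intertwine in the expected way. The key technical point is that Lemma~\ref{middle exchange} makes the comparison between $\rho_{in_{F\amalg F'}}$ and $\rho_{in_F}\times \rho_{in_{F'}}$ an honest homeomorphism of fibrations with compatible orientation classes, so that no correction term appears beyond the two Koszul signs tracked above; the rest of the argument is formal naturality of the constructions recalled in Sections~\ref{proprietes umkehr maps} and~\ref{Definition des evaluation products}.
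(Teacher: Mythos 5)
Your argument is correct and follows essentially the same route as the paper's proof: Lemma~\ref{middle exchange}~ii) to split the Borel construction of the disjoint union, the product property of integration along the fiber from Section~\ref{proprietes umkehr maps} to produce the sign $(-1)^{-d\chi_{F'}(\vert a\vert+\vert v\vert)}$, the Koszul shuffle for $(-1)^{\vert b\vert\vert v\vert}$, and K\"unneth to identify the outgoing evaluations. The one point the paper makes explicit that you compress into ``formal naturality'' is that the identification of $(map(F\amalg F',X))_{hD\times D'}$ with the product of the two Borel constructions is a homeomorphism only for the model $ED\times ED'$, so one further application of naturality of integration along the fiber with respect to the homotopy equivalence $ED\times ED'\simeq E(D\times D')$ is needed before the product formula applies.
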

\begin{proof}
Consider the two cobordisms
$$\amalg_{i=1}^p S^1\buildrel{in}\over\hookrightarrow F
\buildrel{out}\over\hookleftarrow \amalg_{i=1}^q S^1
\quad\text{and}\quad
\amalg_{i=1}^{p'} S^1\buildrel{in'}\over\hookrightarrow F'
\buildrel{out'}\over\hookleftarrow \amalg_{i=1}^{q'} S^1.
$$
By definition, the disjoint union of these two cobordisms is
$$
\amalg_{i=1}^p S^1\amalg\amalg_{i=1}^{p'} S^1
\buildrel{in \amalg in'}\over\hookrightarrow F\amalg F'
\buildrel{out\amalg out'}\over\hookleftarrow
\amalg_{i=1}^q S^1\amalg\amalg_{i=1}^{q'} S^1.
$$
Since we assume that $F$ and $F'$ have each at least one boundary component,
we have the isomorphism of topological groups
$$
Diff^+(F\amalg F';\partial)\cong
Diff^+(F;\partial)\times Diff^+(F';\partial).
$$
Denote by $D$ the group $Diff^+(F;\partial)$ while $D':=Diff^+(F';\partial)$.
The homeomorphism $map(Y\coprod Z,X)\cong map(Y,Z)\times map(Z,X)$
is natural in $Y$ and $Z$.
Therefore using Lemma~\ref{middle exchange} ii), we have the commutative diagram
$$
\xymatrix{
(map(F,X))_{hD}\times (map(F',X))_{hD'}
\ar[r]^\cong\ar[d]_{\rho_{in}\times \rho_{in}'}
& (map(F\amalg F',X))_{hD\times D'}\ar[d]\\
BD\times\mathcal{L}X^{\times p}\times
BD'\times\mathcal{L}X^{\times p'}\ar[r]_\cong 
& ED\times ED'/D\times D'\times\mathcal{L}X^{\times p+p'} 
}
$$
where the horizontal maps are homeomorphisms.
The homotopy equivalence
$ED\times ED'\buildrel{\simeq}\over\rightarrow E(D\times D')$
induces the commutative diagram
$$
\xymatrix{
ED\times ED'\times_{D\times D'} map(F\amalg F',X)\ar[d]\ar[r]^{\simeq}
& E(D\times D')\times_{D\times D'} map(F\amalg F',X)\ar[d]^{\rho_{in}}
\\
 ED\times ED'/D\times D'\times\mathcal{L}X^{\times p+p'} \ar[r]_{\simeq}
& B(D\times D') \times\mathcal{L}X^{p+p'}
}
$$
where the horizontal maps are homotopy equivalences.
Using similar commutative diagrams for $\rho_{out}$,
by naturality of integration along the fiber with respect to
homotopy equivalences, we obtain the commutative diagram

$$
\xymatrix{
H_*(BD\times\mathcal{L}X^{p}\times BD'\times\mathcal{L}X^{p'})
\ar[r]^{\cong}\ar[d]_{(\rho_{in}\times\rho_{in}')_!}
& H_*(B(D\times D')\times\mathcal{L}X^{p+p'})\ar[d]^{\rho_{in}}\\
H_*((map(F,X))_{hD}\times (map(F',X))_{hD'})
\ar[r]^{\cong}\ar[d]_{H_*(\rho_{out}\times\rho_{out}')}
& H_*((map(F\amalg F',X))_{hD\times D'})
\ar[d]^{H_*(\rho_{out})}\\
H_*(\mathcal{L}X^{q}\times\mathcal{L}X^{q'})\ar[r]^{\cong}
& H_*(\mathcal{L}X^{q+q'})
}
$$
where the horizontal maps are isomorphisms.
\end{proof}
We now show that the evaluation products defined in the preceding section are compatible with gluing. Let us pick two surfaces $F_{g,p+q}$ and $F'_{g',q+r}$, gluing these surfaces one gets a third surfaces $F"_{g",p+r}$ and an inclusion of groups $D_{g,p+q}\times D_{g',q+r}\hookrightarrow D_{g",p+r}$.
Therefore the composite $gl$
$$BD_{g,p+q}\times BD_{g',q+r}\thickapprox
B(D_{g,p+q}\times D_{g',q+r})
\rightarrow BD_{g",p+r}$$
gives in homology the gluing morphism
$$gl:H_*(BD_{g,p+q})\otimes H_*(BD_{g',q+r})\rightarrow H_*(BD_{g",p+r}).$$
\begin{proposition}\label{compatible au gluing} (Composition, i.e. gluing). For any $a_i\in H_*(\mathcal LX)$ and any $m_1\in H_*(BD_{g,p+q})$ and $m_2\in H_*(BD_{g',q+r})$ one has 
$$(gl(m_1\otimes m_2))(a_1\otimes\ldots\otimes a_p)=m_2( m_1(a_1\otimes\ldots\otimes a_p)).$$
\end{proposition}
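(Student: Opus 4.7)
The plan is to realize both sides of the asserted equality as the homology operation associated to a common correspondence whose middle space is the Borel construction
$$N := (map(F'',X))_{h(D \times D')},$$
where I abbreviate $D := D_{g,p+q}$, $D' := D_{g',q+r}$ and $D'' := D_{g'',p+r}$, and label the structure maps of Definition~\ref{definition evaluation product} by superscripts $F$, $F'$, $F''$.

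First I would identify $N$ via two different pull-back squares. Since $F'' = F \cup_{S_q} F'$, there is a natural homeomorphism $map(F'',X) \cong map(F,X) \times_{\mathcal{L}X^{\times q}} map(F',X)$ whose projections are $D$-equivariant on the $F$-factor and $D'$-equivariant on the $F'$-factor, with $\mathcal{L}X^{\times q}$ carrying the trivial actions (since $D$ fixes $\partial_{out}F$ and $D'$ fixes $\partial_{in}F'$ pointwise). Applying Lemma~\ref{middle exchange}(ii) yields the pull-back square
$$
\xymatrix{
N \ar[r]^-{p_F} \ar[d]_-{p_{F'}} & \mathcal{M}_{g,p+q}(X) \ar[d]^-{\rho_{out}^F} \\
\mathcal{M}_{g',q+r}(X) \ar[r]_-{\pi} & \mathcal{L}X^{\times q},
}
$$
where $\pi$ is the composite of $\rho_{in}^{F'}$ with the projection onto $\mathcal{L}X^{\times q}$. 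Separately, since any model for $ED''$ is already a free contractible $(D\times D')$-space, the change-of-groups square for $D\times D' \hookrightarrow D''$ is a homotopy pull-back, and multiplying by $\mathcal{L}X^{\times p}$ (with trivial action) gives a second pull-back
$$
\xymatrix{
N \ar[r]^-{\phi} \ar[d]_-{\rho_{in}^N} & \mathcal{M}_{g'',p+r}(X) \ar[d]^-{\rho_{in}^{F''}} \\
BD \times BD' \times \mathcal{L}X^{\times p} \ar[r]_-{gl \times 1} & BD'' \times \mathcal{L}X^{\times p},
}
$$
using that the in-boundary of $F''$ is the in-boundary of $F$.

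Applying naturality of integration along the fibre with respect to pull-backs (Section~\ref{proprietes umkehr maps}) to the second square, $\rho_{in}^N$ inherits an orientation from $\rho_{in}^{F''}$ and satisfies $H_*(\phi)\circ(\rho_{in}^N)_! = (\rho_{in}^{F''})_!\circ H_*(gl \times 1)$. Since the out-boundary of $F''$ equals that of $F'$, one also has $\rho_{out}^{F''}\circ\phi = \rho_{out}^{F'}\circ p_{F'}$, and these two observations combine to give
$$
\mu(F'')(gl(m_1 \otimes m_2) \otimes a) = H_*(\rho_{out}^{F'}\circ p_{F'})\circ(\rho_{in}^N)_!(m_1 \otimes m_2 \otimes a).
$$
For the right-hand side of the proposition, I would invoke Lemma~\ref{composition lemma}. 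The operations $\mu(F)$ and $\mu(F')$ are the homology operations of correspondences $c_F: BD\times\mathcal{L}X^{\times p}\to\mathcal{L}X^{\times q}$ and $c_{F'}: BD'\times\mathcal{L}X^{\times q}\to\mathcal{L}X^{\times r}$. After extending $c_F$ trivially by the identity correspondence on $BD'$ to $\tilde c_F: BD\times BD'\times\mathcal{L}X^{\times p}\to BD'\times\mathcal{L}X^{\times q}$, the pull-back defining $c_{F'}\circ\tilde c_F$ coincides with the first square above; hence the composite correspondence has middle space $N$ with structure maps $\rho_{in}^N$ and $\rho_{out}^{F'}\circ p_{F'}$, and Lemma~\ref{composition lemma} identifies $\mu(F')(m_2 \otimes\mu(F)(m_1 \otimes a))$ with $H_*(\rho_{out}^{F'}\circ p_{F'})\circ(\rho_{in}^N)_!(m_1 \otimes m_2 \otimes a)$, matching the previous display.

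The hard part will be the careful bookkeeping of factor orderings and Koszul signs, in particular when swapping $BD'$ past $\mathcal{L}X^{\times p}$ in the monoidal extension of $c_F$ to $\tilde c_F$; one also has to verify that the two priori distinct orientations of $\rho_{in}^N$ (inherited from $\rho_{in}^{F''}$ via the change-of-groups pull-back on the one hand, and built from the product and composition properties of integration along the fibre on the other) coincide, which follows from the naturality of orientations with respect to pull-backs recorded in Section~\ref{proprietes umkehr maps}.
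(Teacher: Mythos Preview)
Your proof is correct and follows the same strategy as the paper's: both introduce the intermediate space $N = (map(F'',X))_{h(D\times D')}$ (called $\mathcal{M}_{g,g',p+q+r}(X)$ in the paper), realize it via a change-of-groups pull-back to handle the left-hand side and via Lemma~\ref{middle exchange} applied to the pushout $F'' = F\cup_{S_q}F'$ for the right-hand side, and then invoke the Composition Lemma. One minor correction: you want part (i) of Lemma~\ref{middle exchange}, not (ii), and the pull-back defining $c_{F'}\circ\tilde c_F$ sits over $BD'\times\mathcal{L}X^{\times q}$ (as in the paper's diagram) rather than over $\mathcal{L}X^{\times q}$ alone---though the pull-back object is still $N$, so the argument goes through unchanged.
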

\begin{proof}
We have the two sequences of maps
$$BD_{g,p+q}\times BD_{g',q+r}\times \mathcal LX^{\times p}
\rightarrow BD_{g",p+r}\times\mathcal LX^{\times p}\leftarrow\mathcal M_{g",p+r}(X)\rightarrow\mathcal LX^{\times r}$$
and
$$BD_{g,p+q}\times BD_{g',q+r}\times \mathcal LX^{\times p}\leftarrow BD_{g',q+r}\times\mathcal M_{g,p+q}(X)\rightarrow BD_{g',q+r}\times\mathcal LX^{\times q}\leftarrow\mathcal M_{g',q+r}(X)\rightarrow\mathcal LX^{\times r}.$$
The first induces the operation $gl(m_1\otimes m_2)$ while the second induces $m_2\circ m_1$. In order to compare these two operations we introduce the following intermediate moduli space :
$$\mathcal M_{g,g',p+q+r}(X):=(map(F"_{g",p+r},X))_{hD_{g,p+q}\times D_{g',q+r}}.$$
Denote by $\rho_{in}(g,g',p+g+r):=$
$$(in)_{hD_{g,p+q}\times D_{g',q+r}}:\mathcal M_{g,g',p+q+r}(X)\twoheadrightarrow
BD_{g,p+q}\times BD_{g',q+r}\times \mathcal LX^{\times p}.$$
Denote by $proj_1$ the projection of the first factor.
we have the commutative diagram :
$$\xymatrix{
\mathcal M_{g,g',p+q+r}(X)\ar[r]^{glue}\ar[d]_{\rho_{in}(g,g',p+g+r)}
&\mathcal M_{g",p+r}(X)\ar[d]^{\rho_{in}}\\
BD_{g,p+q}\times BD_{g',q+r}\times \mathcal LX^{\times p}
\ar[r]_{gl\times Id}\ar[d]_{proj_1}
& BD_{g",p+r}\times \mathcal LX^{\times ^p}\ar[d]^{proj_1}\\
BD_{g,p+q}\times BD_{g',q+r}\ar[r]_{gl}& BD_{g",p+r}
}$$
The two composites of the vertical maps,
$proj_1\circ\rho_{in}(g,g',p+g+r)$ and $proj_1\circ\rho_{in}$
are fiber bundles with the
same fiber $map(F"_{g",p+r},X)$.
Therefore the total square is a pull-back.
Since the lower square is obviously a pull-back, by associativity of pull-back,
the upper square is also a pull-back.
Thus, by property of the integration along the fibers in homology we have :
$$\mu_{g,g',p+r}\circ(gl\times Id)_*=
(\rho_{out})_*\circ glue_*\circ(\rho_{in}(g,g',p+q+r))_!$$
thus $gl(m_1\otimes m_2)(a_1\otimes\ldots a_p)$ is equal to
$$(\rho_{out})_*\circ glue_*\circ(\rho_{in}(g,g',p+q+r))_!
(m_1\otimes m_2\otimes a_1\ldots\otimes a_p).$$

Now, we have to compare our second correspondence
(corresponding to $m_2\circ m_1$) to the correspondence
$$BD_{g,p+q}\times BD_{g',q+r}\times \mathcal LX^{\times p}
\buildrel{\rho_{in}(g,g',p+q+r)}\over
\longleftarrow\mathcal M_{g,g',p+q+r}(X)\rightarrow\mathcal LX^{\times r}.$$
By definition, $F"_{g",p+r}$ is given by the push-out
$$\xymatrix{
\coprod_{i=1}^q S^1 \ar[r]\ar[d]
&F_{g,p+q}\ar[d]\\
F'_{g,q+r}\ar[r] & F"_{g",p+r}.
}
$$
By applying $map(-,X)$, we obtain the pull-back
\begin{equation}~\label{produit fibre gluing}
\xymatrix{
map(F"_{g",p+r},X)\ar[r]^f\ar[d]
&map(F_{g,p+q},X)\ar[d]^{map(out,X)}\\
map(F'_{g,q+r},X)\ar[r]_-{map(in,X)} & \mathcal{L}X^{\times q}. 
}
\end{equation}
By applying the Lemma~\ref{middle exchange} i) to the previous pull-back,
we obtain the pull-backs,
$$
\xymatrix{
\mathcal M_{g,g',p+q+r}(X) \ar[r]\ar[d]
&BD_{g',q+r}\times\mathcal M_{g,p+q}(X)
\ar[d]^{BD_{g',q+r}\times\rho_{out}}\ar[r]_-{proj_2}
&\mathcal M_{g,p+q}(X)\ar[d]^{\rho_{out}}\\
\mathcal M_{g',q+r}(X)\ar[r]_{\rho_{in}} &
BD_{g',q+r}\times\mathcal LX^{\times q}\ar[r]_-{proj_2}
&\mathcal LX^{\times q}
}
$$
\noindent Finally, we have obtain the diagram
$$
\xymatrix{
& BD_{g',q+r}\times\mathcal M_{g,p+q}(X)\ar[r]_{BD_{g',q+r}\times\rho_{out}}
\ar[dl]_{BD_{g',q+r}\times\rho_{in}}
& BD_{g',q+r}\times\mathcal LX^{\times q}\\
BD_{g,p+q}\times BD_{g',q+r}\times \mathcal LX^{\times p}
\ar[d]_{gl\times\mathcal LX^{\times ^p}}
& \mathcal M_{g,g',p+q+r}(X)\ar[l]^-{\rho_{in}(g,g',p+g+r)}\ar[d]^{glue}\ar[r]\ar[u]
&\mathcal M_{g',q+r}(X)\ar[d]^{\rho_{out}}\ar[u]_{\rho_{in}}\\
BD_{g",p+r}\times \mathcal LX^{\times ^p}
& \mathcal M_{g",p+r}(X)\ar[l]^{\rho_{in}}\ar[r]_{\rho_{out}}
& \mathcal LX^{\times ^r}
}$$
\noindent where:

-the lower left square and the upper right square are pull-backs and

-the lower right square and the upper left triangle commutte.

The comparison follows from the Composition lemma, 
\end{proof}

\section{Results for connected topological groups}

\subsection{Main Theorem}
In Section~\ref{Definition des evaluation products}, we have defined for every oriented cobordism $F$
whose path-conponents have at least one in-boundary component and at least one out-going boundary component,
a linear map of degre $-d\chi(F)$,
$$\mu(F_{g,p+q}):H_*(BD_{g,p+q})\otimes H_*({\mathcal LX})^{\otimes p}
\rightarrow H_*({\mathcal LX})^{\otimes q}.$$
In Section~\ref{propic}, we have shown that these $\mu(F)$ define an action of the corresponding prop
$H_*(BD)$ on $H_*(\mathcal{L}X)$.
Therefore, by Proposition~\ref{equivalence des trois props}, we have our main Theorem:
\begin{theor}\label{main theoreme homologie groupe de Lie}
{\bf (Main Theorem)} Let $X$ be a simply connected topological space such that
the singular homology  of its based loop space with coefficient in a field,
$H_*(\Omega X,\mathbb F)$, is finite dimensional.
Then the singular homology of $\mathcal LX$ taken with coefficients in a field, $H_*(\mathcal{L}X;\mathbb{F})$,
is a non-unital non-counital homological conformal field theory. (See Section~\ref{unital counital ou pas} for the definition)
\end{theor}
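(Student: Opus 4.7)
The plan is to assemble the evaluation products $\mu(F)$ constructed in Section~\ref{Definition des evaluation products} into a single morphism of linear props, and then to transfer the resulting algebraic structure from $H_*(\propBD)$ to the Segal prop $H_*(\mathfrak{M})$ using Proposition~\ref{equivalence des trois props}. The theorem is not so much a new construction as an organizational statement: all of the hard work has been done in Sections~\ref{Definition des evaluation products} and~\ref{propic}, and it remains only to check that everything fits together properly.

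For each pair $(p,q)$ of positive integers, the non-unital non-counital version of $\propBD(p,q)$ is by definition the disjoint union $\coprod_F BDiff^+(F;\partial)$ taken over oriented cobordism classes $F=F_{g,p+q}$ each of whose path-components carries at least one in-boundary and at least one out-boundary circle. Since singular homology with field coefficients turns disjoint unions into direct sums,
$$H_*(\propBD(p,q);\mathbb{F})\;\cong\;\bigoplus_{F_{g,p+q}} H_*(BDiff^+(F;\partial);\mathbb{F}),$$
so the individual evaluation products of Definition~\ref{definition evaluation product} aggregate into a single linear map
$$\mu(p,q)\colon H_*(\propBD(p,q);\mathbb{F})\otimes H_*(\mathcal{L}X;\mathbb{F})^{\otimes p}\longrightarrow H_*(\mathcal{L}X;\mathbb{F})^{\otimes q}.$$
By adjunction this is the same data as a morphism of collections $\Phi\colon H_*(\propBD)\to\mathcal End_{H_*(\mathcal{L}X)}$.

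Next I would check that $\Phi$ satisfies the four defining axioms of a morphism of linear props. Each of them has been established cobordism by cobordism in Section~\ref{propic}: the monoidal axiom follows from the disjoint union proposition, the identity and symmetry axioms from the corollary on cylinder cobordisms $C_{id}$ and $C_\phi$, and the composition axiom is Proposition~\ref{compatible au gluing}. Since all four axioms are linear in the variable in $H_*(\propBD(p,q))$, they extend from the basic generators supported in the individual components $H_*(BDiff^+(F;\partial))$ to all of $H_*(\propBD(p,q))$, so that $\Phi$ is genuinely a morphism of linear props. One subtlety to track with care at this stage is the behaviour of the degree shift $-d\chi(F)$, which varies from component to component of $\propBD(p,q)$; but this is already built into the construction of $\mu$ and causes no trouble as long as one writes $\Phi$ summand by summand rather than as a single homogeneous map.

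Finally, Proposition~\ref{equivalence des trois props} provides a zig-zag of homotopy equivalences of topological props between the positive-boundary versions of $\propBD$, Tillmann's prop $\mathcal B\deuxcat$, and Segal's prop $\mathfrak{M}$. Applying $H_*(-;\mathbb{F})$ yields an isomorphism of linear props, through which $\Phi$ transports to an action of $H_*(\mathfrak{M})$ on $H_*(\mathcal{L}X;\mathbb{F})$; by Definition~\ref{unital counital HCFT} and Section~\ref{unital counital ou pas} this is exactly the assertion that $H_*(\mathcal{L}X;\mathbb{F})$ is a non-unital non-counital homological conformal field theory. The main obstacle in this argument is not novelty of ideas but careful bookkeeping: one must check that the homotopy equivalences of Proposition~\ref{equivalence des trois props} respect the positive-boundary restriction consistently on all three models, and that the functor $H_*(-;\mathbb{F})$ really does convert the equivalence of topological props into an isomorphism of linear props at the level of each hom-space $(p,q)$. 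Once this is in place the proof is complete.
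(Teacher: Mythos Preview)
Your proposal is correct and follows exactly the paper's own argument: the paragraph preceding the theorem statement in Section~6.1 is nothing more than a pointer to the evaluation products of Section~\ref{Definition des evaluation products}, the prop axioms verified in Section~\ref{propic}, and the transfer to the Segal prop via Proposition~\ref{equivalence des trois props}. You have spelled out the organizational logic (direct-sum decomposition over cobordism classes, the four prop axioms, the degree-shift bookkeeping) more explicitly than the paper does, but the approach is identical.
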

\subsection{TQFT structure on $H_*(\mathcal{L}X)$}
In Sections~\ref{composantes connexes de la Segal prop} or~\ref{Tillmann prop}, we recalled that
there is an isomorphim of discrete props $\pi_0(\mathfrak M)\cong \pi_0(BD)\cong sk(2-Cob)$ between the path-components of the two topological props $\mathfrak M$, $BD$
and the skeleton of the category of oriented $2$-dimensional cobordisms.
Therefore, we have an inclusion of linear props
$$
\mathbb{F}[sk(2-Cob)]\cong H_0(BD)\hookrightarrow H_*(BD).
$$
A homological conformal field theory is an algebra over the linear prop $H_*(BD)$
(Section~\ref{unital counital ou pas}).
A $2$-dimensional topological quantum field theory is an algebra over the discrete prop $sk(2-Cob)$
(section~\ref{prop des tqfts}).
So, any (non-counital non-unital) homological conformal field theory is in particular a (non-counital non-unital)
topological quantum field theory.
Notice that (non-counital non-unital)
topological quantum field theory are exactly (non-counital non-unital) commutative Frobenius
algebra~\cite[Theorem 3.3.2]{Kock:Frob2TQFT}.
So finally Theorem~\ref{main theoreme homologie groupe de Lie} implies the following Corollary.
\begin{cor}\label{TQFT homologie groupe de Lie}
Let $X$ be a simply connected topological space such that
$H_*(\Omega X,\mathbb F)$ is finite dimensional.
Then $H_*(\mathcal{L}X;\mathbb{F})$ is a non-unital non-counital Frobenius algebra.
\end{cor}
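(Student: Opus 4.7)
The plan is to deduce the corollary from the Main Theorem (Theorem~\ref{main theoreme homologie groupe de Lie}) by restricting the propic action to its degree zero part, and then invoking the standard dictionary between $2$-TQFTs and commutative Frobenius algebras. First I would recall the identification $\pi_0(\mathfrak{M})\cong \pi_0(\propBD)\cong sk(2-Cob)$ described in Sections~\ref{composantes connexes de la Segal prop} and~\ref{Tillmann prop}: for each oriented cobordism class $F$ the space $B Diff^+(F;\partial)$ is path-connected, so $H_0(\propBD(p,q))\cong \mathbb{F}[\pi_0(\propBD(p,q))]\cong \mathbb{F}[sk(2-Cob)(p,q)]$. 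Since the composition and monoidal structure on $\propBD$ descend to its path components, the canonical inclusion
\[
\mathbb{F}[sk(2-Cob)]\cong H_0(\propBD)\hookrightarrow H_*(\propBD)
\]
is a morphism of linear props.

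Next I would combine this inclusion with the Main Theorem. The Main Theorem provides a morphism of linear props from the (non-unital non-counital part of) $H_*(\propBD)$ to $\mathcal End_{H_*(\mathcal{L}X;\mathbb{F})}$. Precomposing with the prop inclusion above yields a morphism of linear props from the non-unital non-counital part of $\mathbb{F}[sk(2-Cob)]$ to $\mathcal End_{H_*(\mathcal{L}X;\mathbb{F})}$. By the definition of a $2$-TQFT recalled in Section~\ref{prop des tqfts}, this is exactly the datum of a non-unital non-counital $2$-dimensional topological quantum field theory on $H_*(\mathcal{L}X;\mathbb{F})$.

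Finally I would apply the classical equivalence between $2$-TQFTs and commutative Frobenius algebras (Kock~\cite[Theorem 3.3.2]{Kock:Frob2TQFT}), in its non-unital non-counital version: algebras over the non-unital non-counital part of $sk(2-Cob)$ are precisely non-unital non-counital commutative Frobenius algebras. Transporting the TQFT structure obtained in the previous step across this equivalence endows $H_*(\mathcal{L}X;\mathbb{F})$ with the desired structure of non-unital non-counital Frobenius algebra.

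The only delicate point is verifying that the restriction to $H_0$ really yields the TQFT structure with the correct composition and symmetric monoidal behaviour; this however is immediate because the path-component functor commutes with products and gluing of cobordisms, and because Proposition~\ref{restriction evaluation product} already identifies the restriction of $\mu(F)$ to $H_0(BDiff^+(F;\partial))$ with the operation $H_*(out)\circ in_!$, which is precisely the operation assigned to the cobordism class of $F$ in the discrete prop $sk(2-Cob)$. Thus no additional work beyond quoting the Main Theorem and Kock's equivalence is required.
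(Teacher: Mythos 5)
Your argument is correct and is essentially the paper's own proof: the paper likewise restricts the propic action of Theorem~\ref{main theoreme homologie groupe de Lie} along the inclusion $\mathbb{F}[sk(2-Cob)]\cong H_0(\propBD)\hookrightarrow H_*(\propBD)$ coming from $\pi_0(\propBD)\cong sk(2-Cob)$, and then invokes Kock's equivalence between (non-unital non-counital) $2$-TQFTs and commutative Frobenius algebras. Your closing remark about Proposition~\ref{restriction evaluation product} matches the paper's comment that the operations here do not really need $BDiff^+(F;\partial)$, so nothing is missing.
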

As we have just explained, Corollary~\ref{TQFT homologie groupe de Lie} is a direct consequence of
Theorem~\ref{main theoreme homologie groupe de Lie}.
But let us notice that Corollary~\ref{TQFT homologie groupe de Lie} is much simpler to prove
than Theorem~\ref{main theoreme homologie groupe de Lie}, because the classifying space
$BD(F)$ is not needed to construct the operations associated to the topological quantum field theory structure
(Proposition~\ref{restriction evaluation product}).
\subsection{Main coTheorem}\label{section main cotheorem}
Let $V$ be a homological conformal field theory which is positively graded and of finite type in
each degree.
Then transposition gives a morphism of props :
$\mathcal End_V^{op}\rightarrow \mathcal End_{V^\vee}$
from the opposite prop of the endomorphims prop of $V$, to the endomorphisms prop of the linear dual of $V$.
Since the category of complex cobordisms, i. e. Segal prop $\mathfrak M$ (Section~\ref{la prop de Segal}), is isomorphic to its opposite category,
the linear dual $V^\vee$ of $V$ is again a homological conformal field theory.
Therefore, from Theorem~\ref{main theoreme homologie groupe de Lie}, we obtain:
\begin{theor}{\bf (Main coTheorem)}\label{main cotheoreme homologie groupe de Lie}
{\it Let $X$ be a simply-connected topological space such that
its based loop space homology with coefficient in a field,
$H_*(\Omega X,\mathbb F)$, is finite dimensional.
Then its free loop space cohomology taken with coefficients into a field,
$H^*(\mathcal{L}X,\mathbb{F})$, is a non-unital non-counital
homological field theory.}
\end{theor}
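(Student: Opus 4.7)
The plan is to derive the coTheorem from the Main Theorem (Theorem~\ref{main theoreme homologie groupe de Lie}) by dualizing the HCFT structure on $H_*(\mathcal{L}X;\mathbb{F})$, using the self-duality of the Segal prop $\mathfrak{M}$. The general argument is already sketched in Section~\ref{section main cotheorem}: for any positively graded vector space $V$ of finite type, linear transposition gives a morphism of linear props $\mathcal End_V^{op}\to \mathcal End_{V^\vee}$, so a $\mathcal P$-algebra structure on $V$ induces a $\mathcal P^{op}$-algebra structure on $V^\vee$. Therefore it suffices to check two things: that $H_*(\mathcal{L}X;\mathbb{F})$ is positively graded and of finite type, and that the relevant sub-prop of $H_*(BD)$ is isomorphic to its opposite as a linear prop.

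First I would verify the finite type condition. Since $X$ is simply connected and $H_*(\Omega X;\mathbb{F})$ is finite dimensional, a Serre spectral sequence argument applied to the path-loop fibration $\Omega X\to PX\to X$ shows that $H_*(X;\mathbb{F})$ is of finite type in each degree (it is bounded below by $0$ since $X$ is connected). Then the Serre spectral sequence of the free loop fibration $\Omega X\hookrightarrow \mathcal LX\buildrel{ev_0}\over\twoheadrightarrow X$, which has trivial local coefficients since $X$ is simply connected, converges with $E_2$-term $H_*(X)\otimes H_*(\Omega X)$ of finite type. Hence $H_*(\mathcal LX;\mathbb{F})$ is positively graded and of finite type in each degree, so the natural map $H_*(\mathcal LX)\to (H^*(\mathcal LX))^\vee$ behaves well and the transposition argument applies dimension by dimension.

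Next I would construct a prop isomorphism $\mathfrak{M}\cong\mathfrak{M}^{op}$ by swapping incoming and outgoing boundary data: sending a ``complex cobordism'' $\bigl(\coprod_p D^2\buildrel{in}\over\hookrightarrow F\buildrel{out}\over\hookleftarrow\coprod_q D^2\bigr)$ to $\bigl(\coprod_q D^2\buildrel{out}\over\hookrightarrow F\buildrel{in}\over\hookleftarrow\coprod_p D^2\bigr)$. This is continuous, compatible with equivalence, and exchanges the two composition laws, so it is an isomorphism of topological props. The same swap carries the non-unital non-counital sub-prop (path-components with at least one in- and one out-boundary) to itself, and analogous swaps work for $BD$ and $\pi_0(\mathfrak{M})\cong sk(2\text{-}Cob)$. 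Applying singular homology with field coefficients, we obtain an isomorphism of linear props between the graded prop controlling non-unital non-counital HCFT's and its opposite.

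Putting the pieces together: by Theorem~\ref{main theoreme homologie groupe de Lie}, $H_*(\mathcal LX;\mathbb{F})$ carries an algebra structure $\mathcal{P}\to\mathcal End_{H_*(\mathcal LX)}$ for $\mathcal P$ this non-unital non-counital homology prop; composing with the transposition $\mathcal End_{H_*(\mathcal LX)}^{op}\to \mathcal End_{H^*(\mathcal LX)}$ and precomposing with the self-duality $\mathcal P\cong \mathcal P^{op}$ yields the desired algebra structure on $H^*(\mathcal LX;\mathbb{F})$. The main obstacle I anticipate is bookkeeping rather than conceptual: one must check that the boundary swap truly respects the topology on the moduli spaces $\mathfrak M(p,q)$ (so that it induces a prop map after taking singular homology), and one must be careful that the transposition identification $\mathrm{Hom}(V^{\otimes m},V^{\otimes n})^{op}\to\mathrm{Hom}((V^\vee)^{\otimes n},(V^\vee)^{\otimes m})$ respects tensor products, symmetries and gradings with the correct Koszul signs, which is exactly where the positively-graded finite-type hypothesis on $V=H_*(\mathcal LX)$ is used.
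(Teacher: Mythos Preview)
Your proposal is correct and follows essentially the same approach as the paper: dualize the HCFT structure on $H_*(\mathcal{L}X)$ from the Main Theorem using the transposition $\mathcal End_V^{op}\to\mathcal End_{V^\vee}$ and the self-duality $\mathfrak{M}\cong\mathfrak{M}^{op}$ given by swapping in- and out-boundaries. You actually supply more detail than the paper does, since you verify via the path-loop and free-loop Serre spectral sequences that $H_*(\mathcal{L}X;\mathbb{F})$ is of finite type in each degree, a hypothesis the paper simply asserts without justification.
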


\subsection{Batalin-Vilkovisky algebra structure.}\label{section structure BV}
In Section~\ref{la prop de Segal}, we have recalled the Segal prop of
Riemann surfaces $\mathfrak{M}=\mathfrak{M}(p,q)_{p,q\geq 0}$.
Consider the sub-operad
$$\mathfrak{M}_0=\mathfrak{M}_0(p)_{p\geq 0}\subset\mathfrak{M}(p,1)_{p\geq 0}$$
where we consider only path-connected Riemann surfaces of genus $0$, with only one
out-going boundary component.
By~\cite[p. 282]{Getzler:BVAlg} (See also~\cite[Proposition 4.8]{Markl-Shnider-Stasheff:opeatp}),
there is a natural map of topological operads from the
 framed little $2$-discs operad $f\mathcal{D}_2$ to $\mathfrak{M}_0$ which is a
homotopy equivalence.
Alternatively, it is implicit in~\cite[Theorem 1.5.16]{thesedewahl}
or~\cite[Theorem 7.3 and Proposition 7.4]{Salvatore-Wahl:FrameddoBVa}, that the
 framed little $2$-discs operad $f\mathcal{D}_2(p)_{p\geq 0}$
is homotopy equivalent as operads to the operad 
$\{B\Gamma_{0,p+1}\}_{p\geq 0}\thickapprox BDiff^+(F_{0,p+1},\partial)_{p\geq 0}$.

Therefore any algebra over the linear prop $H_*(\mathcal{M})\cong H_*(BD)$
is an algebra over the linear operad $H_*(f\mathcal{D}_2)$: any homological
conformal field theory is in particular a Batalin-Vilkovisky algebra.
Therefore,  from Theorem~\ref{main cotheoreme homologie groupe de Lie}, we deduce
when the cohomology is taken with coefficient in a field:
\begin{cor}\label{Structure BV en cohomologie groupe de Lie}
Let ${\Bbbk}$ be any principal ideal domain.
Let $X$ be a simply connected space such

$\bullet$ For each $i\leq d$,  $H_i(\Omega X;{\Bbbk})$ is finite generated, 
 
$\bullet$ $H_d(\Omega X;{\Bbbk})\cong{\Bbbk}$, $H_{d-1}(\Omega X;{\Bbbk})$ is ${\Bbbk}$-free  and

$\bullet$ For $i>d$, $H_i(\Omega X;{\Bbbk})=\{0\}$.

Then the singular cohomology of $\mathcal {L}X$ with coefficients in ${\Bbbk}$,
${H}^*(\mathcal{L}X,{\Bbbk})$, is an algebra over the operad
$\oplus_{g\geq 0} H_*(BDiff^{+}(F_{g,p+1}))$, $p\geq 1$.
In particular,  the shifted cohomology
$\mathbb{H}^*(\mathcal{L}X,{\Bbbk}):=H^{*+d}(\mathcal{L}X,{\Bbbk})$
is a Batalin-Vilkovisky algebra (not necessarily with an unit).
\end{cor}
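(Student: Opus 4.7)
My plan is to show that, under the stronger hypotheses on $X$, the construction of the evaluation products of Section~\ref{Definition des evaluation products} and the propic identities of Section~\ref{propic} extend to ${\Bbbk}$-coefficients rather than only to field coefficients. Once this is in hand, restricting the propic action to the sub-collection of surfaces $F_{g,p+1}$ with $p\geq 1$ ingoing and a single outgoing circle yields the claimed operadic action; restricting further to genus $0$ and using the equivalence $BDiff^+(F_{0,p+1},\partial)\simeq f\mathcal{D}_2(p)$ recalled in Section~\ref{section structure BV}, together with Getzler's theorem identifying $H_*(f\mathcal{D}_2)$-algebras with BV algebras, delivers the Batalin--Vilkovisky structure on the shifted cohomology.

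First I would re-examine the construction of $\mu(F)$ over ${\Bbbk}$. The fibre of $\rho_{in}$ is $\Omega X^{-\chi(F)}$; by iterated K\"unneth, using $H_d(\Omega X;{\Bbbk})\cong{\Bbbk}$, the ${\Bbbk}$-freeness of $H_{d-1}(\Omega X;{\Bbbk})$, and the finite generation of $H_i(\Omega X;{\Bbbk})$ for $i\leq d$, the top homology $H_{-d\chi(F)}(\Omega X^{-\chi(F)};{\Bbbk})$ is still a free ${\Bbbk}$-module of rank one, providing an orientation class. The triviality of the $\pi_1$- and mapping-class-group actions on this class (Proposition~\ref{action du pi1 sur in est trivial} and Theorem~\ref{action du mapping class group est trivial}) rests only on the combinatorics of Sullivan chord diagrams and on determinants of self-equivalences of wedges of circles, both of which are insensitive to the coefficient ring. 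The Serre spectral sequence construction of integration along the fibre from Section~\ref{integration suite spectrale} then produces the desired maps over ${\Bbbk}$, and the propic compatibilities established in Section~\ref{propic} (naturality, product, composition, gluing) carry over verbatim.

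With the evaluation products
\[\mu(F_{g,p+q}):H_*(BD_{g,p+q};{\Bbbk})\otimes H_*(\mathcal{L}X;{\Bbbk})^{\otimes p}\longrightarrow H_*(\mathcal{L}X;{\Bbbk})^{\otimes q}\]
of homological degree $-d\chi(F)$ in hand, I would restrict to the surfaces $F_{g,p+1}$ with $p\geq 1$, package over all genera, and dualise to cohomology as in Section~\ref{section main cotheorem}, thereby obtaining the operadic action of $\{\oplus_{g\geq 0}H_*(BDiff^+(F_{g,p+1});{\Bbbk})\}_{p\geq 1}$ on $H^*(\mathcal{L}X;{\Bbbk})$ asserted in the corollary. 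Restricting further to $g=0$ and invoking the homotopy equivalence with the framed little $2$-discs operad produces the BV algebra structure on $\mathbb{H}^*(\mathcal{L}X;{\Bbbk})=H^{*+d}(\mathcal{L}X;{\Bbbk})$: the shift by $d$ is arranged so that the pair-of-pants operation (of homological degree $-d\chi(F_{0,2+1})=d$) becomes a cohomological product of total degree $0$, while the rotation class in $H_1(BDiff^+(F_{0,1+1});{\Bbbk})\cong H_1(BS^1;{\Bbbk})\cong{\Bbbk}$ yields the BV operator in cohomological degree $+1$.

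The main obstacle is the meticulous verification of the first paragraph: every point in Sections~\ref{Definition des evaluation products}--\ref{propic} where field coefficients were tacitly used must be re-examined. In particular, the rank-one assertions about the top homology of the various fibres must be upgraded to rank-one freeness over ${\Bbbk}$, which is exactly what the freeness of $H_{d-1}(\Omega X;{\Bbbk})$ guarantees via the K\"unneth formula; and the Frobenius-algebra step used over a field to identify the fibre homology with that of $\Omega X^{-\chi(F)}$ is replaced by the more direct observation that only the top class of $H_*(\Omega X;{\Bbbk})$ actually enters the construction, and this class is free of rank one by hypothesis.
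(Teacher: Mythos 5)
Your plan has a genuine gap at the two steps ``extend the homological evaluation products over ${\Bbbk}$'' and ``dualise to cohomology as in Section~\ref{section main cotheorem}''. Over a principal ideal domain which is not a field, the K\"unneth map $H_*(\mathcal{L}X;{\Bbbk})^{\otimes q}\rightarrow H_*(\mathcal{L}X^{\times q};{\Bbbk})$ is not an isomorphism, so the construction of Section~\ref{Definition des evaluation products}, which naturally lands in $H_*(\mathcal{L}X^{\times q})$, cannot be rewritten as a map into the tensor power $H_*(\mathcal{L}X)^{\otimes q}$; this is exactly the reason the paper restricts its homological prop actions to field coefficients, and it is not repaired by your (correct) observations that the top homology of the fibre is free of rank one and that the $\pi_1$- and mapping-class-group arguments are coefficient-independent. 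The dualisation step is equally problematic: the transposition argument of Section~\ref{section main cotheorem} needs $V$ of finite type over a field, since it uses $(V^{\otimes m})^\vee\cong(V^\vee)^{\otimes m}$; over ${\Bbbk}$ the module $H_*(\mathcal{L}X;{\Bbbk})$ may have torsion and need not be degreewise finitely generated, and in any case $H^*(\mathcal{L}X;{\Bbbk})$ is not the linear dual of $H_*(\mathcal{L}X;{\Bbbk})$ (there is an $\operatorname{Ext}$ term in universal coefficients), so no operad action on $H^*(\mathcal{L}X;{\Bbbk})$ is obtained this way.

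The paper's proof avoids both obstacles by working in cohomology from the start. Using the universal coefficient theorem and K\"unneth (here is where the ${\Bbbk}$-freeness of $H_{d-1}(\Omega X;{\Bbbk})$ and the finite generation are used), one gets $H^{-d\chi(F)}(map_*(F/\partial_{in}F,X))\cong{\Bbbk}$, so integration along the fibre in cohomology defines $\mu(F)^*:H^*(\mathcal{L}X^{\times q})\rightarrow H^{*+d\chi_F}(BD(F)\times\mathcal{L}X^{\times p})$; one then slants with classes of $H_*(BD(F))$ to get $\mu(F)^{\#}$, and finally restricts to cobordisms $F_{g,1+q}$ with a single incoming boundary and precomposes with the cohomological cross product $H^*(\mathcal{L}X)^{\otimes q}\rightarrow H^*(\mathcal{L}X^{\times q})$ — which is merely a natural map, no isomorphism required, and the single incoming circle means the output is one copy of $H^*(\mathcal{L}X)$ that never has to be decomposed. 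The propic identities of Section~\ref{propic} then give the action of the opposite operad $H_*(BDiff^+(F_{g,1+q},\partial))_{q\geq 1}$, identified with $H_*(BDiff^+(F_{g,q+1},\partial))_{q\geq 1}$, and the genus-zero part gives the BV structure as you indicate. If you want to keep a homology-first outline, you would have to make this asymmetry (many inputs through the cross product, one output) explicit and stay on the cohomology side when the coefficients are not a field.
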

\begin{proof}
We have already proved above the Corollary when ${\Bbbk}$ is a field.
Over a principal ideal domain ${\Bbbk}$, we are going to indicate what are the differences.
Note also that the proof of Corollary~\ref{Structure BV en cohomologie groupe fini} will be similar.

Let $F_{g,p+q}$ be an oriented cobordism from $\coprod_{i=1}^p S^1$ to $\coprod_{i=1}^q S^1$.
Using the universal coefficient theorem for cohomology and Kunneth theorem, we have
that the cohomology of the fiber of $\rho_{in}$ in degre $-d\chi(F)$
$$H^{-d\chi(F)}map_*(F/\partial_{in} F,X)\cong H^{-d\chi(F)}(\Omega X^{\times -\chi(F)})\cong
 H^{-d}(\Omega X)^{\otimes -\chi(F)}\cong {\Bbbk}.$$
Therefore using integration along the fiber in cohomology, we can define the evaluation product
$$\mu(F)^*:
H^l(\mathcal{L}X^{\times q})\buildrel{H^l(\rho_{out})}\over\rightarrow
H^{l}(\mathcal M_{q,p+p}(X))\buildrel{\rho_{in}^!}\over\rightarrow
H^{l+d\chi_F}(BD(F_{g,p+q})\times\mathcal{L}X^{p}).
$$
Now Section~\ref{propic} implies that the evaluation products
$\mu(F)^*$ are symmetric, compatible with gluing and disjoint union of cobordisms.
Consider the composite, denoted $\mu(F)^{\#}$, of the tensor product of $\mu(F)^*$ with the identity
morphism
$$
H^l(\mathcal{L}X^{\times q})\otimes
H_{m}(BD(F))\buildrel{\mu(F)^*\otimes Id}\over\rightarrow
H^{l+d\chi_F}(BD(F)\times\mathcal{L}X^{p})\otimes
H_{m}(BD(F))
$$
and of the Slant product~\cite[p. 254]{Switzer}
$$/:H^{l+d\chi_F}(BD(F)\times\mathcal{L}X^{p})\otimes
H_{m}(BD(F))\rightarrow H^{l+d\chi_F-m}(\mathcal{L}X^{p}).$$
Again, the evaluation products $\mu(F)^{\#}$ are symmetric, compatible with gluing and disjoint
union. Therefore restricting to path-connected cobordisms $F_{g,1+q}$ with only $p=1$
incoming boundary component, the composite
$$
H^*(\mathcal{L}X)^{\otimes q}\otimes
H_{*}(BD(F_{g,1+q}))\buildrel{Kunneth\otimes Id}\over\rightarrow
H^*(\mathcal{L}X^{\times q})\otimes
H_{*}(BD(F_{g,1+q}))
\buildrel{\mu(F)^\#}\over\rightarrow
H^{*}(\mathcal{L}X)
$$
defines an action of the opposite of the operad
$H_*(BDiff^+(F_{g,1+q},\partial))_{q\geq 1}$ on $H^*(\mathcal{L}X)$.
But as recalled in Section~\ref{section main cotheorem}, the topological operad $BDiff^+(F_{g,1+q},\partial)_{q\geq 1}$
is isomorphic to the opposite of the operad $BDiff^+(F_{g,q+1},\partial))_{q\geq 1}$.
\end{proof}

In the next section, we prove similar results for finite groups. But our results for
finite groups are better than our results for connected topological groups.

-The main theorem for finite groups (Theorem~\ref{main theorem stable version}) is in the homotopy
category of spectra. In~\cite[(4.2)]{Becker-Gottlieb:transferfiberbundle}, there is a stable version of integration
along the fiber for fibre bundles with smooth fibers. So Theorem~\ref{main theoreme homologie groupe de Lie} should also hold in the stable category.


-The structure for free loop space homology (respectively cohomology) for finite groups has a counit,
(respectively an unit).
When $G$ is a connected compact Lie group, although we don't prove it, there is also an unit:
the element $(EG\times_G \eta)^!(1)\in H^d(EG\times_G G^{ad})\cong H^d(LBG)$ considered in the proof
of Theorem~\ref{Homologie de Hochschild groupe de Lie}.

To summarize, we believe that all our results for finite groups should extend
to connected compact Lie groups.

\section{The case of finite groups}
In this section,we consider a finite group $G$ instead of a connected topological group.
Using transfer instead of integration along the fibers, we prove that, for a finite group $G$,
the free loop homology on a $K(G,1)$, $H_*(\mathcal{L}(K(G,1)))$,
is a counital non-unital homological conformal field theory.
In order to define the transfert maps $map(in,K(G,1))_!$ and $\rho_{in!}$,
we need to check the finiteness of all the fibers of $\rho_{in}$.

\subsection{finiteness of all the fibres is preserved by:\newline}\label{finitude fibre preserve par}

-pull-back and homotopy equivalence:
Consider a commutative diagram
$$\xymatrix{
E_1\ar[r]^{g}\ar@{>>}[d]^{p_1}
&E_2\ar@{>>}[d]^{p_2}\\
B_1\ar[r]^{h}
&B_2
}$$
where $p_1$ and $p_2$ are two fibrations.
Suppose that the diagram is a pull-back or that $h$ and $g$ are homotopy equivalence.
Then for any $b_1\in B_1$, the fibre of $p_1$ over $b_1$, $p_1^{-1}(b_1)$,
is homotopic to the fiber of  $p_2$ over $h(b_1)$, $p_2^{-1}(h(b_1))$.

-composition:
Let $f:X\twoheadrightarrow Y$ and let $g:Y\twoheadrightarrow Z$ be two fibrations.
Let $z\in Z$ be any element of $Z$. By pull-back of $f$, we obtain the fibration
$f':(g\circ f)^{-1}(z)\twoheadrightarrow g^{-1}(z)$.
If the base space of $f'$, $g^{-1}(z)$, and if all the fibres of $f'$, $f'^{-1}(y)$,
$y\in g^{-1}(z)$, are homotopy equivalent to a finite CW-complex then the total space of $f'$,
$(g\circ f)^{-1}(z)$ is also homotopy equivalent to a finite CW-complex.

-Borel construction:
Let $p:E\twoheadrightarrow B$ be a fibration and also a $G$-equivariant map.
We have the pull-back of principal $G$-bundles
$$
\xymatrix{
EG\times E\ar[r]^{EG\times p}\ar[d]
&EG\times B\ar[d]\\
E_{hG}\ar[r]_{p_{hG}}
&B_{hG}
}
$$
Therefore the fiber of $p_{hG}$ over the class $[x,b]$, $x\in EG$, $b\in B$, is homeomorphic
to the fibre of $p$ over $b$, $p^{-1}(b)$.

\subsection{Finiteness of the fibres of $map(in,K(G,1)$}
 \begin{proposition}\label{toutes les fibres homotopes}
Let $X$ be a path-connected space.
Let $F_{g,p+q}$ be a path-connected oriented cobordism from
$\coprod_{i=1}^p S^1$ to $\coprod_{i=1}^q S^1$.
If $p\geq 1$ and $q\geq 1$ then all the fibers of
the fibration obtained by restriction to the in-boundary
components 
$$map(in,X):map(F_{g,p+q},X)\twoheadrightarrow \mathcal LX^{\times p}$$ 
are homotopy equivalent to the product, $\Omega X^{-\chi(F)}$.
\end{proposition}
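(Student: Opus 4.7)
The plan is to deduce this directly from the Sullivan chord diagram pull-back square already established in the proof of Proposition~\ref{action du pi1 sur in est trivial}, combined with the observation that if a Serre fibration has a path-connected base then all its fibres are homotopy equivalent. The advantage of this approach is that it reduces the question about the fibration $map(in,X)$ (whose base $\mathcal{L}X^{\times p}$ is typically \emph{not} path-connected, since we do not assume $X$ simply connected here) to a question about a fibration whose base \emph{is} path-connected.

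More precisely, first I would invoke the homotopy equivalence $F_{g,p+q}\thickapprox c_{g,p+q}$ with a Sullivan chord diagram of type $(g;p,q)$ (using $p\geq 1$, $q\geq 1$), and rewrite the pull-back diagram~(\ref{pull-back diagramme de cordes}): the fibration $map(in,X):map(F_{g,p+q},X)\twoheadrightarrow\mathcal{L}X^{\times p}$ is, over $\mathcal{L}X^{\times p}$, fibrewise homotopy equivalent to the pull-back of the evaluation fibration
\[
\textstyle
\prod_{v\in\sigma(c)}map(v,X)\ \twoheadrightarrow\ \prod_{v\in\sigma(c)}X^{\#\mu(v)}=X^{\#v(c)}
\]
along the evaluation map $\mathcal{L}X^{\times p}\to X^{\#v(c)}$. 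Since taking fibres is preserved by pull-back (Section~\ref{finitude fibre preserve par}), it is enough to prove that every fibre of this latter fibration is homotopy equivalent to $\Omega X^{-\chi(F)}$.

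Next I would note that each tree $v\in\sigma(c)$ is contractible with $\#\mu(v)\geq 1$ endpoints on the circles, so the cofibre $v/\mu(v)$ is the wedge $\vee_{\#\mu(v)-1}S^1$. Consequently each factor fibration $map(v,X)\twoheadrightarrow X^{\#\mu(v)}$ has fibre over the basepoint equal to $map_*(v/\mu(v),X)\simeq \Omega X^{\#\mu(v)-1}$. The base $X^{\#\mu(v)}$ is path-connected (as $X$ is), so by the standard fact that fibres of a Serre fibration over a path-connected base are all homotopy equivalent, \emph{every} fibre is homotopy equivalent to $\Omega X^{\#\mu(v)-1}$. Taking products, every fibre of $\prod_v map(v,X)\twoheadrightarrow X^{\#v(c)}$ is homotopy equivalent to $\prod_v \Omega X^{\#\mu(v)-1}=\Omega X^{\#v(c)-\#\sigma(c)}$.

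Finally, the additivity of Euler characteristics computed at the end of the proof of Proposition~\ref{action du pi1 sur in est trivial} gives $\chi(F_{g,p+q})=\#\sigma(c)-\#v(c)$, so $\#v(c)-\#\sigma(c)=-\chi(F)$, and we conclude that every fibre is homotopy equivalent to $\Omega X^{-\chi(F)}$, as required. The only mild subtlety, and really the only place the hypotheses $p\geq 1$ and $q\geq 1$ (together with path-connectedness of $F$) are used, is in invoking the existence of the Sullivan chord diagram $c_{g,p+q}$ and the fact that each $\#\mu(v)\geq 1$; once that input is granted, the argument is formal.
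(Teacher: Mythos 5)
Your proof is correct and takes essentially the same route as the paper's: the Sullivan chord-diagram pull-back square~(\ref{pull-back diagramme de cordes}), path-connectedness of the upstream base $X^{\# v(c)}$, and preservation of fibres under pull-back and homotopy equivalence. The only (harmless) difference is in how the fibre is identified with $\Omega X^{-\chi(F)}$: you compute tree by tree, $map_*(v/\mu(v),X)\simeq \Omega X^{\#\mu(v)-1}$, together with the Euler-characteristic count $-\chi(F)=\# v(c)-\#\sigma(c)$ --- exactly the alternative computation the paper records as a remark at the end of the proof of Proposition~\ref{action du pi1 sur in est trivial} --- whereas the paper's proof identifies the fibre over a constant loop as $map_*(F/\partial_{in}F,X)$ and invokes Proposition~\ref{cofibre de in} directly.
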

\begin{proof}
The proof of this Proposition follows the same pattern as the proof of
Proposition~\ref{action du pi1 sur in est trivial}:
Consider the commutative diagram~(\ref{pull-back diagramme de cordes}).
As $X^{\# v(c)}$ is path connected, all the fibres of
the fibration
$$\prod_{v\in\sigma(c)}map(v,X)\twoheadrightarrow\prod_{v\in\sigma(c)} X^{\#\mu(v)}=X^{\# v(c)}$$
are homotopy equivalent.
By pull-back and then by homotopy equivalence (µSection~(\ref{finitude fibre preserve par})),
all the fibres of
the fibration $map(in,X):map(F_{g,p+q},X)\twoheadrightarrow({\mathcal LX})^{\times p}$
are homotopy equivalent.
Let $x_0\in X$. Denote by $\bar{x_0}$ the constant map from $\partial_{in}F$ to $X$.
By Proposition~\ref{cofibre de in}, the fibre of  $map(in,X)$ over $\bar{x_0}$, the pointed mapping
space $map_*((F/\partial_{in} F,\partial_{in} F)(X,x_0))$ is homotopy equivalent
to the product of pointed loop spaces,  $\Omega X^{-\chi(F)}$. 
\end{proof}
\begin{proposition}\label{finite fibre}
Let $G$ be a finite group.
Let $X$ be a $K(G,1)$.
Let $F_{g,p+q}$ be a path-connected oriented cobordism from
$\coprod_{i=1}^p S^1$ to $\coprod_{i=1}^q S^1$.

1) If $p\geq 1$ and $q\geq 0$ then all the fibers of
the fibration obtained by restriction to the in-boundary
components 
$$map(in,X):map(F_{g,p+q},X)\twoheadrightarrow \mathcal LX^{\times p}$$ 
are homotopy equivalent to a discrete finite set.

2 ) Let $R$ be a ring spectrum.
If $X$ is $R$-small then for any $q\geq 0$, the mapping space
$map(F_{g,0+q},X)$ is $R$-small
\end{proposition}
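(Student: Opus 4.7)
My plan is to treat each part by splitting on whether the surface has a free outgoing boundary. For part 1 with $q \geq 1$, the statement is immediate from Proposition~\ref{toutes les fibres homotopes}: the fibers of $map(in,X)$ are homotopy equivalent to $(\Omega X)^{-\chi(F)}$, and since $X = K(G,1)$ with $G$ finite we have $\Omega X \simeq G$ discrete and finite, so each fiber is homotopy equivalent to the finite discrete set $G^{-\chi(F)}$.

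For the remaining case $q = 0$ of part 1, I would argue directly via the long exact sequence of the fibration $\mathrm{fib} \hookrightarrow map(F_{g,p+0},X) \twoheadrightarrow \mathcal{L}X^{\times p}$. Since $F = F_{g,p+0}$ has non-empty boundary ($p \geq 1$), it deformation retracts onto a graph, so each path component of $map(F,X)$ is aspherical; likewise each component of $\mathcal{L}X$ is a classifying space $K(C,1)$ for some centralizer $C \leq G$. This gives $\pi_i(\mathrm{fib}) = 0$ for $i \geq 2$. For $i = 1$, the induced map $\pi_1(map(F,X),f) \to \pi_1(\mathcal{L}X^{\times p}, \mathrm{ev}(f))$ factors as the $p$-fold diagonal embedding of the centralizer of $\mathrm{im}\,\pi_1(f)$ into the product of centralizers, and this diagonal is injective for $p \geq 1$. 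Hence $\mathrm{fib}$ is homotopy discrete, and $\pi_0(\mathrm{fib})$ is finite since it is a subset of $\mathrm{Hom}(\pi_1(F),G) \subset G^{2g+p-1}$.

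For part 2, I again split on $q$. When $q \geq 1$, the surface $F_{g,0+q}$ is homotopy equivalent to a wedge of $n = 2g+q-1$ circles, so evaluation at the wedge point yields a fiber bundle $(\Omega X)^n \to map(F_{g,0+q},X) \to X$ whose base is $R$-small by hypothesis and whose fiber $G^n$ is finite discrete, hence $R$-small for any ring spectrum; the total space is then $R$-small by the closure of $R$-small spaces under such fibrations, which follows from the thick subcategory formalism of~\cite{Dwyer:transfer}. When $q = 0$, the standard CW decomposition $F_g = \vee_{2g} S^1 \cup_\phi D^2$ produces a cofiber sequence $S^1 \to \vee_{2g} S^1 \to F_g$ and hence a fiber sequence $map(F_g,X) \to map(\vee_{2g} S^1,X) \to \mathcal{L}X$; the middle and right terms are $R$-small as in the $q \geq 1$ case (both being fibrations over $X$ with finite discrete fibers), while the fiber $map(F_g,X)$ decomposes up to homotopy as a disjoint union of classifying spaces $B\pi$ for subgroups $\pi \leq G$, each of which inherits $R$-smallness from $BG$.

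The principal obstacle is verifying the closure properties of $R$-smallness invoked in part 2: stability under fibrations with $R$-small base and $R$-small fiber, and the transfer of $R$-smallness from $BG$ to $B\pi$ for subgroups $\pi \leq G$. These follow from the formal structure of $R$-small spaces in Dwyer's framework, but a careful check is needed for arbitrary ring spectra $R$, in contrast to the Eilenberg--MacLane or Morava $K$-theory cases where both properties are standard.
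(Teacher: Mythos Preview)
Your proof of part~1 for $q\geq 1$ matches the paper exactly. For $q=0$ and for all of part~2, however, the paper takes a quite different route: rather than computing homotopy groups of the fiber or analyzing $map(F,X)$ as a disjoint union of classifying spaces, it uses the cobordism decomposition. For part~1 with $q=0$, the paper removes an embedded disk from $F_{g,p+0}$ to obtain $F_{g,p+1}$, so that $map(F_{g,p+0},X)\to \mathcal{L}X^{\times p}$ factors through $map(F_{g,p+1},X)$, with the extra stage a pullback of the disk map $map(D^2,X)\to \mathcal{L}X$ (whose fibers are empty or contractible). For part~2, the paper glues the disk $F_{0,0+1}$ to $F_{g,1+q}$ to obtain $F_{g,0+q}$; the resulting pullback exhibits $map(F_{g,0+q},X)$ as a fibration over $map(D^2,X)\simeq X$ with fibers that are finite discrete by part~1, and this handles all $q\geq 0$ at once.

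Your direct approach is correct --- the description of $\pi_1(map(F,X),f)$ as a centralizer and the injectivity of the restriction map are valid (though calling it ``diagonal'' is slightly imprecise: it is diagonal only up to conjugation by paths to the various boundary basepoints), and $map(F_g,X)$ is indeed a finite disjoint union of $B\pi$ for centralizer subgroups $\pi\leq G$. The advantage of the paper's approach is uniformity and its fit with the prop/gluing framework used throughout; the advantage of yours is that it is self-contained and makes the homotopy type of each mapping space explicit. Note, finally, that the closure property you flag --- $R$-smallness of the total space of a fibration with $R$-small base and finite discrete fiber --- is exactly what the paper also needs (its ``composition'' step in Section~\ref{finitude fibre preserve par} is stated for finite CW-complexes but is applied in the $R$-small setting), so you are not assuming anything beyond what the paper does; and your subgroup question reduces to this same property, since $B\pi\to BG$ is a finite cover of degree~$[G:\pi]$.
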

\begin{proof}

1) The case $p\geq 1$ and $q\geq 1$ follows from the Proposition~\ref{toutes les fibres homotopes}, since $\Omega X$ is homotopy equivalent
to $G$.

Case $F_{0,1+0}$: Existence of the counit (to be compared with~\cite[Section 3]{1095.55006} for the free loop space homology $H_*(\mathcal{L}M)$ of a manifold).
Consider the disk $D^2$ as an oriented cobordism $F_{0,1+0}$ witn one incoming boundary and zero outgoing boundary.
Let $l\in\mathcal {L}X$ be a free loop.
The loop $l$ is homotopic to a constant loop if and only if $l$ can be extend over $D^2$, the cone of $S^1$.
This means exactly that $l$ belongs to the image of
$$map(in,X):map(D^2,X)\twoheadrightarrow \mathcal LX.$$
Let $x_0\in X$. Let $\bar{x_0}$ be the constant free loop equal to $x_0$.
The fibre of  $map(in,X)$ over $\bar{x_0}$ is the double loop space
$map_*((D^2/S^1,S^1)(X,x_0))=\Omega^2(X,x_0)$.
The double loop space $\Omega^2(X,x_0)$ is homotopy equivalent to $\Omega G$, which is a point since
$G$ is discrete.
Therefore all the fibres of $map(in,X):map(D^2,X)\twoheadrightarrow \mathcal LX$ are either empty or
contractile.

Case $p\geq 1$ and $q=0$.
By removing the interior of an embedded disk $D^2$ from the oriented cobordism $F_{g,p+0}$, we obtain
an oriented cobordism $F_{g,p+1}$.
According to the case  $p\geq 1$ and $q\geq 1$, the operation associated to $F_{g,p+1}$
$$
H_*(\mathcal{L}X)^{\times p}\buildrel{map(in,X)_!}\over\longrightarrow H_*(map(F_{g,p+1},X))
\buildrel{map(out,X)}\over\longrightarrow H_*(\mathcal{L}X)
$$
is defined. The operation associated to $D^2=F_{0,1+0}$, namely the counit,
$$
H_*(\mathcal{L}X)\buildrel{map(in,X)_!}\over\longrightarrow H_*(map(D^2,X))
\buildrel{map(out,X)}\over\longrightarrow H_*(map(\O,X))=H_*(point)=\mathbb{F}$$
is also defined.
Therefore by composition, the operation associated to  $F_{g,p+0}$
$$
H_*(\mathcal{L}X)^{\times p}\buildrel{map(in,X)_!}\over\longrightarrow H_*(map(F_{g,p+0},X))
\buildrel{map(out,X)}\over\longrightarrow  H_*(map(\O,X))=H_*(point)=\mathbb{F}
$$
is going to be defined.
More precisely, consider the commutative diagram
$$\xymatrix{
&&\mathcal{L}X^{\times p}\\
&map(F_{g,p+0},X)\ar[ld]_{map(out,X)}\ar[d] \ar[ru]^{map(in,X)}\ar[r]
& map(F_{g,p+1},X)\ar[d]^{map(out,X)}\ar[u]_{map(in,X)}\\
\mathcal{L}X^{\times 0}
&map(F_{0,1+0},X)\ar[l]^-{map(out,X)}\ar[r]_-{map(in,X)}
& \mathcal{L}X^{\times 1}}$$
where the square is the pull-back~(\ref{produit fibre gluing}).
Since all the fibres of 
$map(in,X):map(F_{0,1+0},X)\twoheadrightarrow  \mathcal{L}X^{\times 1}$
are homotopy equivalent a finite discrete set, by pull-back (Section~\ref{finitude fibre preserve par}),
all the fibres of the fibration $map(F_{g,p+0},X)\twoheadrightarrow map(F_{g,p+1},X)$
are also homotopy equivalent to a finite discrete set.
Therefore, by composition (Section~\ref{finitude fibre preserve par}), all the fibres of 
$map(in,X):map(F_{g,p+0},X)\twoheadrightarrow  \mathcal{L}X^{\times p}$ are homotopy equivalent to
a finite discrete set.

Note that the same proof (Proof of Lemma~\ref{composition lemma}) shows that the operation  associated to  $F_{g,p+0}$ is the composite
 of the operations associated to $F_{g,p+1}$ and $F_{g,1+0}$ (This is Proposition~\ref{compatible au gluing} without the $BDiff^+(F,\partial)$, compare also with Lemma 8 and Corollary 9 in~\cite{1095.55006}).

2) Consider this time, the disk $D^2$ as an oriented cobordism $F_{0,0+1}$ with zero incoming boundary and one outgoing boundary. We suppose that $X$ is $R$-small.
Since the fibre of the fibration
$map(in,X):map(D^2,X)\twoheadrightarrow map(\O,X)=point $ is homotopy equivalent to $X$,
the operation associated to $D^2=F_{0,0+1}$, namely the unit,
$$ R=R\wedge\Sigma^\infty map(\O,X)_+\buildrel{\tau_{map(in,X)}}
\over\longrightarrow R\wedge\Sigma^\infty map(D^2,X)_+
\buildrel{R\wedge\Sigma^\infty map(out,X)_+}\over\longrightarrow R\wedge\Sigma^\infty \mathcal{L}X_+$$
is defined using Dwyer's transfert.
By 1), the operation associated to $F_{g,1+q}$ is defined for all $q\geq 0$.
Therefore, by composition (same arguments as in the proof of 1), in the case $p\geq 1$ and $q=0$),
the operation associated to $F_{g,0+q}$,
$$ R\buildrel{\tau_{map(in,X)}}
\over\longrightarrow R\wedge\Sigma^\infty map(F_{g,0+q},X)_+
\buildrel{R\wedge\Sigma^\infty map(out,X)_+}\over\longrightarrow R\wedge(\Sigma^\infty \mathcal{L}X_+)^{\wedge q}$$
is also defined using Dwyer's transfert. That is the fibre of the fibration
$$map(in,X):map(F_{g,0+q},X)\twoheadrightarrow map(\O,X)=point $$ 
is $R$-small.
\end{proof}
\subsection{The results for finite groups}
Let $F_{g,p+q}$ be an oriented cobordism from $\amalg_{i=1}^p S^1$ to
  $\amalg_{i=1}^q S^1$.
Let $Diff^+(F;\partial)$ be the group of orientation preserving diffeomorphisms
that fix the boundaries pointwise.

Let $G$ be a finite discrete group.
Let $X$ be a $K(G,1)$.
Suppose that every path component of  $F_{g,p+q}$ has at least one in-boundary component.
By part 1) of Proposition~\ref{finite fibre}, all the fibers of the fibration
$map(in;X):map(F_{g,p+q},X)\twoheadrightarrow \mathcal{L}X^{\times p}
$ are homotopy equivalent to a finite CW-complex.
Therefore by Section~\ref{finitude fibre preserve par}, all the fibres of the fibration obtained by Borel construction $(-)_{hDiff^+(F;\partial)}$
$$
\rho_{in}:=map(in,X)_{hDiff^+(F;\partial)}:
map(F_{g,p+q},X)_{hDiff^+(F;\partial)}\twoheadrightarrow BDiff^+(F;\partial)\times\mathcal{L}X^{\times p}
$$
are homotopy equivalent to a finite CW-complex.

Therefore we can define the {\it evaluation product} associated to $F_{g,p+q}$.
$$
\mu(F):\Sigma^\infty BDiff^+(F;\partial)_+\wedge (\Sigma^\infty \mathcal{L}X_+)^{\wedge p}\rightarrow
 (\Sigma^\infty \mathcal{L}X_+)^{\wedge q}$$  
by the composite of the transfert map of $\rho_{in}$,
$$
\tau_{\rho_{in}}:
\Sigma^\infty BDiff^+(F;\partial)_+\wedge (\Sigma^\infty \mathcal{L}X_+)^{\wedge p}\rightarrow
\Sigma^\infty map(F_{g,p+q},X)_{hDiff^+(F;\partial)+}
$$
and of
$$
\Sigma^\infty \rho_{out+}:\Sigma^\infty map(F_{g,p+q},X)_{hDiff^+(F;\partial)+}\rightarrow
(\Sigma^\infty \mathcal{L}X_+)^{\wedge q}.
$$
Now the same arguments as in Section~\ref{propic} give a stable version of the main theorem:
\begin{theor}(Stable version)\label{main theorem stable version}
Let $G$ be a finite group.
Let $X$ be a $K(G,1)$.
Then the suspension spectrum $\Sigma^{\infty}\mathcal{L}X_+$ is an algebra over the stable prop of surfaces $\Sigma^{\infty} (\propBD)_+$ in the stable homotopy category (The topological prop $\propBD$
is defined in Proposition~\ref{equivalence des trois props} except that we consider here the cobordisms whose path components all
have at least one in-boundary components).
\end{theor}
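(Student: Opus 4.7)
The plan is to verify that the evaluation products $\mu(F)$ just constructed (as composites $\Sigma^{\infty}\rho_{out+}\circ\tau_{\rho_{in}}$, well-defined by Proposition~\ref{finite fibre}) assemble into an algebra over $\Sigma^{\infty}(\propBD)_+$ in the stable homotopy category. The entire strategy is to transcribe Section~\ref{propic} into the stable category, systematically replacing ``integration along the fibre'' by ``Becker-Gottlieb/Dwyer stable transfer'' and invoking the fact, recalled in Section~\ref{proprietes umkehr maps}, that the stable transfer obeys the same yoga as integration along the fibre: naturality under pullback and homotopy equivalence, compatibility with composition, with products (smash), and with Borel constructions.

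First I would handle equivalence of cobordisms, the identity and symmetry axioms, and the ``no-$B\!Diff$'' restriction statement (Proposition~\ref{restriction evaluation product}). Each proof in Section~\ref{propic} uses only naturality of the Umkehr map with respect to pullbacks and homotopy equivalences, so each translates verbatim to the stable setting. Compatibility with disjoint union is equally routine, combining the product property of the transfer with Lemma~\ref{middle exchange}(ii) and the isomorphism $Diff^{+}(F\amalg F';\partial)\cong Diff^{+}(F;\partial)\times Diff^{+}(F';\partial)$ which is valid as soon as each path-component of $F$ and of $F'$ has a boundary (which is here automatic from $p_i\geq 1$).

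The main step is the gluing axiom, i.e.\ the stable analogue of Proposition~\ref{compatible au gluing}. I would introduce the auxiliary Borel construction $(map(F''_{g'',p+r},X))_{hD_{g,p+q}\times D_{g',q+r}}$ and reuse the two commutative diagrams from the proof of that proposition: the upper pullback square comparing $\rho_{in}$ on the glued surface with its mixed Borel construction via the map $BD_{g,p+q}\times BD_{g',q+r}\to BD_{g'',p+r}$, and the pullback~(\ref{produit fibre gluing}) expressing $map(F'',X)$ as the fibre product of $map(F',X)$ and $map(F,X)$ over $\mathcal{L}X^{\times q}$. Naturality of the transfer under pullbacks and the stable composition property then give $\mu(F'')\simeq\mu(F')\circ\mu(F)$, exactly by the stable translation of Lemma~\ref{composition lemma}. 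For cobordisms containing path-components of the form $F_{g,p_i+0}$ (the only ``counital'' case allowed by the hypotheses), gluing compatibility is obtained by decomposing such a component as the composition of $F_{g,p_i+1}$ with the counit disk $F_{0,1+0}$, precisely as in the argument of Proposition~\ref{finite fibre}(1).

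The main obstacle is purely a bookkeeping issue: every stable transfer that appears in the gluing argument — in particular those attached to the auxiliary fibrations $map(F'',X)\to map(F',X)$ obtained by pullback from $map(out,X)$ in~(\ref{produit fibre gluing}), as well as their Borel-constructed versions — must act on a fibration whose fibres are stably finite. This is settled by systematically invoking the preservation principles of Section~\ref{finitude fibre preserve par} (finiteness of fibres is stable under pullback, composition, homotopy equivalence, and Borel construction) against the seed finiteness of Proposition~\ref{finite fibre}. Beyond this verification no new ingredient is required, and the identical pullback/Borel gymnastics that produced Theorem~\ref{main theoreme homologie groupe de Lie} in the homological setting yields the desired stable prop action on $\Sigma^{\infty}\mathcal{L}X_+$.
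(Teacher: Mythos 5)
Your proposal is correct and matches the paper's own argument: the paper defines $\mu(F)$ as $\Sigma^{\infty}\rho_{out+}\circ\tau_{\rho_{in}}$ (legitimized by Proposition~\ref{finite fibre} together with the preservation principles of Section~\ref{finitude fibre preserve par}) and then simply invokes ``the same arguments as in Section~\ref{propic}'', i.e.\ the transcription of the naturality, product, composition and Borel-construction yoga from integration along the fibre to the stable transfer, exactly as you describe. Your explicit handling of the $q_i=0$ components via the counit disk is the same device used in the proof of Proposition~\ref{finite fibre}(1), so no genuinely different route is involved.
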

\begin{cor}(Stable Frobenius algebra)\label{Stable Frobenius algebra}
Let $G$ be a finite group.
Let $X$ be a $K(G,1)$.
Then the suspension spectrum $\Sigma^{\infty}\mathcal{L}X_+$ is a counital non-unital commutative Frobenius
object (in the sense of~\cite[3.6.13]{Kock:Frob2TQFT}) in the homotopy category of spectras.
In particular the suspension spectrum $\Sigma^{\infty}\mathcal{L}X_+$ is a non-unital commutative
associative ring spectrum.
\end{cor}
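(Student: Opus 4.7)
The plan is to deduce this corollary from the stable main theorem (Theorem~\ref{main theorem stable version}) in exactly the same way as Corollary~\ref{TQFT homologie groupe de Lie} was deduced from Theorem~\ref{main theoreme homologie groupe de Lie}, only now working in the stable homotopy category of spectra rather than in graded vector spaces. So the argument is essentially formal once the stable propic action is in place.

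First I would recall (Sections~\ref{composantes connexes de la Segal prop} and~\ref{Tillmann prop}) that the path components of the topological prop $\propBD$ are identified with the skeleton of oriented $2$-cobordisms, $sk(2\text{-}Cob)$. Applying $\Sigma^{\infty}(-)_+$ to the canonical inclusion of $0$-cells $\pi_0(\propBD)\hookrightarrow\propBD$ produces a morphism of props of spectra
$$\Sigma^{\infty}sk(2\text{-}Cob)_+\longrightarrow\Sigma^{\infty}(\propBD)_+$$
in the stable homotopy category. Restricting attention throughout to cobordisms each of whose path-components has at least one in-boundary circle (the counital non-unital setting that applies here because of the restriction in Theorem~\ref{main theorem stable version}), Theorem~\ref{main theorem stable version} therefore makes $\Sigma^{\infty}\mathcal{L}X_+$ into an algebra over $\Sigma^{\infty}sk(2\text{-}Cob)_+$ in the stable homotopy category.

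Next I would invoke the classical identification (\cite[3.3.2 and 3.6.13]{Kock:Frob2TQFT}) that an algebra in a symmetric monoidal category over the skeletal category of $2$-cobordisms (with positive in-boundary components) is exactly a counital non-unital commutative Frobenius object. Since the symmetric monoidal category of spectra is the ambient category here, this gives the Frobenius structure on $\Sigma^{\infty}\mathcal{L}X_+$ directly: the counit is the operation associated to the disk $F_{0,1+0}$ (whose associated transfer is well defined by part~1) of Proposition~\ref{finite fibre}), the multiplication is the pair of pants $F_{0,2+1}$, the coproduct is $F_{0,1+2}$, and commutativity, associativity and the Frobenius relation follow from the corresponding equivalences of cobordisms (as in Section~\ref{propic}, which only used symmetry, disjoint union and gluing, all of which are available in the stable setting).

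The last sentence is then immediate: a counital non-unital commutative Frobenius object has in particular a commutative and associative multiplication, so $\Sigma^{\infty}\mathcal{L}X_+$ is a non-unital commutative associative ring spectrum. There is no real obstacle; the only point that requires a little care is that the stable propic action of Theorem~\ref{main theorem stable version} only deals with cobordisms whose components have at least one in-boundary, which is precisely the counital non-unital hypothesis (hence no unit is produced, consistent with the statement). Thus the proof reduces to a citation of Kock's algebraic classification together with the stable main theorem.
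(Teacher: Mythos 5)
Your argument is correct and follows essentially the same route as the paper: identify $\pi_0(\propBD)\cong sk(2\text{-}Cob)$, obtain a morphism of stable props $\Sigma^{\infty}sk(2\text{-}Cob)_+\to\Sigma^{\infty}(\propBD)_+$ from a splitting of $\propBD\twoheadrightarrow\pi_0(\propBD)$, apply Theorem~\ref{main theorem stable version}, and conclude by Kock's classification of algebras over the $2$-cobordism category as commutative Frobenius objects. The only point the paper spells out that you gloss over with ``inclusion of $0$-cells'' is why the section $\pi_0(\propBD)\hookrightarrow\propBD$ is a morphism of props up to homotopy (namely, a map out of a discrete set is determined up to homotopy by its composite with the projection onto $\pi_0$), but this is a minor presentational difference.
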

\begin{proof}
Consider the topological prop $BD$ (or Segal prop $\mathfrak M$ since they are homotopy equivalent).
Any map $\varphi$ from a discrete set $E$ to a topological space $Y$ is uniquely determined up
to homotopy by the composite
$E\buildrel{\varphi}\over\rightarrow Y\twoheadrightarrow \pi_0(Y)$.
Therefore the quotient map $BD\twoheadrightarrow \pi_0(BD)$ admits a section
$\sigma:\pi_0(BD)\hookrightarrow BD$, which is a morphism
of props up to homotopy since the quotient map $BD\twoheadrightarrow \pi_0(BD)$ is a morphism of props.
Recall from Sections~\ref{composantes connexes de la Segal prop} or~\ref{Tillmann prop} that
there is an isomorphim of discrete props $\pi_0(\mathfrak M)\cong \pi_0(BD)\cong sk(2-Cob)$ between the path-components of the two topological props $\mathfrak M$, $BD$
and the skeleton of the category of oriented $2$-dimensional cobordisms.
So we have a morphism of props $\Sigma^{\infty}sk(2-Cob)_+\hookrightarrow \Sigma^{\infty} (\propBD)_+$ in the
stable homotopy category.
Therefore by Theorem~\ref{main theorem stable version}, $\Sigma^{\infty}\mathcal{L}X_+$
 is an algebra over the stable prop  $\Sigma^{\infty}sk(2-Cob)_+$.
So by~\cite[Theorem 3.6.19]{Kock:Frob2TQFT}, $\Sigma^{\infty}\mathcal{L}X_+$ is a commutative Frobenius object
in the homotopy category of spectra.
\end{proof}
\begin{cor}(Batalin-Vilkovisky algebra)\label{Structure BV en cohomologie groupe fini}
Let $G$ be a finite group.
Let $X$ be a $K(G,1)$.
Let $h^*$ be a generalized cohomology theory coming from a commutative ring spectrum.
Then $h^*(\mathcal{L}X)$, is an algebra over the operad
$\oplus_{g\geq 0} h_*(BDiff^{+}(F_{g,p+1}))$, $p\geq 0$.
In particular, the singular free loop space cohomology of $X$, with coefficients in any commutative ring ${\Bbbk}$,
$H^*(\mathcal{L}X;{\Bbbk})$, is an unital Batalin-Vilkovisky algebra.
\end{cor}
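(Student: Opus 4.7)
The proof mirrors that of Corollary~\ref{Structure BV en cohomologie groupe de Lie}, with two simplifications and one genuine gain. Since $\Omega X\simeq G$ is discrete, the ``top degree'' is $d=0$, so no cohomological shift is needed. We work throughout with the stable prop action of Theorem~\ref{main theorem stable version}, obtained via Dwyer's transfer, so any commutative ring spectrum $R$ representing $h^*$ is allowed. The gain is the existence of a unit, arising from the disk $D^{2}$ viewed as a cobordism $\emptyset\to S^{1}$, whose associated operation was undefined in the Lie group setting.

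First I would apply $h^*$ to the stable evaluation product
$\mu(F)\colon \Sigma^{\infty}BD(F)_{+}\wedge\Sigma^{\infty}\mathcal{L}X_{+}^{\wedge p}\to \Sigma^{\infty}\mathcal{L}X_{+}^{\wedge q}$,
transposing to obtain $\mu(F)^{*}\colon h^{*}(\mathcal{L}X^{q})\to h^{*}(BD(F)\times \mathcal{L}X^{p})$. Composing with the external product $h^{*}(\mathcal{L}X)^{\otimes q}\to h^{*}(\mathcal{L}X^{q})$ and the slant product
$h^{*}(BD(F)\times \mathcal{L}X^{p})\otimes h_{*}(BD(F))\to h^{*}(\mathcal{L}X^{p})$
produces the operadic evaluation
$$
\mu(F)^{\#}\colon h^{*}(\mathcal{L}X)^{\otimes q}\otimes h_{*}(BD(F))\longrightarrow h^{*}(\mathcal{L}X)^{\otimes p}.
$$
All external products are defined because $R$ is a commutative ring spectrum. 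A diagram chase dualising Section~\ref{propic} then shows that $\mu(F)^{\#}$ is symmetric and compatible with disjoint union and gluing of cobordisms, exactly as in the proof of Corollary~\ref{Structure BV en cohomologie groupe de Lie}.

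Next I would restrict to path-connected cobordisms $F_{g,1+p}$ with one incoming and $p\geq 0$ outgoing boundary components. Reinterpreting these as morphisms in the opposite direction, and using the tautological identification $BDiff^{+}(F_{g,1+p},\partial)\cong BDiff^{+}(F_{g,p+1},\partial)$, the operations $\mu(F_{g,1+p})^{\#}$ assemble into an action of the operad $\bigoplus_{g\geq 0}h_{*}(BDiff^{+}(F_{g,p+1},\partial))_{p\geq 0}$ on $h^{*}(\mathcal{L}X)$. The arity-zero component ($p=0$, the disk viewed as $\emptyset\to S^{1}$) provides the unit, which is exactly the new input compared to the Lie group case.

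For singular cohomology with coefficients in a commutative ring ${\Bbbk}$, restricting the action above to genus zero and invoking the homotopy equivalence of operads $BDiff^{+}(F_{0,p+1},\partial)_{p\geq 0}\simeq f\mathcal{D}_{2}$ recalled in Section~\ref{section structure BV} exhibits $H^{*}(\mathcal{L}X;{\Bbbk})$ as an algebra over $H_{*}(f\mathcal{D}_{2};{\Bbbk})$, that is, a unital Batalin--Vilkovisky algebra. The main obstacle is essentially bookkeeping: verifying gluing compatibility of $\mu(F)^{\#}$ amounts to checking naturality of the slant product with respect to the pull-back square underlying Proposition~\ref{compatible au gluing}, and this is the place where the commutativity of the ring spectrum $R$ is used in an essential way.
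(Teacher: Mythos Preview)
Your argument is correct and follows essentially the same route as the paper. The paper packages your slant-product construction as a separate abstract statement (Property~\ref{Slant product}): if $Y$ is a coalgebra over a stable operad $O$, then $h^*(Y)$ is an algebra over $h_*(O)^{op}$; it then applies this to the cooperad $\Sigma^\infty BDiff^+(F_{g,1+q})_+$ coming from Theorem~\ref{main theorem stable version}, and invokes the isomorphism $\mathfrak{M}(p,1)\cong\mathfrak{M}(1,q)^{op}$. Your proof spells out exactly the same mechanism by hand and mirrors the Lie-group argument; the only cosmetic difference is that the paper restricts to the cooperad (one incoming circle) before dualising, whereas you dualise the full prop and restrict afterwards. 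One minor inaccuracy: the transfers in Theorem~\ref{main theorem stable version} are Becker--Gottlieb transfers (the fibres are finite complexes by Proposition~\ref{finite fibre}(1)), not Dwyer's; Dwyer's transfer is only invoked later for the unit when fibres are merely $R$-small.
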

\begin{propriete}\label{Slant product}
In the homotopy category of spectra, let $Y$ be a coalgebra over an operad $O=O(n)_{n\geq 0}$.
Let $\mu:O(n)\wedge Y\rightarrow Y^{\wedge n}$ be the evaluation product.
Then the composition of
$$
h^*(Y)^{\otimes n}\otimes h_*(O(n))\rightarrow h^*(Y^{\wedge n})\otimes h_*(O(n))
\buildrel{h^*(\mu)\otimes h_*(O(n))}\over\rightarrow h^*(O(n)\wedge Y)\otimes h_*(O(n))
$$
and of the slant product for generalized multiplicative cohomology~\cite[p. 270 iii)]{Switzer}
$$
/:h^*(O(n)\wedge Y)\otimes h_*(O(n))\rightarrow h^*(Y)
$$
makes $h^*(Y)$ into an algebra over the opposite of the operad $h_*(O)$.
\end{propriete}
\begin{proof}[Proof of Corollary~\ref{Structure BV en cohomologie groupe fini}]
By Theorem~\ref{main theorem stable version}, $\Sigma^{\infty}\mathcal{L}X_+$ is a coalgebra over the stable operad
$\Sigma^{\infty}\vee_{g\geq 0}BDiff^{+}(F_{g,1+q})_+$,
$q\geq 0$, or over the homotopy equivalent stable operad
$\Sigma^{\infty}\mathfrak{M}(1,q)_+$, $q\geq 0$.
By Property~\ref{Slant product},
$h^*(\mathcal{L}X)$, is an algebra over the operad
$h_*(\mathfrak{M}(1,q))^{op}$, $q\geq 0$. But the topological operad
$\mathfrak{M}(p,1)$, $p\geq 0$ is isomorphic to the opposite of the operad
$\mathfrak{M}(1,q)$, $q\geq 0$.
Therefore $h^*(\mathcal{L}X)$ is an algebra over the operad
$h_*(\mathfrak{M}(p,1))$, $p\geq 0$.

By taking the zero genus part, as in Section~\ref{section structure BV},
we obtain that $h^*(\mathcal{L}X)$, is an algebra over the operad
$h_*(f\mathcal{D}_2)$. If $h^*$ is any singular cohomology theory,
$h^*(\mathcal{L}X)$, is an unital Batalin-Vilkovisky algebra.
\end{proof}
From Theorem~\ref{main theorem stable version} and Corollary~\ref{Stable Frobenius algebra}, we get immediately:
\begin{theor}(Generalized homology version)\label{Generalized homology version groupe fini}
Let $G$ be a finite group.
Let $X$ be a $K(G,1)$.
Let $h_*$ be a generalized homology theory with a commutative product such that the Kunneth
morphism
$h_*(X)\otimes_{h_*(pt)}h_*(Y)\buildrel{\cong}\over\rightarrow h_*(X\times Y)$ is an isomorphism.
Then

i) The free loop space homology $h_*(\mathcal{L}X)$ is an algebra over the prop $h_*(BD)$
in the category of graded modules over the graded commutative algebra $h_*(pt)$.

ii) $h_*(\mathcal{L}X)$ is a non-unital counital Frobenius algebra in the category of $h_*(pt)$-modules.
\end{theor}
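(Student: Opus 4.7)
The plan is to deduce this theorem by applying the generalized homology functor $h_*$ to the stable structures already established in Theorem~\ref{main theorem stable version} and Corollary~\ref{Stable Frobenius algebra}, using the Künneth hypothesis to translate smash products of suspension spectra into tensor products over $h_*(pt)$.

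For part (i), I would start from the family of evaluation morphisms
$$\mu(F):\Sigma^\infty BDiff^+(F;\partial)_+\wedge(\Sigma^\infty\mathcal{L}X_+)^{\wedge p}\longrightarrow(\Sigma^\infty\mathcal{L}X_+)^{\wedge q}$$
in the stable homotopy category provided by Theorem~\ref{main theorem stable version}. Applying the generalized homology theory $h_*$, together with the Künneth isomorphism $h_*(Y\times Z)\cong h_*(Y)\otimes_{h_*(pt)}h_*(Z)$ (which extends to suspension spectra by the standard identification $h_*(\Sigma^\infty W_+)\cong h_*(W)$), yields linear maps
$$h_*(\mu(F)):h_*(BDiff^+(F;\partial))\otimes_{h_*(pt)} h_*(\mathcal{L}X)^{\otimes_{h_*(pt)} p}\longrightarrow h_*(\mathcal{L}X)^{\otimes_{h_*(pt)} q}$$
in the category of graded $h_*(pt)$-modules. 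The prop axioms (symmetry, associativity/gluing and monoidality/disjoint union) are inherited from the propic structure in the stable homotopy category by functoriality of $h_*$ and naturality of the Künneth isomorphism, exactly as in Section~\ref{propic} for singular homology. The only thing to check carefully is that the Künneth isomorphism is compatible with the swap and gluing morphisms at the level of $h_*(BD)$ itself, so that $h_*(BD)$ really is a prop in $h_*(pt)$-modules; this is routine from the monoidal naturality of Künneth.

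For part (ii), I would follow the argument of Corollary~\ref{Stable Frobenius algebra} verbatim at the spectrum level: the quotient map $BD\twoheadrightarrow\pi_0(BD)$ admits a section $\sigma:\pi_0(BD)\hookrightarrow BD$ which is a morphism of props up to homotopy, and under the identification $\pi_0(BD)\cong sk(2\text{-}Cob)$ this produces a morphism of stable props $\Sigma^\infty sk(2\text{-}Cob)_+\hookrightarrow\Sigma^\infty BD_+$. Restricting the action from part (i) along $h_*(\sigma)$ endows $h_*(\mathcal{L}X)$ with an action of $h_*(sk(2\text{-}Cob))=h_*(pt)[sk(2\text{-}Cob)]$ in $h_*(pt)$-modules. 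Since we are only working with cobordisms whose components each have at least one in-boundary component, by~\cite[Theorem 3.6.19]{Kock:Frob2TQFT} this is exactly the data of a non-unital counital commutative Frobenius algebra in the symmetric monoidal category of $h_*(pt)$-modules.

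The main obstacle is essentially bookkeeping rather than geometry: one must verify that the Künneth map is natural enough to turn the stable prop action into a prop action in $h_*(pt)$-modules and, in particular, that iterated Künneth maps $h_*(\mathcal{L}X^{\times p})\cong h_*(\mathcal{L}X)^{\otimes_{h_*(pt)} p}$ are coherently compatible with the symmetry isomorphisms used to define the $\Sigma_p\times\Sigma_q$-equivariance of $\mu(F)$. Once this coherence is in place (which is standard for any homology theory with a Künneth formula in the sense of the hypothesis), both (i) and (ii) follow formally from the stable statements already proved.
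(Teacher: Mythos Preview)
Your proposal is correct and takes essentially the same approach as the paper, which simply states that the theorem follows immediately from Theorem~\ref{main theorem stable version} and Corollary~\ref{Stable Frobenius algebra}. You have faithfully spelled out the details that the paper leaves implicit: apply $h_*$ to the stable evaluation maps, use the K\"unneth hypothesis to turn smash products into tensor products over $h_*(pt)$, and for part~(ii) restrict along the section $\sigma:sk(2\text{-}Cob)\hookrightarrow BD$ already used in the proof of Corollary~\ref{Stable Frobenius algebra}.
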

In particular from i), we have:
\begin{theor}(Singular homology version)\label{main theoreme homologie groupe fini}
Let $G$ be a finite group.
Let $X$ be a $K(G,1)$. Then the singular free loop space homology of $X$, with coefficients in a field $\mathbb{F}$,
$H_*(\mathcal{L}X;\mathbb{F})$, is a counital non-unital homological conformal field theory.(See Section~\ref{unital counital ou pas} for the definition).
\end{theor}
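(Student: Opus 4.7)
The plan is to deduce this as a direct specialization of the Generalized homology version (Theorem on $h_*$) by taking $h_* = H_*(-;\mathbb{F})$, singular homology with coefficients in a field. The key point to verify is that this particular generalized homology theory satisfies the Künneth isomorphism required in the hypothesis: for field coefficients the Künneth formula $H_*(Y;\mathbb{F}) \otimes_{\mathbb{F}} H_*(Z;\mathbb{F}) \xrightarrow{\cong} H_*(Y \times Z;\mathbb{F})$ holds without any Tor correction, and $H_*(pt;\mathbb{F}) = \mathbb{F}$, so the category of $h_*(pt)$-modules is just the category of $\mathbb{F}$-vector spaces. This is exactly the setting in which the propic action takes values in the right target for the definition of an HCFT.

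First I would invoke part (i) of the Generalized homology version to obtain that $H_*(\mathcal{L}X;\mathbb{F})$ is an algebra over the linear prop $H_*(BD;\mathbb{F})$, where we restrict to cobordisms whose path components each have at least one in-boundary component (this restriction is dictated by Proposition on finite fibres, which is where transfer maps become available in the finite-group setting). Next I would translate this prop into the Segal prop: by Proposition on the equivalence of the three props, the sub-topological prop of $BD$ consisting of such cobordisms is homotopy equivalent to the corresponding sub-prop of Segal's $\mathfrak{M}$, so applying singular homology gives an isomorphism $H_*(BD;\mathbb{F}) \cong H_*(\mathfrak{M};\mathbb{F})$ of graded linear props. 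Therefore $H_*(\mathcal{L}X;\mathbb{F})$ is an algebra over (the sub-prop of) $H_*(\mathfrak{M};\mathbb{F})$, which by the definition in Section~\ref{unital counital ou pas} is precisely a counital non-unital homological conformal field theory (``counital'' since the cap $F_{0,1+0}$ is admitted, and ``non-unital'' since the cup $F_{0,0+1}$ is excluded by the $p_i \geq 1$ constraint).

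There is essentially no new obstacle at this stage: all the hard work has already been carried out in establishing Theorem~\ref{main theorem stable version} (the stable version) and in verifying that the operations are compatible with symmetries, gluing, and disjoint union in Section~\ref{propic}. The only mild point to watch is to confirm that the propic structure established stably survives passage to singular homology as a prop structure, which reduces to the fact that $\Sigma^\infty(-)_+$ followed by $H_*(-;\mathbb{F})$ is lax symmetric monoidal via Künneth and strictly so over a field, so the evaluation products assemble into a bona fide prop morphism $H_*(\mathfrak{M};\mathbb{F}) \to \mathcal{E}nd_{H_*(\mathcal{L}X;\mathbb{F})}$.
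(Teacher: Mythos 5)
Your proposal is correct and follows essentially the same route as the paper: the paper deduces the singular homology version directly from part i) of the Generalized homology version (itself immediate from the stable Theorem~\ref{main theorem stable version} and Section~\ref{propic}), specialized to $h_*=H_*(-;\mathbb{F})$ where K\"unneth is an isomorphism, and then identifies the resulting $H_*(BD)$-algebra structure with the counital non-unital HCFT definition of Section~\ref{unital counital ou pas}. Your added remarks on the K\"unneth hypothesis and on the cap $F_{0,1+0}$ versus cup $F_{0,0+1}$ are consistent with the paper's conventions.
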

Of course, there is also a singular cohomology version.
In~\cite[p. 176, (B.7.3)]{Ravenel:Nilpotence}, Ravenel explains that Morava $K$-theory and singular homology with
field coefficients are essentially the only generalized homology theories where Kunneth is an isomorphism.
For these generalized homology theory with Kunneth isomorphism, it turns out that, in many case, the Frobenius
algebra $h_*(\mathcal{L}X)$ of Theorem~\ref{Generalized homology version groupe fini} ii) has an unit:
\begin{cor}(unit)\label{unite pour les groupes finis}
Let $G$ be a finite group. Let $X$ be a $K(G,1)$. Then

1) (Dijkgraaf-Witten) if $char(\mathbb F)$ does not divide card(G) then $H_*(\mathcal {L}BG;\mathbb F)=H_0(\mathcal {L}BG;\mathbb F)$ is an
unital and counital Frobenius algebra.

2) (Comparison with Strickland~\cite{Strickland:duality} below) if $K(n)$ is the even periodic Morava $K$-theory spectrum at an odd prime then $K(n)_*(\mathcal LBG)$ is an unital and counital Frobenius algebra in the category of $K(n)_*(pt)$-graded modules.
\end{cor}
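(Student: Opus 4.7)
The plan is to upgrade the counital non-unital Frobenius structure produced by Theorem~\ref{Generalized homology version groupe fini} to a fully unital one in both situations by exhibiting the missing operation associated to the disk $D^2$ regarded as a cobordism $F_{0,0+1}$ from the empty $1$-manifold to one circle, and then deducing the unit axiom formally from the propic structure.

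First I would verify the appropriate smallness hypothesis so that Part 2) of Proposition~\ref{finite fibre} applies with the cobordism $D^2$ viewed as $F_{0,0+1}$. For Case (1), since $\mathrm{char}(\mathbb F)\nmid |G|$, Maschke's theorem gives $H_*(BG;\mathbb F)\cong\mathbb F$ concentrated in degree $0$, so $BG$ is $H\mathbb F$-small in the sense of~\cite[Definition 2.2]{Dwyer:transfer}. For Case (2), $BG$ is $K(n)$-small by the finite generation theorem of Hopkins--Kuhn--Ravenel; this is exactly the hypothesis needed for Dwyer's transfer $\tau_{map(in,X)}$ to exist for the fibration $map(in,X):map(D^2,X)\twoheadrightarrow map(\emptyset,X)=\mathrm{pt}$. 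Composing with $R\wedge\Sigma^\infty map(out,X)_+$ yields a map
$$
\eta:R\longrightarrow R\wedge\Sigma^\infty\mathcal LX_+
$$
which, after passing to the appropriate (co)homology, defines the unit element $1_{\mathcal LX}$.

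Next I would establish the unit axiom. By Proposition~\ref{compatible au gluing} (compatibility of the evaluation products with gluing, which by the same proof extends to the stable setting of Theorem~\ref{main theorem stable version}), gluing $F_{0,0+1}$ to the incoming boundary of the pair of pants $F_{0,2+1}$ reproduces the cylinder operation, which by the identity corollary in Section~\ref{propic} acts as the identity on $H_*(\mathcal LX)$. This shows that multiplication by $1_{\mathcal LX}$ is the identity, giving the unit for the ring structure; dually the Frobenius coform combined with $\eta$ gives compatibility with the counit produced in Corollary~\ref{Stable Frobenius algebra}. In Case (1), the additional statement $H_*(\mathcal LBG;\mathbb F)=H_0(\mathcal LBG;\mathbb F)$ follows from the standard decomposition $\mathcal LBG\simeq\coprod_{[g]}BC_G(g)$ together with Maschke applied fibrewise: since $|C_G(g)|$ divides $|G|$, each $H_*(BC_G(g);\mathbb F)$ is concentrated in degree zero.

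The main obstacle is not really the construction of $\eta$ (which is essentially given by Proposition~\ref{finite fibre}(2) once smallness is in hand) but rather, for Case~(2), the identification of the resulting $K(n)_*$-Frobenius algebra with the one constructed by Strickland~\cite{Strickland:duality}. For this I would compare the coevaluation here, built from $\tau_{\rho_{in}}$ applied to the torus cobordism $F_{1,0+0}$ (or equivalently the pair-of-pants composed with itself), with Strickland's duality pairing on $K(n)_*(BG)$ coming from the ambidexterity of $K(n)$-local $G$-equivariant spectra; once both pairings are seen to be the Becker--Gottlieb transfer along $BG\to\mathrm{pt}$, functorial uniqueness of Frobenius structures with a prescribed trace forces the two structures to agree. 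The oddness of the prime is used here to avoid sign subtleties in $K(n)$-theory, exactly as in Strickland's setting.
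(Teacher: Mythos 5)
Your core argument is the paper's own: by part 2) of Proposition~\ref{finite fibre}, the unit is supplied by Dwyer's transfer for the disk $F_{0,0+1}$, and its compatibility with the rest of the structure comes from the same gluing arguments as in Section~\ref{propic}, so everything reduces to checking that $BG$ is $R$-small; you verify this in case 1) by the vanishing of $H_{>0}(BG;\mathbb F)$ when $\mathrm{card}(G)$ is invertible (the paper cites \cite{Brown:cohgro}) and in case 2) by finite generation of $K(n)_*(BG)$ over $K(n)_*(pt)$ (the paper cites Ravenel~\cite{RavenelMexique} rather than Hopkins--Kuhn--Ravenel, but it is the same input to \cite{Dwyer:transfer}). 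Your explicit derivation of $H_*(\mathcal LBG;\mathbb F)=H_0(\mathcal LBG;\mathbb F)$ from $\mathcal LBG\simeq\coprod_{[g]}BC_G(g)$ is a harmless supplement that the paper leaves implicit.

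Two caveats. First, the identification with Strickland's Frobenius algebra, which you single out as ``the main obstacle,'' is not part of the statement: the parenthetical only points to the remark following the corollary, where the authors explicitly say they have \emph{not} checked that the two structures coincide. Your sketch of that comparison would not stand as written: the coevaluation is governed by the genus-zero cobordism $F_{0,0+2}$ with two outgoing circles, not by the closed torus $F_{1,0+0}$ (which has no boundary and is not among the allowed cobordisms), and ``functorial uniqueness of Frobenius structures with a prescribed trace'' is not a result you can invoke; fortunately none of this is needed for the corollary. Second, the hypothesis that $p$ is odd is not there to handle signs in a comparison with Strickland: it is what makes the two-periodic $K(n)$ a commutative ring spectrum (\cite[p. 764]{Strickland:duality}), which, together with the K\"unneth isomorphism, is precisely the hypothesis of Theorem~\ref{Generalized homology version groupe fini} needed for the Frobenius structure on $K(n)_*(\mathcal LBG)$ to exist in the first place; this verification should appear in your case 2).
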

\begin{proof}
Let $R$ be the Eilenberg-Mac Lane spectrum $H\mathbb{F}$ in case 1) and let $R$ be $K(n)$
in case 2). By part 2) of Proposition~\ref{finite fibre}, it suffices to show that
$X$ is $R$-small.
As recalled in Section~\ref{Dwyer transfer}, this is the case if the homology
$R_*(X)$ is finitely generated as $R_*(pt)$-module.

Case 1) Since the cardinal of $G$ is invertible in $\mathbb{F}$,
$H_*(X;\mathbb{F})$ is concentrated in degre $0$~\cite[dual of Chap III Corollary 10.2]{Brown:cohgro}.
Therefore $H_*(X;\mathbb{F})=H_0(X;\mathbb{F})\cong \mathbb{F}$ is a finite dimension
vector space over $\mathbb{F}$.

Case 2) If $p$ is odd, the two-periodic Morava $K$-theory $K(n)$ is a
commutative ring spectrum~\cite[p. 764]{Strickland:duality}.
By~\cite{RavenelMexique}, $K(n)_*(X)$ is finitely generated as $K(n)_*(pt)$-modules.
\end{proof}
\begin{rem}
1) When $\mathbb{F}$ is the field of complex numbers $\mathbb{C}$,
we have not checked that our Frobenius algebra
$H_0(\mathcal {L}BG;\mathbb C)$ coincides with the Frobenius algebra of Dijkgraaf-Witten.
But it should!

2) Let $\mathcal G$ be a finite groupoid. Let $B\mathcal G$ its classifying space.
In~\cite[Theorem 8.7]{Strickland:duality}, Strickland showed that the suspension spectrum of
$B\mathcal G$ localized with respect to $K(n)$ is an unital and counital Frobenius object.
Roughly, the comultiplication is the diagonal map $B\mathcal G\rightarrow
B\mathcal G\otimes B\mathcal G$.
The counit is the projection map $B\mathcal G\rightarrow *$.
On the contrary, the multiplication is given by the transfer map of the evaluation fibration
$(ev_0,ev_1):B\mathcal G^{[0,1]}\twoheadrightarrow B\mathcal G\otimes B\mathcal G$.
The unit is the transfert of the projection map  $B\mathcal G\rightarrow *$.
As pointed by Strickland~\cite[p. 733]{Strickland:duality},
this Frobenius structure has ``striking formal similarities''
with the case of manifolds (See 3) of example ~\ref{exemples d'algebres de Frobenius}).
In particular, $K(n)_*(B\mathcal G)$ is an unital and counital Frobenius algebra in
the category of $K(n)_*(pt)$-modules.

Let $G$ be a finite group. The inertia groupoid $\Lambda G$ of $G$ (~\cite[Definition 8.8]{Strickland:duality}
or~\cite[Definition 2.49]{Adem-Chen-Ruan:livre}) is a finite groupoid whose classifying space $B\Lambda G$
is homotopy equivalent to the free loop space on $BG$, ${\mathcal L}BG$.
Therefore applying Strickland results, we obtain that $K(n)_*(\mathcal{L}BG)$ is an unital and counital
Frobenius algebra like in part 2) of our Corollary~\ref{unite pour les groupes finis}.
We have not checked that Strickland Frobenius algebra coincides with ours.
But Strickland definitions of the comultiplication, of the counit, of the multiplication and of the unit
are very different from the definitions using cobordism given in this paper.
\end{rem}
\begin{rem}
By Proposition~\ref{toutes les fibres homotopes}, Theorem~\ref{main theorem stable version} can be extended to path-connected spaces $X$ such that $\Omega X$ is (stably) equivalent to a finite CW-complex.
In this case, we are only able to get a non-counital non-unital homological conformal
field theory. Although we have not prove it, we believe that this structure is trivial
except when $H_*(\Omega X;\mathbb{F})$ is concentrated in degre 0.
\end{rem}
\section{Frobenius algebras and symmetric Frobenius algebras}
In this section, we recall the notion of symmetric Frobenius and Frobenius algebras and prove that the homology of a connected compact Lie group is a symmetric Frobenius algebra.
\subsection{Frobenius algebras}
Frobenius algebras arise in the representation theory of algebras. 
\\
{\it A Frobenius algebra is a finite dimensional unitary associative algebra $A$ over a field $R$, equipped with a bilinear form called the Frobenius form
$$<-,->:A\otimes A\rightarrow \mathbb{F}$$  
that satisfies the Frobenius identity 
$$<a,bc>=<ab,c>$$
and is non-degenerate.}  
\\
They can also be characterized by the existence of an isomorphism
$\lambda_L:A\cong A^\vee$ of left-$A$-modules. In fact the existence of such an isomorphism implies 
the existence of a coassociative counital coproduct 
$$\delta:A\rightarrow A\otimes A$$
which is a morphism of $A$-bimodules~\cite[Thm 2.1]{Abrams:modulesfrobenius}.
Here the $A$-bimodule structure on $A\otimes A$ is the {\it outer} bimodule structure
given by
$$
a\dot(b\otimes b')\dot c:=ab\otimes b'c
$$
for $a$, $b$, $b'$ and $c\in A$.
\\
When $\lambda_L:A\cong A^\vee$ is an isomorphism of $A$-bimodule, the algebra $A$ is called symmetric Frobenius, this is equivalent to requiring that the Frobenius form is symmetric $<a,b>=<b,a>$. Let us notice that a commutative Frobenius algebra is always a symmetric Frobenius algebra.
\begin{ex}\label{example d'algebre symetrique}
 1) A classical example is given by algebras of matrices $M_n(\mathbb{F})$ where 
$$<A,B>=tr(A.B).$$
2) Let $G$ be a finite group then its group algebra $\mathbb{F}[G]$ is a non commutative symmetric Frobenius algebra.By definition, the group ring $\mathbb{F}[G]$ admits the set $\{g\in G\}$ as a basis.
Denote by $\delta_g$ the dual basis in $\mathbb F[G]^\vee$.
The linear isomorphism $\lambda_L:\mathbb{F}[G]\rightarrow \mathbb{F}[G]^\vee$,
sending $g$ to $\delta_{g^{-1}}$ is an isomorphism of $\mathbb{F}[G]$-bimodules.
\\
3) Let $M^d$ be a compact oriented closed manifold of dimension $d$ then the singular homology $H_{*+d}(M^d,\mathbb F)$ is a commutative Frobenius algebra of
(lower) degree $+d$. The product is the intersection product, the coproduct is induced by the diagonal $\Delta:M^d\rightarrow M^d\times M^d$. The counit is induced by the projection map $M\rightarrow *$. The unit is the orientation class
$[M]\in H_d(M)$.   
\end{ex}
\subsection{Hopf algebras}
Let $H$ be a finite dimensional Hopf algebra over a field ${\mathbb F}$.
A left (respectively right) {\it integral} in $H$ is an element $l$ of $H$ such
that $\forall h\in H$, $h\times l=\varepsilon (h) l$ (respectively $l\times h=\varepsilon(h) l$).
A Hopf algebra $H$ is {\it unimodular} if there exists a non-zero element
$l\in H$ which is both a left and a right integral in $H$.
\begin{ex}\label{exemples d'algebres de Frobenius}
If $G$ is a finite group,
$\sum_{g\in G} g$ is both a left and right integral in the group algebra
${\mathbb F}[G]$.
\end{ex}
The set $\int$ of left (respectively right)
integrals in the dual Hopf algebra $H^\vee$
is a ${\mathbb F}$-vector space of dimension $1$~\cite[Corollary 5.1.6 2)]{Sweedler:livre}.
Let $\lambda$ be any non-zero left (respectively right) integral in $H^\vee$.
The morphism of left (respectively right) $H$-modules, $H\buildrel{\cong}\over\rightarrow H^{\vee}$
sending $1$ to $\lambda$ is an
isomorphism~\cite[Proof of Corollary 5.1.6 2)]{Sweedler:livre}.
So a finite dimensional Hopf algebra is always a Frobenius algebra,
but not always a symmetric Frobenius algebra as the following theorem shows.
\begin{theor}(Due to~\cite{Oberst-Schneider}. Other proofs are given
in ~\cite{Farnsteiner} and ~\cite[p. 487 Proposition]{Lorenz:representationHopf}.
See also~\cite{Humphreys:symHopf}.)\label{symmetrique ssi unimodulaire}
A Hopf algebra $H$ is a symmetric Frobenius algebra if and only if
$H$ is unimodular and its antipode $S$ satisfies $S^2$ is an inner
automorphism of $H$.
\end{theor}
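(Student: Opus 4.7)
The statement is the classical Oberst--Schneider theorem, so the plan is to reconstruct a standard modern proof via the Nakayama automorphism, following the line of Farnsteiner or Lorenz cited in the statement. The key conceptual tool is that for any finite dimensional Frobenius algebra $A$ with Frobenius form $\lambda$, the isomorphism of left $H$-modules $A \to A^\vee$, $1 \mapsto \lambda$, fails to be an isomorphism of $A$-bimodules only up to a uniquely determined algebra automorphism $\nu \colon A \to A$, called the Nakayama automorphism, characterized by $\lambda(ab) = \lambda(b\,\nu(a))$ for all $a, b \in A$. A basic fact is that $A$ is symmetric Frobenius if and only if $\nu$ is an inner automorphism. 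So the whole problem reduces to identifying $\nu$ for a Hopf algebra.

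First, I would set up the Frobenius form on $H$. By the one-dimensionality of the space of right integrals in $H^\vee$ (already recalled in the excerpt via Sweedler), fix a non-zero right integral $\lambda \in H^\vee$ and set $\langle a, b\rangle := \lambda(ab)$. Next I would recall the distinguished group-like element $g \in H$ (the modular element), which is defined by the identity $\ell \cdot a = \alpha(a)\,\ell$ for a left integral $\ell \in H$ and the modular character $\alpha \in H^\vee$; the element $g$ is the Hopf-algebraic dual object to $\alpha$, and $H$ is unimodular precisely when $g = 1$.

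The technical core is the Radford formula for $S^4$ together with the computation of the Nakayama automorphism in this setting: one shows
\[
\nu(a) \;=\; S^{-2}\bigl(g^{-1}\,a\,g\bigr) \qquad \text{for all } a \in H.
\]
This is obtained by carefully transporting the defining equation $\lambda(ab) = \lambda(b\,\nu(a))$ through the Hopf-algebraic identities relating $\lambda$, $S$, and $g$ (this is the step I expect to be the main obstacle, since it requires the full interaction between the right integral, the antipode, and the modular element, essentially Radford's theorem). Once this formula is in hand, the equivalence is immediate: if $H$ is unimodular then $g = 1$ and $\nu = S^{-2}$, so $\nu$ is inner iff $S^{-2}$ is inner iff $S^2$ is inner, giving the symmetry of $H$.

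For the converse, assume $H$ is symmetric Frobenius, i.e. $\nu$ is inner, say $\nu(a) = u\,a\,u^{-1}$ for some unit $u \in H$. Then the formula $\nu(a) = S^{-2}(g^{-1} a g)$ rewrites $S^{-2}$ as the composite of conjugation by $u$ with conjugation by $g$, hence $S^2$ is itself an inner automorphism. Unimodularity is recovered by evaluating a suitable trace-type invariant: applying the counit-like functional coming from $\lambda$ to both sides of the equality of automorphisms $S^{-2} \circ \mathrm{Ad}(g) = \mathrm{Ad}(u)$ and comparing with the defining property of the modular element forces $\alpha = \varepsilon$, whence $g = 1$. This closes the equivalence. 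Throughout I would cite Radford's formula and Sweedler's results on integrals rather than rederive them, keeping the argument at the level of a careful translation of known Hopf-algebraic identities into the Frobenius language.
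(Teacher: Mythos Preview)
The paper does not prove this theorem: it is quoted from the literature (Oberst--Schneider, Farnsteiner, Lorenz, Humphreys), and the only thing the paper adds is the explicit symmetric form in the ``if'' direction, namely $\beta(h,k):=\lambda(hku)$ where $S^2(h)=uhu^{-1}$ and $\lambda$ is a non-zero left integral in $H^\vee$. So there is no proof in the paper to compare against; your Nakayama-automorphism strategy is exactly the approach of the cited references.

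That said, your outline has a genuine confusion that would derail the argument as written. You conflate two different modular objects: the modular \emph{character} $\alpha\in H^\vee$ (defined by $\ell a=\alpha(a)\ell$ for a left integral $\ell\in H$; unimodularity of $H$ is $\alpha=\varepsilon$) and the distinguished group-like $g\in H$ (which controls unimodularity of $H^\vee$, not of $H$). Your claim ``$H$ is unimodular precisely when $g=1$'' is wrong, and the formula $\nu(a)=S^{-2}(g^{-1}ag)$ is not the Nakayama automorphism. The correct formula involves the \emph{hit action} of $\alpha$, e.g.\ $\nu(a)=S^{-2}(\alpha\rightharpoonup a)$ (conventions vary), which is not conjugation by any element of $H$. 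With the correct formula, the forward direction is as you say: unimodular means $\alpha=\varepsilon$, so $\nu=S^{-2}$, and symmetry is equivalent to $S^2$ inner. For the converse, your trace-type argument is too vague; the standard route is to observe that innerness of $\nu$ forces the twist $a\mapsto \alpha\rightharpoonup a$ to be inner as well (since $S^{-2}$ then differs from it by an inner automorphism), and an algebra automorphism of the form $a\mapsto \alpha(a_{(1)})a_{(2)}$ with $\alpha$ a character is inner only when $\alpha=\varepsilon$. Fixing the $\alpha$/$g$ confusion and making this last step precise would give a correct proof along the lines of Lorenz.
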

Assume that $H$ is unimodular and that
$S^2$ is an inner automorphism of $H$.
Let $u$ be an invertible element
$u\in H$ such that
$\forall h\in H$, $S^2(h)=uhu^{-1}$.
Let $\lambda$ be any non-zero left integral in $H^\vee$.
Then $\beta(h,k):=\lambda (hku)$ is a non-degenerate symmetric
bilinear form~\cite[p. 487 proof of Proposition]{Lorenz:representationHopf}.
\begin{ex} Let $G$ be a finite group.
Since $S^2=Id$ and since $\delta_1$ is a left integral for
${\mathbb F}[G]^\vee$, we recover that the
the linear isomorphism $\mathbb{F}[G]\rightarrow\mathbb{F}[G]^\vee$, sending $g$ to $\delta_1(-g)=\delta_{g^{-1}}$ is an isomorphism of $\mathbb{F}[G]$-bimodules.
\end{ex}
All the previous results extend to graded Hopf algebras.
We need the following.
\begin{proposition}
Let $H$ be a cocommutative (lower) graded Hopf algebra such that
\\
i) $H_0\cong\mathbb{F}$ ($H$ is connected),
\\
ii) $H$ is concentrated in degrees between $0$ and $d$ and
\\
iii) $H_d\neq {0}$.
\\
Then there exists an isomorphism $H\buildrel{\cong}\over\rightarrow
H^\vee$
of $H$-bimodules (necessarily of (lower) degree $-d$:$H_p\buildrel{\cong}\over\rightarrow
(H_{d-p})^\vee$), i.e.
 $H$ is a symmetric Frobenius algebra of (lower) degree $-d$.
\end{proposition}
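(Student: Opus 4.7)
The plan is to verify, in the graded setting, the two hypotheses of Theorem~\ref{symmetrique ssi unimodulaire}: that $H$ is unimodular and that $S^{2}$ is an inner automorphism. Since $H$ is cocommutative its antipode satisfies $S^{2}=\mathrm{id}_{H}$, so $S^{2}$ is tautologically the inner automorphism determined by $1\in H$. The real content is unimodularity, which I extract directly from the grading.

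Pick any non-zero $l\in H_{d}$, which exists by hypothesis iii). For a homogeneous element $h\in H$ of positive degree, $hl$ lies in $H_{|h|+d}=\{0\}$ by hypothesis ii), while $\varepsilon(h)=0$ by connectedness i); hence $hl=0=\varepsilon(h)\,l$. For $h\in H_{0}=\mathbb{F}$ both sides reduce to scalar multiplication and agree. The mirror-image computation gives $lh=\varepsilon(h)\,l$, so $l$ is simultaneously a left and right integral in $H$. Thus $H$ is unimodular, and Theorem~\ref{symmetrique ssi unimodulaire} (in its graded version) produces a non-zero left integral $\lambda\in H^{\vee}$ together with a symmetric non-degenerate pairing $\beta(h,k):=\lambda(hk)$, which in turn gives an $H$-bimodule isomorphism $H\xrightarrow{\cong}H^{\vee}$, $h\mapsto\beta(h,-)$.

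To pin down the degree shift, I apply the same integral analysis to $H^{\vee}$ itself: it is a finite-dimensional graded Hopf algebra concentrated in lower degrees between $-d$ and $0$, with $(H^{\vee})_{0}\cong\mathbb{F}$ and $(H^{\vee})_{-d}\cong(H_{d})^{\vee}\neq\{0\}$. Rerunning the previous paragraph with $H$ replaced by $H^{\vee}$ shows that every element of $(H^{\vee})_{-d}$ is a two-sided integral; combined with the one-dimensionality of the space of integrals in a finite-dimensional Hopf algebra, this forces any non-zero $\lambda$ to be homogeneous of lower degree $-d$. Consequently $h\mapsto\beta(h,-)$ has lower degree $-d$ and restricts to isomorphisms $H_{p}\xrightarrow{\cong}(H_{d-p})^{\vee}$ for every $p$, as asserted. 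The only delicate point is the passage from the ungraded Theorem~\ref{symmetrique ssi unimodulaire} to its graded analogue; however the proof via the Nakayama automorphism transcribes verbatim because every structural map of a graded Hopf algebra preserves the grading, so no genuinely new argument is required.
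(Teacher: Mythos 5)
Your proof is correct and follows essentially the same route as the paper: cocommutativity gives $S^{2}=\mathrm{id}$, the grading forces any non-zero element of $H_{d}$ to be a two-sided integral so that $H$ is unimodular, and Theorem~\ref{symmetrique ssi unimodulaire} then yields the symmetric Frobenius structure. Your additional verification that the integral in $H^{\vee}$ sits in degree $-d$ just makes explicit the degree count the paper records parenthetically, so nothing further is needed.
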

\begin{proof}
Since $H$ is cocommutative, $S^2=Id$.
For degree reasons, any element $l\in H_d$ is both a left and a right integral.
Since $H_d\neq {0}$, $H$ is unimodular.
Therefore, by Theorem~\ref{symmetrique ssi unimodulaire}, $H$ is a symmetric Frobenius algebra.
\end{proof}
Notice that an ungraded cocommutative Hopf algebra
can be non unimodular~\cite[p. 487-8, Remark and Examples (1) and (4)]{Lorenz:representationHopf}.
Therefore the previous Proposition is false without condition i).

\subsection{The case of compact Lie groups} Let us come to our motivational example. Let us take a connected compact Lie group $G$ of dimension $d$.
Let $m$ denotes the product and $Inv$ the inverse map of $G$.
As a manifold one knows that $H_{*+d}(G,\mathbb F)$ together with the intersection product is a commutative Frobenius algebra of lower degree $+d$.
Moreover its homology  together with the coproduct $\Delta_*$ and the Pontryagin product $m_*$ is a finite dimensional connected cocommutative Hopf algebra, the antipode map $S$ is given by $S=Inv_*$.
Therefore using the above Proposition, $H_*(G)$ together with the
Pontryagin product
is a symmetric Frobenius algebra of (lower) degree $-d$.
Denote by 
$$\Theta:H_p(G)\buildrel{\cong}\over\rightarrow
(H_{d-p}(G))^\vee\cong H^{d-p}(G)$$ an isomorphism of $H_*(G)$-bimodules of (lower) degree $-d$.
Let $\eta_!$ be the Poincar\'e dual of the canonical inclusion
$\eta:\{1\}\subset G$.
Since $\eta_!$ is a non-zero element of $H_d(G)^\vee$, there exists a non-zero
scalar $\alpha\in\mathbb{F}$ such that $\eta_!=\alpha.\Theta(1)$.
Therefore, we have obtained
\begin{theor}\label{homologie groupe de Lie algebre symetrique}
The singular homology of a connected compact Lie group $G$ taken with coefficients in a field and equipped with the bilinear pairing 
$$< a, b >:=\eta_!(m_*(a\otimes b))$$
is a symmetric Frobenius algebra.
\end{theor}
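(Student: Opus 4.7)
The plan is to combine the preceding proposition on cocommutative graded Hopf algebras with classical facts about the topology of a connected compact Lie group, then match the stated pairing with the abstract Frobenius form produced by that proposition.

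First, I would verify that $H_*(G;\mathbb{F})$ equipped with the Pontryagin product $m_*$ and diagonal coproduct $\Delta_*$ satisfies the hypotheses of the Proposition just recalled. Since $G$ is path-connected, $H_0(G;\mathbb{F})\cong\mathbb{F}$; since $G$ is a closed orientable manifold of dimension $d$ (every Lie group is parallelizable and hence orientable), $H_i(G;\mathbb{F})=0$ for $i>d$ and $H_d(G;\mathbb{F})\neq 0$. Cocommutativity of $\Delta_*$ follows from the naturality of the Eilenberg--Zilber map and the fact that the diagonal $G\to G\times G$ is automatically cocommutative up to the twist (which acts trivially on the image). Applying the Proposition produces an isomorphism of $H_*(G)$-bimodules
$$\Theta: H_*(G;\mathbb{F}) \buildrel{\cong}\over\longrightarrow H^{*}(G;\mathbb{F})$$
of lower degree $-d$; in particular $\Theta$ restricts to an isomorphism $H_d(G;\mathbb{F})\cong H^0(G;\mathbb{F})\cong \mathbb{F}$, and the pairing $(a,b)\mapsto \Theta(1)(m_*(a\otimes b))$ is a non-degenerate symmetric Frobenius form.

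Next I would identify the linear form $\eta_!$ with $\Theta(1)$ up to a non-zero scalar. The map $\eta:\{1\}\hookrightarrow G$ has Umkehr/Poincar\'e dual $\eta_!\in H^d(G;\mathbb{F})\cong H_d(G;\mathbb{F})^{\vee}$, and $\eta_!$ is non-zero because it evaluates non-trivially on the orientation class $[G]\in H_d(G;\mathbb{F})$ (equivalently, $\eta_!$ is the Poincar\'e dual of the class of a point, which is the generator of $H^d(G;\mathbb{F})$). Since the top cohomology group $H^d(G;\mathbb{F})$ is one-dimensional and both $\eta_!$ and $\Theta(1)$ are non-zero elements of it, there exists $\alpha\in\mathbb{F}^\times$ with $\eta_!=\alpha\cdot\Theta(1)$.

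Finally, combining the two steps, the stated pairing
$$\langle a,b\rangle := \eta_!(m_*(a\otimes b)) = \alpha\cdot \Theta(1)(m_*(a\otimes b))$$
is $\alpha$ times a non-degenerate symmetric Frobenius form, hence itself a non-degenerate symmetric Frobenius form; associativity $\langle a,bc\rangle = \langle ab,c\rangle$ is immediate from associativity of the Pontryagin product. The main conceptual obstacle is already absorbed into the Proposition on cocommutative graded Hopf algebras (itself a consequence of Theorem~\ref{symmetrique ssi unimodulaire}); what remains for this theorem is essentially the topological identification of the linear functional $\eta_!$ with the abstract left integral used to construct $\Theta$, which is forced by the one-dimensionality of $H^d(G;\mathbb{F})$.
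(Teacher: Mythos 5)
Your proposal is correct and follows essentially the same route as the paper's first proof: apply the Proposition on finite-dimensional connected cocommutative graded Hopf algebras to get a bimodule isomorphism $\Theta$ of degree $-d$, then use one-dimensionality of $H_d(G;\mathbb{F})^\vee$ to write $\eta_!=\alpha\,\Theta(1)$ with $\alpha\neq 0$, so the stated pairing is a non-degenerate symmetric Frobenius form. (The paper also records a second, more topological proof via Poincar\'e duality and the intersection product, but that is an alternative, not something missing from your argument.)
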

\begin{proof}[Second Proof] Let us give a more topological proof. The Frobenius relation is automatically satisfied.
This bilinear form is closely related to the intersection product.
In fact one can consider the following pull-back diagram :
$$\xymatrix{G\ar[d]^{\Delta}\ar[r]^p&\{e\}\ar[d]^\eta\\
G\times G\ar[r]_{m'} &G}
$$
where $m'(h,g)=h^{-1}g$.
Using properties of Poincar\'e duality with respect to Pull-back diagrams and denoting by $\bullet$ the intersection product (Poincar\'e dual of $\Delta$) one gets :
$$p_*(a\bullet b)=\eta_!\circ m_*(S(a)\otimes b).$$
Using $m''(h,g)=hg^{-1}$, we find the relation
$$p_*(a\bullet b)=\eta_!\circ m_*(a\otimes S(b)).$$
Therefore, 
$$< a, b >:=\eta_!(m_*(a\otimes b))=p_*(S^{-1}(a)\bullet b)=
p_*(a\bullet S^{-1}(b)).$$
Since the intersection product is commutative,
the pairing $<a,b>$ is symmetric.
We recall that $p_*(a\bullet b)$ is the Frobenius form associated to the Frobenius structure on $H_*(G,\mathbb F)$ given by Poincar\'e duality. Therefore
since $S^{-1}$ is an isomorphism,
the pairing $<a,b>$ is non-degenerate. 
\end{proof}

\section{Hochschild cohomology}\label{hochschild cohomology}
Let $G$ be a finite group. The group ring $\mathbb{F}[G]$ is equipped with an isomorphism
$$\lambda_L:\mathbb{F}[G]\buildrel{\cong}\over\rightarrow\mathbb{F}[G]^\vee$$ of $\mathbb{F}[G]$-bimodules and is therefore
a symmetric Frobenius algebra (Example~\ref{example d'algebre symetrique} 2)).
So we have the induced isomorphism in Hochschild cohomology
$$
HH^*(\mathbb{F}[G];\lambda_L):HH^*(\mathbb{F}[G];\mathbb{F}[G])\buildrel{\cong}\over\rightarrow
HH^*(\mathbb{F}[G];\mathbb{F}[G]^\vee).
$$
Our inspirational theorem in section 2 says that the Gerstenhaber algebra
$HH^*(\mathbb{F}[G];\mathbb{F}[G])$ is a Batalin-Vilkovisky algebra.
Here the $\Delta$ operator is the Connes coboundary map $H(B^\vee)$
on $HH^*(\mathbb{F}[G];\mathbb{F}[G]^\vee)$.
In this section, we extends our inspirational theorem for finite groups to connected compact
Lie groups:
\begin{theor}\label{Homologie de Hochschild groupe de Lie}
Let $G$ be a connected compact Lie group of dimension $d$.
Denote by $S_*(G)$ the algebra of singular chains of $G$.
Consider Connes coboundary map $H(B^\vee)$
on the Hochschild cohomology of $S_*(G)$ with coefficients in its dual,
$HH^{*}(S_*(G);S^*(G))$.
then there is an isomorphism of graded vector spaces of upper degree $d$
$$\mathbb{D}^{-1}:HH^p(S_*(G);S_*(G))\buildrel{\cong}\over\rightarrow
HH^{p+d}(S_*(G);S^*(G))$$
such that the Gerstenhaber algebra $HH^*(S_*(G);S_*(G))$
equipped with the operator
$\Delta=\mathbb{D}\circ H(B^\vee)\circ \mathbb{D}^{-1}$ is a Batalin-Vilkovisky
algebra. 
\end{theor}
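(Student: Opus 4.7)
The strategy is to construct a chain-level lift of the symmetric Frobenius isomorphism of Theorem~\ref{homologie groupe de Lie algebre symetrique}, use it to transport the Connes coboundary, and then verify the Batalin-Vilkovisky identity by mimicking the argument of~\cite{MenichiL:BValgaccoHa} in the DG setting.

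First I would build a quasi-isomorphism of $S_*(G)$-bimodules of (lower) degree $d$
$$\mathbb{D}:S^*(G)\longrightarrow S_*(G),$$
lifting the symmetric Frobenius pairing of Theorem~\ref{homologie groupe de Lie algebre symetrique}. Fix a cocycle $\int_G\in S^d(G)$ Poincar\'e dual to the inclusion $\eta:\{e\}\hookrightarrow G$, and determine $\mathbb{D}$ by the rule that the pairing $\langle a,b\rangle:=\mathbb{D}^{-1}(a)(b)$ equals $\int_G(a\cdot b)$, where $\cdot$ denotes the Pontryagin product on $S_*(G)$. Passing to homology recovers the pairing $\eta_!(m_*(a\otimes b))$ of Theorem~\ref{homologie groupe de Lie algebre symetrique}, so by the five lemma $\mathbb{D}$ is a quasi-isomorphism. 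Right $S_*(G)$-linearity is immediate from associativity of the Pontryagin product; left linearity reduces to a trace-like identity $\int_G(cab)=\int_G(abc)$ that holds on homology but on chains only up to an explicit homotopy. In the latter case I would enhance $\mathbb{D}$ to an $A_\infty$-bimodule quasi-isomorphism by standard homotopy transfer.

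The induced map in Hochschild cohomology
$$\mathbb{D}^{-1}:HH^p(S_*(G);S_*(G))\buildrel{\cong}\over\longrightarrow HH^{p+d}(S_*(G);S^*(G))$$
is then the desired isomorphism of upper degree $d$. Composing with the Burghelea-Fiederowicz-Goodwillie isomorphism $HH^*(S_*(G);S^*(G))\cong H^*(\mathcal{L}BG)$ yields the vector-space identification $HH^*(S_*(G);S_*(G))\cong H^{*+d}(\mathcal{L}BG)$ announced before the statement. Setting $\Delta:=\mathbb{D}\circ H(B^\vee)\circ\mathbb{D}^{-1}$, the equality $\Delta^2=0$ is automatic from $(B^\vee)^2=0$. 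What remains is the Batalin-Vilkovisky relation
$$\{a,b\}=\Delta(a\cup b)-\Delta(a)\cup b-(-1)^{|a|}a\cup\Delta(b)$$
between $\Delta$ and the Gerstenhaber structure $(\cup,\{-,-\})$ on $HH^*(S_*(G);S_*(G))$. This is the DG analogue of the main theorem of~\cite{MenichiL:BValgaccoHa}, whose bar-complex calculation relies only on the formal properties of a bimodule isomorphism $A\simeq A^\vee[d]$ together with Rinehart's formulas for cup product and bracket, so it transposes to the DG setting once $\mathbb{D}$ is in place.

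The main obstacle is this last step, for two reasons. First, the chain-level $\mathbb{D}$ is naturally only an $A_\infty$-bimodule morphism, so one has to check that Menichi's computation respects the coherent homotopies. Second, verifying the BV identity at the bar-complex level is sign-sensitive and the Pontryagin product on $S_*(G)$ is only graded commutative up to homotopy, unlike the group algebra of a discrete group treated in the inspirational theorem. A cleaner alternative route that sidesteps both difficulties is to identify the transported $\Delta$ directly with the Batalin-Vilkovisky operator on $H^{*+d}(\mathcal{L}BG)$ furnished by Corollary~\ref{Structure BV en cohomologie groupe de Lie}: one would check that the Burghelea-Fiederowicz-Goodwillie isomorphism intertwines the Gerstenhaber structure on Hochschild cohomology with the loop product and bracket on $H^{*+d}(\mathcal{L}BG)$, and that it carries $H(B^\vee)$ to the $S^1$-action operator, so that the BV identity on $H^{*+d}(\mathcal{L}BG)$ transfers back to $HH^*(S_*(G);S_*(G))$ as required.
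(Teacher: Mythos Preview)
Your primary route and your alternative route both have genuine gaps, and the paper proceeds by a different mechanism that avoids them.

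\textbf{On the primary route.} Constructing a chain-level bimodule quasi-isomorphism $S^*(G)\simeq S_*(G)$ and then rerunning the bar-complex calculation of~\cite{MenichiL:BValgaccoHa} is not what the paper does, and the obstacles you flag are real. The result you cite is for an honest (ungraded) symmetric Frobenius algebra; here $S_*(G)$ is only a DG algebra whose homology is symmetric Frobenius, and the trace identity $\int_G(cab)=\int_G(abc)$ holds only up to homotopy. Upgrading $\mathbb{D}$ to an $A_\infty$-bimodule map and then checking that every step of the BV computation is compatible with the higher homotopies is a substantial project, not a routine transfer. The paper does not attempt this.

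\textbf{On the alternative route.} Your fallback is to transport the BV structure from $H^{*+d}(\mathcal{L}BG)$ via the Burghelea--Fiedorowicz--Goodwillie isomorphism, after checking that BFG intertwines the Gerstenhaber structures. But that compatibility is precisely what the paper records as Conjecture~\ref{conjecture iso d'algebres de Lie}~i): it is \emph{not} proved here, so you cannot invoke it.

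\textbf{What the paper actually does.} The paper appeals to Propositions~10 and~11 of~\cite{Menichi:BV_Hochschild}, which reduce the whole problem to producing a single class $m\in HH^d(S_*(G);S^*(G))$ with two properties: (a) the $HH^*(S_*(G);S_*(G))$-module map $a\mapsto a\cdot m$ is an isomorphism, and (b) $H(B^\vee)(m)=0$. No chain-level bimodule map is needed. The class $m$ is built \emph{topologically}: one takes the equivariant Gysin class $(EG\times_G\eta)^!(1)\in H^d(EG\times_G G^{ad})$ of the inclusion $\eta:\{e\}\hookrightarrow G^{ad}$, transports it through the cyclic-bar identification $|\Gamma G|\cong EG\times_G G^{ad}$ and the BFG isomorphism. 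Property~(a) then follows because the restriction of $m$ to $H^*(G)$ is $\eta_!$ (Lemma~\ref{preserve les unites des algebres}) and $a\mapsto a\cdot\eta_!$ is the Frobenius isomorphism (Lemma~\ref{homologie groupe algebre de Frobenius}). Property~(b) follows because $EG\times_G\eta$ is $S^1$-equivariant (shown via the cyclic structure on $B(*;G;*)$), so the Gysin map commutes with the circle operator $\Delta^\vee$ (Lemma~\ref{shriek commute avec delta}), and $\Delta^\vee(1)=0$; since BFG intertwines $\Delta^\vee$ with $H(B^\vee)$, one gets $H(B^\vee)(m)=0$.

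The conceptual point you are missing is that the vanishing $H(B^\vee)(m)=0$ is the crux, and it comes from the $S^1$-equivariance of the section $BG\hookrightarrow\mathcal{L}BG$ by constant loops, not from any algebraic manipulation of the bar complex.
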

The proof of this theorem relies on Propositions 10 and 11 of~\cite{Menichi:BV_Hochschild} and
on the following three Lemmas.
Denote by $\eta:\{1\}\hookrightarrow G$ the inclusion of the trivial
group into $G$.
\begin{lem}\label{homologie groupe algebre de Frobenius}
The morphism of left $H_*(G)$-modules
$$
H_p(G)\rightarrow H_{d-p}(G)^\vee,a\mapsto a.\eta_!
$$
is an isomorphism.
\end{lem}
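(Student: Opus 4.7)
The plan is to derive the lemma directly from Theorem~\ref{homologie groupe de Lie algebre symetrique}, which asserts that $(H_*(G), m_*)$ equipped with the pairing $\langle a, b\rangle := \eta_!(m_*(a \otimes b))$ is a symmetric Frobenius algebra.

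First I would spell out the left $H_*(G)$-module structure on $H_*(G)^\vee$ induced (dually) by the Pontryagin product: for $a \in H_*(G)$ and $\phi \in H_*(G)^\vee$, one has $(a \cdot \phi)(b) := \phi(b \cdot a)$. Since $\eta_!$ has (lower) degree $-d$, i.e. vanishes off $H_d(G)$, the functional $a \cdot \eta_!$ with $a \in H_p(G)$ is supported on $H_{d-p}(G)$, so the map indeed lands in $H_{d-p}(G)^\vee$, confirming well-definedness and the stated degree shift.

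Next I would compute
\[
(a \cdot \eta_!)(b) \;=\; \eta_!(b \cdot a) \;=\; \langle b, a\rangle \;=\; \langle a, b\rangle,
\]
where the last equality uses the symmetry of the Frobenius form. This identifies the map of the lemma with the canonical Frobenius assignment $\lambda_L: A \to A^\vee$, $a \mapsto \langle a, -\rangle$, associated to the pairing $\langle,\rangle$.

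Finally, for every finite-dimensional symmetric Frobenius algebra $A$ the map $\lambda_L$ is a left $A$-linear isomorphism. Left-linearity is an immediate consequence of the Frobenius identity $\langle xy, z\rangle = \langle x, yz\rangle$ together with symmetry; bijectivity follows from non-degeneracy of $\langle,\rangle$ and finite-dimensionality, with degrees matching by Poincar\'e duality on the closed oriented manifold $G$. Since $G$ is compact, $H_*(G)$ is finite-dimensional and this applies. The only obstacle is bookkeeping: one must use the ``outer'' bimodule convention on $A^\vee$ (as recalled in the text after Example~\ref{example d'algebre symetrique}) and keep track of the $-d$ degree shift coming from $\eta_!$, but no new ideas beyond Theorem~\ref{homologie groupe de Lie algebre symetrique} are required.
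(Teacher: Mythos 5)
Your argument is correct, and the paper itself notes that the lemma is a particular case of Theorem~\ref{homologie groupe de Lie algebre symetrique}; but the paper deliberately does \emph{not} prove it this way, preferring an ``independent and more simple proof''. You deduce everything from the symmetric Frobenius structure: you identify $a\mapsto a\cdot\eta_!$ with $\lambda_L:a\mapsto\langle a,-\rangle$ using the symmetry of the pairing $\langle a,b\rangle=\eta_!(m_*(a\otimes b))$, and get bijectivity from non-degeneracy (with the degree bookkeeping you describe; note that left-linearity of $a\mapsto a\cdot\eta_!$ is in fact automatic from the module axioms, so the Frobenius identity is only needed if you insist on going through $\lambda_L$, and in the graded setting the symmetry only holds up to Koszul signs, which is harmless for bijectivity). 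The paper instead uses only the weaker input that a finite-dimensional Hopf algebra is Frobenius in the one-sided sense: by Sweedler there is an isomorphism of \emph{left} $H_*(G)$-modules $\Theta:H_*(G)\to H_*(G)^\vee$, a degree count (connectedness, concentration in degrees $0$ to $d$, $H_d\neq 0$) forces $\Theta$ to have lower degree $-d$, and since $H_d(G)^\vee$ is one-dimensional one can rescale so that $\alpha\Theta(1)=\eta_!$; left-linearity then gives $\alpha\Theta(a)=a\cdot\eta_!$, which is the map of the lemma. What your route buys is brevity once Theorem~\ref{homologie groupe de Lie algebre symetrique} is granted; what the paper's route buys is independence from the symmetric (bimodule) structure, whose proof rests on unimodularity (Oberst--Schneider) or on Poincar\'e duality, so Section~9 needs only the elementary integral argument of Sweedler. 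Both proofs are valid; yours just imports a strictly stronger theorem than necessary.
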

This Lemma is a particular case of Theorem~\ref{homologie groupe de Lie algebre symetrique}
in the previous section.
But we prefer to give an independent and more simple proof of this Lemma.
In fact in this section, we implicitly~\cite[Proof of Proposition 10]{Menichi:BV_Hochschild}
give two morphisms of $S_*(G)$-bimodules
$$
S_*(G)\buildrel{\simeq}\over\rightarrow P\buildrel{\simeq}\over\leftarrow S_*(G)^\vee
$$
which induce isomorphisms in homology.
In particular, passing to homology, we obtain a third proof of
Theorem~\ref{homologie groupe de Lie algebre symetrique}.
\begin{proof}[Proof of Lemma~\ref{homologie groupe algebre de Frobenius}]
By~\cite[Proof of Corollary 5.1.6 2)]{Sweedler:livre}, since $H_*(G)$ is a finite dimensional
Hopf algebra, $H_*(G)$ together with the Pontryagin product is a Frobenius algebra:
there exists an isomorphism of left $H_*(G)$-modules
$$\Theta:H_*(G)\buildrel{\cong}\over\rightarrow
(H_{*}(G))^\vee.$$
Since $H_*(G)$ is concentrated in degrees between $0$ and $d$ and since
$H_0(G)$ and $H_d(G)$ are two non trivial vector spaces,
the isomorphism $\Theta$ must be of (lower) degree $-d$.
Let $\eta_!$ be the Poincar\'e dual of the canonical inclusion $\eta:\{1\}\subset G$.
Since $\eta_!$ is a non-zero element of $H_d(G)^\vee$, there exists a non-zero
scalar $\alpha\in\mathbb{F}$ such that $\eta_!=\alpha.\Theta(1)$.
Therefore the
morphism of left $H_*(G)$-modules
$$\alpha\Theta:H_*(G)\buildrel{\cong}\over\rightarrow
(H_{*}(G))^\vee$$ is an isomorphism.
This is the desired isomorphism since $\alpha\Theta(1)=\eta_!$.
\end{proof}
Let $M$ and $N$ be two oriented closed smooth manifolds of dimensions $m$ and $n$. Let $G$ be a connected compact Lie group acting smoothly
on $M$ and $N$.
Let $f:M\rightarrow N$ be a smooth $G$-equivariant map.
Then we have a {\it Gysin equivariant map} in homology
$$
(EG\times_G f)_!:H_*(EG\times_G N)\rightarrow H_{*+m-n}(EG\times_G M)
$$
and a Gysin equivariant map in cohomology~\cite[Theorem 6.1]{Kawakubo:transformationgroups}
$$
(EG\times_G f)^!:H^*(EG\times_G M)\rightarrow H^{*+n-m}(EG\times_G N).
$$
Similarily to integration along the fiber,
Gysin equivariant maps are natural with respect to pull-backs and products.
\begin{lem}\label{shriek commute avec delta}
Let $K$ be a connected compact Lie group.
Suppose that $EG\times_G M$ and $EG\times_G N$ are two left $K$-spaces.
Suppose also that $$EG\times_G f:EG\times_G M\rightarrow EG\times_G N$$ is $K$-equivariant.
Then the Gysin equivariant map
$$
(EG\times_G f)_!:H_*(EG\times_G N)\rightarrow H_{*+m-n}(EG\times_G M)
$$
is a morphism of left $H_*(K)$-modules.
In particular, if
$K$ is the circle, $$\Delta\circ (EG\times_G f)_!=(EG\times_G f)_!\circ\Delta.$$
\end{lem}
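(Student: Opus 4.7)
The plan is to reduce the claim to three naturality properties of Gysin maps collected in Section~2: naturality with respect to pull-backs, naturality with respect to homotopy equivalences, and the product formula. Abbreviate $X:=EG\times_G M$, $Y:=EG\times_G N$ and $F:=EG\times_G f$, and let $a_X:K\times X\to X$, $a_Y:K\times Y\to Y$ be the action maps. By definition, the $H_*(K)$-module structure on $H_*(X)$ is $k\cdot x:=H_*(a_X)(k\times x)$ (and similarly on $H_*(Y)$), so the claim amounts to showing $F_!\circ H_*(a_Y)=H_*(a_X)\circ(\mathrm{id}_K\times F)_!$, up to Koszul sign.

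The key step is the shear trick. Consider the self-homeomorphism $\phi_Z:K\times Z\to K\times Z$, $(k,z)\mapsto(k,kz)$, with inverse $(k,z)\mapsto(k,k^{-1}z)$; it factors each action map as $a_Z=pr_2\circ\phi_Z$, where $pr_2:K\times Z\to Z$ is the second projection. The $K$-equivariance of $F$ translates into the commutativity of the square
$$\xymatrix{
K\times X\ar[r]^{\mathrm{id}_K\times F}\ar[d]_{\phi_X}^{\cong} & K\times Y\ar[d]^{\phi_Y}_{\cong}\\
K\times X\ar[r]_{\mathrm{id}_K\times F} & K\times Y
}$$
while the square with two copies of $pr_2$ down the sides and two copies of $\mathrm{id}_K\times F$ across the top and bottom is a pull-back (it is the pull-back of $F$ along $pr_2:K\times Y\to Y$). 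Applying naturality of Gysin with respect to homotopy equivalences to the first square gives $H_*(\phi_X)\circ(\mathrm{id}_K\times F)_!=(\mathrm{id}_K\times F)_!\circ H_*(\phi_Y)$, and naturality with respect to pull-backs applied to the second gives $H_*(pr_2)\circ(\mathrm{id}_K\times F)_!=F_!\circ H_*(pr_2)$. Composing these two identities using the factorization $H_*(a_Z)=H_*(pr_2)\circ H_*(\phi_Z)$ produces the desired commutation $H_*(a_X)\circ(\mathrm{id}_K\times F)_!=F_!\circ H_*(a_Y)$.

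The product property identifies $(\mathrm{id}_K\times F)_!$ with $\mathrm{id}\otimes F_!$ up to the Koszul sign $(-1)^{|k|(m-n)}$, which converts the preceding identity into the $H_*(K)$-linearity of $F_!$ in the graded sense. Specialising to $K=S^1$ and $k=[S^1]\in H_1(S^1)$ yields the commutation $\Delta\circ F_!=F_!\circ\Delta$ (up to the sign $(-1)^{m-n}$, which is $+1$ in the applications of this lemma). The main technical point is to verify that the Kawakubo equivariant Gysin map inherits the three naturality properties recorded in Section~2 for integration along the fiber; this is routine, following either from the Pontryagin--Thom description of the Gysin map via an equivariant tubular neighbourhood, or by exhibiting it fibrewise as integration along the fiber of the corresponding Borel fibration over $BG$.
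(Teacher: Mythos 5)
Your proof is correct and follows essentially the same route as the paper's: the paper simply observes that the square whose vertical arrows are the two action maps and whose horizontal arrows are $K\times (EG\times_G f)$ and $EG\times_G f$ is a pull-back (your shear homeomorphism $\phi$ is exactly the verification of this), and then concludes by naturality of the equivariant Gysin map with respect to pull-backs together with the product formula, just as you do. The only cosmetic differences are that you factor the action as $pr_2\circ\phi$ and treat two squares, invoking homotopy-equivalence naturality for the shear square even though it is itself a pull-back, and that you record the Koszul sign which the paper suppresses -- but beware that in the application to $\eta\colon\{1\}\hookrightarrow G^{ad}$ that sign is $(-1)^{d}$ with $d=\dim G$, not automatically $+1$ (harmless there, since the lemma is only used to show a class is annihilated by $\Delta^\vee$).
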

\begin{proof}
Consider the pull-back diagram:
$$
\xymatrix{
K\times EG\times_G M\ar[rr]^{K\times EG\times_G f}\ar[d]_{action}
&& K\times EG\times_G N\ar[d]^{action}\\
EG\times_G M\ar[rr]_{EG\times_G f}
&& EG\times_G N
}
$$
where $action$ are the actions of $K$ on $EG\times_G M$ and $EG\times_G N$.
By naturality of Gysin equivariant map with respect to this pull-back and to products,
we obtain the commutative diagram
$$
\xymatrix{
H_*(K)\otimes H_*(EG\times_G M)\ar[d]
&& H_*(K)\otimes H_*(EG\times_G N)\ar[d]\ar[ll]^{H_*(K)\otimes (EG\times_G f)_!}\\
H_*(K\times EG\times_G M)\ar[d]_{H_*(action)}
&& H_*(K\times EG\times_G N)\ar[d]^{H_*(action)}\ar[ll]^{(K\times EG\times_G f)_!}\\
H_*(EG\times_G M)
&& H_*(EG\times_G N)\ar[ll]_{(EG\times_G f)_!}
}$$
\end{proof}
Let us denote by $G^{ad}$ the left $G$-space obtained by the conjugation
action of $G$ on itself. The inclusion $\eta:\{1\}\hookrightarrow G^{ad}$ is a
$G$-equivariant embedding of dimension $d$. Therefore, we have a Gysin equivariant
morphism
$$(EG\times_G\eta)^!:H^*(BG)\rightarrow H^{*+d}(EG\times_G G^{ad}).$$
\begin{lem}\label{preserve les unites des algebres}
The morphism
$$
H^d(E\eta\times_\eta G^{ad}):H^d(EG\times_G G^{ad})\rightarrow H^d(G)
$$
maps $(EG\times_G\eta)^!(1)$ to $\eta!\in H_d(G)^\vee$.
\end{lem}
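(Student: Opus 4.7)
The first step is to unpack the notation. The map labeled $E\eta\times_\eta G^{ad}$ is the one induced on Borel constructions by the inclusion of groups $\eta:\{1\}\hookrightarrow G$ acting compatibly on the space $G^{ad}$; concretely, it is the inclusion of the fiber over the basepoint $[e_0]\in BG$,
$$
i:G\cong E\{1\}\times_{\{1\}} G^{ad}\hookrightarrow EG\times_G G^{ad}.
$$
On the other hand, $EG\times_G\eta:BG=EG\times_G\{1\}\to EG\times_G G^{ad}$ sends $[e]$ to $[e,1]$.

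The plan is to deduce the Lemma from the naturality of the Gysin equivariant map with respect to pull-backs (stated at the beginning of Section~\ref{hochschild cohomology}). First I would verify that the square
$$
\xymatrix{
\{1\}\ar[r]^{\eta}\ar[d]_{j} & G\ar[d]^{i}\\
BG\ar[r]_-{EG\times_G\eta} & EG\times_G G^{ad}
}
$$
where $j$ denotes the inclusion of the basepoint $[e_0]\in BG$, is a pull-back. This is a direct computation: an element $(g,[e])\in G\times BG$ lies over the same point of $EG\times_G G^{ad}$ iff $[e_0,g]=[e,1]$, which forces $g=1$ and $[e]=[e_0]$; thus the fiber product is the single point $\{1\}$, as claimed.

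Second, applying naturality of the Gysin equivariant map to this pull-back square yields the commutative diagram
$$
\xymatrix{
H^*(BG)\ar[r]^-{(EG\times_G\eta)^!}\ar[d]_{H^*(j)} & H^{*+d}(EG\times_G G^{ad})\ar[d]^{H^{*+d}(i)}\\
H^*(\{1\})\ar[r]_-{\eta^!} & H^{*+d}(G).
}
$$
Evaluating at $1\in H^0(BG)$ and using $H^0(j)(1)=1$ gives
$$
H^d(i)\bigl((EG\times_G\eta)^!(1)\bigr)=\eta^!(1)=\eta_!,
$$
which is exactly the claim, once one identifies $\eta^!(1)\in H^d(G)\cong H_d(G)^\vee$ with the class $\eta_!$ introduced in Section~8.

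There is no real obstacle here beyond unpacking the notation and confirming that the square is a genuine pull-back; the entire content of the lemma is then naturality of the Gysin equivariant map, which was already recorded above. The only minor subtlety is checking that the model of $E\eta$ used in defining $E\eta\times_\eta G^{ad}$ is compatible with the one used to define the fiber inclusion $i$, but this is a standard issue that is dealt with (as usual) by passing to the functorial Borel construction.
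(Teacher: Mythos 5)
Your proof is correct and follows essentially the same route as the paper: both rest on exhibiting the square formed by $\eta$, the basepoint inclusion of $BG$, the fiber inclusion $E\eta\times_\eta G^{ad}$ and the section $EG\times_G\eta$ as a pull-back, and then invoking naturality of the Gysin equivariant map. The only (cosmetic) difference is that you verify the pull-back by a direct computation of the fiber product, while the paper deduces it from the fact that $G^{ad}$ is the fiber of $p:EG\times_G G^{ad}\twoheadrightarrow BG$ and that $EG\times_G\eta$ is a section of $p$.
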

\begin{proof}
Consider the two commutative squares
$$
\xymatrix{
\{1\}\ar[d]_\eta\ar[rr]
&& BG\ar[d]^{EG\times_G\eta}\\
 G^{ad}\ar[rr]_-{E\eta\times_\eta G^{ad}}\ar[d]
&& EG\times_G G^{ad}\ar[d]^p\\
\{1\}\ar[rr]
&& BG
}
$$
The lower square is a fiber product since $G^{ad}$ is the fiber of the fiber
bundle $p:EG\times_G G^{ad}\twoheadrightarrow BG$.
The total square is a fiber product since $EG\times_G\eta$ is a section of $p$.
Therefore the upper square is also a fiber product.
By naturality of Gysin equivariant morphism with respect to fiber products,
we obtain the commutative square
$$
\xymatrix{
\mathbb{F}\ar[d]_{\eta^!}
&& H^*(BG)\ar[ll]\ar[d]^{(EG\times_G\eta)^!}\\
 H^*(G^{ad})
&& EG\times_G G^{ad}\ar[ll]^{H^*(E\eta\times_\eta G^{ad})}
}
$$
Therefore $H^*(E\eta\times_\eta G^{ad})\circ (EG\times_G\eta)^!(1)=\eta^!(1)$.
\end{proof}
\begin{proof}[Proof of Theorem~\ref{Homologie de Hochschild groupe de Lie}]
Let $M$ be a left $G$-space.
Let $B(*;G;M)$ denote the simplicial Bar
construction~\cite[p. 31]{MayClassifying}.
Recall that its space of $n$-simplices $B(*;G;M)_n$ is the product
$G^{\times n}\times M$.
The realisation $\vert-\vert$ of this simplicial space,
$\vert B(*;G;M)\vert$ is homeomorphic to the Borel Construction
$EG\times_G M$ when we set
$EG:=\vert B(*;G;G)\vert$~\cite[p. 40]{MayClassifying}.
Let $\Gamma G$ be the cyclic Bar construction of $G$~\cite[7.3.10]{LodayJ.:cych}.
The continuous application
$$\Phi:\Gamma G\rightarrow B(*;G;G^{ad})$$
$$(m,g_1,\cdots,g_n)\mapsto (g_1,\cdots,g_n,g_1\dots g_n m )$$
is an isomorphism of simplicial spaces.
(In homological algebra, the same isomorphism~\cite[7.4.2]{LodayJ.:cych} proves that
Hochschild homology and group homology are isomorphic).
The simplicial space $\Gamma G$ is in fact a cyclic space.
Let us consider the structure of cyclic space on $B(*;G;G^{ad})$
such that $\Phi$ is an isomorphisn of cyclic spaces.
Recall that $\eta:\{1\}\hookrightarrow G$ denote the inclusion of the trivial
group into $G$.
The composite
$$
B(*;G;*)\buildrel{B(*;G;\eta)}\over\rightarrow B(*;G;G^{ad})
\buildrel{\Phi^{-1}}\over\rightarrow \Gamma G
$$
which maps the $n$-simplex $[g_1,\dots,g_n]$ of $B(*;G;*)$
to the $n$-simplex $(g_1\dots g_n)^{-1},g_1,\dots,g_n)$ of $\Gamma G$,
is an injective morphism of cyclic spaces~\cite[7.4.5]{LodayJ.:cych}.
Here the simplicial space $B(*;G;*)$ is equipped with the structure
of cyclic space called twisted nerve and denoted $B(G,1)$
in~\cite[7.3.3]{LodayJ.:cych}.
Since realisation is a functor from cyclic spaces to $S^1$-spaces,
we obtain that

-the homeomorphism $\vert\Phi\vert:\vert\Gamma G\vert\buildrel{\cong}\over\rightarrow
EG\times_G G^{ad}$ is $S^1$-equivariant and

-the inclusion $EG\times_G \eta: BG\hookrightarrow EG\times_G G^{ad}$ is also $S^1$-equivariant.

Therefore in homology we have 
$H_*(\vert\Phi\vert)\circ\Delta=\Delta\circ H_*(\vert\Phi\vert)$.
By Lemma~\ref{shriek commute avec delta}, we also have
$(EG\times_G \eta)_!\circ\Delta=\Delta\circ (EG\times_G \eta)_!$.
Finally by dualizing,
in cohomology, we have
\begin{equation}\label{shriek de la section commute avec Delta}
 \Delta^\vee\circ H^*(\vert\Phi\vert)=H^*(\vert\Phi\vert)\circ\Delta^\vee
\quad\text{and}\quad
\Delta^\vee \circ (EG\times_G \eta)^!=(EG\times_G \eta)^!\circ \Delta^\vee.
\end{equation}
Let $j:G\rightarrow\Gamma G$,
$g\mapsto (g,1,\dots,1)$, the inclusion of the constant simplicial space $G$ into $\Gamma G$.
Consider the morphism of simplicial spaces
$$
B(*;\eta;G^{ad}):G^{ad}=B(*;*;G^{ad})\rightarrow B(*;G;G^{ad})
$$
$$
g\mapsto (1,\dots,1,g).
$$
Obviously $\Phi\circ j=B(*;\eta;G^{ad})$. Therefore we have the commutative diagram
of topological spaces
$$
\xymatrix{
& G \ar[dr]^{\vert j\vert}\ar[dl]_{E\eta\times_\eta G^{ad}}\\
EG\times_G G^{ad}
&& \Gamma G\ar[ll]^{\vert\Phi\vert}
}
$$
In~\cite{Burghelea-Fiedorowicz:chak,Goodwillie:cychdfl}, Burghelea, Fiedorowicz and Goodwillie proved that
there is an isomorphism of vector spaces between $H^*(\mathcal{L}BG)$ and
$HH^*(S_*(G);S^*(G))$.
More precisely, they give a $S^1$-equivariant weak homotopy equivalence
$\gamma:\vert \Gamma G\vert\rightarrow \mathcal{L}BG$~\cite[7.3.15]{LodayJ.:cych}.
And they construct an isomorphism
$BFG: H^*(\vert\Gamma G\vert)\rightarrow HH^*(S_*(G);S^*(G))$.
Denote by $\eta_{S_*(G)}:\mathbb{F}\hookrightarrow S_*(G)$ the unit of the algebra
 $S_*(G)$.
It is easy to check that
$$
HH^*(\eta_{S_*(G)};S^*(G))\circ BFG= H^*(\vert j\vert).
$$
Therefore we have the commutative diagram.
$$
\xymatrix{
&& H^*(G) \\
H^*(EG\times_G G^{ad})\ar[rr]_{H^*(\vert\Phi\vert)}\ar[urr]^{H^*(E\eta\times_\eta G^{ad})}
&& H^*(\Gamma G)\ar[u]_{H^*(\vert j\vert)}\ar[rr]_-{BFG}
&&  HH^*(S_*(G);S^*(G)) \ar[ull]_{HH^*(\eta_{S_*(G)};S^*(G))}
}
$$
Note that this diagram is similar to  the diagram in the proof of Theorems 20 and 21
in~\cite{Menichi:BV_Hochschild}.

Let $m\in HH^d(S_*(G);S^*(G))$ be $BFG\circ H^*(\vert\Phi\vert)\circ (EG\times_G \eta)^! (1)$.
By the above commutative diagram and Lemma~\ref{preserve les unites des algebres},
$$HH^d(\eta_{S_*(G)};S^*(G))(m)=H^d(E\eta\times_\eta G^{ad})\circ (EG\times_G \eta)^! (1)
=\eta_!.$$
Therefore, by Lemma~\ref{homologie groupe algebre de Frobenius},
The morphism of left $H_*(G)$-modules
$$
H_*(G)\rightarrow H_*(G)^\vee,a\mapsto a.HH^d(\eta_{S_*(G)};S^*(G))(m)
$$
is an isomorphism.
Therefore by Proposition 10 of~\cite{Menichi:BV_Hochschild},
we obtain that the morphism of $HH^*(S_*(G);S_*(G))$-modules
$$\mathbb{D}^{-1}:HH^p(S_*(G);S_*(G))\buildrel{\cong}\over\rightarrow
HH^{p+d}(S_*(G);S^*(G),$$
$$ a\mapsto a.m$$
is an isomorphism.

The isomorphism of Burghelea, Fiedorowicz and Goodwillie
$$BFG: H^*(\vert\Gamma G\vert)\rightarrow HH^*(S_*(G);S^*(G))$$
is compatible with the action of the circle on $\vert\Gamma G\vert$
and the dual of Connes boundary map $H(B^\vee)$:
this means that $BFG\circ \Delta^\vee= H(B^\vee)\circ BFG$.
Since $\Delta^\vee$ is a derivation for the cup product, $\Delta^\vee (1)=0$.
Therefore using (\ref{shriek de la section commute avec Delta}),
\begin{multline*}
H(B^\vee)(m)=BFG\circ\Delta^\vee \circ H^*(\vert\Phi\vert)\circ (EG\times_G \eta)^!(1)\\
=BFG\circ H^*(\vert\Phi\vert)\circ (EG\times_G \eta)^!\circ\Delta^\vee (1)=0.
\end{multline*}
So, by applying Proposition 11 of~\cite{Menichi:BV_Hochschild}, we obtain the desired
Batalin-Vilkovisky algebra structure.
\end{proof}
\begin{rem}
Denote by $s:BG\hookrightarrow\mathcal{L}BG$ the inclusion of the constants loops into
$\mathcal{L}BG$. If we equipped $BG$ with the trivial $S^1$-action then $s$ is $S^1$-equivariant.
The problem is that we don't know how to define $s_!$ directly.
Instead, in the proof of Theorem~\ref{Homologie de Hochschild groupe de Lie}, we define
$(EG\times_G\eta)_!$. And using a simplicial model of $EG$, we give $S^1$-actions on $BG$
and $EG\times_G G^{ad}$ such that
$EG\times_G\eta :BG\hookrightarrow EG\times_G G^{ad}$ is $S^1$-equivariant.
\end{rem}

\section{a string bracket in cohomology}
In this section,

-We show that the $\Delta$ operators of the Batalin-Vilkovisky algebras given by
Corollaries~\ref{Structure BV en cohomologie groupe de Lie} and~\ref{Structure BV en cohomologie groupe fini}, coincide with the  $\Delta$ operator induced by the action of $S^1$ on
$\mathcal{L}X$ (Proposition~\ref{comparaison des deltas})

-from the Batalin-Vilkovisky algebra given by
Corollaries~\ref{Structure BV en cohomologie groupe de Lie} and~\ref{Structure BV en cohomologie groupe fini},
we define a Lie bracket on the $S^1$-equivariant
cohomology $H^*_{S^1}(\mathcal{L}X)$ when $X$ satisfying the hypothesis of the main theorem
(Theorem~\ref{string bracket pour les classifiants}).
The definition of this string bracket in  $S^1$-equivariant cohomology
is similar but not identical
to the definition of the Chas-Sullivan string bracket in homology
(Theorem~\ref{Chas-Sullivan string bracket}).

-from the Batalin-Vilkovisky algebra given by our inspirational theorem in section 2 when $G$ is a finite
group or given by Theorem~\ref{Homologie de Hochschild groupe de Lie} when $G$ is a connected compact Lie group,
we define a Lie bracket on the cyclic cohomology $HC^*(S_*(G))$ of the singular chains on $G$
(Theorem~\ref{string bracket sur la cohomologie cyclique}).

We consider 
$$act:S^1\times\mathcal LX\rightarrow \mathcal LX$$ 
the reparametrization map defined by $act(\theta,\gamma(-)):=\gamma(-+\theta)$.
Let $[S^1]\in H_1(S^1)$ be the fundamental class of the circle.
\begin{proposition}\label{comparaison des deltas}
The operator $\Delta: H^*(\mathcal{L}X)\rightarrow H^{*-1}(\mathcal{L}X)$
of the Batalin-Vilkovisky algebras given by
Corollaries~\ref{Structure BV en cohomologie groupe de Lie} and~\ref{Structure BV en cohomologie groupe fini}, is the dual of the composite
$$
H_*(\mathcal{L}X)\buildrel{[S^1]\otimes -}\over\rightarrow H_*(S^1)\otimes H_*(\mathcal{L}X)
\buildrel{act_*}\over\rightarrow
H_*(\mathcal{L}X),\quad x\mapsto act_*([S^1]\otimes x).
$$
\end{proposition}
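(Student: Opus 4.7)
Both BV structures of Corollaries~\ref{Structure BV en cohomologie groupe de Lie} and~\ref{Structure BV en cohomologie groupe fini} arise by restricting the $H_*(BD)$-action to the genus-zero, arity-one component $H_*(BDiff^+(F_{0,1+1},\partial))$. By the Earle--Schatz result used in Proposition~\ref{equivalence des trois props}, this classifying space is homotopy equivalent to $B\Gamma_{0,2}=B\mathbb{Z}\simeq S^1$, the generator of $\Gamma_{0,2}$ being the Dehn twist $\tau$ of the cylinder. The BV operator $\Delta$ acting on $H^*(\mathcal LX)$ is by construction the cohomological dual of $x\mapsto \mu(F_{0,1+1})([S^1]\otimes x)$, where $[S^1]\in H_1(BDiff^+(F_{0,1+1},\partial))$ is the fundamental class. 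The proposition therefore reduces to verifying
$$\mu(F_{0,1+1})([S^1]\otimes\gamma)=act_*([S^1]\otimes\gamma)\qquad\text{in }H_*(\mathcal LX).$$

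Since $in\colon S^1\hookrightarrow F_{0,1+1}$ is a homotopy equivalence, the cofibre $F/\partial_{in}F$ is contractible, so the fibres of $map(in,X)\colon map(F_{0,1+1},X)\twoheadrightarrow\mathcal LX$, and after Borel construction of $\rho_{in}\colon\mathcal M_{0,1+1}(X)\twoheadrightarrow BDiff^+(F,\partial)\times\mathcal LX$, are contractible. Hence $\rho_{in}$ is a weak equivalence, $\rho_{in!}=H_*(\rho_{in})^{-1}$, and $\mu(F_{0,1+1})([S^1]\otimes\gamma)=H_*(\rho_{out}\circ\sigma)([S^1]\otimes\gamma)$ for any section $\sigma$ of $\rho_{in}$. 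A natural section is the constant-cylinder extension $\sigma([e],\gamma):=[e,f_\gamma]$ with $f_\gamma(z,s):=\gamma(z)$. Representing $[S^1]$ by the loop $t\mapsto[\tau_t]$ with $\tau_t(z,s):=(e^{2\pi i t s}z,s)$, so that $\tau_0=\mathrm{id}$ and $\tau_1=\tau$, the lifted $1$-cycle $t\mapsto[\tau_t,f_\gamma\circ\tau_t^{-1}]$ is sent by $\rho_{out}$ to the family $t\mapsto\gamma(e^{-2\pi i t}\,\cdot\,)$, which is exactly the reparametrization orbit $act_*([S^1]\otimes\gamma)$. Dualizing then yields the claimed identity for $\Delta$.

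The main obstacle I foresee is bookkeeping of signs and normalizations: one has to verify that the BV generator produced by the Getzler/Salvatore--Wahl equivalence $f\mathcal D_2\simeq\mathfrak M_0$ from Section~\ref{section structure BV} is identified with our fundamental class $[S^1]\in H_1(BDiff^+(F_{0,1+1},\partial))$ with the correct sign, and that the orientation class chosen on the (contractible) fibre of $\rho_{in}$ in Definition~\ref{definition evaluation product} introduces no additional factor. Once these conventions are aligned, the explicit section-and-reparametrization computation above closes the argument.
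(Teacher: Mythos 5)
Your proposal is correct and takes essentially the same route as the paper: reduce $\Delta$ to the cylinder cobordism $C=F_{0,1+1}$, use Earle--Schatz and the Dehn twist to identify $BDiff^+(C,\partial)\simeq B\mathbb{Z}\simeq S^1$ and the class $[S^1]$, note that $\rho_{in}$ is an equivalence so $\rho_{in!}=\rho_{in*}^{-1}$, and evaluate on $[S^1]\otimes x$ by an explicit twisted lift over the circle whose outgoing restriction traces the reparametrization orbit --- the paper packages exactly this lift via the model $\mathbb{R}\times_{\mathbb{Z}}map(C,X)$ with the maps $r_0,r_1$ and the homotopy $\overline{\Phi}$ between $r_1$ and $act\circ r_0$. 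The only caveats are presentational: your global ``constant-cylinder section'' $([e],\gamma)\mapsto[e,f_\gamma]$ is not well defined on the Borel construction and $\tau_t\notin Diff^+(C,\partial)$ for $0<t<1$, so the lifted $1$-cycle must be interpreted in a model such as $\mathbb{R}\times_{\mathbb{Z}}map(C,X)$ and carried out in families over $\mathcal{L}X$ (which your formula, being continuous in $\gamma$, does allow), together with the sign conventions you already flag.
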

\begin{proof}
For $\varepsilon=0$ or $1$, let $i_\varepsilon:S^1\hookrightarrow S^1\times [0,1]$,
$x\mapsto (x,\varepsilon)$ be the two canonical inclusions.
Consider the cylinder $C:=S^1\times [0,1]$ as the cobordism $F_{0,1+1}$:
$$S^1\buildrel{i_0}\over\hookrightarrow S^1\times [0,1]
\buildrel{i_1}\over\hookleftarrow S^1.
$$
Let $\mu(C): H_*BDiff^+(C,\partial)\otimes H_*(\mathcal{L}X)\rightarrow H_*(\mathcal{L}X)$
be the evaluation map associated to $C$.

Recall from Section~\ref{section structure BV}, that there is a canonical
homotopy equivalence
$$f\mathcal{D}_2(1)\buildrel{\thickapprox}\over\rightarrow BDiff^+(C,\partial).$$
It is also easy to construct a canonical homotopy equivalence 
$S^1\buildrel{\thickapprox}\over\rightarrow f\mathcal{D}_2(1)$.
Denote by $B(\sigma):S^1\buildrel{\thickapprox}\over\rightarrow BDiff^+(C,\partial)$ the composite of these two homotopy equivalences.

The operator $\Delta: H^*(\mathcal{L}X)\rightarrow H^{*-1}(\mathcal{L}X)$
of the Batalin-Vilkovisky algebras given by
Corollaries~\ref{Structure BV en cohomologie groupe de Lie} and~\ref{Structure BV en cohomologie groupe fini}, is
by definition the dual of the composite
$$
H_*(\mathcal{L}X)\buildrel{[S^1]\otimes -}\over\rightarrow H_*(S^1)\otimes
H_*(\mathcal{L}X)
\buildrel{B(\sigma)_*\otimes Id}\over\rightarrow
H_*(BDiff^+(C,\partial))\otimes H_*(\mathcal{L}X)\buildrel{\mu(C)}\over\rightarrow
H_*(\mathcal{L}X).
$$
By definition, $\rho_{in}$ is
$EDiff^+(C,\partial)\times_{Diff^+(C,\partial)}map(i_0,X)$.
Since 
$$map(i_0,X):map(C,X)\buildrel{\thickapprox}\over\rightarrow\mathcal{L}X$$
is a homotopy equivalence, $\rho_{in}$ is also a homotopy equivalence.
Therefore the shriek of  $\rho_{in}$, $\rho_{in!}$, is equal to the inverse of
$\rho_{in*}$, $\rho_{in*}^{-1}$ and $$\mu(C):=\rho_{out*}\circ \rho_{in!}=
\rho_{out*}\circ \rho_{in*}^{-1}.$$

We identify $S^1$ with $\mathbb R/\mathbb Z$.
The Dehn twist $D\in Diff^+(C,\partial)$ defined by $D(\theta,a)=(\theta+a,a)$,
is the generator of the mapping class group
$\Gamma_{0,1+1}=\pi_0(Diff^+(C,\partial))\cong\mathbb{Z}$.
By~\cite{Earle-Schatz}, the morphism of groups $\sigma:\mathbb{Z}\buildrel{\thickapprox}\over\rightarrow Diff^+(C,\partial)$ sending $n\in\mathbb{Z}$ to the $n$-th composite $D^n$
of $D$, is a homotopy equivalence.
By applying the classifying construction, we obtain the map that we denoted before
$B(\sigma)$. The morphism of groups $\sigma$ induces the commutative diagram
$$
\xymatrix{
BDiff^+(C,\partial)\times\mathcal{L}X
& EDiff^+(C,\partial)\times_{Diff^+(C,\partial)}map(C,X)
\ar[l]_-{\rho_{in}}^-{\thickapprox}\ar[r]^-{\rho_{out}}
&\mathcal{L}X\\
\mathbb{R}/\mathbb{Z}\times\mathcal{L}X\ar[u]^{B(\sigma)\times\mathcal{L}X }_{\thickapprox}
& \mathbb{R}\times_{\mathbb{Z}}map(C,X)\ar[u]^{E(\sigma)\times_\sigma map(C,X)}_{\thickapprox}
\ar[l]_{r_{0}}^{\thickapprox}\ar[r]^-{r_{1}}
&\mathcal{L}X\ar[u]_{Id}
}$$
where $r_0$ and $r_1$ are the maps defined by
$r_0([\theta,f]):=([\theta],f(-,0))$ and $r_1([\theta,f]):=f\circ i_1=f(-,1)$
for $\theta\in\mathbb{R}$ and $f\in map(C,X)$.
Since $r_0$ is equal to $\mathbb{R}\times_{\mathbb{Z}}map(i_0,X)$,
$r_0$ is also a homotopy equivalence.
Therefore by the commutativity of the above diagram
$$
\mu(C)\circ (B(\sigma)_*\otimes H_*(\mathcal{L}X))=
\rho_{out*}\circ \rho_{in*}^{-1}\circ  (B(\sigma)_*\otimes H_*(\mathcal{L}X))=
r_{1*}\circ r_{0*}^{-1}.
$$
Let $\Phi:I\times\mathbb R\times map(C,X)\rightarrow \mathcal LX$
by the map defined by $\Phi(t,\theta,f(-,-)):=f(-+t\theta,1-t)$.
Since
$\Phi(t,\theta+n,f(-,-))=\Phi(t,\theta,f\circ D^{-n}(-,-))$  for $n\in\mathbb{Z}$,
 $\Phi$ induces a well-defined homotopy
$\overline{\Phi}:I\times\mathbb R\times_\mathbb{Z} map(C,X)\rightarrow \mathcal LX$
between $r_1$ and $act\circ r_0$. Therefore
$\mu(C)\circ (B(\sigma)_*\otimes H_*(\mathcal{L}X))=
r_{1*}\circ r_{0*}^{-1}=act_*$.
\end{proof}
\begin{proposition}\label{shriek fibre principal}
Let $G$ be a topological group.
Let $p:E\twoheadrightarrow B$ be a $G$-principal bundle
(or more generally a $G$-Serre fibration in the
sense of~\cite[p. 28]{Felix-Halperin-Thomas:ratht}).
Then $p$ is a Serre fibration.
Suppose that $B$ is path-connected and that $p$ is oriented
with orientation class $w\in H_n(G)\cong H_n(p^{-1}(p(*))$.
Then the composite
$$
p_!\circ H_*(p):
H_*(E)\rightarrow H_*(B)\rightarrow H_{*+n}(E)
$$
is given by the action of $w\in H_n(G)$ on $H_*(E)$.
\end{proposition}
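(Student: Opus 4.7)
The idea is to exploit the fact that a principal $G$-bundle trivializes after being pulled back along itself. For a $G$-principal bundle (or $G$-Serre fibration in the sense cited) $p:E\twoheadrightarrow B$, the shearing map $(g,e)\mapsto(ge,e)$ gives a homeomorphism (resp. weak equivalence) $G\times E\buildrel{\cong}\over\rightarrow E\times_B E$. This yields the pullback square
\[
\xymatrix{
G\times E\ar[r]^-{act}\ar[d]_-{pr_2} & E\ar[d]^p\\
E\ar[r]_-p & B
}
\]
where $act(g,e)=g\cdot e$ and $pr_2(g,e)=e$. In particular, $pr_2$ is the trivial fibration obtained by pulling back $p$ along itself, and its fiber over any $e\in E$ is the $G$-orbit $G\cdot e$, canonically identified with $G$ itself.

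Second, I would check orientability of $pr_2$: with the orientation class of $p$ given by $w\in H_n(G)\cong H_n(p^{-1}(p(*)))$, the induced map on fibers for the above pullback square is the identity of $G$ under the canonical identifications, so by the naturality property of integration along the fiber with respect to pull-backs (recalled in Section~\ref{proprietes umkehr maps}), the fibration $pr_2$ is oriented with orientation class $w$, and we obtain
\[
p_!\circ H_*(p)\;=\;H_*(act)\circ pr_{2!}.
\]

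Third, I would compute $pr_{2!}$ using the product property of integration along the fiber (Section~\ref{proprietes umkehr maps}): since $pr_2$ is literally the product of the constant fibration $G\to\mathrm{pt}$ (with orientation class $w$) and the identity of $E$, the shriek map $pr_{2!}:H_*(E)\to H_{*+n}(G\times E)$ sends $x$ to $w\times x$. Combining with the previous step,
\[
p_!\circ H_*(p)(x)\;=\;H_*(act)(w\times x),
\]
and the right-hand side is by definition the action of $w\in H_n(G)$ on $x$ coming from the $H_*(G)$-module structure on $H_*(E)$ induced by $act$.

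The only genuine point requiring care is the orientation bookkeeping in the second step: one must identify the fiber of $pr_2$ (namely $G$ regarded as a factor of the product $G\times E$) with the fiber $p^{-1}(p(e))$ used to define the orientation $w$ of $p$, via the orbit map $g\mapsto ge$, and verify that these two copies of $H_n(G)$ carry the same class $w$. Once this compatibility is established, the rest is formal from the general properties of Umkehr maps stated in Section~\ref{proprietes umkehr maps}.
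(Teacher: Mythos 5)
Your proof is correct and follows essentially the same route as the paper: pull $p$ back along itself, identify the result with $G\times E$ (the paper simply asserts this pullback square, where you justify it via the shearing map), then apply naturality of integration along the fiber with respect to pull-backs and the product property to get $p_!\circ H_*(p)(x)=H_*(act)(w\times x)$. Your explicit attention to matching the orientation class on the fiber of $pr_2$ with $w$ via the orbit map is exactly the compatibility the paper's naturality property encodes, so nothing further is needed.
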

\begin{proof}
Consider the pull-back
\xymatrix{
G\times E\ar[r]^-{action}\ar[d]_{proj_1}
& E\ar[d]^{p}\\
E\ar[r]_p
& B
}
\\
where $proj_1$ is the projection on the first factor
and $action$ is the action of $G$ on $E$.
By naturality with respect to pull-backs,
$$
p_!\circ H_*(p)=H_*(action)\circ proj_{1!}.
$$
Let $\varepsilon:G\rightarrow *$ be the constant map to a point.
If we orient $\varepsilon$ with the orientation class $w\in H_n(G)$,
 $\varepsilon_!(*)=w$.
Since $proj_1=\varepsilon\times id_G$,
$proj_{1!}=\varepsilon_!(*)\otimes id_{G!}=w\otimes id$.
So finally,
$$
p_!\circ H_*(p)(a)=H_*(action)(w\otimes a).
$$
Under the assumption that $G$ is path-connected,
an alternative proof is to interpret $H_*(p)$ as an edge
homomorphism~\cite[XIII.(7.2)]{Whitehead:eltsoht} and to use that
the Serre spectral is a spectral sequence of $H_*(G)$-modules.
\end{proof}
By propositions~\ref{shriek fibre principal} and~\ref{comparaison des deltas}, we have:
\begin{cor} \label{Delta en fonction de Gysin}
Consider the $S^1$-principal bundle
$p:ES^1\times \mathcal LX{\rightarrow} ES^1\times_{S^1} \mathcal LX$.
The composite $p_!\circ H_*(p)$ coincides with the operator $\Delta$.
\end{cor}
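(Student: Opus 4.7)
The plan is to apply Proposition~\ref{shriek fibre principal} directly to the $S^1$-principal bundle $p$, oriented by the fundamental class $[S^1]\in H_1(S^1)$. This immediately gives that for any $x\in H_*(ES^1\times\mathcal LX)$,
$$p_!\circ H_*(p)(x)=H_*(act')\bigl([S^1]\otimes x\bigr),$$
where $act':S^1\times(ES^1\times\mathcal LX)\to ES^1\times\mathcal LX$ is the diagonal $S^1$-action (the free right action on $ES^1$ coupled with the reparametrization action on $\mathcal LX$) that exhibits $p$ as a principal bundle with quotient $ES^1\times_{S^1}\mathcal LX$.

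The second step is to identify this operator with $\Delta$ itself via the contractibility of $ES^1$. The projection $proj_2:ES^1\times\mathcal LX\buildrel{\thickapprox}\over\longrightarrow\mathcal LX$ is a homotopy equivalence and is by construction $S^1$-equivariant with respect to $act'$ on the source and the reparametrization action $act$ of Proposition~\ref{comparaison des deltas} on the target; that is, the square
$$\xymatrix{
S^1\times(ES^1\times\mathcal LX)\ar[r]^-{act'}\ar[d]_{id\times proj_2}
& ES^1\times\mathcal LX\ar[d]^{proj_2}_{\thickapprox}\\
S^1\times\mathcal LX\ar[r]_-{act}
& \mathcal LX
}$$
commutes on the nose. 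Passing to singular homology with field coefficients and using Künneth (so that $[S^1]\otimes -$ is natural), one obtains the commutation $H_*(proj_2)\circ H_*(act')\bigl([S^1]\otimes-\bigr)=H_*(act)\bigl([S^1]\otimes-\bigr)\circ H_*(proj_2)$. Since $H_*(proj_2)$ is an isomorphism, this identifies $p_!\circ H_*(p)$ with the operator $x\mapsto act_*([S^1]\otimes x)$ on $H_*(\mathcal LX)$.

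Finally, Proposition~\ref{comparaison des deltas} states exactly that this last operator is (dual to) the BV operator $\Delta$, which concludes the proof. I do not expect any serious obstacle here: the argument is essentially a one-line consequence of Proposition~\ref{shriek fibre principal} combined with Proposition~\ref{comparaison des deltas}, the only glue being the trivial equivariance of $proj_2$. The only potential subtleties are conventional, namely to check that the orientation class chosen on the $S^1$-fibre of $p$ is indeed $[S^1]$, and that the diagonal action $act'$ is the one realizing $p$ as a principal bundle in the sense of Proposition~\ref{shriek fibre principal}; both amount to unpacking definitions.
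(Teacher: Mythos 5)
Your argument is correct and is exactly the paper's: the corollary is stated there as an immediate consequence of Proposition~\ref{shriek fibre principal} (applied to the principal bundle $p$ oriented by $[S^1]$) together with Proposition~\ref{comparaison des deltas}. The only extra content you supply is the explicit $S^1$-equivariance of the homotopy equivalence $ES^1\times\mathcal{L}X\rightarrow\mathcal{L}X$ identifying the action of $[S^1]$ on $H_*(ES^1\times\mathcal{L}X)$ with $act_*([S^1]\otimes-)$ on $H_*(\mathcal{L}X)$, which the paper leaves implicit.
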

\begin{lem}\label{Definition du String Bracket}
Let $\varepsilon\in\mathbb{Z}$ be an integer.
Let $\mathbb{H}$ be a Batalin-Vilkovisky algebra (not necessarily with an unit)
and $\mathcal{H}$ be a graded module.
Consider a long exact sequence of the form
$$
\cdots\rightarrow \mathbb{H}_n\buildrel{E}\over\rightarrow
\mathcal{H}_{n+\varepsilon}
\rightarrow \mathcal{H}_{n+\varepsilon-2}\buildrel{M}\over\rightarrow
\mathbb{H}_{n-1}
\rightarrow\cdots
$$
Suppose that the operator $\Delta:\mathbb{H}_i\rightarrow \mathbb{H}_{i+1}$ is equal to $M\circ E$. Then
$$\{a,b\}:=(-1)^{\vert a\vert}E\left(M(a)\cup M(b)\right),
\quad\forall a,b\in\mathcal{H}$$
defines a Lie bracket of degree $2-\varepsilon$ on $\mathcal{H}$
such that the morphism of degree $1-\varepsilon$,
$M:\mathcal{H}_n\rightarrow\mathbb{H}_{n+1-\varepsilon}$
is a morphism of graded Lie algebras:
$$\{M(a),M(b)\}=(-1)^{1-\varepsilon} M(\{a,b\}).$$
\end{lem}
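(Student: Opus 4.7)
The plan is to exploit the BV structure on $\mathbb{H}$. In any BV algebra, the operator $\Delta$ determines a Gerstenhaber bracket
$$\{x,y\}_{\mathbb{H}} := (-1)^{|x|}\bigl[\Delta(x\cup y) - \Delta(x)\cup y - (-1)^{|x|} x\cup\Delta(y)\bigr],$$
and $\Delta$ is a second-order derivation (the seven-term identity), which is precisely the statement that $\{-,-\}_{\mathbb{H}}$ satisfies the graded Jacobi and Poisson identities. From the hypothesis $\Delta = M\circ E$ together with exactness of the long sequence ($E\circ M = 0$), I obtain the two vanishing identities
$$\Delta\circ M = M\circ E\circ M = 0 \quad\text{and}\quad E\circ\Delta = E\circ M\circ E = 0,$$
so $\mathrm{Im}\,M\subset\ker\Delta$ and $\mathrm{Im}\,\Delta\subset\ker E$. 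These two inclusions drive the whole argument.

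First I would establish the Lie morphism identity. Since $M(a),M(b)\in\ker\Delta$, the BV identity simplifies to $\Delta(M(a)\cup M(b)) = (-1)^{|M(a)|}\{M(a),M(b)\}_{\mathbb{H}}$, and therefore
$$M(\{a,b\}) = (-1)^{|a|}M\circ E(M(a)\cup M(b)) = (-1)^{|a|}\Delta(M(a)\cup M(b)) = (-1)^{|a|+|M(a)|}\{M(a),M(b)\}_{\mathbb{H}}.$$
Using $|M(a)|=|a|+1-\varepsilon$, the sign collapses to $(-1)^{1-\varepsilon}$, which is exactly the claimed morphism of Lie algebras identity.

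For graded antisymmetry of $\{-,-\}$ on $\mathcal{H}$ (as a bracket of degree $2-\varepsilon$), I would apply graded commutativity of $\cup$ on $\mathbb{H}$ and reduce everything to a parity check based on $|M(a)|=|a|+1-\varepsilon$. For Jacobi I would substitute the Lie morphism identity inside one factor to rewrite
$$\{a,\{b,c\}\} = (-1)^{|a|+1-\varepsilon}E\bigl(M(a)\cup\{M(b),M(c)\}_{\mathbb{H}}\bigr) = (-1)^{|a|+|b|}E\bigl(M(a)\cup\Delta(M(b)\cup M(c))\bigr),$$
and similarly for the two cyclic variants. Applying the seven-term identity to $\Delta(M(a)\cup M(b)\cup M(c))$ (all three factors lie in $\ker\Delta$) expresses $\Delta$ of the triple product as a sum of three terms of the shape $\Delta(M(\cdot)\cup M(\cdot))\cup M(\cdot)$ and $M(\cdot)\cup\Delta(M(\cdot)\cup M(\cdot))$. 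Since $E$ annihilates $\mathrm{Im}\,\Delta$ in particular $E\circ\Delta(M(a)\cup M(b)\cup M(c))=0$, so applying $E$ yields a linear relation among these six terms that, after using graded commutativity of $\cup$, reassembles precisely into the graded Jacobi identity for $\{-,-\}$ on $\mathcal{H}$.

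The hardest part will be the Koszul sign bookkeeping in the Jacobi step: the seven-term identity brings in six correction terms, each carrying its own Koszul sign in terms of $|M(a)|$, $|M(b)|$, $|M(c)|$, and these must be converted systematically into the signs appropriate for a degree $2-\varepsilon$ bracket on $\mathcal{H}$ via $|M(a)|=|a|+1-\varepsilon$. No new ideas are needed here; one only has to verify that the signs match, which they must, since the whole construction is an abstract transfer of the classical Chas--Sullivan mechanism through the long exact sequence.
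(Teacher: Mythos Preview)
Your proof is correct. The approach is essentially the standard direct verification via the BV seven-term identity and the two consequences $\Delta\circ M=0$, $E\circ\Delta=0$ of exactness, and the sign computations you indicate all go through. One small imprecision: after imposing $\Delta M(a)=\Delta M(b)=\Delta M(c)=0$, the seven-term identity leaves exactly \emph{three} surviving terms (not six), each of the shape $\Delta(M(\cdot)M(\cdot))\cdot M(\cdot)$ up to a graded commutation; applying $E$ and using $E\circ\Delta=0$ then gives a three-term relation that matches the graded Jacobi identity for the degree $2-\varepsilon$ bracket on $\mathcal{H}$.

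The paper's own proof takes a different, much shorter route: it observes that the case $\varepsilon=0$ is precisely the computation carried out by Chas and Sullivan in the proof of \cite[Theorem~6.1]{Chas-Sullivan:stringtop}, where the key identity $\{M(a),M(b)\}=-M(\{a,b\})$ already appears; then for general $\varepsilon$ one replaces $\mathcal{H}$ by its desuspension $s^{-\varepsilon}\mathcal{H}$ to reduce to $\varepsilon=0$. Your argument is more self-contained and makes the mechanism transparent (it is exactly the Chas--Sullivan computation unrolled), whereas the paper's argument is quicker but relies on an external reference and a regrading trick. Both are valid; yours has the advantage of showing explicitly how the BV structure on $\mathbb{H}$ transports to a Lie structure on $\mathcal{H}$ through the long exact sequence.
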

\begin{proof}
Case $\varepsilon=0$.
The Lie algebra structure is proved by Chas and Sullivan in the proof of~\cite[Theorem 6.1]{Chas-Sullivan:stringtop}.
In their proof, the relation
$$
\{M(a),M(b)\}=(-1)^{\vert a\vert+1}M\circ E\left(M(a)\cup M(b)\right)=-M(\{a,b\}).
$$
appears (See also~\cite[p. 136]{Westerland:equivariant}).

General Case $\varepsilon\neq 0$. If we replace $\mathcal{H}$ by its desuspension
$s^{-\varepsilon}\mathcal{H}$, we can apply the case  $\varepsilon=0$
since $(s^{-\varepsilon}\mathcal{H})_n=\mathcal{H}_{n+\varepsilon}$.
Alternatively, to check the signs carefully, it is simpler to generalise
the computations of Chas and Sullivan.
\end{proof}
\begin{theor}\cite[Theorem 6.1]{Chas-Sullivan:stringtop}\label{Chas-Sullivan string bracket}
Let $M$ be a compact oriented smooth manifold of dimension $d$.
Then
$$\{a,b\}:=(-1)^{\vert a\vert-d}H_*(p)\left(p_!(a)\cup p_!(b)\right),
\quad\forall a,b\in H_{*}^{S^1}(\mathcal{L}M)$$
defines a Lie bracket of degree $2-d$ on $H_{*}^{S^1}(\mathcal{L}M)$
such that
$$
p_!:H_{*}^{S^1}(\mathcal{L}M)\rightarrow H_{*+1}(\mathcal{L}M)
$$
is a morphism of Lie algebras (between the string bracket and the loop bracket).
\end{theor}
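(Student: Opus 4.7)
The plan is to deduce the theorem directly from Lemma~\ref{Definition du String Bracket}, feeding it the Gysin sequence of the universal $S^1$-bundle over $\mathcal{L}M$ as its long exact sequence. Consider
$$p \colon ES^1 \times \mathcal{L}M \twoheadrightarrow ES^1 \times_{S^1} \mathcal{L}M,$$
the $S^1$-principal bundle of Corollary~\ref{Delta en fonction de Gysin}: its total space is weakly equivalent to $\mathcal{L}M$ and its base computes $H^{S^1}_*(\mathcal{L}M)$. The Gysin sequence (Section~\ref{integration suite spectrale}) of this oriented circle fibration reads
$$\cdots \to H_n(\mathcal{L}M) \buildrel{H_*(p)}\over\rightarrow H^{S^1}_n(\mathcal{L}M) \rightarrow H^{S^1}_{n-2}(\mathcal{L}M) \buildrel{p_!}\over\rightarrow H_{n-1}(\mathcal{L}M) \to \cdots.$$
By Chas--Sullivan, the shifted loop homology $\mathbb{H}_n := H_{n+d}(\mathcal{L}M)$ carries a Batalin--Vilkovisky structure whose operator $\Delta$ is induced by the $S^1$-action on $\mathcal{L}M$, namely $\Delta(x) = act_*([S^1] \otimes x)$; by Corollary~\ref{Delta en fonction de Gysin} (itself a consequence of Proposition~\ref{shriek fibre principal}) this $\Delta$ coincides with $p_! \circ H_*(p)$.

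Setting $\mathcal{H}_n := H^{S^1}_{n+d}(\mathcal{L}M)$ and shifting the Gysin sequence by $n \mapsto n+d$, one obtains a long exact sequence of precisely the form hypothesized by Lemma~\ref{Definition du String Bracket} with $\varepsilon = 0$, $E = H_*(p)$, $M = p_!$, and the requisite identity $\Delta = M \circ E$. The lemma then outputs a Lie bracket on $\mathcal{H}$ of degree $2-\varepsilon = 2$, hence of degree $2-d$ on $H^{S^1}_*(\mathcal{L}M)$; its defining formula
$$\{a,b\} = (-1)^{|a|_{\mathcal{H}}} E\bigl(M(a) \cup M(b)\bigr),$$
with $|a|_{\mathcal{H}} = |a|-d$ and $\cup$ the Chas--Sullivan loop product, is exactly the formula displayed in the theorem. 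The accompanying conclusion of the lemma, namely $\{p_!(a), p_!(b)\} = -p_!(\{a,b\})$, states that $p_!$ is a morphism of graded Lie algebras (the sign $(-1)^{1-\varepsilon} = -1$ being the standard convention for an odd-degree Lie algebra morphism).

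The argument reduces to a bookkeeping exercise, and there is no genuine obstacle: the only non-trivial input beyond Lemma~\ref{Definition du String Bracket} is the identification $\Delta = p_! \circ H_*(p)$ for the universal circle bundle, and this is already in hand through Corollary~\ref{Delta en fonction de Gysin}.
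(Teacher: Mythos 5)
Your proposal is correct and follows essentially the same route as the paper: the paper's proof likewise takes the Chas--Sullivan BV structure on $\mathbb{H}_n=H_{n+d}(\mathcal{L}M)$, the Gysin sequence of $p\colon ES^1\times\mathcal{L}M\rightarrow ES^1\times_{S^1}\mathcal{L}M$, the identification $\Delta=p_!\circ H_*(p)$ from Corollary~\ref{Delta en fonction de Gysin} (via Proposition~\ref{shriek fibre principal}), and then applies Lemma~\ref{Definition du String Bracket} with $\varepsilon=0$. Nothing is missing.
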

\begin{proof}
By \cite[Theorem 5.4]{Chas-Sullivan:stringtop}, $\mathbb{H}_n:=H_{n+d}(\mathcal{L}M)$ is a Batalin-Vilkovisky algebra.
Consider the Gysin exact sequence in homology
$$
\cdots\rightarrow H_{n+d}(\mathcal{L}M)\buildrel{H_{n+d}(p)}\over\rightarrow
H_{n+d}^{S^1}(\mathcal{L}M)
\rightarrow H_{n+d-2}^{S^1}(\mathcal{L}M)\buildrel{p_!}\over\rightarrow
 H_{n+d-1}(\mathcal{L}M)
\rightarrow\cdots
$$
By Corollary~\ref{Delta en fonction de Gysin}, we can apply Lemma~\ref{Definition du String Bracket}
to it in the case $\varepsilon=0$.
\end{proof}
\begin{theor}\label{string bracket pour les classifiants}
Let $X$ be a space satisfying the hypothesis of Corollary~\ref{Structure BV en cohomologie groupe de Lie}
or of Corollary~\ref{Structure BV en cohomologie groupe fini}.
Denote by $d$ the top degre of $H_*(\Omega X)$.
Then
$$\{a,b\}:=(-1)^{\vert a\vert-d}p^!\left(H^*(p)(a)\cup H^*(p)(b)\right),
\quad\forall a,b\in H^{*}_{S^1}(\mathcal{L}X)$$
defines a Lie bracket of (upper) degree $-1-d$ on $H^{*}_{S^1}(\mathcal{L}X)$
such that
$$
H^*(p):H^{*}_{S^1}(\mathcal{L}X)\rightarrow H^{*}(\mathcal{L}X)
$$
is a morphism of Lie algebras.
\end{theor}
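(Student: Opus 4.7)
The plan is to apply Lemma~\ref{Definition du String Bracket} to the Gysin long exact sequence in cohomology associated to the $S^1$-principal bundle
$$p \colon ES^1\times\mathcal{L}X \twoheadrightarrow ES^1\times_{S^1}\mathcal{L}X.$$
This mirrors the Chas--Sullivan argument of Theorem~\ref{Chas-Sullivan string bracket}, but carried out in cohomology and with the $d$-shift coming from the BV structure of Corollary~\ref{Structure BV en cohomologie groupe de Lie} (resp.\ Corollary~\ref{Structure BV en cohomologie groupe fini}).

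First, I identify the data required by the lemma. Take $\mathbb{H}^{n}:=H^{n+d}(\mathcal{L}X)$, which is the Batalin--Vilkovisky algebra provided by the cited corollaries; set $\mathcal{H}^{n}:=H^{n}_{S^{1}}(\mathcal{L}X)$. The Gysin long exact sequence of the oriented $S^{1}$-principal bundle $p$, dualized from the homology Gysin sequence, reads
$$\cdots \longrightarrow \mathcal{H}^{n}\xrightarrow{H^{*}(p)} H^{n}(\mathcal{L}X)\xrightarrow{p^{!}}\mathcal{H}^{n-1}\xrightarrow{\cup e}\mathcal{H}^{n+1}\longrightarrow\cdots$$
which, after rewriting via $H^{n}(\mathcal{L}X)=\mathbb{H}^{n-d}$, has the shape
$$\cdots \longrightarrow \mathbb{H}^{m}\xrightarrow{E:=p^{!}}\mathcal{H}^{m+d-1}\longrightarrow \mathcal{H}^{m+d+1}\xrightarrow{M:=H^{*}(p)}\mathbb{H}^{m+1}\longrightarrow\cdots$$
required by Lemma~\ref{Definition du String Bracket} (in lower-degree notation, with the corresponding value of $\varepsilon$).

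Second, I verify the hypothesis $\Delta=M\circ E$ on $\mathbb{H}$. By Proposition~\ref{comparaison des deltas}, the BV $\Delta$-operator on $\mathbb{H}$ coincides with the operator coming from the $S^{1}$-action on $\mathcal{L}X$. Corollary~\ref{Delta en fonction de Gysin}, applied to the $S^{1}$-principal bundle $p$, gives in homology the identity $p_{!}\circ H_{*}(p)=\Delta$. Dualizing yields
$$H^{*}(p)\circ p^{!}=\Delta$$
on $H^{*}(\mathcal{L}X)=\mathbb{H}^{*-d}$, which is exactly $M\circ E=\Delta$.

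Third, I invoke Lemma~\ref{Definition du String Bracket}: its conclusion immediately produces a graded Lie bracket of the desired degree $-1-d$ on $\mathcal{H}^{*}=H^{*}_{S^{1}}(\mathcal{L}X)$, given by the formula
$$\{a,b\}=(-1)^{|a|-d}\, p^{!}\bigl(H^{*}(p)(a)\cup H^{*}(p)(b)\bigr),$$
and further asserts that $M=H^{*}(p)$ is a morphism of graded Lie algebras (up to the sign $(-1)^{1-\varepsilon}$ dictated by the lemma) from the string bracket on $\mathcal{H}^{*}$ to the BV Gerstenhaber bracket on $\mathbb{H}^{*}$, i.e.\ the loop bracket on $H^{*}(\mathcal{L}X)$.

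The main obstacle is purely bookkeeping: one must carefully track the degree shift by $d$ between $\mathbb{H}^{*}=H^{*+d}(\mathcal{L}X)$ and $H^{*}(\mathcal{L}X)$ in order to cast the cohomological Gysin sequence in the exact form required by Lemma~\ref{Definition du String Bracket} and to recover the sign $(-1)^{|a|-d}$ of the statement. Once this matching is made, the Lie algebra axioms and the compatibility with $H^{*}(p)$ are obtained for free from the abstract lemma.
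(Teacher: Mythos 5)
Your proposal is correct and takes essentially the same route as the paper: the paper's proof likewise applies Lemma~\ref{Definition du String Bracket} (with $\varepsilon=1$) to the cohomological Gysin exact sequence of the $S^1$-bundle $p$, regraded so that $\mathbb{H}_{-n}:=H^{n+d}(\mathcal{L}X)$ is the Batalin--Vilkovisky algebra of Corollaries~\ref{Structure BV en cohomologie groupe de Lie} and~\ref{Structure BV en cohomologie groupe fini}, and uses Corollary~\ref{Delta en fonction de Gysin} (dualized, via Proposition~\ref{comparaison des deltas}) to get $\Delta=H^*(p)\circ p^!$. The only thing to make explicit in your write-up is that the degree bookkeeping forces $\varepsilon=1$, which is exactly what the paper records.
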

\begin{proof}
By Corollaries~\ref{Structure BV en cohomologie groupe de Lie}
or~\ref{Structure BV en cohomologie groupe fini}, 
 $\mathbb{H}_{-n}:=H^{n+d}(\mathcal{L}X)$ is a Batalin-Vilkovisky algebra
not necessarily with an unit.
Consider the Gysin exact sequence in cohomology
$$
\cdots\rightarrow H^{n+d}(\mathcal{L}X)\buildrel{p^!}\over\rightarrow
H^{n+d-1}_{S^1}(\mathcal{L}X)
\rightarrow H^{n+d+1}_{S^1}(\mathcal{L}X)\buildrel{H^{n+d+1}(p)}\over\rightarrow
 H^{n+d+1}(\mathcal{L}X)
\rightarrow\cdots
$$
By Corollary~\ref{Delta en fonction de Gysin}, we can apply Lemma~\ref{Definition du String Bracket}
to it in the case $\varepsilon=1$.
\end{proof}
\begin{rem}
Under the hypothesis of our main theorem, one can also define a Lie bracket
of degree $2+d$ on the equivariant homology $H^{S^1}_*(\mathcal{L}X)$,
exactly as Chas-Sullivan string bracket. But this bracket will
often be zero since the product on $H_*(\mathcal{L}X)$ is often trivial.
\end{rem} 
\begin{theor}\label{string bracket sur la cohomologie cyclique}
a) Let $G$ be a finite group.
Then
$$\{a,b\}:=(-1)^{\vert a\vert}\partial\left(I(a)\cup I(b)\right),
\quad\forall a,b\in HC^*(\mathbb{F}[G])$$
defines a Lie bracket of (upper) degree $-1$ on
the cyclic cohomology of the group ring of $G$
such that the composite
$$
HC^{*}(\mathbb{F}[G])\buildrel{I}\over\rightarrow
HH^{*}(\mathbb{F}[G];\mathbb{F}[G]^\vee)\buildrel{\cong}\over\rightarrow
HH^*(\mathbb{F}[G];\mathbb{F}[G])
$$
is a morphism of Lie algebras.

b) Let $G$ be a compact connected Lie group of dimension $d$.
Then
$$\{a,b\}:=(-1)^{\vert a\vert-d}\partial\left(I(a)\cup I(b)\right),
\quad\forall a,b\in HC^*(S_*(G))$$
defines a Lie bracket of (upper) degree $-1-d$ on
the cyclic cohomology of the algebra of singular chains on $G$
such that the composite
$$
HC^{*+d}(S_*(G))\buildrel{I}\over\rightarrow
HH^{*+d}(S_*(G);S^*(G))\build\rightarrow_{\cong}^{\mathbb{D}} HH^*(S_*(G);S_*(G))
$$
is a morphism of Lie algebras.
\end{theor}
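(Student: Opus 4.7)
The plan is to apply Lemma~\ref{Definition du String Bracket} to Connes' \emph{SBI} long exact sequence relating cyclic and Hochschild cohomology of the algebra $A$ (with $A = \mathbb{F}[G]$ in part a) and $A = S_*(G)$ in part b)). In cohomological form this sequence reads
\[
\cdots \to HC^{n-2}(A) \xrightarrow{S} HC^n(A) \xrightarrow{I} HH^n(A;A^\vee) \xrightarrow{\partial} HC^{n-1}(A) \to \cdots,
\]
where $\partial$ is dual to Connes' boundary. The crucial identity for applying the Lemma is that the BV operator $\Delta$ on $HH^*(A;A^\vee)$ satisfies $\Delta = I\circ \partial$, which is a standard feature of the mixed complex formalism.

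For part a), the inspirational theorem (and the symmetric Frobenius algebra structure on $\mathbb{F}[G]$ recalled in Example~\ref{example d'algebre symetrique} 2)) identifies $HH^*(\mathbb{F}[G];\mathbb{F}[G])$ with $HH^*(\mathbb{F}[G];\mathbb{F}[G]^\vee)$ as a BV algebra whose $\Delta$ is Connes' coboundary. I will therefore apply Lemma~\ref{Definition du String Bracket} with $\mathbb{H} = HH^*(\mathbb{F}[G];\mathbb{F}[G]^\vee)$, $\mathcal{H} = HC^*(\mathbb{F}[G])$, $E = \partial$, $M = I$ and $\varepsilon = 1$ (in the homological reindexing that matches the Lemma's conventions). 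Since $\Delta = I \circ \partial = M \circ E$, the hypotheses are satisfied, and the Lemma yields a Lie bracket of degree $-1$ on $HC^*(\mathbb{F}[G])$ given by the formula $\{a,b\} = (-1)^{|a|}\partial(I(a)\cup I(b))$, together with the fact that $M = I: HC^*(\mathbb{F}[G]) \to HH^*(\mathbb{F}[G];\mathbb{F}[G]^\vee)$ is a morphism of graded Lie algebras. Post-composing with the isomorphism $HH^*(\mathbb{F}[G];\mathbb{F}[G]^\vee) \cong HH^*(\mathbb{F}[G];\mathbb{F}[G])$ gives the stated Lie morphism.

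For part b), I use Theorem~\ref{Homologie de Hochschild groupe de Lie}, which produces a BV structure on $HH^*(S_*(G);S_*(G))$ via the Poincar\'e-type isomorphism $\mathbb{D}\colon HH^{*+d}(S_*(G);S^*(G)) \xrightarrow{\cong} HH^*(S_*(G);S_*(G))$, whose $\Delta$ corresponds to Connes' operator $H(B^\vee)$ on the side with coefficients in $S^*(G)$. The setup for the Lemma is the same, but with everything shifted by $d$: take $\mathbb{H}^k = HH^{k+d}(S_*(G);S^*(G))$ and $\mathcal{H}^k = HC^{k+d}(S_*(G))$, so that the SBI sequence reads $\cdots \to \mathcal{H}^{k-2} \xrightarrow{S} \mathcal{H}^k \xrightarrow{I} \mathbb{H}^k \xrightarrow{\partial} \mathcal{H}^{k-1} \to \cdots$, and $\Delta = I \circ \partial$ holds by the same SBI argument. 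The Lemma then produces a Lie bracket of cohomological degree $-1$ on $\mathcal{H}^*$, which translates back to degree $-1-d$ on $HC^*(S_*(G))$, with the claimed formula and sign, and makes $I: HC^{*+d}(S_*(G)) \to HH^{*+d}(S_*(G);S^*(G))$ into a Lie morphism; composing with $\mathbb{D}$ yields the Lie morphism into $HH^*(S_*(G);S_*(G))$ stated in the theorem.

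The main obstacle is purely bookkeeping: one must carefully track the homological versus cohomological grading conventions of Lemma~\ref{Definition du String Bracket}, confirm the value $\varepsilon = 1$ and the resulting degree shift (in particular verifying the sign $(-1)^{|a|-d}$ in part b) comes out correctly from $(-1)^{|a|}$ in the Lemma after reindexing), and unwind the Frobenius identifications $\mathbb{D}$ so that the cup product appearing in $I(a)\cup I(b)$ is well-defined on $HH^*(A;A^\vee)$. Once the equality $\Delta = I \circ \partial$ is in place under these identifications, both parts are a direct consequence of the Lemma.
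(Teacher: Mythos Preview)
Your proposal is correct and follows essentially the same route as the paper: both apply Lemma~\ref{Definition du String Bracket} with $\varepsilon=1$ to Connes' long exact sequence in cohomology, using the identity $H(B^\vee)=I\circ\partial$ to verify the hypothesis $\Delta=M\circ E$, and treating parts a) and b) uniformly via the degree shift by $d$ (with $d=0$ for the finite group case).
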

\begin{proof}
For a), let $A:=\mathbb{F}[G]$ and $d:=0$. For b), let $A:=S_*(G)$.
By our inspirational theorem in section 2 or
Theorem~\ref{Homologie de Hochschild groupe de Lie}, 
 $\mathbb{H}_{-n}:=HH^{n+d}(A;A^\vee)$ is a Batalin-Vilkovisky algebra
(with an unit).

Consider Connes long exact sequence in homology~\cite[2.2.1]{LodayJ.:cych}
$$
\cdots\rightarrow HH_{n+d}(A;A) \buildrel{I}\over\rightarrow
HC_{n+d}(A)
\buildrel{S}\over\rightarrow HC_{n+d-2}(A)\buildrel{\partial}\over\rightarrow
 H_{n+d-1}(A;A)
\rightarrow\cdots
$$
Usually, the map $\partial$ is unfortunately denoted $B$.
The composite
$$
HH_n(A;A)\buildrel{I}\over\rightarrow HC_n(A)\buildrel{\partial}\over\rightarrow HH_{n+1}(A;A)
$$
coincides with Connes boundary map
$H_*(B)$ (See~\cite[Notational consistency, p. 348-9]{Weibel:inthomalg}
or~\cite[proof of Proposition 7.1]{MenichiL:BValgaccoHa} where the mixed complex considered should be the Hochschild chain
complex of $A$).
By dualizing, we have Connes long exact sequence in cohomology~\cite[2.4.4]{LodayJ.:cych}
$$
\cdots\rightarrow HH^{n+d}(A;A^\vee) \buildrel{\partial}\over\rightarrow
HC^{n+d-1}(A)
\buildrel{S}\over\rightarrow HC^{n+d+1}(A)\buildrel{I}\over\rightarrow
 H^{n+d+1}(A;A^\vee)
\rightarrow\cdots
$$
Since $H(B^\vee)=I\circ\partial$,
we can apply Lemma~\ref{Definition du String Bracket}
to it in the case $\varepsilon=1$.
\end{proof}
In part a) of Theorem~\ref{string bracket sur la cohomologie cyclique}, the group ring
$\mathbb{F}[G]$ can be replaced by any symmetric Frobenius algebra $A$.
In~\cite[Corollary 1.5]{MenichiL:BValgaccoHa}, the second author defines a Lie bracket of (upper) degree $-2$
on the negative cyclic cohomology $HC^*_-(A)$ of any  symmetric Frobenius algebra $A$.

Let $M$ be a simply-connected manifold of dimension $d$.
Let $S^*(M)$ be the algebra of singular cochains on $M$.
In~\cite[Corollary 23]{Menichi:BV_Hochschild}, the second author defines a Lie bracket of lower degree
$2-d$ on the negative cyclic cohomology $HC^*_-(S^*(M))$.
In~\cite[Conjecture 24]{Menichi:BV_Hochschild}, the second author conjectures that the Jones isomorphism
is an isomorphism of graded Lie algebras between this bracket and the Chas-Sullivan bracket
on $H^{S^1}_*(LM)$.
Dually, we conjecture
\begin{conjecture}\label{conjecture iso d'algebres de Lie}
Let $G$ be a connected compact Lie group of dimension $d$.

i)The composite of the isomorphism due to
Burghelea, Fiedorowicz and Goodwillie~\cite{Burghelea-Fiedorowicz:chak,Goodwillie:cychdfl} 
$$H^{*+d}(LBG)\buildrel{\cong}\over\rightarrow
 HH^{*+d}(S_*(G),S^*(G))$$
and of the isomorphism given by Theorem~\ref{Homologie de Hochschild groupe de Lie}
$$\mathbb{D}:HH^{*+d}(S_*(G);S^*(G))\buildrel{\cong}\over\rightarrow
HH^*(S_*(G);S_*(G))$$
is a morphism of graded algebras
between the algebra given by
Corollary~\ref{Structure BV en cohomologie groupe de Lie}
and the underlying algebra on the Gerstenhaber algebra $HH^*(S_*(G);S_*(G))$.

ii)The isomorphism due to Burghelea, Fiedorowicz and Goodwillie~\cite{Burghelea-Fiedorowicz:chak,Goodwillie:cychdfl}
$$
H_{S^1}^*(LBG)\buildrel{\cong}\over\rightarrow HC^*(S_*(G)) 
$$
is a morphism of graded Lie algebras between the Lie brackets defined by
Theorem~\ref{string bracket pour les classifiants}
and Theorem~\ref{string bracket sur la cohomologie cyclique}.
\end{conjecture}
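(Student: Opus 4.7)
The plan is to prove part (i) first, and then deduce part (ii) from it by a diagram chase between matching long exact sequences.

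For part (i), the strategy is to identify both products on the chain level via the cyclic bar construction $\Gamma G$. On the topological side, the pair-of-pants product on $H_*(\mathcal{L}BG)$ is computed through the chord diagram $c_{0,2+1}=O-O$ (Example~1 following Proposition~\ref{action du pi1 sur in est trivial}), reducing it to an evaluation pull-back followed by integration along a fibre homotopy equivalent to $G$. On the algebraic side, the Hochschild cup product on $HH^*(S_*(G);S_*(G))$ is defined by composition of cochains, and is transported via $\mathbb{D}^{-1}$ to $HH^*(S_*(G);S^*(G))$, where it corresponds to the cup product paired against the Frobenius form. The key identification uses that, under the Burghelea--Fiedorowicz--Goodwillie map $\gamma:|\Gamma G|\to\mathcal{L}BG$, the chord-diagram pull-back computing the loop product is modelled by the simplicial operation on $\Gamma G$ splitting a cyclic word into two subwords at a chosen marker, while the symmetric Frobenius structure on $H_*(G)$ (Theorem~\ref{homologie groupe de Lie algebre symetrique}) translates integration along the fibers of $\rho_{in}$ into pairing with the class $\eta^!$ as in Lemma~\ref{homologie groupe algebre de Frobenius}.

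Given part (i), part (ii) proceeds as follows. The Burghelea--Fiedorowicz--Goodwillie isomorphism is $S^1$-equivariant via the equivariant map $\gamma$ (used in the proof of Theorem~\ref{Homologie de Hochschild groupe de Lie}), so it identifies the dualized Gysin long exact sequence associated to the $S^1$-principal bundle $ES^1\times \mathcal{L}BG\to ES^1\times_{S^1}\mathcal{L}BG$ with Connes' long exact sequence relating $HH^*$ and $HC^*$. By Proposition~\ref{comparaison des deltas}, the $\Delta$ operator on $H^{*+d}(\mathcal{L}BG)$ corresponds, under BFG, to Connes' operator $H(B^\vee)$ on $HH^*(S_*(G);S^*(G))$, hence (after conjugation by $\mathbb{D}$) to the $\Delta$ of the BV algebra produced in Theorem~\ref{Homologie de Hochschild groupe de Lie}. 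Both Lie brackets are then defined via the universal formula of Lemma~\ref{Definition du String Bracket} applied to these matching long exact sequences equipped with matching BV structures, so they necessarily coincide.

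The principal obstacle is the chain-level comparison of products in part (i). Even in the manifold case (the Cohen--Jones theorem for $\mathcal{L}M$), matching a geometrically defined loop product with the Hochschild cup product requires delicate chain models; in the present setting there is the additional subtlety of the Frobenius isomorphism $\mathbb{D}$, which involves the top class $\eta^!$ non-trivially (cf.\ Lemma~\ref{preserve les unites des algebres}). A plausible route avoiding ad hoc combinatorics is to realize both products as Ext-pairings over a cobar model for $C_*(BG)$: the free loop space appears as a derived pull-back of $BG$ along itself over $BG\times BG$ while $HH^*(S_*(G);S_*(G))$ is computed by self-Ext of $S_*(G)$ in its bimodule category, and the two products coincide by a Yoneda composition argument once the Frobenius structure is used to shift coefficients from $S^*(G)$ to $S_*(G)$.
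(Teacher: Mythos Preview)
The statement you are attempting to prove is a \emph{conjecture}: the paper does not supply a proof and does not claim one. Immediately after stating it, the authors only remark that ``part i) of the conjecture implies part ii) (by the same arguments as in the last paragraph of~\cite{Menichi:BV_Hochschild}).'' So there is no ``paper's own proof'' to compare against.

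Your plan to deduce (ii) from (i) via a comparison of long exact sequences is exactly the paper's remark, and your sketch of that deduction is correct in outline: the BFG isomorphism is $S^1$-equivariant, so it intertwines the cohomological Gysin sequence with Connes' sequence; the $\Delta$-operators match by Proposition~\ref{comparaison des deltas} and the construction in Theorem~\ref{Homologie de Hochschild groupe de Lie}; and both string brackets are instances of the same formula (Lemma~\ref{Definition du String Bracket}), so a match of the BV structures forces a match of the Lie brackets.

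For part (i), however, what you have written is not a proof but a strategy, and you yourself flag the gap (``the principal obstacle is the chain-level comparison of products''). None of the ingredients you list---the chord-diagram model of the loop product, the BFG map on $|\Gamma G|$, the Frobenius form $\eta^!$---are assembled into a chain-level identification of the two products. The Yoneda/Ext route you suggest at the end is plausible but is precisely the hard part: one must produce an explicit zig-zag of quasi-isomorphisms between the chain model computing $\rho_{out*}\circ\rho_{in!}$ and the Hochschild cochain complex with its cup product, compatibly with the shift by $\mathbb{D}$. This is the content of the conjecture, and it remains open in the paper. Your proposal should be read as a program rather than a proof.
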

Note that part i) of the conjecture implies part ii) (by the same arguments as in the last paragraph
of~\cite{Menichi:BV_Hochschild}).

Following Freed, Hopkins and Teleman~\cite{FreedHopkinsTeleman3,freedverlindektheory} in twisted equivariant $K$-theory ${}^\tau \! K^*_G(G^{ad})$, one can easily define
a {\it fusion product} on the equivariant cohomology
$H^{*+d}_G(G^{ad})\cong H^{*+d}(EG\times_G G^{ad})\cong H^{*+d}(LBG)$~\cite{Gruher:duality}.
In~\cite{Gruher-Westerland}, Gruher and Westerland gives an isomorphism of graded algebras
between $H^{*+d}(LBG)$ and $HH^*(S_*(G);S_*(G))$ compatible with this fusion product. 
Note that to prove this isomorphism of algebras,
they used the following theorem of Felix, the second author and Thomas.
\begin{theor}~\cite[Corollary 2]{Felix-Menichi-Thomas:GerstduaiHochcoh}\label{dualite in gerstenhaber}
Let $X$ be a simply connected space such that $H_*(X)$ is finite dimensional in each degree.
Then there is an isomorphism of Gerstenhaber algebras
$$HH^{*}(S^{*}(X),S^{*}(X))\cong HH^{*}(S_{*}(\Omega X),S_{*}(\Omega X)).$$
Here $\Omega X$ is the topological monoid of Moore pointed loops.  
\end{theor}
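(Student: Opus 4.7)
The plan is to build a zigzag of Gerstenhaber algebra quasi-isomorphisms between $CC^*(S^*(X); S^*(X))$ and $CC^*(S_*(\Omega X); S_*(\Omega X))$, rather than attempting a direct isomorphism. The strategy rests on two pillars: Adams' cobar theorem, which provides a dga quasi-isomorphism $\Omega S_*(X) \simeq S_*(\Omega X)$ for simply connected $X$, and a general bar-duality principle relating the Hochschild cohomology of a dga to that of its bar-construction dual.

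Under the hypotheses that $X$ is simply connected with $H_*(X)$ of finite type in each degree, one obtains a chain of dga quasi-isomorphisms of the form $S^*(X) \simeq C^*(B\Omega X) \simeq (B S_*(\Omega X))^\vee$, where $BA$ denotes the reduced bar construction. This reduces the theorem to showing that for a suitable connected dga $A$ of finite type, there is a natural isomorphism of Gerstenhaber algebras
$$HH^*(A, A) \cong HH^*((BA)^\vee, (BA)^\vee).$$
One must then invoke a standard result that Hochschild cohomology is preserved as a Gerstenhaber algebra under dga quasi-isomorphism (via a model-category or $A_\infty$-argument), to replace $S^*(X)$ by $(B S_*(\Omega X))^\vee$ without cost.

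I would establish the key isomorphism at the cochain level via the classical identification of Hochschild cochains with coderivations on the bar construction: $CC^*(A; A) \cong \mathrm{Coder}(BA, BA)[1]$, equipped with its Gerstenhaber structure coming from the brace operations. Under the finite-type hypothesis, linear duality gives a natural isomorphism between coderivations of $BA$ and derivations of the dual dga $(BA)^\vee$, and the latter models Hochschild cochains on $(BA)^\vee$ up to a shift. The cup product corresponds to composition on both sides and matches readily; the compatibility of the Lie bracket is the delicate point.

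The main obstacle I expect is precisely the compatibility with the Gerstenhaber bracket, since it is defined through brace operations rather than plain composition and is only strictly defined at the level of the $B_\infty$-structure. I would address this by upgrading both sides to $B_\infty$-algebras (following Getzler--Jones or Kadeishvili) and exhibiting an explicit $B_\infty$-isomorphism implementing the coderivation/derivation duality, or alternatively by appealing to a Koszul-duality statement in the derived category of bimodules. The finite-type assumption is essential throughout: without it, the double dual $((BA)^\vee)^\vee$ need not recover $BA$, and the duality collapses; simply-connectedness ensures the bar construction is well-behaved (conilpotent and homologically bounded below in each degree) so that Adams' theorem and its dual apply.
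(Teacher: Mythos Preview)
The paper does not prove this theorem at all: it is quoted verbatim as Corollary~2 of F\'elix--Menichi--Thomas and used here only as a black box, to deduce (together with Theorem~\ref{Homologie de Hochschild groupe de Lie}) that $HH^*(S^*(BG);S^*(BG))$ is a Batalin--Vilkovisky algebra when $G$ is a connected compact Lie group. There is therefore nothing in this paper to compare your proposal against.

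For what it is worth, your sketch is a faithful outline of the strategy actually used in the cited reference: pass through the bar/cobar duality (Adams' theorem plus the quasi-isomorphism invariance of Hochschild cohomology), identify Hochschild cochains on $A$ with coderivations of $BA$, and dualise to derivations of $(BA)^\vee$, checking that the cup product and Lie bracket are carried across. You have correctly flagged the bracket compatibility as the delicate point; in the original paper this is handled by working with explicit models rather than by invoking an abstract $B_\infty$-isomorphism. If you want the details, you should go to F\'elix--Menichi--Thomas directly, since nothing further is said here.
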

Let $G$ be any topological group.
By~\cite[Proposition 2.10 and Theorem 4.15]{Felix-Halperin-Thomas:ratht}, the differential graded
algebras $S_*(G)$ and $S_*(\Omega BG)$ are weakly equivalent, therefore by~\cite[Theorem 3]{Felix-Menichi-Thomas:GerstduaiHochcoh},
there is an isomorphism of Gerstenhaber algebras
$$HH^{*}(S_{*}(G),S_{*}(G))\cong HH^{*}(S_{*}(\Omega BG),S_{*}(\Omega BG)).$$
By applying Theorems~\ref{Homologie de Hochschild groupe de Lie} and~\ref{dualite in gerstenhaber}, we obtain 
\begin{theor}
Let $G$ be a connected compact Lie group.
Denote by $S^*(BG)$ the algebra of singular cochains on the classifying space of $G$.
The Gerstenhaber algebra $HH^*(S^*(BG);S^*(BG))$
is a Batalin-Vilkovisky algebra. 
\end{theor}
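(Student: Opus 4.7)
The plan is to assemble the statement directly from the three ingredients that have already been put in place in the paper, namely Theorem~\ref{Homologie de Hochschild groupe de Lie} (the BV structure on $HH^*(S_*(G),S_*(G))$), Theorem~\ref{dualite in gerstenhaber} (the Felix--Menichi--Thomas duality between cochain and loop-chain Hochschild cohomology), and the weak equivalence $S_*(G)\simeq S_*(\Omega BG)$ of differential graded algebras coming from \cite[Proposition 2.10 and Theorem 4.15]{Felix-Halperin-Thomas:ratht}.

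First I would apply Theorem~\ref{dualite in gerstenhaber} with $X=BG$. The hypotheses are met: since $G$ is a connected compact Lie group, $BG$ is simply connected, and its singular homology $H_*(BG)$ is of finite type in each degree (this is classical; for instance, rationally $H^*(BG;\mathbb{Q})$ is a polynomial algebra on finitely many even-degree generators, and the integral statement follows from the standard computations for compact Lie groups). This yields an isomorphism of Gerstenhaber algebras
\[
HH^{*}(S^{*}(BG),S^{*}(BG))\;\cong\;HH^{*}(S_{*}(\Omega BG),S_{*}(\Omega BG)).
\]

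Next, using the weak equivalence of differential graded algebras $S_*(G)\simeq S_*(\Omega BG)$ cited just before Theorem~\ref{dualite in gerstenhaber} in the paper, the invariance of Hochschild cohomology under quasi-isomorphisms of DGAs gives an isomorphism of Gerstenhaber algebras
\[
HH^{*}(S_{*}(\Omega BG),S_{*}(\Omega BG))\;\cong\;HH^{*}(S_{*}(G),S_{*}(G)).
\]
Composing the two isomorphisms, the Gerstenhaber algebra $HH^{*}(S^{*}(BG),S^{*}(BG))$ is isomorphic, as a Gerstenhaber algebra, to $HH^{*}(S_{*}(G),S_{*}(G))$.

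Finally I would transport the Batalin--Vilkovisky operator $\Delta$ constructed in Theorem~\ref{Homologie de Hochschild groupe de Lie} on $HH^{*}(S_{*}(G),S_{*}(G))$ across this isomorphism; since the isomorphism respects the full Gerstenhaber structure, the transported $\Delta$ automatically satisfies the BV compatibility between the cup product and the Gerstenhaber bracket, and the statement follows. There is no genuine obstacle: the only thing one must check is the finite-type hypothesis on $H_*(BG)$, which is standard, and the fact that the composite isomorphism is compatible with the Gerstenhaber brackets, which is precisely what the cited theorems provide.
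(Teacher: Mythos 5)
Your proposal is correct and follows essentially the same route as the paper: the paper also combines Theorem~\ref{dualite in gerstenhaber} applied to $X=BG$, the weak equivalence $S_*(G)\simeq S_*(\Omega BG)$ (with the Gerstenhaber-algebra invariance of Hochschild cohomology under quasi-isomorphism, cited there as Theorem 3 of F\'elix--Menichi--Thomas), and Theorem~\ref{Homologie de Hochschild groupe de Lie} to transport the Batalin--Vilkovisky operator. Your explicit verification of the hypotheses (simple connectivity of $BG$ and finite type of $H_*(BG)$) is a welcome, if routine, addition that the paper leaves implicit.
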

\section{Appendix:(not) choosing an orientation class and signs problems}\label{choix classe orientation}
In this section, we explain that in fact, we do not choose an orientation class $w_F$ for each
cobordism $F$.
Instead, we put all the possible choice of an orientation class in a prop, the prop
$\text{Det}H_1(F,\partial_{in}F;\mathbb{Z})$,
to ensure the compatibility with gluing, disjoint union, etc ....
This prop appeared first in~\cite{Costello:tcftcalabiyaucat} and~\cite{Godin:higherstring}.
As a consequence, we explain that in fact, in our main theorem for simply connected spaces, Theorem~\ref{main theoreme homologie groupe de Lie}, $H_*(\mathcal{L}X)$ is a degree $d$ (non-counital non-unital) homological
conformal field theory.

\subsection{Integration along the fiber without orienting.}
Let $F\hookrightarrow E\buildrel{p}\over\twoheadrightarrow B$ an orientable fibration that
we don't orient for the moment.
The Serre spectral sequence (Compare with Section~\ref{integration suite spectrale})
gives the linear application of degre $0$
$$
\int_{p}:H_n(F;\mathbb{F})\otimes H_*(B;\mathbb{F})\rightarrow H_{*+n}(E;\mathbb{F})
$$
which is independant of any choice of orientation class. If we choose an orientation
class $w\in H_n(F;\mathbb{F})$, then we have an oriented fibration $p$ whose integration along the fibre
$p_!$ is given by $p_!(b):=\int_{p}(w\otimes b)$ for any $b\in H_*(B;\mathbb{F})$.

\subsection{The prop $H_{-d\chi(F)}(map_*(F_{p+q}/\partial_{in}F,X))$.}\label{prop des fibres}
Let $X$ be a simply connected space such that $H_*(\Omega X)$ is finite dimensional.
In Section~\ref{proprietes umkehr maps}, we explain the formulas that orientation classes
must satisfy, in order for the integration along the fibre to be natural, compatible
with composition and product. With theses rules, the family of direct sum of graded vector spaces
$$\bigoplus_{F_{p+q}} H_{-d\chi(F)}(map_*(F_{p+q}/\partial_{in}F,X);\mathbb{F})$$
form a $\mathbb{F}$-linear graded prop.
Here the direct sum is taken over a set of representatives $F_{p+q}$ of the oriented cobordisms classes
from $\coprod_{i=1}^p S^1$ to $\coprod_{i=1}^q S^1$, whose path components
have at least one incoming-boundary component and one outgoing-boundary component
(Compare with the topological prop defined in Proposition~\ref{equivalence des trois props}).
For example, let us explain what is the composition of this prop using the notations
of Proposition~\ref{compatible au gluing} and of the paragraph Composition in Section~\ref{proprietes umkehr maps}

Let $F_{g,p+q}$ and $F_{g',q+r}'$ be two oriented cobordisms.
Let $F_{g",p+r}''$ be the oriented cobordism obtained by gluing.
Consider the two orientable fibrations
$$
f:map(F_{g",p+r}'',X)\twoheadrightarrow map(F_{g,p+q},X)
$$
given in the pull back~(\ref{produit fibre gluing}),
and $$g:=map(in,X):map(F_{g,p+q},X)\twoheadrightarrow\mathcal{L}X^{\times p}.$$
By pull back, we obtain an orientable fibration

$$
f':map_*(F_{g,p+r}''/\partial_{in}, X)\twoheadrightarrow map_*(F_{g,p+q}/\partial_{in},X)
$$
with fibre $map_*(F_{g',q+r}'/\partial_{in},X)$.
Therefore the Serre spectral sequence gives the linear isomorphism of degre $0$
$$
\int_{f'}:H_{-d\chi(F')}map_*(F_{g',q+r}'/\partial_{in},X)
\otimes H_{-d\chi(F)}  map_*(F_{g,p+q}/\partial_{in},X)
\buildrel{\cong}\over\rightarrow H_{-d\chi(F'')}map_*(F_{g,p+r}''/\partial_{in}, X)
$$
which is the composition of the prop.
Suppose that we choose an orientation class for $f$, $w_f\in H_{-d\chi(F')}map_*(F_{g',q+r}'/\partial_{in},X)$,
and an orientation class for $g$, $w_g\in  H_{-d\chi(F)}  map_*(F_{g,p+q}/\partial_{in},X)$.
Then $(g\circ f)_!$ is equal to $f_!\circ g_!$ if the orientation class chosen $w_{g\circ f}$ for $g\circ f$
is equal to $f'_!(w_g)$, i. e. to $\int_{f'}(w_f\otimes w_g)$(See paragraph Composition in Section~\ref{proprietes umkehr maps}).

Using $\int_{\rho_{in}}$ instead of $\rho_{in!}$, we obtain an linear evaluation product of degre $0$
$$
\int_F: H_{-d\chi(F)}map_*(F_{g,p+q}/\partial_{in},X)\otimes H_*(BDiff^+(F,\partial))\otimes H_*(\mathcal{L}X)^{\otimes p}
\rightarrow  H_*(\mathcal{L}X)^{\otimes q}.
$$
If we choose an orientation class $w_F\in H_{-d\chi(F)}map_*(F_{g,p+q}$, of course this new evaluation product is related to the old one
by $\mu(F)(a\otimes v)=\int_F(w_F\otimes a\otimes v)$ for any
$a\otimes v\in  H_*(BDiff^+(F,\partial))\otimes H_*(\mathcal{L}X)^{\otimes p}$.

The section~\ref{propic} shows in fact that, with $\int_F$, $H_*(\mathcal{L}X)$ is an algebra over the tensor product of props
$$
H_{-d\chi(F)}map_*(F_{g,p+q}/\partial_{in},X)\otimes H_*(BDiff^+(F,\partial)).
$$

\subsection{The orientation of a finitely generated free $\mathbb{Z}$-module $V$, $\text{Det}V$}\label{orientation groupe abelien}
Let $V$ be a free abelian group of rank $n$.
Let $\Theta=(e_1,\cdots,e_n)$ et $\Theta'=(e_1',\cdots,e_n')$ be two basis of $V$.
Let $\varphi:V\rightarrow V$ be the $\mathbb{Z}$-linear automorphism sending $e_i$ to $e_i'$.
By definition, $\Theta$ and $\Theta'$ belong to the same orientation class
if the determinant of $\varphi$, $det\varphi$, is equal to $+1$.
Equivalently, the application induced by $\varphi$, on the $n$-th exterior powers,
$\Lambda^n\varphi:\Lambda^n V\rightarrow\Lambda^n V$ is the identity map, this means that
$e_1'\wedge\cdots\wedge e_n'=e_1\wedge\cdots\wedge e_n$.
Therefore the application which maps the orientation class $[\Theta]$ of a basis
 $\Theta=(e_1,\cdots,e_n)$, on the generator $e_1\wedge\cdots\wedge e_n$ of $\Lambda^n V\cong\mathbb{Z}$ is a bijection.
So a choice of an generator of $\Lambda^n V\cong\mathbb{Z}$ is a choice of an orientation on $V$.
Let us set $$\text{det}V:=\Lambda^n V.$$

\subsection{The prop $\text{det}H_1(F,\partial_{in};\mathbb{Z})$}
In~\cite[p. 183]{Costello:tcftcalabiyaucat} and~\cite[Lemma 13]{Godin:higherstring},
Costello and Godin explain that the familly of graded abelians groups
$$\bigoplus_{F_{p+q}} \text{det}H_1(F,\partial_{in};\mathbb{Z})$$
form a $\mathbb{Z}$-linear graded prop.
Again, here the direct sum is taken over a set of representatives $F_{p+q}$ of the oriented cobordisms classes
from $\coprod_{i=1}^p S^1$ to $\coprod_{i=1}^q S^1$, whose path components
have at least one incoming-boundary component and one outgoing-boundary component.

Let us explain what is the composition of this prop.
Let $F_{g,p+q}$ and $F_{g',q+r}'$ be two oriented cobordisms.
Let $F_{g'',p+r}''$ be the oriented cobordism obtained by gluing.
By excision, $H_*(F_{g'',p+r}'',F_{g,p+q})\cong H_*(F_{g',q+r}',\partial_{in} F_{g',q+r}')$.
By Proposition~\ref{cofibre de in}
$
\tilde{H}_2(F_{g',q+r}'/\partial_{in}F_{g',q+r}')=0
$
and
$
\tilde{H}_0(F_{g,p+q}/\partial_{in}F_{g,p+q})=0
$. Therefore the long exact sequence associated to the triple
$(F_{g'',p+r}'',F_{g,p+q},\partial_{in}F_{g,p+q})$ reduces to the short exact sequence:
$$
\xymatrix@1{
0\ar[r]
&H_1(F_{g,p+q},\partial_{in}F_{g,p+q})\ar[r]
&H_1(F_{g'',p+r}'',\partial_{in}F_{g,p+q})\ar[r]
&H_1(F_{g',q+r}',\partial_{in}F_{g',q+r}')\ar[r]
&0.}
$$
Godin's situation~\cite[(33)]{Godin:higherstring} is more complicated because she considers open-closed
cobordisms and in this paper, we consider only closed cobordisms.

A short exact sequence of finite type free abelian groups
$
\xymatrix@1{
0\ar[r]
&U\ar[r]
&V\ar[r]
&W\ar[r]
&0.}
$
gives~\cite[Lemma 1 p. 1176]{Conant-Vogtmann:theokontsevich}) a natural isomorphism
$\text{det} U\otimes\text{det} V\buildrel{\cong}\over\rightarrow \text{det} W$.
Therefore, we have a canonical isomorphism
$$
\text{det} H_1(F_{g,p+q},\partial_{in}F_{g,p+q};\mathbb{Z})
\otimes\text{det} H_1(F_{g',q+r}',\partial_{in}F_{g',q+r}';\mathbb{Z})
\buildrel{\cong}\over\rightarrow \text{det} H_1(F_{g'',p+r}'',\partial_{in}F_{g'',p+q}'';\mathbb{Z}).
$$
This is the composition of the prop.

\subsection{The prop isomorphim
$\text{det}H_1(F,\partial_{in};\mathbb{Z})^{\otimes d}\otimes_\mathbb{Z}\mathbb{F}
\cong H_{-d\chi(F)}(map_*(F/\partial_{in},X);\mathbb{F})$.}

Obviously, our conformal field theory structure on $H_*(\mathcal{L}X;\mathbb{F})$ when $X$ is simply-connected
topological space, depends of a choice of a generator $w\in H_d(\Omega X;\mathbb{F})$.
So let us choose a fixed generator $w\in H_d(\Omega X;\mathbb{F})$.

Let $F_{p+q}$ be an oriented cobordism whose path connected components have at least one incoming boundary
component and also at least one outgoing component.
By Proposition~\ref{cofibre de in}, the quotient space $F/\partial_{in}$ is homotopy equivalent
to a wedge $\vee_{-\chi(F)}S^1$.
Let $f:F/\partial_{in}\buildrel{\approx}\over\rightarrow\vee_{-\chi(F)}S^1$ be a pointed homotopy
equivalence. Consider the composite of the Kunneth map, $Kunneth$, and of $H_*(map_*(f, X))$.
$$
H_*(\Omega X)^{\otimes -\chi(F)}\build\rightarrow_\cong^{Kunneth}
H_*(map_*(\vee_{-\chi(F)}S^1,X))\build\rightarrow_\cong^{H_*(map_*(f, X))}
H_*(map_*(F/\partial_{in},X)).
$$
Let $w_f$ denote the image of $w^{\otimes -\chi(F)}$ by this isomorphism.
On the other hand, let $\Theta_f$ be the image of the canonical basis of $\mathbb{Z}^{\times -\chi(F)}$
by the inverse of $H_1(f;\mathbb{Z})$:
$$
\mathbb{Z}^{\times -\chi(F)}=H_1(\vee_{-\chi(F)}S^1;\mathbb{Z})
\buildrel{H_1(f;\mathbb{Z})^{-1}}\over\rightarrow H_1(F/\partial_{in};\mathbb{Z})\cong
  H_1(F,\partial_{in};\mathbb{Z}).
$$
\begin{proposition}\label{lien entre les deux generateurs}
Let $f$ and $g:F/\partial_{in}\buildrel{\approx}\over\rightarrow\vee_{-\chi(F)}S^1$ be two pointed homotopy
equivalences.
Let $w_f$ and $w_g$ be the two corresponding generators of
$H_{-d\chi(F)}(map_*(F/\partial_{in},X);\mathbb{F})$.
Let $\Theta_f$ and $\Theta_g$ be the two associated basis of $H_1(F,\partial_{in};\mathbb{Z})$. Then $$w_f=det_{\Theta_f}(\Theta_g)^d w_g,$$
where $det_{\Theta_f}(\Theta_g)$ is the $d$-th power of the determinant of the basis $\Theta_g$
with respect to the basis $\Theta_f$.
\end{proposition}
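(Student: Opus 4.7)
The plan is to reduce the statement directly to Proposition~\ref{action sur la top class}, which was proved precisely to compute the action of a self-equivalence of $\vee_n S^1$ on the top homology of $map_*(\vee_n S^1,X)$. Set $n:=-\chi(F)$, and choose a pointed homotopy inverse $\overline{f}:\vee_n S^1\buildrel{\approx}\over\rightarrow F/\partial_{in}F$ of $f$. Define
$$h:=g\circ \overline{f}:\vee_n S^1\buildrel{\approx}\over\rightarrow \vee_n S^1.$$
Then $g\simeq h\circ f$ as pointed maps, so by functoriality of $map_*(-,X)$ on pointed homotopy classes, $map_*(g,X)\simeq map_*(f,X)\circ map_*(h,X)$.

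Applying $H_{-d\chi(F)}(-;\mathbb F)$ and chasing through the definition of $w_f$ and $w_g$ given in Section~\ref{prop des fibres}, one obtains
$$w_g=H_{-d\chi(F)}(map_*(f,X))\circ H_{-d\chi(F)}(map_*(h,X))\bigl(Kunneth(w^{\otimes n})\bigr).$$
By Proposition~\ref{action sur la top class} applied to $h$, the middle arrow is multiplication by $(\det H_1(h;\mathbb{Z}))^d$, so $w_g=(\det H_1(h;\mathbb{Z}))^d\,w_f$.

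It remains to identify $\det H_1(h;\mathbb{Z})$ with $\det_{\Theta_f}(\Theta_g)^{-1}$ (which equals $\det_{\Theta_f}(\Theta_g)$ since both basis of the free $\mathbb{Z}$-module $H_1(F,\partial_{in};\mathbb{Z})$ lie in the same $\text{GL}_n(\mathbb{Z})$-orbit, hence the change of basis has determinant $\pm 1$). This is a direct computation: by definition $\Theta_f=H_1(f;\mathbb{Z})^{-1}(e_1,\dots,e_n)$ and $\Theta_g=H_1(g;\mathbb{Z})^{-1}(e_1,\dots,e_n)$, and $H_1(h;\mathbb{Z})=H_1(g;\mathbb{Z})\circ H_1(f;\mathbb{Z})^{-1}$, so expressing the canonical basis in terms of itself via these isomorphisms shows that the matrix of $H_1(h;\mathbb{Z})$ is the inverse of the change-of-basis matrix from $\Theta_f$ to $\Theta_g$. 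Raising to the $d$-th power and noting that $(\pm 1)^{-d}=(\pm 1)^d$ yields the claimed formula $w_f=\det_{\Theta_f}(\Theta_g)^d\,w_g$.

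The only possible obstacle is a sign/orientation bookkeeping slip in the last step (swapping $f$ and $g$ or inverting a matrix in the wrong direction), but since $\det_{\Theta_f}(\Theta_g)\in\{\pm 1\}$ the two possible signs coincide after raising to the $d$-th power, so this is not a real difficulty. The essential content was already extracted in Proposition~\ref{action sur la top class}; here we are merely transporting it from the model $\vee_n S^1$ back to $F/\partial_{in}F$.
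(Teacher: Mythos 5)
Your argument is correct and is essentially the paper's own proof: both reduce to Proposition~\ref{action sur la top class} by comparing $f$ and $g$ through a self-equivalence $h$ of $\vee_{-\chi(F)}S^1$ and then identifying $\det H_1(h;\mathbb{Z})$ with $\det_{\Theta_f}(\Theta_g)$. The only (harmless) difference is that you take $h$ with $g\simeq h\circ f$ instead of $f\simeq h\circ g$, which introduces an inverse that you correctly dispose of because the determinant is $\pm 1$, whereas the paper's choice of $h$ gives $\det_{\Theta_f}(\Theta_g)=\det H_1(h;\mathbb{Z})$ directly.
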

\begin{proof}
Let $h:\vee_{-\chi(F)}S^1\buildrel{\approx}\over\rightarrow\vee_{-\chi(F)}S^1$
be a pointed homotopy equivalence such that $f$ is homotopic to the composite
$h\circ g$. Since $H_1(f;\mathbb{Z})=H_1(h;\mathbb{Z})\circ H_1(g;\mathbb{Z})$,
\begin{multline*}
det_{\Theta_f}(\Theta_g)=
det_{H_1(f;\mathbb{Z})(\Theta_f)}H_1(f;\mathbb{Z})(\Theta_g)\\
=det_{(\text{canonical basis})}H_1(h;\mathbb{Z})(\text{canonical basis})
=det H_1(h;\mathbb{Z})
\end{multline*}
where canonical basis denotes the canonical basis of $\mathbb{Z}^{\times -\chi(F)}$.
By Proposition~\ref{action sur la top class},
\begin{multline*}
w_f:=H_*(map_*(f,X))\circ Kunneth (w^{\otimes -\chi(F)})\\
=H_*(map_*(g,X))\circ H_*(map_*(h,X))\circ Kunneth (w^{\otimes -\chi(F)})\\
=det H_1(h;\mathbb{Z})^d H_*(map_*(g,X))\circ Kunneth (w^{\otimes -\chi(F)})
=det H_1(h;\mathbb{Z})^d w_g
\end{multline*}
\end{proof}
Let $\Theta_f=(e_1,\cdots,e_{-\chi(F)})$ be the basis of
$H_1(F,\partial_{in};\mathbb{Z})$ associated to a pointed homotopy
equivalence $f:F/\partial_{in}\buildrel{\approx}\over\rightarrow\vee_{-\chi(F)}S^1$.
As recalled in Section~\ref{orientation groupe abelien}, the orientation class
of $\Theta_f$, $[\Theta_f]$, corresponds to the generator
$e_1\wedge\cdots\wedge e_{-\chi(F)}$ of $\text{det} H_1(F,\partial_{in};\mathbb{Z})$.
Therefore $[\Theta_f]^{\otimes d}$ is a generator of the tensor product
 $\text{det} H_1(F,\partial_{in};\mathbb{Z})]^{\otimes d}$.
Consider the unique $\mathbb{Z}$-linear map
$$
Or(F):\text{det} H_1(F,\partial_{in};\mathbb{Z})]^{\otimes d}\rightarrow
H_{-d\chi(F)}(map_*(F/\partial_{in},X);\mathbb{F})
$$
sending the generator  $[\Theta_f]^{\otimes d}$ to the generator $w_f$.
\begin{cor}
The morphism $Or(F)$ is independant of the pointed homotopy equivalence
$f:F/\partial_{in}\buildrel{\approx}\over\rightarrow\vee_{-\chi(F)}S^1$.
\end{cor}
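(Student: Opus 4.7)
The plan is to compare the two candidate definitions directly, using Proposition~\ref{lien entre les deux generateurs} as the bridge between the homotopy equivalence $f$ (which controls $w_f$) and the change-of-basis combinatorics (which controls $[\Theta_f]$). Fix two pointed homotopy equivalences $f,g:F/\partial_{in}F\buildrel{\thickapprox}\over\rightarrow\vee_{-\chi(F)}S^1$ and write $Or_f(F)$, $Or_g(F)$ for the two $\mathbb{Z}$-linear maps defined by sending $[\Theta_f]^{\otimes d}\mapsto w_f$ and $[\Theta_g]^{\otimes d}\mapsto w_g$ respectively. Since both source and target are (free) of rank one, it suffices to compare their values on any single generator, say $[\Theta_f]^{\otimes d}$.

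First I would unwind what happens in $\mathrm{det}\,H_1(F,\partial_{in}F;\mathbb{Z})$. If $n:=-\chi(F)$ and $\Theta_f=(e_1,\dots,e_n)$, $\Theta_g=(e_1',\dots,e_n')$, then by definition of the exterior power,
\[
e_1'\wedge\cdots\wedge e_n'=\det{}_{\Theta_f}(\Theta_g)\cdot e_1\wedge\cdots\wedge e_n,
\]
so $[\Theta_g]=\det_{\Theta_f}(\Theta_g)\cdot[\Theta_f]$ and therefore
\[
[\Theta_g]^{\otimes d}=\det{}_{\Theta_f}(\Theta_g)^{d}\cdot[\Theta_f]^{\otimes d}
\]
in $(\mathrm{det}\,H_1(F,\partial_{in}F;\mathbb{Z}))^{\otimes d}$. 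By $\mathbb{Z}$-linearity, this forces
\[
Or_g(F)([\Theta_f]^{\otimes d})=\det{}_{\Theta_f}(\Theta_g)^{-d}\,w_g.
\]

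Next I would invoke Proposition~\ref{lien entre les deux generateurs}, which gives exactly $w_f=\det_{\Theta_f}(\Theta_g)^{d}\,w_g$. Combining, we obtain
\[
Or_f(F)([\Theta_f]^{\otimes d})=w_f=\det{}_{\Theta_f}(\Theta_g)^{d}\,w_g,\qquad
Or_g(F)([\Theta_f]^{\otimes d})=\det{}_{\Theta_f}(\Theta_g)^{-d}\,w_g.
\]
The only remaining point is that $\det_{\Theta_f}(\Theta_g)^{d}$ and $\det_{\Theta_f}(\Theta_g)^{-d}$ agree. This is the key observation that makes the construction work: $\Theta_f$ and $\Theta_g$ are two $\mathbb{Z}$-bases of the same finitely generated free abelian group, so the change-of-basis matrix lies in $GL_n(\mathbb{Z})$ and hence $\det_{\Theta_f}(\Theta_g)=\pm 1$. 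In particular $\det_{\Theta_f}(\Theta_g)^{-d}=\det_{\Theta_f}(\Theta_g)^{d}$, which yields $Or_f(F)=Or_g(F)$ on the generator and therefore as $\mathbb{Z}$-linear maps.

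The whole argument is short once Proposition~\ref{lien entre les deux generateurs} is in place, so I do not expect a serious obstacle; the only subtlety worth emphasizing is the sign compatibility, namely the fact that tensoring with $d$ copies neutralizes the ambiguity $\pm 1$ regardless of the parity of $d$ (since $(\pm 1)^{2d}=1$). This is precisely why the prop $\mathrm{det}\,H_1(F,\partial_{in}F;\mathbb{Z})^{\otimes d}$, rather than a single copy of $\mathrm{det}\,H_1$, is the correct $\mathbb{Z}$-linear prop to map into $H_{-d\chi(F)}(\mathrm{map}_*(F/\partial_{in}F,X);\mathbb{F})$.
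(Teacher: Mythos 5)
Your proposal is correct and follows essentially the same route as the paper: both reduce the claim to the change-of-basis identity $[\Theta_g]=\det_{\Theta_f}(\Theta_g)[\Theta_f]$ together with Proposition~\ref{lien entre les deux generateurs}, the only cosmetic difference being that you compare the two maps on the single generator $[\Theta_f]^{\otimes d}$ while the paper evaluates the $f$-defined map on $[\Theta_g]^{\otimes d}$. Your explicit remark that $\det_{\Theta_f}(\Theta_g)=\pm 1$, so that $\det^{-d}=\det^{d}$, is exactly the point the paper's final equality uses implicitly.
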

\begin{proof}
Let $g:F/\partial_{in}\buildrel{\approx}\over\rightarrow\vee_{-\chi(F)}S^1$
be another pointed homotopy equivalence.
Let $\Theta_g$ be the basis of $H_1(F,\partial_{in};\mathbb{Z})$
associated to $g$. By definition of orientation classes,
$[\Theta_g]=det_{\Theta_f}(\Theta_g)[\Theta_f]$.
Therefore, by Proposition~\ref{lien entre les deux generateurs},
$$
Or(F)([\Theta_g]^{\otimes d})=Or(F)\left( det_{\Theta_f}(\Theta_g)^d [\Theta_f]^{\otimes d}        \right)=det_{\Theta_f}(\Theta_g)^d w_f=w_g.
$$
\end{proof}
We claim that the familly of $\mathbb{Z}$-linear map
$$
Or(F):\text{det} H_1(F,\partial_{in};\mathbb{Z})]^{\otimes d}\rightarrow
H_{-d\chi(F)}(map_*(F/\partial_{in},X);\mathbb{F})
$$
gives a morphism of props from the tensor product of prop
$
\text{det}H_1(F,\partial_{in};\mathbb{Z})^{\otimes d}
$
to the prop
$H_{-d\chi(F)}(map_*(F/\partial_{in},X);\mathbb{F})$.
At the end of section~\ref{prop des fibres}, we explain that
$H_*(\mathcal{L}X)$ is an algebra over the tensor product of props
$$
H_{-d\chi(F)}map_*(F_{g,p+q}/\partial_{in},X)\otimes_\mathbb{F} H_*(BDiff^+(F,\partial)).
$$
Therefore $H_*(\mathcal{L}X)$ is an algebra over the tensor product of props
$$
\text{det}H_1(F,\partial_{in};\mathbb{Z})^{\otimes d}\otimes_\mathbb{Z} H_*(BDiff^+(F,\partial)).
$$
That is $H_*(\mathcal{L}X)$ is a {\it $d$-dimensional (non-unital non-counital)
conformal field theory} (in the sense of~\cite[Definition p. 169]{Costello:tcftcalabiyaucat}
or~\cite[Definition 3, Section 4.1]{Godin:higherstring}).

\bibliography{Bibliographie}
\bibliographystyle{amsplain}

 \vspace{1cm}

\noindent
\begin{tabular*}{4in}[t]{lcr}
david.chataur at math.univ-lille1.fr     & \hspace{1cm} &          luc.menichi at univ-angers.fr \\
D\'epartement de math\'ematiques Paul Painlev\'e&&     LAREMA UMR
6093, CNRS\\
UFR de math\'ematique&&                               Facult\'e des Sciences\\
USTL&&                                              2, Boulevard Lavoisier\\
59655 Villeneuve d'Ascq C\'edex, France. &&          49045 Angers, France.
\end{tabular*}

\end{document}